\newtheorem{theorem}{Theorem}[section]
\newtheorem{corollary}[theorem]{Corollary}
\newtheorem{lemma}[theorem]{Lemma}
\newtheorem{proposition}[theorem]{Proposition}
\theoremstyle{definition}
\newtheorem{example}[theorem]{Example}
\newtheorem{remark}[theorem]{Remark}
\numberwithin{equation}{section}
\newcommand{\PSp}{\mathrm{PSp}}
\renewcommand{\leq}{\leqslant}
\renewcommand{\geq}{\geqslant}
\begin{document}
\title[Classification of flag-transitive $2$-designs]{Classification of the non-trivial $2$-$(k^{2},k,\lambda )$ designs, with $\lambda \mid k$, admitting a flag-transitive almost simple automorphism group}

\author[]{ Alessandro Montinaro}

%
%

\address{Alessandro Montinaro, Dipartimento di Matematica e Fisica “E. De Giorgi”, University of Salento, Lecce, Italy}
\email{alessandro.montinaro@unisalento.it}

\subjclass[MSC 2020:]{05B05; 05B25; 20B25}%
\keywords{ $2$-design; automorphism group; flag-transitive}
\date{\today}%

\begin{abstract}
Non-trivial $2$-$(k^{2},k,\lambda )$ designs, with $\lambda \mid k$,
admitting a flag-transitive almost simple automorphism group are classified.




\end{abstract}

\maketitle

\section{Introduction and Main Result}

A $2$-$(v,k,\lambda )$ \emph{design} $\mathcal{D}$ is a pair $(\mathcal{P},%
\mathcal{B})$ with a set $\mathcal{P}$ of $v$ points and a set $\mathcal{B}$
of blocks such that each block is a $k$-subset of $\mathcal{P}$ and each two
distinct points are contained in $\lambda $ blocks. We say $\mathcal{D}$ is 
\emph{non-trivial} if $2<k<v$. All $2$-$(v,k,\lambda )$ designs in this paper
are assumed to be non-trivial. An automorphism of $\mathcal{D}$ is a
permutation of the point set which preserves the block set. The set of all
automorphisms of $\mathcal{D}$ with the composition of permutations forms a
group, denoted by $\mathrm{Aut(\mathcal{D})}$. For a subgroup $G$ of $%
\mathrm{Aut(\mathcal{D})}$, $G$ is said to be \emph{point-primitive} if $G$
acts primitively on $\mathcal{P}$, and said to be \emph{point-imprimitive}
otherwise. A \emph{flag} of D is a pair $(x,B)$ where $x$ is a point and $B$
is a block containing $x$. If $G\leq \mathrm{Aut(\mathcal{D})}$ acts
transitively on the set of flags of $\mathcal{D}$, then we say that $G$ is 
\emph{flag-transitive} and that $\mathcal{D}$ is a \emph{flag-transitive
design}.\ 

The $2$-$(v,k,\lambda )$ designs $\mathcal{D}$ admitting a flag-transitive
automorphism group $G$ have been widely studied by several authors. In 1990,
a classification of those with $\lambda =1$ and $G\nleq A\Gamma L_{1}(q)$
was announced by Buekenhout, Delandtsheer, Doyen, Kleidman, Liebeck and Saxl
in \cite{BDDKLS} and proven in \cite{BDD}, \cite{Da}, \cite{De0}, \cite{De}, 
\cite{Kle}, \cite{LiebF} and \cite{Saxl}. Since then a special attention was
given to the case $\lambda >1$. A classification of the flag-transitive $2$%
-designs with $\gcd (r,\lambda )=1$, $\lambda >1$ and $G\nleq A\Gamma
L_{1}(q)$, where $r$ is the replication number of $\mathcal{D}$, has been
announced by Alavi, Biliotti, Daneshkakh, Montinaro, Zhou and their
collaborators in \cite{glob} and proven in \cite{A}, \cite{A1}, \cite{ABD0}, 
\cite{ABD1}, \cite{ABD2},\cite{ABD}, \cite{BM}, \cite{BMR}, \cite{MBF}, \cite%
{TZ}, \cite{Zie}, \cite{ZD}, \cite{ZZ0}, \cite{ZZ1}, \cite{ZZ2}, \cite{ZW}
and \cite{ZGZ}. Moreover, recently the flag-transitive $2$-designs with $%
\lambda =2$ have been investigated by Devillers, Liang, Praeger and Xia in 
\cite{DLPX}, where it is shown that apart from the two known symmetric $2$-$%
(16,6,2)$ designs, $G$ is primitive of affine or almost simple type.
Moreover, a classification is provided when the socle of $G$ is isomorphic
to $PSL_{n}(q)\trianglelefteq G$ and $n\geq 3$.

The investigation of the flag-transitive $2$-$(k^{2},k,\lambda )$ designs,
with $\lambda \mid k$, has been recently started in \cite{MF}. The reason of studying such $2$-designs is that they represent a natural generalization of the affine planes in
terms of parameters, and also because, it is shown in \cite{Monty} that, the blocks of imprimitivity of a family of flag-transitive, point-imprimitive symmetric $2$-designs investigated in \cite{PZ} have the structure of the $2$-designs analyzed here.
It is shown in \cite{MF} that, apart from the smallest Ree group, a
flag-transitive automorphism group $G$ of a $2$-$(k^{2},k,\lambda )$ design $%
\mathcal{D}$, with $\lambda \mid k$, is either an affine group or an almost
simple classical group. Moreover, when $G\cong ~^2 G_{3}(3)$, the
smallest Ree group, $\mathcal{D}$ is isomorphic either to the $2$-$%
(6^{2},6,2)$ design or to one of the three $2$-$(6^{2},6,6)$ designs
constructed in \cite{MF}. All the four $2$-designs have the $36$ secants of
a non-degenerate conic $\mathcal{C}$ of $PG_{2}(8)$ as a point set and $6$%
-sets of secants in a remarkable configuration as a block set. Clearly, $%
G\cong ~^2 G_{3}(3)$ is a special case of an almost simple
classical group, since $~^2 G_{3}(3)\cong P\Gamma L_{2}(8)$.

The result contained in the present paper, together with that obtained in \cite{MF}, is a complete classification of $(\mathcal{D},G)$ when $G$ is
almost simple. More precisely, the following result is obtained: 

\medskip 

\begin{theorem}
\label{main}Let $\mathcal{D}$ be a $2$-$(k^{2},k,\lambda )$ design, with $\lambda
\mid k$, admitting a flag-transitive automorphism group $G$ of almost simple
type. Then one of the following holds:

\begin{enumerate}
\item $\mathcal{D}$ is isomorphic to the $2$-$(6^{2},6,2)$ design
constructed in \cite{MF} and $PSL_{2}(8)\trianglelefteq G\leq P\Gamma
L_{2}(8)$.

\item $\mathcal{D}$ is isomorphic to one of the three $2$-$(6^{2},6,6)$
designs constructed in \cite{MF} and $G\cong P\Gamma L_{2}(8)$.

\item $\mathcal{D}$ is isomorphic to the $2$-$(12^{2},12,3)$ design
constructed in Example \ref{Ex5} and $G\cong PSL_{3}(3)$.

\item $\mathcal{D}$ is isomorphic to the $2$-$(12^{2},12,6)$ design
constructed in Example \ref{Ex5} and $G\cong PSL_{3}(3):Z_{2}$.
\end{enumerate}
\end{theorem}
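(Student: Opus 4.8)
The plan is to take as input the reduction of \cite{MF}: since the exceptional possibility ${}^{2}G_{3}(3)\cong \PGaL_{2}(8)$ is itself a classical almost simple group, we may assume $X:=\Soc(G)$ is a finite simple classical group. First I would record the numerology forced by $v=k^{2}$: from $\lambda (v-1)=r(k-1)$ one gets $r=\lambda (k+1)$ and $b=k\lambda (k+1)$, so the number of flags is $k^{2}\lambda (k+1)$, and flag-transitivity gives $|G|=k^{2}|G_{x}|$, $\lambda (k+1)\mid |G_{x}|$ and $k\mid |G_{B}|=|G|/b$. Moreover, since $G_{x}$ is transitive on the $r$ blocks through $x$, for each $G_{x}$-suborbit $\Delta$ the intersection $|B\cap \Delta|$ is independent of the block $B\ni x$, whence $r\,|B\cap\Delta|=\lambda |\Delta|$ and therefore $(k+1)\mid d$ for \emph{every} non-trivial subdegree $d$ of $G$ on points. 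I would then reduce to the point-primitive case, so that $G_{x}$ is maximal in $G$: a point-imprimitive example would satisfy the Delandtsheer--Doyen equations $c=(\tbinom{k}{2}-m)/n$, $d=(\tbinom{k}{2}-n)/m$ with $cd=k^{2}$ for positive integers $m,n$, leaving only finitely many possibilities for $(k,c,d)$, each excluded using $|G|=k^{2}|G_{x}|$ together with the order of $X$ (this step may already be implicit in \cite{MF}).

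The decisive observation is now that $v=k^{2}=|G:G_{x}|$ is a \emph{perfect square}. I would run through Aschbacher's classes $\mathcal{C}_{1},\dots ,\mathcal{C}_{8}$ and $\mathcal{S}$ of maximal subgroups of $G$ and determine all pairs $(G,G_{x})$ for which the index is a square: in each geometric class the index is an explicit expression — a Gaussian binomial coefficient for the subspace stabilizers in $\mathcal{C}_{1}$, a product of $q$-powers times a smaller classical group order in the remaining cases — and requiring it to be a square is extremely restrictive, to be settled by elementary number theory reinforced by Zsigmondy-prime arguments bounding the rank and the field size; the class $\mathcal{S}$ is dispatched using the standard upper bounds for the orders of its members (so that the index is too large unless $G$ is of small dimension over a small field) together with the low-dimensional tables of maximal subgroups. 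I expect this square-index analysis to be the main obstacle, since it must be carried out uniformly over all classical families and both existence and non-existence of square indices must be decided in each case. The outcome should be a short explicit list of candidate pairs, containing in particular $(\PSL_{2}(8),D_{14})$ and $(\PGaL_{2}(8),7{:}6)$ with $v=36$ and $(\PSL_{3}(3),13{:}3)$ and $(\PSL_{3}(3){:}2,13{:}6)$ with $v=144$, together with a few further pairs to be eliminated.

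Finally, for each surviving pair $(G,G_{x})$ I would use $\lambda (k+1)\mid |G_{x}|$ together with $\lambda \mid k$ to cut down to the finitely many admissible values of $\lambda$, and then decide each parameter set. The value $\lambda =1$ corresponds to a flag-transitive affine plane of order $k$, which by Wagner's theorem is a translation plane and so forces $k$ to be a prime power; since none of the surviving orders $k$ is an admissible prime power (and in the borderline case $k=3$, $v=9$, one has $\PSL_{2}(8)\not\leq \AGL_{2}(3)$ and also $r=12\nmid 56$), this case does not occur. For the other values of $\lambda$ I would, on the one hand, exhibit the designs of Theorem \ref{main}(1)--(4) — the $2$-$(6^{2},6,2)$ and the three $2$-$(6^{2},6,6)$ designs of \cite{MF}, and the $2$-$(12^{2},12,3)$ and $2$-$(12^{2},12,6)$ designs of Example \ref{Ex5} — and verify directly that $G_{x}$ is transitive on the $r$ blocks through $x$; and, on the other hand, rule out the remaining intermediate values (such as $\lambda =3$ for $\PGaL_{2}(8)$, and $\lambda =2,3$ for $\PSL_{3}(3){:}2$) by showing that no $G_{x}$-invariant $k$-subset of the point set can meet the $G_{x}$-suborbits in the constant sizes dictated by $r\,|B\cap\Delta|=\lambda |\Delta|$, equivalently by a direct inspection of the relevant permutation characters. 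Collecting the surviving pairs then yields exactly the four families in the statement.
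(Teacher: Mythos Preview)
Your outline is broadly correct and shares its skeleton with the paper: reduce to classical $X=\Soc(G)$ via \cite{MF}, use the numerology $r=\lambda(k+1)$, the subdegree divisibility $(k+1)\mid d$, and the square-index condition $[G:G_{x}]=k^{2}$ to pin down $G_{x}$, then finish the few survivors by hand. Two remarks on the reduction steps: point-primitivity follows immediately from Dembowski's criterion $r>(k-3)\lambda$ (here $r=\lambda(k+1)$), so the Delandtsheer--Doyen digression is unnecessary (and your formulation of it is not quite right); and the $\lambda=1$ case is disposed of exactly as you say, via Wagner.

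The substantive divergence is in how the square-index maximal subgroups are isolated. You propose to run directly through the Aschbacher classes and solve ``index is a square'' case by case, remarking that this is the main obstacle. The paper does not attempt this head-on. Instead it develops a primitive-prime-divisor machinery: writing $\Phi_{ef}^{\ast}(p)$ for the primitive part of $p^{ef}-1$ (with the appropriate $e\in\{n,n-1,n-2\}$), it proves a trichotomy (Theorem~\ref{daleko}) forcing either $(\Phi_{ef}^{\ast}(p),k+1)>1$, or $X_{x}$ large with $(\Phi_{ef}^{\ast}(p),|X_{B}|)>1$, or $\Phi_{ef}^{\ast}(p)$ a square with a specific splitting in $|X_{B}|$. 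The last branch is eliminated using the Guralnick--Penttila--Praeger--Saxl classification together with a lower bound on $|X_{B}|_{p}$; the first branch, via a Singer-cycle argument and the overgroup theorems of Bereczky and Bamberg--Penttila, again forces $X_{x}$ to be large. Having established largeness, the paper then invokes the Alavi--Burness classification of large maximal subgroups of simple classical groups, which \emph{is} a finite, explicit list for each $X$, and finishes with the Diophantine analysis (Ribenboim) you allude to. Parabolics are dispatched separately by transplanting Saxl's $r$-versus-$(v-1)_{p}$ argument with $r/\lambda=k+1$ in place of $r$.

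What your plan buys is conceptual simplicity; what it lacks is a mechanism guaranteeing finiteness before the case analysis. ``Square index'' alone does not bound $n$ or $q$ in any given Aschbacher class without further input (for $\mathcal{C}_{3}$, $\mathcal{C}_{8}$, and especially $\mathcal{S}$ you would in effect be rebuilding the large-subgroup filter). If you want to keep your direct approach, you should insert the step ``$G_{x}$ is large in $G$, hence $X_{x}$ is large in $X$'' early (this is immediate from $|G|=k^{2}|G_{x}|<|G_{x}|^{3}$, plus a short argument to pass from $G_{x}$ to $X_{x}$), and then quote Alavi--Burness to obtain a genuinely finite list on which your square-index and $(k+1)\mid d$ tests can be run. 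With that amendment your strategy converges to the paper's.
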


\bigskip

A complete classification of $(\mathcal{D},G)$, with $G$ of affine type and $G\nleq A\Gamma
L_{1}(q) $, is contained in \cite{Mo1}.

\bigskip
\bigskip

The proof of Theorem \ref{main} is outlined as follows. In Lemma \ref{PP} it is shown that $G$
acts point-primitively on $\mathcal{D}$ and the point-stabilizer $G_{x}$ is
a large (maximal) subgroup of $G$. In Theorem \ref{MF1}
and in Lemma \ref{lambada} it proven that, if $G$ is non-abelian simple, $G$
is classical and $\lambda \geq 2$ respectively. Then we use the results
contained in \cite{Bre}, \cite{BP} and \cite{GPPS} to show $Soc(G)_{x}$ is a
large subgroup of $Soc(G)$. Finally, we complete the proof by using the
classification of the large maximal subgroups of simple classical groups
contained in \cite{AB} together with some results arising from the geometry
of the classical groups.

More details about the proof strategy are provided at the beginning of each
section.

\bigskip

\section{Examples}

All the details about the examples of $2$-designs corresponding to cases (1)
and (2) of Theorem \ref{main} are contained in \cite{MF}. Hence, in this
small section we focus on the constructions of the two examples of $2$-designs
corresponding to cases (3) and (4). The tools used
for the constructions are the results about tactical
configurations contained in \cite{Demb}, \cite{HM}, combined with some
group-theoretical information about $\mathrm{Aut(}PSL_{3}(3)\mathrm{)}$
contained in \cite{At}.

\bigskip

Let $PSL_{3}(3)\trianglelefteq G\leq PSL_{3}(3):\left\langle \sigma
\right\rangle $, where $\sigma $ is the symmetric polarity of $PG_{2}(3)$
defining the conic $\mathcal{C}:$ $XZ-Y^{2}=0$. Let $P$ be the subgroup of $%
G^{\prime }$ generated by the following elements:%
\[
\eta =\left( 
\begin{array}{ccc}
1 & 1 & -1 \\ 
0 & 1 & 1 \\ 
1 & 1 & 0%
\end{array}%
\right) \; \textit{ and } \;\psi =\left( 
\begin{array}{ccc}
1 & 1 & -1 \\ 
0 & 1 & -1 \\ 
0 & 0 & 1%
\end{array}%
\right) \text{.} 
\]%
It is easy to see that, $P$ is a Frobenius group of order $39$ with kernel $%
\left\langle \eta \right\rangle $ and complement $\left\langle \psi
\right\rangle $. Then $N_{G}(P)$ is either $P$, or $P:\left\langle \sigma
\right\rangle $, according to whether $G$ is either $PSL_{3}(3)$, or $%
PSL_{3}(3):\left\langle \sigma \right\rangle $, respectively.

Let $L=\left\langle \alpha ,\gamma \right\rangle $, where 
\[
\alpha =\left( 
\begin{array}{ccc}
0 & 0 & 1 \\ 
0 & -1 & 0 \\ 
1 & 0 & 0%
\end{array}%
\right) \; \textit{ and }\;\gamma =\left( \allowbreak 
\begin{array}{ccc}
0 & 0 & 1 \\ 
0 & -1 & -1 \\ 
1 & -1 & 1%
\end{array}%
\right) \text{.} 
\]%
Then $L\cong \Omega _{3}(3)\cong A_{4}$ lies in the stabilizer in $%
PSL_{3}(3) $ of $\mathcal{C}$.

\bigskip

Let $\mathcal{D}=(\mathcal{P},\mathcal{B},\mathcal{I})$ be the incidence
structure, where $\mathcal{P}$ and $\mathcal{B}$ are the sets of the right
cosets of the subgroups $N_{G}(P)$ and $L$ in $G$, respectively, and where 
\[
\mathcal{I}=\left\{ (N_{G}(P)x,Ly)\in \mathcal{P}\times \mathcal{B}:Px\cap
Ly\neq \varnothing \right\} . 
\]

\bigskip

\begin{example}
\label{Ex5}The following hold:

\begin{enumerate}
\item $\mathcal{D}$ is a $2$-$(12^{2},12,3)$ design admitting $G\cong
PSL_{3}(3)$ as the unique flag-transitive automorphism group.

\item $\mathcal{D}$ is a $2$-$(12^{2},12,6)$ design admitting $G\cong
PSL_{3}(3):\left\langle \sigma \right\rangle $ as the unique flag-transitive
automorphism group.
\end{enumerate}
\end{example}

\begin{proof}
Assume that $G\cong PSL_{3}(3)$. Then $P$ is maximal in $G$ by \cite{At},
and hence $N_{G}(\left\langle \eta \right\rangle )=P$. Then the
incidence structure $\mathcal{D}=(\mathcal{P},\mathcal{B},\mathcal{I})$
above defined is flag-transitive with parameters $(v,b,k,r)=(144,468,12,39)$
by \cite{HM}, Lemmas 1 and 2.

The group $P$ fixes exactly one element of $\mathcal{P}$, namely itself, and 
$\left\langle \eta \right\rangle $ acts semiregularly on $\mathcal{P-}%
\left\{ P\right\} $. It follows from \cite{At} that $N_{G}(\left\langle \psi
\right\rangle )\cong D_{18}$. Hence, the number of elements in $\mathcal{P}$
fixed by $\left\langle \psi \right\rangle $ is $6$. Since $P$ is a Frobenius
group and since $\left\langle \eta \right\rangle $ acts semiregularly on $%
\mathcal{P-}\left\{ P\right\} $, the five points fixed by $\left\langle \psi
\right\rangle $ and distinct from $P$ lie in five distinct $\left\langle
\eta \right\rangle $-orbits by \cite{Pass}, Proposition 4.2. Thus $\mathcal{P%
}=\left\{ Px:x\in G\right\} $ is partitioned in one $P$-orbit of length $1$,
five ones of length $13$ and two ones of length $39$.

Each $P$-orbit is of the form $PyP$ with $y\in G$. If we set $\beta =\alpha
^{\gamma }$, then $\left\langle \alpha ,\beta \right\rangle \cong E_{4}$,
and it is not difficult to check that
\small 
\begin{equation}
P\alpha P\cap L=\left\{ \alpha ,\gamma ^{\beta },\left( \gamma ^{\beta
}\right) ^{-1}\right\}, \; P\beta P\cap L=\left\{ \beta ,\gamma
^{\alpha },\left( \gamma ^{\alpha }\right) ^{-1}\right\} \text{ and } \; PxP\cap
L=\left\{ x\right\}  \label{centrl}
\end{equation}%
\normalsize
for each \small{$x\in \left\{ 1,\alpha \beta ,\gamma ,\gamma ^{-1},\gamma ^{\alpha
\beta },\left( \gamma ^{\alpha \beta }\right) ^{-1}\right\} $}. \normalsize Thus, the
double cosets $PyP$ with $y\in \left\{ 1,\alpha ,\beta ,\alpha \beta ,\gamma
,\gamma ^{-1},\gamma ^{\alpha \beta },\left( \gamma ^{\alpha \beta }\right)
^{-1}\right\} $ are exactly all the eight $P$-orbits partitioning $\mathcal{P%
}$.

For each \small $y\in \left\{ \alpha ,\beta ,\alpha \beta ,\gamma ,\gamma
^{-1},\gamma ^{\alpha \beta },\left( \gamma ^{\alpha \beta }\right)
^{-1}\right\} $, \normalsize the incidence structure $\mathcal{K}_{y}=\left(
PyP,LP\right) $ is a tactical configuration by \cite{Demb}, 1.2.6. It follows from (\ref{centrl})
that, the parameters $(v_{y},b_{y},k_{y},r_{y})$ of $\mathcal{K}_{y}$ are either equal
to $(39,39,3,3)$, or to $(13,39,1,3)$, according to whether $y\in \left\{
\alpha ,\beta \right\} $ or $y\in \left\{ \alpha \beta ,\gamma ,\gamma
^{-1},\gamma ^{\alpha \beta },\left( \gamma ^{\alpha \beta }\right)
^{-1}\right\} $ respectively. Thus $r_{y}=3$ in each case,
and hence the number of blocks of $\mathcal{D}$ incident with $P$ and $Px$,
where $x\neq 1$, is $3$. Then $3$ is the number of blocks incident with any two
distinct points of $\mathcal{P}$, as $\mathcal{D}$ is a flag-transitive
tactical configuration. Therefore, $\mathcal{D}=(\mathcal{P},\mathcal{B})$
is a $2$-$(12^{2},12,3)$ design admitting $G$ as a flag-transitive
automorphism group.

Assume that $\mathrm{Aut(\mathcal{D})}\neq G$. Then $G\trianglelefteq 
\mathrm{Aut(\mathcal{D})}\leq \mathrm{Aut(}G\mathrm{)}$, where $\mathrm{Aut(}%
G\mathrm{)}\cong G:\left\langle \sigma \right\rangle $ and $\sigma $ is
defined above, as a consequence of the O'Nan-Scott Theorem (e.g. see \cite%
{DM}, Theorem 4.1A), since $v=2^{4}\cdot 3^{2}$. Then $\mathrm{Aut(\mathcal{D%
})}=\mathrm{Aut(}G\mathrm{)}$, since $G\neq \mathrm{Aut(\mathcal{D})}$ by
our assumption. By \cite{AtMod}, one $\mathrm{Aut(}G\mathrm{)}_{P}$-orbit on 
$\mathcal{P}$, say $\mathcal{O}$, is of length $26$ and is union of two $%
G_{P}$-orbits of length $13$, the remaining $\mathrm{Aut(}G\mathrm{)}_{P}$%
-orbits on $\mathcal{P}-\mathcal{O}$ are also $G_{P}$-orbit. Thus, if $\mathcal{B}_{P}$ denotes the set of blocks of $\mathcal{D}$ incident with $P$, any block
in $\mathcal{B}_{P}$ intersects $\mathcal{O}$ in exactly $4$ points. Hence, $(\mathcal{O},\mathcal{B}_{P})$ is a tactical configuration by \cite%
{Demb}, 1.2.6, and its parameters are $(v_{0},b_{0},k_{0},r_{0})=(26,39,4,6)$. So,
there are exactly $6$ blocks of $\mathcal{D}$ incident with $P$ and with any
fixed point of $\mathcal{O}$. This is impossible, since $\mathcal{D}$ is a $2$%
-design with $\lambda =3$. Thus $\mathrm{Aut(\mathcal{D})}=G$. Clearly, $G$
is also the minimal flag-transitive automorphism group of $\mathcal{D}$, and
(1) is proven.

Finally, set $A=\mathrm{Aut(}G\mathrm{)}$. Then $A$ has a unique primitive
permutation representation of degree $144$ and is the one on the set $%
\mathcal{P}^{\prime }=\left\{ N_{A}(P)x:x\in A\right\} $ by \cite{At}. Also, $%
G$ permutes primitively the elements of $\mathcal{P}^{\prime }$ again by \cite%
{At}. Hence, we may consider the isomorphic copy $\mathcal{D}_{0}=(\mathcal{P%
}^{\prime },\mathcal{B}_{0})$ of the $2$-design $\mathcal{D}$ constructed in
(1), where $\mathcal{B}_{0}$ consists of suitable $12$-subsets of $\mathcal{P%
}^{\prime }$. Note that, $\mathcal{B}_{0}^{\sigma }\neq \mathcal{B}_{0}$ as $%
\mathrm{Aut(\mathcal{D}}_{0}\mathrm{)}=G$ by (1). Thus, $\mathcal{D}_{0}^{\sigma }=(\mathcal{P}^{\prime },%
\mathcal{B}_{0}^{\sigma })$ is a isomorphic to $\mathcal{D}$, but $\mathcal{D}^{\sigma} \neq \mathcal{D}$. Therefore, $\mathcal{D}%
^{\prime }=(\mathcal{P}^{\prime },\mathcal{B}_{0}\cup \mathcal{B}%
_{0}^{\sigma })$ is a $2$-$(144,12,6)$ design admitting $A$ as a
flag-transitive automorphism group. Arguing as in (1), we see that $A$ is
the unique flag-transitive automorphism group of $\mathcal{D}^{\prime }$.

If $\mathcal{B}^{\prime }=\left\{ Ly:y\in A\right\} $, the incidence
structure $(\mathcal{P}^{\prime },\mathcal{B}^{\prime },\mathcal{I}^{\prime
})$, where 
\[
\mathcal{I}=\left\{ (N_{A}(P)x,Ly)\in \mathcal{P}^{\prime }\times \mathcal{B}%
^{\prime }:N_{A}(P)x\cap Ly\neq \varnothing \right\} \text{,} 
\]%
is an isomorphic copy of $\mathcal{D}^{\prime }$ by \cite{HM}, Lemmas 1, and
we obtain (2).
\end{proof}

\bigskip

Recall that, if $\Gamma _{1}$ and $\Gamma _{2}$ are subgroups of a group $%
\Gamma $ such $\Gamma =\Gamma _{1}\Gamma _{2}\Gamma _{1}$, then $(\Gamma
,\Gamma _{1},\Gamma _{2})$ is said \emph{triple factorization}. Moreover, $%
(\Gamma ,\Gamma _{1},\Gamma _{2})$ is \emph{non-degenerate} if $\Gamma \neq
\Gamma _{1}\Gamma _{2}$. See \cite{AP} for more details and results on
triple factorizations. As shown in the following corollary, Example \ref%
{Ex5} also provides a non-degenerate triple factorization.

\bigskip

\begin{corollary}
$\left( G,N_{G}(P),L\right) $ is a non-degenerate triple factorization.
\end{corollary}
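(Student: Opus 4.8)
The plan is to deduce the corollary directly from the design-theoretic facts established in the proof of Example \ref{Ex5}. Recall that in that proof we work with $G\cong PSL_3(3)$, and $\mathcal{D}=(\mathcal{P},\mathcal{B},\mathcal{I})$ where $\mathcal{P}$ is the set of right cosets of $N_G(P)=P$ and $\mathcal{B}$ is the set of right cosets of $L$, with incidence given by non-empty intersection of the corresponding cosets of $P$ and $L$. The key observation is that the incidence relation "$Px\cap Ly\neq\varnothing$" is precisely the statement "$yx^{-1}\in P\cdot L$" (after the standard bookkeeping with right cosets), so the $G$-orbit structure of flags of $\mathcal{D}$ is governed by the double coset $PLP$, and $G$ is flag-transitive on $\mathcal{D}$ if and only if $G=PLP$, i.e. if and only if $(G,P,L)$ is a triple factorization.

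First I would make the translation between incidence and factorization explicit: there is a flag $(Px,Ly)$ of $\mathcal{D}$ exactly when $Px\cap Ly\neq\varnothing$, which happens iff there exist $p\in P$, $\ell\in L$ with $px=\ell y$, i.e. iff $x y^{-1}\in P^{-1}L=PL$, equivalently $yx^{-1}\in L^{-1}P=LP$. Since $G$ acts transitively on the flags of $\mathcal{D}$ (established in Example \ref{Ex5}(1)) and the pair $(P1, L1)$ is a flag (as $1\in P\cap L\neq\varnothing$), every flag lies in the $G$-orbit of $(P,L)$; translating, for every $g\in G$ the pair $(Pg,L)$ is a flag, hence $g\in LP$ for all $g$, giving $G=LP$ after inverting, and more to the point the flag-transitivity forces $G=PLP$ (a block $Ly$ is incident with the point $Px$ for a suitable representative, for every $y$). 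So $(G,P,L)$ is a triple factorization.

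Next I would verify non-degeneracy, namely $G\neq PL$. Here the cleanest route is a counting argument: $|PL|=|P||L|/|P\cap L|$. We have $|P|=39$ and $|L|=|A_4|=12$, while $P\cap L$ is a subgroup of the Frobenius group $P$ of order $39=3\cdot 13$ and also of $A_4$; its order divides $\gcd(39,12)=3$. Even taking $|P\cap L|=3$ we get $|PL|\leq 39\cdot 12/3=156<|G|=|PSL_3(3)|=5616$, so certainly $G\neq PL$. (In fact one can read off from the displayed double-coset computation (\ref{centrl}) in Example \ref{Ex5} that $P$ and $L$ generate more than the product: the eight distinct double cosets $PyP$ exhausting $\mathcal{P}$ already show $PL$ is a proper subset, since a single double coset $PLP$ would be one $P$-orbit on $\mathcal{P}$, not eight.) Hence $(G,N_G(P),L)=(G,P,L)$ is a \emph{non-degenerate} triple factorization.

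The main obstacle, such as it is, is purely expository: one must be careful with the left/right coset conventions so that the incidence condition $Px\cap Ly\neq\varnothing$ is correctly seen to encode membership in a double coset $PLP$ rather than in $PL$, and then to argue that flag-transitivity of $G$ on $\mathcal{D}$ is equivalent to the single-double-coset condition $G=PLP$. Once that dictionary is in place, both the factorization $G=PLP$ and its non-degeneracy $G\neq PL$ are immediate from facts already proven in Example \ref{Ex5} together with the elementary order estimate above. Note also that the same argument applies verbatim with $G$ replaced by $PSL_3(3):\langle\sigma\rangle$ and $P$ by $N_G(P)=P:\langle\sigma\rangle$, so the corollary is really a statement about either of the two groups in Example \ref{Ex5}; we state it for $G=PSL_3(3)$ for definiteness.
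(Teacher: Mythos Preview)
There is a genuine gap in your derivation of $G=PLP$. In the Higman--McLaughlin coset construction, flag-transitivity is \emph{automatic}: every flag $(Px,Ly)$ with $Px\cap Ly\neq\varnothing$ contains some $z\in Px\cap Ly$, whence $(Px,Ly)=(Pz,Lz)=(P,L)\cdot z$. So flag-transitivity by itself carries no information about the product $PLP$. Your explicit step ``for every $g\in G$ the pair $(Pg,L)$ is a flag'' is a non-sequitur, and if it were true it would give $g\in PL$ for all $g$, i.e.\ $G=PL$ --- precisely the degenerate factorization you then go on to exclude by a counting argument. The sentence ``flag-transitivity forces $G=PLP$'' is simply false in general (take any $G$ with subgroups $P,L$ satisfying $PLP\neq G$; the coset geometry is still flag-transitive).

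What actually yields $G=PLP$ is the content of the computation (\ref{centrl}) in Example~\ref{Ex5}: it shows that the block $L$ (as a set of points $\{Px:x\in PL\}$) meets every one of the eight double cosets $PyP$, i.e.\ every point-$P$-orbit. That condition is equivalent to $G=\bigcup_{\ell\in L}P\ell P=PLP$, and this is exactly the route the paper takes (citing \cite{AP}, Proposition~3.1). An alternative, equally short, argument: since $\mathcal D$ is a $2$-design (proved in Example~\ref{Ex5}), any two points $P$ and $Pg$ lie on a common block $Lh$; incidence of $P$ with $Lh$ gives $h\in LP$, say $Lh=Lp_0$, and incidence of $Pg$ with $Lp_0$ gives $gp_0^{-1}\in PL$, so $g\in PLP$. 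Your non-degeneracy argument via $|PL|\le |P|\cdot|L|<|G|$ is correct and matches the paper's.
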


\begin{proof}
It follows from the proof of Example \ref{Ex5} that the block-$N_{G}(P)$%
-orbit $LN_{G}(P)$ non-trivially intersect each point-$N_{G}(P)$-orbit. Thus $%
G=N_{G}(P)LN_{G}(P)$ is a triple factorization of $G$ by \cite{AP},
Proposition 3.1. Moreover, it is non-degenerate, since the cardinality of $%
N_{G}(P)L$ is smaller that the order of $G$.
\end{proof}

\bigskip

\section{Preliminary Reductions}
\medskip
We first collect some useful results on flag-transitive designs.
\medskip
\begin{lemma}\label{desdes}
Let $\mathcal{D}$ be a $2$-$(k^{2},k,\lambda )$ design and let $b$ be the
number of blocks of $\mathcal{D}$. Then the number of blocks containing each
point of $\mathcal{D}$ is a constant $r$ satisfying the following:

\begin{enumerate}
\item $r=\lambda (k+1)$;

\item $b=\lambda k(k+1)$;

\item $\left( r/\lambda \right) ^{2}>k^{2}$.
\end{enumerate}
\end{lemma}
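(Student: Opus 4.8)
The plan is to use the two fundamental counting identities for a $2$-$(v,k,\lambda)$ design, namely $\lambda(v-1)=r(k-1)$ and $bk=vr$, and then specialize to $v=k^{2}$. Here $r$ is well-defined and constant by Fisher-type considerations: every $2$-design is necessarily a tactical configuration, so the number of blocks through a point does not depend on the point.

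First I would substitute $v=k^{2}$ into $\lambda(v-1)=r(k-1)$. This gives $\lambda(k^{2}-1)=r(k-1)$, and since $k^{2}-1=(k-1)(k+1)$ and $k>2$ so that $k-1\neq 0$, we may cancel $k-1$ to obtain $r=\lambda(k+1)$, which is (1). For (2), I would use $bk=vr=k^{2}r$, hence $b=kr=k\lambda(k+1)=\lambda k(k+1)$. For (3), from (1) we have $r/\lambda=k+1$, so $(r/\lambda)^{2}=(k+1)^{2}=k^{2}+2k+1>k^{2}$ since $k>0$; equivalently $(r/\lambda)^{2}=k^{2}+2k+1 > k^2 = v$.

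There is essentially no obstacle here: the only mild point to be careful about is justifying that $r$ is a genuine constant (this is the standard fact that a $2$-design is a $1$-design, or equivalently a tactical configuration) and that $k-1$ is nonzero so the cancellation is legitimate, which follows from the standing non-triviality hypothesis $2<k<v$. The rest is immediate algebraic manipulation of the two standard identities, so I would present it in a few lines.
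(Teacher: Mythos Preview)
Your proof is correct and is exactly the standard argument one would expect; the paper states this lemma without proof, presumably because it follows immediately from the basic identities $\lambda(v-1)=r(k-1)$ and $bk=vr$ specialized to $v=k^{2}$, just as you do.
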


\bigskip

\begin{lemma}
\label{PP}If $\mathcal{D}$ is a $2$-$(k^{2},k,\lambda )$ design, with $%
\lambda \mid k$, admitting a flag-transitive automorphism group $G$, then
the following hold:

\begin{enumerate}
\item $G$ acts point-primitively on $\mathcal{D}$.

\item If $x$ is any point of $\mathcal{D}$, then $G_{x}$ is a large subgroup
of $G$.

\item $\left\vert y^{G_{x}}\right\vert =(k+1)\left\vert B\cap
y^{G_{x}}\right\vert $ for any point $y$ of $\mathcal{D}$, with $y\neq x$,
and for any block $B$ of $\mathcal{D}$ incident with $x$. In particular, $%
k+1 $ divides the length of each point-$G_{x}$-orbit on $\mathcal{D}$
distinct from $\left\{ x\right\} $.
\end{enumerate}
\end{lemma}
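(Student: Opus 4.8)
The plan is to prove the three parts essentially in the order stated, with part (1) resting on a combination of the Higman--McLaughlin type argument and the parameter restriction $\lambda \mid k$, part (2) following from a standard order-estimate for flag-transitive designs, and part (3) being a direct counting argument once transitivity is in hand.

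For part (1), the first step is to recall the classical fact (Higman--McLaughlin) that a flag-transitive automorphism group of a $2$-design acts point-primitively whenever the design has $\gcd$-type parameter restrictions; more precisely, I would argue that if $G$ were point-imprimitive, preserving a nontrivial partition of $\mathcal{P}$ into classes of size $d$ with $d \mid k^2$ and $1 < d < k^2$, then flag-transitivity forces each block to meet each class in a constant number of points, so each class size $d$ divides $k$, and a Block/class incidence count gives divisibility constraints that, combined with $r = \lambda(k+1)$ from Lemma \ref{desdes}(1) and $\lambda \mid k$, become contradictory. The cleanest route is probably to invoke the bound of Davies or of Delandtsheer--Doyen: if $G$ is point-imprimitive with classes of size $d$ then $v = k^2 \leq \left(\binom{k}{2}/\ell - 1\right)\cdot\left(\binom{k}{2}/m - 1\right)$ for suitable integers $\ell, m$ coming from the two ``imprimitivity parameters,'' and then show this is incompatible with $v = k^2$ and $\lambda \mid k$ (so $\lambda \leq k$, whence $r = \lambda(k+1) \leq k(k+1)$, which is a strong constraint). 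I expect this to be the main obstacle: ruling out point-imprimitivity cleanly for this specific parameter family is the delicate part, and it may require splitting into the case where the block size $k$ is at least the class size versus smaller, using the Delandtsheer--Doyen inequalities in both directions.

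For part (2), recall that $G_x$ is called \emph{large} if $|G_x|^3 \geq |G|$ (equivalently $|G : G_x| \leq |G_x|^2$). Since $G$ is point-transitive, $|G : G_x| = v = k^2$. Flag-transitivity gives $|G_x : G_{x,B}| = r = \lambda(k+1)$ for a block $B$ through $x$, and block-transitivity gives $|G : G_B| = b = \lambda k(k+1)$ by Lemma \ref{desdes}(2); moreover $G_{x,B}$ acts transitively on the $k$ points of $B$, so $k \mid |G_B|$ and in fact $|G_B| = |G_{x,B}| \cdot k$. The step is then to chase these equalities: $|G| = v\,|G_x| = k^2\,|G_x|$, and $|G_x| = r\,|G_{x,B}| = \lambda(k+1)|G_{x,B}| \geq \lambda(k+1)$, while one also has the crude bound $|G_x| \geq |x^{G}|/\!\sim$ arguments are not needed --- instead use that $G_x$ acts transitively on the $r$ blocks through $x$ and on no fewer than $k+1$ other objects; the cleanest finish is the standard estimate $r \mid |G_x|$ combined with $r = \lambda(k+1)$ and $r^2/\lambda^2 = (k+1)^2 > k^2 = v$ from Lemma \ref{desdes}(3), giving $r > \sqrt{v}$, hence $|G_x| \geq r > \sqrt{v} = \sqrt{|G:G_x|}$, i.e. $|G_x|^2 > |G:G_x|$, which is exactly $|G_x|^3 > |G|$. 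So part (2) follows immediately from Lemma \ref{desdes}(3) and point-transitivity.

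For part (3), fix a point $x$, a point $y \neq x$, and a block $B$ incident with $x$. By part (1), $G_x$ acts on $\mathcal{P} \setminus \{x\}$; consider the orbit $y^{G_x}$. The incidence structure induced on $\left(y^{G_x},\, B^{G_x}\right)$ --- points in this orbit, blocks in the $G_x$-orbit of $B$ --- is a tactical configuration because $G_x$ acts transitively on both sides (here one uses that $G_x$ is transitive on $y^{G_x}$ by definition and transitive on $B^{G_x}$, which has size $r$ by flag-transitivity since $G$ is flag-transitive and $G_x$ is transitive on the $r$ flags at $x$). In a tactical configuration with point-set size $v' = |y^{G_x}|$, block-set size $b' = |B^{G_x}| = r$, block size $k' = |B \cap y^{G_x}|$, and replication number $r'$, we have $v' r' = b' k'$, so $|y^{G_x}| \cdot r' = r \cdot |B \cap y^{G_x}|$. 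The final step is to identify $r'$: the number of blocks in $B^{G_x}$ through a fixed point of $y^{G_x}$ equals $\lambda$ (the total number of blocks through the pair $\{x, \text{that point}\}$ is $\lambda$, and since $x \in$ each of them they all lie in the orbit structure at $x$; a short transitivity argument shows all $\lambda$ lie in $B^{G_x}$). Substituting $r' = \lambda$ and $r = \lambda(k+1)$ from Lemma \ref{desdes}(1) gives $|y^{G_x}|\,\lambda = \lambda(k+1)\,|B \cap y^{G_x}|$, hence $|y^{G_x}| = (k+1)|B \cap y^{G_x}|$, which is the claimed identity; the divisibility statement $(k+1) \mid |y^{G_x}|$ is then immediate. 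I do not expect part (3) to present any difficulty beyond carefully justifying that the $\lambda$ blocks through a pair of points all lie in a single $G_x$-orbit, which follows from flag-transitivity applied to the flag $(x, B)$.
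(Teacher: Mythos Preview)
Your arguments for parts (2) and (3) are essentially the same as the paper's: part (2) is exactly the order estimate $|G_x| \geq r = \lambda(k+1) > k = \sqrt{v}$, and part (3) is the tactical-configuration double count on $(y^{G_x}, B^{G_x})$, which the paper obtains by citing \cite{Demb}, 1.2.6. (One small slip: $G_{x,B}$ does \emph{not} act transitively on the $k$ points of $B$, since it fixes $x$; but you abandon that line anyway, so it is harmless.)

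The genuine divergence is in part (1). You propose to rule out point-imprimitivity via a Delandtsheer--Doyen style inequality and flag this as ``the main obstacle'' requiring case-splitting. The paper bypasses all of this with a one-line appeal to the classical primitivity criterion in \cite{Demb}, 2.3.7(c): a flag-transitive automorphism group of a $2$-design is point-primitive whenever $r > (k-3)\lambda$. Here $r = (k+1)\lambda$, so the inequality $(k+1)\lambda > (k-3)\lambda$ is immediate and primitivity follows. Your route may well be made to work, but it trades a trivial verification for a genuinely delicate argument; the Dembowski criterion is the tool you are missing.
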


\begin{proof}
The assertion (1) follows from \cite{Demb}, 2.3.7.c, since $r=(k+1)\lambda
>(k-3)\lambda $.

The flag-transitivity of $G$ on $\mathcal{D}$ implies $\left\vert
G\right\vert =k^{2}\left\vert G_{x}\right\vert $, $\left\vert
G_{x}\right\vert =\lambda \left( k+1\right) \left\vert G_{x,B}\right\vert $
and hence $\left\vert G\right\vert <\left\vert G_{x}\right\vert ^{3}$, which
is the assertion (2).

Let $y$ be any point of $\mathcal{D}$, $y\neq x$, and $B$ be any block of $%
\mathcal{D}$ incident with $x$. Since $(y^{G_{x}},B^{G_{x}})$ is a tactical
configuration by \cite{Demb}, 1.2.6, it follows that $\left\vert
y^{G_{x}}\right\vert \lambda =r\left\vert B\cap y^{G_{x}}\right\vert $.
Hence $\left\vert y^{G_{x}}\right\vert =(k+1)\left\vert B\cap
y^{G_{x}}\right\vert $ as $r=(k+1)\lambda $. This proves (3).
\end{proof}

\bigskip

\begin{theorem}
\label{MF1}Let $\mathcal{D}$ be a $2$-$(k^{2},k,\lambda )$, with $\lambda
\mid k$, admitting a flag-transitive automorphism group $G$. Then $G$ is
point primitive and one of the following holds:

\begin{enumerate}
\item $G$ is an affine group.

\item $G$ is an almost simple classical group.

\item $\mathcal{D}$ is isomorphic to the $2$-$(6^{2},6,2)$ design constructed
in \cite{MF}, and $~^2 G_{2}(3)^{\prime } \trianglelefteq G \leq ~^2 G_{2}(3)$.

\item $\mathcal{D}$ is isomorphic to one of the three $2$-$(6^{6},6,6)$ designs
constructed in \cite{MF}, and $G\cong $ $~^2 G_{3}(3)$.
\end{enumerate}
\end{theorem}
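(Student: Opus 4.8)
\textbf{Proof proposal for Theorem \ref{MF1}.}

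The plan is to recover this statement as a consequence of the detailed analysis already carried out in \cite{MF}, combined with the preliminary reductions established above. First I would invoke Lemma \ref{PP}(1) to get point-primitivity of $G$ on $\mathcal{D}$ for free, so the only real content is the trichotomy on the O'Nan--Scott type of $G$. Since $G$ is point-primitive, the O'Nan--Scott theorem partitions the possibilities into the affine, almost simple, product, diagonal and twisted-wreath types. The strategy is to rule out the product, diagonal and twisted-wreath types using the arithmetic constraints peculiar to our parameters, and then, within the remaining cases, to show that a non-abelian simple socle must force either the almost simple classical conclusion (2) or one of the two small sporadic exceptions (3)--(4) involving $^2G_2(3)$.

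The key arithmetic lever is Lemma \ref{PP}(2): the point-stabilizer $G_x$ is a \emph{large} subgroup of $G$, i.e. $|G|<|G_x|^3$, equivalently $v=k^2<|G_x|^2$, which is just the restatement that $v<|G_x|^2$ since $v=|G:G_x|$. For primitive groups of diagonal type with socle $T^m$ ($m\geq 2$) one has $v=|T|^{m-1}$ while $|G_x|$ is bounded above essentially by $|T|\cdot m\cdot|\mathrm{Out}(T)|$, and a direct comparison shows $v<|G_x|^2$ fails for $m\geq 2$; similarly the twisted-wreath case is eliminated. For product type, $v=v_0^m$ with $m\geq 2$ and the group sits inside a wreath product $H\wr S_m$ acting on $\Delta^m$; here one would use that $k^2=v_0^m$ forces $v_0$ to be a perfect power compatible with $m$, and combine this with Lemma \ref{desdes}(3), namely $(k+1)^2>\lambda^2 k^2$, equivalently $\lambda<(k+1)/k<2$ when $\lambda\geq\dots$ — more precisely $r/\lambda=k+1$ so $(k+1)^2>k^2$ is automatic, so the genuinely restrictive inequality is the largeness bound. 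The cleanest route for product type is to note that flag-transitivity together with $\lambda\mid k$ and $v=k^2$ pins down $r=\lambda(k+1)$ by Lemma \ref{desdes}(1), and then the subdegree divisibility in Lemma \ref{PP}(3) ($k+1$ divides every nontrivial point-$G_x$-orbit length) clashes with the product action subdegrees, which are built from the factors' subdegrees and hence are highly non-coprime to $v_0$; since $\gcd(k+1,k)=1$ and $k=v_0^{m/2}$-type expressions share all prime factors with $v_0$, one forces a contradiction unless $m=1$.

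Once $G$ is reduced to affine or almost simple type, case (1) is absorbed into the statement, and for almost simple type I would cite the corresponding reduction theorem in \cite{MF} verbatim: there it is proved that the socle of $G$ cannot be sporadic (except $^2G_2(3)\cong P\Gamma L_2(8)$, which is handled as a classical group) and cannot be alternating, leaving exactly the classical and the exceptional families; the exceptional families other than the smallest Ree group are eliminated in \cite{MF} by the large-subgroup classification of Alavi--Burness together with order estimates, and the smallest Ree group $^2G_2(3)$ is shown in \cite{MF} to give precisely the $2$-$(6^2,6,2)$ design and the three $2$-$(6^2,6,6)$ designs, which is conclusions (3)--(4). \textbf{The main obstacle} I anticipate is the product-type elimination: unlike diagonal and twisted-wreath types, product type is not immediately killed by a crude order inequality, because one can have $|G_x|$ quite large relative to $v$; the argument must genuinely exploit the interplay between $v=k^2$ being a square, the divisibility $k+1\mid$ (subdegrees) from Lemma \ref{PP}(3), and the coprimality $\gcd(k,k+1)=1$, and making this airtight across all $m\geq 2$ and all socle types of the factors is the delicate point. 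Everything else is either quoted from \cite{MF} or is a routine order computation.
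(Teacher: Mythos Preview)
Your proposal is essentially aligned with the paper: the paper does not prove Theorem \ref{MF1} at all but simply writes ``See \cite{MF} for a proof.'' So both you and the paper defer the substance to \cite{MF}; your proposal just adds an outline of how that proof presumably goes (O'Nan--Scott reduction, then elimination of sporadic/alternating/exceptional socles).

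That said, your outline contains a real slip in the diagonal-type elimination. You claim that for diagonal type with socle $T^m$, $m\geq 2$, the inequality $v<|G_x|^2$ fails. But for $m=2$ one has $v=|T|$ while $|G_x|$ contains a diagonal copy of $T$, so $|G_x|\geq |T|$ and hence $v=|T|<|T|^2\leq |G_x|^2$ trivially \emph{holds}; the large-subgroup bound alone does not kill $m=2$. The actual argument in \cite{MF} for diagonal (and twisted wreath) type uses more than a crude order comparison---typically one combines the square-degree constraint $v=k^2$ with the subdegree divisibility $k+1\mid |y^{G_x}|$ from Lemma \ref{PP}(3), or appeals to the classification of primitive groups of diagonal type with square degree. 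Your instinct that product type is the delicate case is right, but diagonal type for $m=2$ is not as free as you suggest. Also, a minor point: $^2G_2(3)'\cong PSL_2(8)$ is not ``sporadic'' in any sense---it is the derived subgroup of the smallest Ree group, and \cite{MF} treats it as the one surviving exceptional case, not as a sporadic group.
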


See \cite{MF} for a proof.\\

\medskip 

In this paper we classify the $2$-$(k^{2},k,\lambda )$ designs, with $%
\lambda \mid k$, admitting a flag-transitive automorphism group when $G$ is
as in (2) of Theorem \ref{MF1}. Since $%
~^2 G_{2}(3)^{\prime } \cong PSL_{2}(8)$, case (3) and (4) of Theorem \ref%
{MF1} are also special cases of (2).\\

\medskip

From now on we assume that $\mathrm{Soc(}G\mathrm{)}$, the socle of $G$, is
a non-abelian simple classical group. $\mathrm{Soc(}G\mathrm{)}$ will simply
be denoted by $X$. Thus $X\trianglelefteq G\leq \mathrm{Aut(}X\mathrm{)}$,
where $X$ is isomorphic to one of the following groups:

\begin{enumerate}
\item $PSL_{n}(q)$, with $n \geq 2$ and $(n,q)\neq (2,2),(2,3)$.

\item $PSp_{n}(q)^{\prime }$, with $n$ even, $n \geq 2$ and $(n,q)\neq (2,2),(2,3)$.

\item $PSU_{n}(q^{1/2})$, with $n \geq 2$, $q$ a square and $(n,q^{1/2})\neq (2,2),(2,3)$ or 
$(3,2)$.

\item $P\Omega_{m}(q)$, with $n \geq 3$, $nq$ odd and $(n,q)\neq (3,3)$.

\item $P\Omega _{n}^{\varepsilon }(q)$, with $\varepsilon = \pm$, $n$ even, $n \geq 4$ and $(n,\varepsilon) \neq (4,+)$. 
\end{enumerate}

Clearly, $q=p^{f}$, where $p$ is a prime and $f\geq 1$. It should be noted that the case $%
PSp_{4}(2)^{\prime }\cong A_{6}$ is already ruled out in Lemma 3.5 of \cite%
{MF}. Therefore, in case (2), we may actually assume that $X\cong PSp_{n}(q)$.

\bigskip

\begin{lemma}
\label{lambada}$\lambda \geq 2$.
\end{lemma}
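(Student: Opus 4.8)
The plan is to rule out $\lambda = 1$, i.e. to show that a non-trivial $2$-$(k^2,k,1)$ design (an affine plane of order $k$) cannot admit a flag-transitive automorphism group $G$ of almost simple type with socle $X$ a classical group. First I would invoke Lemma \ref{desdes}: when $\lambda = 1$ we have $r = k+1$ and $b = k(k+1)$, so $\mathcal{D}$ is precisely an affine plane $\mathrm{AG}_2(k)$ of order $k = q_0$ for some prime power $q_0$ (the fact that the order of a flag-transitive affine plane is a prime power follows from the classical theory, since flag-transitivity forces point-primitivity by Lemma \ref{PP}(1), hence $2$-transitivity on points, and a $2$-transitive affine plane is Desarguesian by Wagner / Ostrom–Wagner). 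Thus $G$ acts $2$-transitively on the $k^2 = q_0^2$ points.

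Next I would apply the classification of $2$-transitive groups. A $2$-transitive group of almost simple type has socle one of a short explicit list (alternating, classical $\mathrm{PSL}_n(q)$ in its natural $2$-transitive action, $\mathrm{Sp}_{2m}(2)$, unitary, Suzuki, Ree, and sporadic-type cases like $\mathrm{PSL}_2(11)$ on $11$ points, $A_7$ on $15$, $\mathrm{HS}$, $\mathrm{Co}_3$, etc.). I would cross this list against our hypothesis that $X = \mathrm{Soc}(G)$ is a classical group \emph{and} that the degree is a perfect square $q_0^2$. The generic $\mathrm{PSL}_n(q)$-case has degree $(q^n-1)/(q-1)$, which I would show is a square only in sporadically few cases (this is essentially a Nagell–Ljunggren type Diophantine condition and is already well documented); the symplectic $\mathrm{Sp}_{2m}(2)$-cases have degrees $2^{m-1}(2^m\pm 1)$, never a perfect square for $m \geq 2$; the unitary-group degree $q^3+1$ and the Suzuki/Ree degrees $q^2+1$, $q^3+1$ are likewise never squares except in tiny cases. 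Each surviving small case is then eliminated by a direct parameter check — either $k = q_0$ fails to match the required prime-power order of the affine plane, or the subdegrees contradict Lemma \ref{PP}(3) (which forces $k+1 \mid |y^{G_x}|$ for every non-trivial point-orbit; but for a $2$-transitive action the only non-trivial orbit has length $k^2-1 = (k-1)(k+1)$, so this divisibility is automatic and gives nothing new here, meaning the elimination must come from the order/structure mismatch rather than from (3)).

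An alternative, cleaner route — and probably the one the paper intends — is to avoid the full $2$-transitive classification and instead use largeness. By Lemma \ref{PP}(2), $G_x$ is a large subgroup, so $|G| < |G_x|^3$; combined with $|G| = k^2 |G_x|$ this gives $k^2 < |G_x|^2$, i.e. $|G_x| > k = \sqrt{v}$, which for a point-primitive action on $v$ points says the point-stabilizer is ``very large''. For $\lambda = 1$ we additionally have $|G_x| = (k+1)|G_{x,B}|$ with $G_{x,B}$ a subgroup, and the flag-transitive affine plane structure forces $G_x$ to act transitively on the $k+1$ lines through $x$ and on the $k^2 - 1$ other points with the parallel-class structure respected; pinning down such a $G_x$ inside $\mathrm{Aut}(X)$ using the tables of large maximal subgroups of classical groups in \cite{AB} leaves only a handful of candidates, each dispatched by an order computation showing $k = \sqrt{v}$ is not a prime power or that no line-transitive action of the right degree exists on $G_x$.

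The main obstacle, in either approach, is the bookkeeping in the $\mathrm{PSL}_n(q)$ natural-action case: here $v = (q^n-1)/(q-1)$ must be a perfect square, $G$ must be flag-transitive on the resulting affine plane of order $k = \sqrt{v}$, and one must rule out the genuinely near-miss small cases (e.g. degrees like $7, 13, 21, 57, \dots$ and their squares) by hand, checking that either the Diophantine equation $(q^n-1)/(q-1) = k^2$ has no solutions in the relevant range or that the putative affine plane order $k$ is not a prime power — this is where a short but careful case analysis is unavoidable. Everything else (symplectic, unitary, orthogonal socles; the sporadic-type $2$-transitive actions) falls quickly because the relevant degrees are transparently non-square for all but the smallest parameters, which are then checked individually against $v = k^2$ with $k$ a prime power. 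Once $\lambda = 1$ is excluded and $\lambda \mid k$ with $\lambda > 1$ forces $\lambda \geq 2$, the lemma follows.
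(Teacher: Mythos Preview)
Your proposal has a genuine logical gap and is also far more complicated than what the paper does.

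The gap is in your first route: you claim that point-primitivity (from Lemma~\ref{PP}(1)) gives $2$-transitivity on points. This inference is false in general --- there are plenty of point-primitive designs that are not $2$-transitive --- and nothing specific to affine planes rescues it without further argument. You then invoke ``Wagner / Ostrom--Wagner'' for the statement that a $2$-transitive affine plane is Desarguesian, but that is the Ostrom--Wagner theorem for projective planes; Wagner's 1965 result for affine planes says something different and stronger for your purposes: a finite affine plane with a \emph{line-transitive} collineation group is a translation plane, and the group contains the full translation group. You never actually need $2$-transitivity.

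This is exactly how the paper proceeds, in two lines. If $\lambda=1$ then $\mathcal{D}$ is an affine plane; by Wagner's theorem \cite{Wa} it is a translation plane and $G$ contains the translation group $T$, which is elementary abelian and normal in $G$. But $G$ is almost simple with non-abelian simple socle $X$, so $G$ has no non-trivial abelian normal subgroup --- contradiction. There is no need for the classification of $2$-transitive groups, no Diophantine analysis of $(q^n-1)/(q-1)=k^2$, and no appeal to the tables of large maximal subgroups; all of that machinery is reserved in the paper for the genuine $\lambda\geq 2$ work. Your second ``cleaner'' route via largeness is likewise unnecessary here and would not avoid the case analysis you describe.
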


\begin{proof}
Assume that $\lambda =1$. Then $\mathcal{D}$ is a translation plane of order 
$k=p^{t}$, $t\geq 1$, and $X$ is an elementary
abelian $p$-group of order $k$ by \cite{Wa}, Theorems 2 and 4. This is a
contradiction, since $X$ is a non-abelian simple group by our
assumption. Thus, $\lambda \geq 2$.
\end{proof}

\bigskip 

In the sequel, we always assume that $\lambda \geq 2$ without recalling
Lemma \ref{lambada}. 

\bigskip

\begin{lemma}
\label{Orbits}Let $\mathcal{D}$ be a $2$-$(k^{2},k,\lambda )$ design, with $%
\lambda \mid k$, admitting a flag-transitive automorphism group $G$. If $x$
is any point of $\mathcal{D}$, then $\frac{k+1}{\gcd (k+1,\left\vert \mathrm{%
Out(}X\mathrm{)}\right\vert )}$ divides $\left\vert y^{X_{x}}\right\vert $
and hence $\left\vert X_{x}\right\vert $.
\end{lemma}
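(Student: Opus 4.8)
The plan is to combine the divisibility statement for point-$G_x$-orbits from Lemma~\ref{PP}(3) with a descent from $G$ to $X=\Soc(G)$, controlling the difference by $|\Out(X)|$. First I would recall that by Lemma~\ref{PP}(3), for every point $y\neq x$ and every block $B$ incident with $x$ we have $|y^{G_x}|=(k+1)|B\cap y^{G_x}|$, so in particular $k+1$ divides $|y^{G_x}|$ for each such orbit. The goal is to push this divisibility onto the possibly shorter $X_x$-orbits $y^{X_x}\subseteq y^{G_x}$.

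The key step is the comparison of orbit lengths under $X_x$ and under $G_x$. Since $X\trianglelefteq G$, the group $G_x$ permutes the $X_x$-orbits inside $y^{G_x}$, and these all have the same length $|y^{X_x}|$; hence $|y^{G_x}|=m\cdot |y^{X_x}|$ where $m$ is the number of such $X_x$-orbits. The number $m$ is the index of the stabilizer in $G_x$ of the orbit $y^{X_x}$, and since $G_x/X_x$ embeds (as $X_x = X\cap G_x$ and $G=XG_x$ by point-primitivity, so $G_x/X_x\cong G/X\leq \Out(X)$) into a group of order dividing $|\Out(X)|$, we get $m\mid |\Out(X)|$. Therefore $k+1\mid |y^{G_x}| = m\,|y^{X_x}|$ with $m\mid|\Out(X)|$, and dividing through by $\gcd(k+1,|\Out(X)|)$ shows that $\frac{k+1}{\gcd(k+1,|\Out(X)|)}$ divides $|y^{X_x}|$. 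Finally, $|y^{X_x}|$ divides $|X_x|$ by the orbit–stabilizer theorem, which gives the last assertion.

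I expect the main point requiring care to be the identification $G_x/X_x\cong G/X$: this uses that $G$ is point-primitive (Lemma~\ref{PP}(1)) so that $X$, being a nontrivial normal subgroup, is transitive on points, whence $G=XG_x$ and $G_x/(X\cap G_x)\cong XG_x/X=G/X$, a subgroup of $\Out(X)$ since $X=\Inn(X)$ acts trivially on itself by conjugation modulo $X$. Once this is in place, the counting argument for $m\mid|\Out(X)|$ is routine. I would also note that the case $y=x$ is trivially excluded (the orbit $\{x\}$ has length $1$), so the statement is only asserted for the nontrivial orbits, consistent with Lemma~\ref{PP}(3).
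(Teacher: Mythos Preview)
Your proposal is correct and follows essentially the same approach as the paper: both use Lemma~\ref{PP}(3) to get $k+1\mid |y^{G_x}|$, then note that $|y^{G_x}|=\mu\,|y^{X_x}|$ with $\mu\mid[G_x:X_x]\mid|\Out(X)|$ (the paper writes this as $\mu=[G_x:X_x]/[G_{x,y}:X_{x,y}]$, which is exactly your orbit-count $m$). Your explicit justification that $[G_x:X_x]=|G/X|$ via $G=XG_x$ from transitivity of $X$ is a detail the paper leaves implicit, but the argument is the same.
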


\begin{proof}
Let $x$ be any point of $\mathcal{D}$. If $y$ is a point of $\mathcal{D}$,
with $y\neq x$, then $\left\vert y^{X_{x}}\right\vert =\frac{\left\vert
B\cap y^{G_{x}}\right\vert \left( k+1\right) }{\mu }$, where $\mu \left\vert
y^{X_{x}}\right\vert =\left\vert y^{G_{x}}\right\vert $, by Lemma \ref{PP}%
(3), as $X_{x}\trianglelefteq G_{x}$. On the other hand, $\mu $ divides $%
\left\vert \mathrm{Out(}X\mathrm{)}\right\vert $, as $\mu =\frac{\left[
G_{x}:X_{x}\right] }{\left[ G_{x,y}:X_{x,y}\right] }$. Therefore $\frac{k+1}{%
\gcd (k+1,\left\vert \mathrm{Out(}X\mathrm{)}\right\vert )}$ divides $%
\left\vert y^{X_{x}}\right\vert $ and hence $\left\vert X_{x}\right\vert $.
\end{proof}

\bigskip

Recall that, if $M$ is a maximal subgroup of an almost simple group $\Gamma $
such that $M\cap \mathrm{Soc(}\Gamma \mathrm{)}$ is a non-maximal in $%
\mathrm{Soc(}\Gamma \mathrm{)}$, then $M$ is called a \emph{novel} maximal
subgroup, or, simply, \emph{novelty}. If $K$ is a maximal subgroup of $%
\mathrm{Soc(}\Gamma \mathrm{)}$ containing $M\cap \mathrm{Soc(}\Gamma 
\mathrm{)}$ and such that $N_{\Gamma }(K)\mathrm{Soc(}\Gamma \mathrm{)}\neq
\Gamma $, then $M$ is called a \emph{type 1 novelty with respect to }$K$.
More details on types and properties of novelties of almost simple groups can be found in \cite{BHRD}.

\bigskip

\begin{lemma}
\label{Nov}Let $\Omega $ be a set of
imprimitivity for $X$ on the point set of $\mathcal{D}$ such that $X_{\Lambda
}$ is maximal in $X$ for $\Lambda \in \Omega $. If $k+1$ does not divide $\left\vert 
\mathrm{Out(}X\mathrm{)}\right\vert $, then the followings hold:

\begin{enumerate}
\item $X_{\Lambda }^{G_{x}}$ is split in at least two distinct conjugacy classes under $X$.

\item $G_{x}$ is a type 1 novelty with respect to $X_{\Lambda }$.
\end{enumerate}
\end{lemma}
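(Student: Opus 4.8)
The plan is to exploit the hypothesis that $k+1 \nmid |\mathrm{Out}(X)|$ by playing off Lemma \ref{Orbits} against the assumption that $X_\Lambda$ is maximal in $X$. First I would set up the basic combinatorics: since $\Omega$ is a system of imprimitivity for $X$ on the points, $X$ acts transitively on $\Omega$ with block stabilizer $X_\Lambda$, and $G$ permutes the blocks of $\Omega$ as well (because $\Omega$ is canonically associated to the $X$-action inside the $G$-action). Fix a point $x$ with $x \in \Lambda$; then $X_x \leq X_\Lambda$, and the orbit of $\Lambda$ under $G_x$ inside $\Omega$ — or rather the set $X_\Lambda^{G_x}$ of $G_x$-conjugates of the maximal subgroup $X_\Lambda$ — is the natural object to analyze. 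The key arithmetic input is that, by Lemma \ref{Orbits}, $\frac{k+1}{\gcd(k+1,|\mathrm{Out}(X)|)} = k+1$ (using the hypothesis) divides $|y^{X_x}|$ for every point $y \neq x$, hence $k+1$ divides $|X_x|$; combined with Lemma \ref{PP}(3) this forces $k+1$ to divide the length of every nontrivial point-$X_x$-orbit.

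Next I would prove (1). Suppose for contradiction that $X_\Lambda^{G_x}$ were contained in a single $X$-conjugacy class. Then every $G_x$-conjugate of $X_\Lambda$ is already an $X_x$-conjugate, which would let us control $[G_x : N_{G_x}(X_\Lambda)]$ in terms of an index inside $X$; concretely, the number of blocks of $\Omega$ through $x$ in a fixed $X$-class would be $[X_x : X_\Lambda \cap X_x] = [X_x : X_x]$ — i.e. the block through $x$ would be $G_x$-invariant up to the $X$-class, which is too rigid. More usefully: if $X_\Lambda^{G_x}$ is a single $X$-class, then $G_x$ normalizes $X_\Lambda$ modulo $X$, so $G_x X_\Lambda / X$ is a subgroup of $\mathrm{Out}(X)$ of index equal to (a divisor of) the number of blocks of $\Omega$ meeting $x$, and tracking the $(k+1)$-part through the design equations $r = \lambda(k+1)$, $|G| = k^2 |G_x|$ together with the divisibility $k+1 \mid |X_x|$ yields a contradiction with $k+1 \nmid |\mathrm{Out}(X)|$. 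This is the step I expect to be the main obstacle: getting the bookkeeping exactly right so that the coprimality of $k+1$ and $|\mathrm{Out}(X)|$ collides with the single-class assumption, rather than merely giving an inequality.

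Then (2) should follow fairly formally from (1). By definition $G_x$ is a novelty with respect to $X_\Lambda$ once we know $G_x \cap X = X_x$ is \emph{not} maximal in $X$ (Lemma \ref{PP}(2) tells us $G_x$ is large and maximal in $G$, but $X_x = G_x \cap X$ need not be maximal in $X$) — and indeed if $X_x$ were maximal in $X$ then $X_x = X_\Lambda$, forcing $\Lambda = \{x\}$, contradicting that $\Omega$ is a nontrivial system of imprimitivity. To upgrade "novelty" to "type 1 novelty with respect to $X_\Lambda$", I would invoke (1): since $X_\Lambda^{G_x}$ splits into at least two $X$-classes, the normalizer $N_G(X_\Lambda)$ does not contain $X$ with the full outer action — more precisely $N_G(X_\Lambda) X \neq G$, because an element of $G$ fusing the distinct $X$-classes of conjugates of $X_\Lambda$ cannot normalize $X_\Lambda$, so $N_G(X_\Lambda)\mathrm{Soc}(G) = N_G(X_\Lambda) X$ is proper in $G$. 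That is exactly the defining condition of a type 1 novelty with respect to $K = X_\Lambda$, quoting the terminology recalled just before the lemma and the detailed treatment in \cite{BHRD}.
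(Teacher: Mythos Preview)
Your argument for (1) has a genuine gap: the bookkeeping you gesture at never crystallizes into an actual contradiction, and along the way you make a slip (``the number of blocks of $\Omega$ through $x$'' --- blocks of imprimitivity partition the point set, so there is exactly one). The paper's proof supplies two ingredients you are missing. First, one must show $[G_x:N_{G_x}(X_\Lambda)]>1$, i.e.\ that some $\alpha\in G_x$ genuinely moves $X_\Lambda$; this comes from maximality of $G_x$ in $G$ (if $G_x$ normalized $X_\Lambda$ then either $X_\Lambda\le G_x$, forcing $X_\Lambda\le X_x$, or $X_\Lambda\trianglelefteq G$, both impossible). Second, and this is the heart of the matter, one proves a \emph{rigidity} statement: for any point $y$, $X_y<X_\Lambda$ forces $y\in\Lambda$. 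The reason is that otherwise $\Lambda$ is a union of nontrivial $X_y$-orbits, each of length divisible by $\frac{k+1}{(k+1,|\mathrm{Out}(X)|)}>1$ (Lemma~\ref{Orbits}), so $k+1$ has a nontrivial common factor with $|\Lambda|$, hence with $k^2$ --- impossible. With rigidity in hand, (1) is immediate: take $\alpha\in G_x\setminus N_{G_x}(X_\Lambda)$; if $X_\Lambda^\alpha=X_\Lambda^\beta$ for some $\beta\in X$ then $X_\Lambda^\alpha=X_{\Lambda^\beta}$ contains $X_x$, so rigidity gives $x\in\Lambda^\beta$, whence $\Lambda^\beta=\Lambda$ and $\alpha$ normalizes $X_\Lambda$ after all. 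Your proposed route via ``$G_x$ normalizes $X_\Lambda$ modulo $X$'' and design-equation divisibilities does not reach this; the coprimality of $k+1$ with $k^2$ is the lever, not coprimality with $|\mathrm{Out}(X)|$ directly.

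Your argument for (2), by contrast, is essentially correct and in fact cleaner than the paper's: if $N_G(X_\Lambda)X=G$ then every $G$-conjugate of $X_\Lambda$ (in particular every $G_x$-conjugate) is already an $X$-conjugate, contradicting (1). The paper instead quotes \cite[Proposition~1.3.10]{BHRD} together with the rigidity statement again, but your deduction is more direct once (1) is available.
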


\begin{proof}
Let $\Omega $ be a set of imprimitivity for $X$ on the point set of $\mathcal{%
D}$ such that $X_{\Lambda }$ is maximal in $X$ for $\Lambda \in \Omega $.
Clearly, $X_{\Lambda }=N_{X}(X_{\Lambda })$. Let $x\in \Lambda $, since $%
X_{x}\trianglelefteq G_{x}$ and $G_{x}$ is maximal in $G$, it results $G_{x}=N_{G}(X_{x})$ and hence $X_{x}\leq N_{X}(X_{x})\leq
G_{x}$. Thus, $X_{x}=N_{X}(X_{x})$.

If $N_{G_{x}}(X_{\Lambda })=G_{x}$, then $G_{x}\leq X_{\Lambda }G_{x}$ and
hence either $X_{\Lambda }\leq G_{x}$ or $G=X_{\Lambda }G_{x}$, as $G_{x}$ is
maximal in $G$. The former implies $X_{\Lambda }\leq X_{x}$, whereas $X_{x}<X_{\Lambda }$, and this case is excluded. Thus, $%
G=X_{\Lambda }G_{x}$ and hence $X_{\Lambda }\vartriangleleft G$, as $%
N_{G_{x}}(X_{\Lambda })=G_{x}$ by our assumption. So, we get $X_{\Lambda
}\vartriangleleft X$, which is a contradiction, as $X$ is simple and $X_{x}<X_{\Lambda}<X$. Thus, $\left[
G_{x}:N_{G_{x}}(X_{\Lambda })\right] >1$.

Assume that $X_{y}<X_{\Lambda }$ for some $y\notin \Lambda $. Clearly, $%
\Lambda $ is a $X_{\Lambda }$-orbit and is union of $X_{y}$-orbits distinct from $%
\left\{ y\right\} $. Each $X_{y}$-orbit distinct from $\left\{ y\right\} $
is of length divisible by $\frac{k+1}{(k+1,\left\vert \mathrm{Out(}X\mathrm{)%
}\right\vert )}$ by Lemma \ref{Orbits}. Moreover, $\frac{k+1}{(k+1,\left\vert \mathrm{Out(}X%
\mathrm{)}\right\vert )}>1$, since $k+1$ does not divide $\left\vert \mathrm{%
Out(}X\mathrm{)}\right\vert $ by our assumption. So $\frac{k+1}{(k+1,\left\vert \mathrm{Out(}%
X\mathrm{)}\right\vert )}$ divides $\left\vert \Lambda \right\vert $, and
hence $k^{2}$, since $k^{2}=\left[ X:X_{\Lambda }\right] \left[ X_{\Lambda
}:X_{x}\right] =\left[ X:X_{\Lambda }\right] \left\vert \Lambda \right\vert $%
, a contradiction. Thus, for each $X_{x^{\prime
}}<X_{\Lambda }$ if, and only if, $x^{\prime }\in \Lambda $.

By (1), there is $\alpha \in G_{x}-N_{G_{x}}(X_{\Lambda })$ such that $X_{\Lambda }^{\alpha
}\neq X_{\Lambda }$. Also, $X_{x}\leq X_{\Lambda }^{\alpha }\cap X_{\Lambda }$, with both $X_{\Lambda }^{\alpha }$ and $X_{\Lambda }$ maximal in $X$. Suppose that there is $\beta \in X$ such that $X_{\Lambda }^{\alpha
}=X_{\Lambda }^{\beta }$. Then $X_{\Lambda }^{\alpha }=X_{\Lambda ^{\beta }}$
and hence $X_{\Lambda }^{\alpha }$ preserves the block of imprimitivity $%
\Lambda ^{\beta }$. Then the above argument implies $x\in \Lambda ^{\beta }$
and hence $\Lambda ^{\beta }=\Lambda $, as $x\in \Lambda $ too. Then $%
X_{\Lambda }^{\alpha }=X_{\Lambda ^{\beta
}}=X_{\Lambda }$, but $\alpha \in G_{x}-N_{G_{x}}(X_{\Lambda })$, and so we
obtain a contradiction. Thus $X_{\Lambda }^{\alpha }$ and $X_{\Lambda }$
lies in distinct conjugacy classes under $X$. This proves (1).

Assume that $G=N_{G}(X_{\Lambda })X$. Since $\left[ G_{x}:N_{G_{x}}(X_{%
\Lambda })\right] >1$, there are $X_{x_{1}},X_{x_{2}}<X_{\Lambda }$ such
that $X_{x_{1}}$ and $X_{x_{2}}$ are conjugate in $N_{G}(X_{\Lambda })$ but
not in $X_{\Lambda }$ by \cite{BHRD}, Proposition 1.3.10. Nevertheless, $%
x_{1},x_{2}\in \Lambda $ by the above argument. Hence $X_{x_{1}}$ and $%
X_{x_{2}}$ are conjugate in $X_{\Lambda }$, as this one acts transitively on 
$\Lambda $, and we reach a contradiction. Thus $G\neq N_{G}(X_{\Lambda })X$ and hence $%
G_{x}$ is a type 1 novelty with respect to $X_{\Lambda }$. Thus, we obtain (2).
\end{proof}

\bigskip

\section{Reductions for $X$ based on primitive prime divisors of its order}

The first part of this section is devoted to the resolution of the case $%
X\cong PSL_{2}(q)$, which is achieved by combining some group-theoretical
results on the structure of $PSL_{2}(q)$ given in \cite{Hup}, together with some numerical
constraints on the diophantine equation $k^{2}=[X:X_{x}]$ provided in \cite{Rib}.

The second part focuses on the case $n\geq 3$, where some tools are developed in order to control the structure of $X_{x}$. More precisely, it is shown that $%
\left\vert X\right\vert $ is divisible by $\Phi _{ef}^{\ast }(p)$, the
primitive part of $p^{e}-1$ (more details on $\Phi _{ef}^{\ast }(p)$ are
provided below). Then, in Theorem \ref{daleko}, is proven the central result
of this section which states that either $X_{x}$ is large subgroup of $X$
and $\left( \Phi _{ef}^{\ast }(p),\left\vert X_{B}\right\vert \right) >1$,
or $\left( \Phi _{ef}^{\ast }(p),k+1\right) >1$, or $\Phi _{ef}^{\ast
}(p)\mid k^{2}$ and $\left( \Phi _{ef}^{\ast }(p),\left\vert
X_{B}\right\vert \right) =\Phi _{ef}^{\ast }(p)^{1/2}$. In the final part of this section,
a lower bound for $\left\vert X_{B}\right\vert _{p}$ is exhibited, which will play an
important role to exclude the case $\left( \Phi _{ef}^{\ast }(p),k+1\right) >1$.

\bigskip

Throughout the paper, if $m$ is any integer and $w$ is a prime, the symbols $m_{w}$ and $m_{w'}$ will denote the integers $\max_{i \geq 0}\lbrace w^{i} : w^{i} \mid m \rbrace$ and $m/m_{w}$ respectively.
\bigskip

\begin{proposition}
\label{Due nonVale}If $X\cong PSL_{2}(q)$, then $q=8$ and one of the
following holds:

\begin{enumerate}
\item $\mathcal{D}$ is isomorphic to the $2$-$(6^{2},6,2)$ design
constructed in \cite{MF} and $PSL_{2}(8)\trianglelefteq G\leq P\Gamma
L_{2}(8)$.

\item $\mathcal{D}$ is isomorphic to one of the three $2$-$(6^{2},6,6)$ designs
constructed in \cite{MF} and $G\cong P\Gamma L_{2}(8)$.
\end{enumerate}
\end{proposition}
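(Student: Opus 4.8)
The plan is to treat $X \cong \PSL_2(q)$ by a case analysis on the maximal subgroups of $X$, using the point-primitivity from Lemma \ref{PP}(1), the largeness of $G_x$ from Lemma \ref{PP}(2), and—crucially—the divisibility constraint $k^2 = [X:X_x]$ coupled with $\lambda \mid k$, $r = \lambda(k+1)$. First I would record that $v = k^2 = [G:G_x]$ and that $X_x = X \cap G_x$ is large in $X$ up to index dividing $|\Out(X)| = \gcd(2,q-1)\cdot f$; so $|X_x|^3 > |X|/|\Out(X)|^3$, which already eliminates most small-order maximal subgroups. Running through Dickson's list of subgroups of $\PSL_2(q)$ (as in \cite{Hup}), the large maximal subgroups are essentially the Borel $E_q{:}Z_{(q-1)/d}$, the dihedral groups $D_{2(q\mp1)/d}$, the subfield subgroups $\PSL_2(q_0)$ or $\PGL_2(q_0)$, and finitely many exceptional small ones ($A_4$, $S_4$, $A_5$). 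For each, $v = k^2 = [X:X_\Lambda]\cdot[X_\Lambda:X_x]$ must be a perfect square (possibly after accounting for the $\Out$-discrepancy), and I would extract $k$ and test whether $r = \lambda(k+1)$ with $\lambda \mid k$ can divide $|G_x|$ while $k^2 = v$.

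Next I would attack the most substantial subcase, $X_\Lambda$ parabolic, so that $\Om = $ the $q+1$ points of $\PG_1(q)$ (or a block system refining it). Here $k^2 = [X:X_x]$ is a multiple of $q+1$, and writing $k^2 = (q+1)\cdot t$ with $t = [X_\Lambda : X_x]$ a divisor of $|E_q{:}Z_{(q-1)/d}|$, one gets tight arithmetic: $q+1 \mid k^2$ forces strong conditions, and combined with Lemma \ref{PP}(3)—that $k+1$ divides every nontrivial point-$G_x$-orbit length—I would push toward $q$ small. The Diophantine input from \cite{Rib} enters precisely here: the equation $k^2 = [X:X_x]$ with $[X:X_x]$ of the shape $\tfrac{q(q^2-1)}{2d\cdot m}$ for small $m$ has only finitely many solutions, and Rib\'in-type bounds (Nagell–Ljunggren, or results on $y^2 = $ (polynomial in $q = p^f$)) pin down $(p,f)$. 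The expected conclusion of this thread is $q = 8$: then $|X| = |\PSL_2(8)| = 504 = 2^3\cdot 3^2\cdot 7$, $v = 36 = 6^2$ works with $X_x$ of order $14$ (a dihedral $D_{14}$, index $36$), $k = 6$, $r = 7\lambda$, and $\lambda \mid 6$ forces $\lambda \in \{1,2,3,6\}$; Lemma \ref{lambada} kills $\lambda = 1$, and $\lambda = 3$ is eliminated because $r = 21$ must divide $|G_x|$, which for $G \le \PGaL_2(8)$ of order dividing $504\cdot 3$ has point-stabilizer of order $14\cdot(|G|/504)$, not divisible by $21$ unless... — here one checks the two surviving designs have $G \cong \PGaL_2(8)$ and $\lambda \in \{2,6\}$, matching cases (1)–(2) via \cite{MF}.

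Then I would handle the remaining maximal subgroup types for general $q$. For $X_\Lambda$ dihedral of order $2(q\pm1)/d$: largeness forces $q\pm 1$ comparable to $|X|^{1/3}$, impossible for $q$ beyond a tiny bound, and the few survivors are checked directly against $v = k^2$. For subfield subgroups $\PSL_2(q_0)$ with $q = q_0^a$, largeness gives $a \le 3$ roughly, and $[X:X_\Lambda]$ is then a ratio of polynomials in $q_0$ that is almost never $k^2$ times an allowable cofactor; again finitely many $(q_0, a)$ to inspect. For the sporadic $A_4, S_4, A_5$: these are large only when $q$ is bounded, and one computes $v$ explicitly. Throughout I would lean on Lemma \ref{Orbits} and Lemma \ref{PP}(3) to discard candidates where $k+1$ fails to divide the requisite orbit lengths—this is typically a one-line contradiction per case.

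The main obstacle will be the parabolic subcase and its Diophantine core: controlling all solutions of $k^2 = \tfrac{q(q^2-1)}{2dm}$ (and its variants with $G$ strictly between $X$ and $\Aut(X)$, where the field automorphisms perturb the index by a factor of $f$) requires the careful number-theoretic estimates of \cite{Rib} rather than elementary manipulation, and one must be scrupulous that no large value of $q = p^f$ slips through when $m$ itself grows with $q$. The secondary difficulty is bookkeeping the interaction between $\Out(X)$ and the block system $\Om$: when $k+1 \nmid |\Out(X)|$ one may invoke Lemma \ref{Nov} to force $G_x$ to be a novelty with respect to $X_\Lambda$, which sharply narrows the options, but the novelty case for $\PSL_2(q)$ (type $1$ novelties are the $D_{2(q-1)/d}$-normalizers and $\PGL_2(q_0)$ inside $\PSL_2(q_0^2)$) must be cross-checked against the square condition separately. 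Once $q = 8$ is isolated, the finish is a finite verification—appealing to the constructions and uniqueness statements already established in \cite{MF}—so no genuinely new difficulty arises there.
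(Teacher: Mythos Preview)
Your sketch has a genuine gap in the dihedral case. You write that for $X_\Lambda$ dihedral of order $2(q\pm 1)/d$, ``largeness forces $q\pm 1$ comparable to $|X|^{1/3}$, impossible for $q$ beyond a tiny bound.'' This is false: the dihedral maximal subgroups $D_{2(q\pm 1)/d}$ are \emph{always} large in $\PSL_2(q)$, since $\bigl(2(q\pm 1)/d\bigr)^3 \sim 8q^3/d^3 > q^3/d \sim |X|$. So largeness gives no bound on $q$ here, and this is precisely the case that produces the answer ($q=8$, $X_x \cong D_{14}$). The paper handles it by analysing the Diophantine equation $k^2 = q(q-\epsilon)/2$ directly: for $q$ even this is a Catalan-type problem forcing $q=8$, while for $q$ odd a substitution $2c(k+1)=(q+\epsilon)f_{p'}$ followed by a careful estimate shows no solutions. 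Your proposal does not indicate how you would control this equation for large odd $q$, and the number-theoretic input from \cite{Rib} that you allude to needs to be applied here, not (only) in the parabolic case.

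There is also some organisational confusion: you place the $q=8$, $X_x\cong D_{14}$ conclusion under the parabolic heading, but $D_{14}$ is not a subgroup of the Borel $E_8{:}Z_7$ (the Frobenius action of $Z_7$ on $E_8$ is fixed-point-free, so there is no subgroup of order $14$). The parabolic case actually yields $k^2=q+1$, which the paper dispatches quickly via \cite{Rib}, A5.1 (giving $k=3$, $q=8$, ruled out in \cite{MF}). Finally, note that the paper bypasses the subfield and $A_4/S_4/A_5$ cases entirely by a Sylow argument: taking a prime $u \mid \tfrac{k+1}{(k+1,|\Out(X)|)}$ and a Sylow $u$-subgroup $U\le X_x$, Lemma~\ref{PP}(3) forces $U$ to fix only $x$, whence $N_X(U)\le X_x$ and so $X_x=N_X(U)$ is Borel or dihedral. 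This is cleaner than running through Dickson's full list.
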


\begin{proof}
Let $X\cong PSL_{2}(q)$. Then $\frac{k+1}{(k+1,(2,q-1)f)}\mid \left\vert
X_{x}\right\vert $, as $\left\vert \mathrm{Out(}X\mathrm{)}\right\vert
=(2,q-1)f$, by Lemma \ref{Orbits}. Moreover, $q\neq 4,5,9$, since $%
PSL_{2}(4)\cong PSL_{2}(5)\cong A_{5}$ and $PSL_{2}(9)\cong A_{6}$, and both these
cases are ruled out in Lemma 3.5 of \cite{MF}.

Assume that $k+1$ divides $(2,q-1)f$. Thus, $k^{2}<(2,q-1)^{2}f^{2}$. Moreover, $%
q>11$ and $f\geq 2$, as $\mathcal{D}$ is non-trivial. Hence $%
q+1<(2,q-1)^{2}f^{2}$, since $q+1$ is the minimal primitive permutation
representation of $X$ for $q>11$ by \cite{KL}, Theorem 5.2.2. So, $%
q=3^{3} $ and $3\leq k\leq 5$. Actually, $k=5$ as $k+1$ must divide $%
(2,q-1)f $. However, this case cannot occur, since $PSL_{2}(3^{3})$ has no
transitive permutation representations of degree less then $28$.

Assume that $\frac{k+1}{(k+1,(2,q-1)f)}>1$. Let $U$ be a Sylow $u$-subgroup
of $X_{x}$, where $u$ is a prime divisor of $\frac{k+1}{(k+1,(2,q-1)f)}$,
and assume that $U$ fixes a further point $y$ of $\mathcal{D}$. So, $%
\left\vert y^{X_{x}}\right\vert $ is coprime to $u$. This is a
contradiction, since $\frac{k+1}{(k+1,(2,q-1)f)}$ divides $\left\vert
y^{X_{x}}\right\vert $ by Lemma \ref{Orbits}. Therefore $x$ is the unique
point fixed by $U$ and hence $N_{X}(U)\leq X_{x}$. Then $X_{x}=N_{X}(U)$ and
either $X_{x}\cong U:Z_{\frac{q-1}{(2,q-1)}}$, or $X_{x}\cong D_{\frac{%
2(q+\epsilon )}{(2,q-1)}}$ and $\epsilon =\pm 1$, according to whether $u=p$
or $u\neq p$, respectively, by \cite{Hup}, Satz 8.5.(a).

Assume that $X_{x}\cong U:Z_{\frac{q-1}{(2,q-1)}}$. Then $k^{2}=\left[
X:X_{x}\right] =q+1$ and hence $k=3$ and $q=8$ by \cite{Rib}, A5.1, since $k \geq 3$.
However, this case is ruled out in Lemma 3.6 of \cite{MF}, since $%
PSL_{2}(8)\cong $ $^{2}G_{2}(3)^{\prime }$.

Assume that $X_{x}\cong D_{\frac{2(q+\epsilon )}{(2,q-1)}}$, where $\epsilon
=\pm 1$. Then $k^{2}=\frac{q(q-\epsilon )}{2}$. If $q$ is even, then $%
q/2=2^{f-1}$, with $f$ odd, and $z^{2}=2^{f}-\epsilon $ for some positive
integer $z$, since $k^{2}=\frac{q}{2}(q-\epsilon )$. Also $f>1$, since $X$
is simple, and hence $f\geq 3$. Then $\epsilon =-1$ and $f=3$ by \cite{Rib},
A3.1 and A5.1. Thus $q=8$ and $k=6$, and hence assertions (1) and (2) follow
from Theorem 3.10 of \cite{MF}, since $PSL_{2}(8)\cong $ $%
^{2}G_{2}(3)^{\prime }$.

Assume that $q$ is odd. Then $q$ is a square and $q^{1/2}$ is the highest
power of $p$ dividing $k$. Moreover, we saw that $q>9$. Clearly, $k+1$ divides 
$\frac{q(q-\epsilon )}{2}-1$ and hence $(q-2\epsilon )\frac{(q+\epsilon )}{2}
$. By Lemma \ref{PP}(3), $k+1$ divides $\left\vert G_{x}\right\vert $, which is 
$2f\left( q+\epsilon \right) $, and hence it divides $\frac{(q+\epsilon )}{2}%
f_{p^{\prime }}$, as $p\mid k$ and $q$ is odd. Then there is an integer $%
c\geq 1$ such that 
\begin{equation}
2c(k+1)=\left( q+\epsilon \right) f_{p^{\prime }}\text{.}  \label{equal}
\end{equation}%
From (\ref{equal}) we derive that $q^{1/2}$ divides $2c-\epsilon
f_{p^{\prime }}$, as $q^{1/2}\mid k$.

Note that $2c>f_{p^{\prime }}$. Indeed, if $2c\leq f_{p^{\prime }}$ then $%
k\geq q+\epsilon -1$ and hence $\left( q+\epsilon -1\right) ^{2}\leq \frac{%
q(q-\epsilon )}{2}$, which has no solutions for $q > 9$. Then $2c-\epsilon
f_{p^{\prime }}=\theta q^{1/2}$ for some $\theta \geq 1$, and (\ref{equal})
becomes $\left( \theta q^{1/2}+\epsilon f_{p^{\prime }}\right) k+\theta
q^{1/2}=qf_{p^{\prime }}$. Therefore, we obtain $k=q^{1/2}\frac{%
q^{1/2}f_{p^{\prime }}-\theta }{\theta q^{1/2}+\epsilon f_{p^{\prime }}}$.

If $\epsilon =1$, then $k<q^{1/2}f_{p^{\prime }}$ and hence $k^{2}=\frac{%
q(q-1)}{2}$ implies $q-1<2f^{2}$, which has no solutions for $q>9$.
Therefore, we have $\epsilon =-1$ and $k=q^{1/2}\frac{q^{1/2}f_{p^{\prime
}}-\theta }{\theta q^{1/2}-f_{p^{\prime }}}$. We have seen that $q-1\geq
2f^{2}$ as $q>9$. Then $q^{1/2}>f_{p^{\prime }}$ and hence $\theta
q^{1/2}-f_{p^{\prime }} > (\theta -1)q^{1/2}$. So, $k < q^{1/2}\frac{%
q^{1/2}f_{p^{\prime }}-\theta }{\left( \theta -1\right) q^{1/2}}%
<q^{1/2}f_{p^{\prime }}$ for $\theta >1$, and we obtain a
contradiction as above. Hence, $\theta =1$. Then $k=q^{1/2}\frac{%
q^{1/2}f_{p^{\prime }}-1}{q^{1/2}-f_{p^{\prime }}}$ which, substituted in $%
k^{2}=\frac{q(q+1)}{2}$, yields $f_{p^{\prime }}=\frac{q^{2}+q+2}{\sqrt{q}%
\left( q+3\right) }$. However, such value of $f_{p^{\prime }}$ is not an
integer as $q$ is odd and $q>1$. This completes the proof.
\end{proof}

\bigskip

\begin{remark}
\label{mag3}From now on we assume that $X\ncong PSL_{2}(q)$. Bearing in mind
the isomorphisms $PSp_{2}(q)\cong PSL_{2}(q)$, $P\Omega _{3}(q)\cong
PSL_{2}(q)$ for $q$ odd, $P\Omega _{4}^{-}(q)\cong PSL_{2}(q^{2})$, and $%
PSU_{2}(q^{1/2})\cong PSL_{2}(q)$ when $q$ is a square, and the fact that $%
P\Omega _{4}^{+}(q)$ is non-simple, in the sequel we assume that

\begin{enumerate}
\item $n\geq 3$ for $X \cong PSL_{n}(q)$;
\item $n\geq 3$ for $X \cong PSU_{n}(q^{1/2})$ for $q$ square and $(n,q^{1/2}) \neq (3,2)$;

\item $n\geq 4$ for $X \cong PSp_{n}(q)$;

\item $n\geq 5$ for $X \cong P\Omega _{5}(q)$;

\item $n\geq 6$ for $X \cong P\Omega _{n}^{\pm }(q)$.
\end{enumerate}
\end{remark}

\bigskip
Now we want to transfer some of the Saxl's arguments contained in \cite{Saxl} to our context in order to rule out the parabolic case. In \cite{Saxl} the author shows that $r \leq (v-1)_{p}$, where $r$ is the replication number of a linear space with $v$ points admitting $G$ as a flag-transitive automorphism group, whenever the conclusions of his Lemma 2.6 hold. Then, he proves that $v \geq (v-1)^{2}_{p} \geq r^2$ and hence he rules out the parabolic case when the conclusions of Lemma 2.6 hold. From a group-theoretical point of view, Saxl shows that
\begin{equation}\label{ThnxSaxl}
[G:G_{x}] < \left([G:G_{x}]-1\right) _{p}^{2}, 
\end{equation}
where $v=[G:G_{x}]$ and $G_{x}$ is a maximal parabolic subgroup of $G$, is never fulfilled whenever the conclusions of his Lemma 2.6 hold, and hence no linear spaces occur in these cases.\\
We may use the same argument of \cite{Saxl}, with his $r$ replaced by our $r/\lambda=k+1$, thus obtaining (\ref{ThnxSaxl}) and hence excluding the parabolic case. This is possible since Lemma 2.1.(ii)--(iii) of \cite{Saxl} are replaced by our Lemmas \ref{desdes}(3) and \ref{PP}(3) respectively. 

\bigskip

\begin{proposition}\label{greatsaxl}
Let $x$ be any point of $\mathcal{D}$, then $G_{x}$ is not a parabolic subgroup of $G$ and $p \mid k$.
\end{proposition}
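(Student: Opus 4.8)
The plan is to follow Saxl's strategy as described in the paragraph preceding the statement, now that all the preparatory ingredients are in place. Suppose for contradiction that $G_x$ is a maximal parabolic subgroup of $G$, so that $X_x$ is the corresponding maximal parabolic subgroup of $X$ (this is legitimate since, by Remark \ref{mag3}, we have excluded the small cases where a parabolic of $G$ need not meet $X$ in a parabolic of $X$). The key point is that Saxl's Lemma 2.6 in \cite{Saxl} classifies, for each simple classical group $X$ and each maximal parabolic $X_x$, exactly when the inequality $[X:X_x] < \bigl([X:X_x]-1\bigr)_p^2$ can hold, and he shows it never holds outside a short explicit list; for us the relevant inequality, obtained by combining Lemma \ref{desdes}(3) with Lemma \ref{PP}(3), is $v < (v-1)_p^2$ with $v = k^2 = [G:G_x] = [X:X_x]$, since $(r/\lambda)^2 = (k+1)^2 > k^2 = v$ and $k+1$ divides $(v-1)_p$ by the orbit-length divisibility in Lemma \ref{PP}(3) together with $p \mid k$ — wait, more carefully: $k+1 = r/\lambda$ divides every nontrivial point-$G_x$-orbit length, in particular it divides $v-1$, and since $v = k^2$ we get $(v-1)_p$ is a multiple of $(k+1)_p$; combined with $v = k^2 < (k+1)^2$ one reduces exactly to Saxl's forbidden inequality (\ref{ThnxSaxl}). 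Hence the parabolic case is excluded verbatim by Saxl's Lemma 2.6, together with the handful of exceptional configurations in that lemma which must be checked by hand against the constraint $\lambda \mid k$ and $v = k^2$.

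Concretely, the steps are: (i) assume $G_x$ parabolic and reduce to $X_x$ a maximal parabolic of $X$, invoking Remark \ref{mag3} to discard the degenerate small-rank cases; (ii) record that $v = k^2$, that $r/\lambda = k+1$ divides $v-1$ by Lemma \ref{PP}(3), and that $p \mid k$ is exactly what we are also trying to prove, so this part of the argument must be arranged so that the parabolic exclusion does not circularly assume $p \mid k$ — instead one argues directly that a flag-transitive $G$ with $G_x$ parabolic would force (\ref{ThnxSaxl}) using only $(r/\lambda)^2 > v$ and $r/\lambda \mid (v-1)$, noting $(v-1)$ and its $p$-part; (iii) apply Saxl's Lemma 2.6 to conclude (\ref{ThnxSaxl}) fails for all parabolics outside his list; (iv) dispatch the finitely many listed exceptions directly, using that $v$ must be a perfect square $k^2$ and $\lambda \mid k$, which is a very restrictive Diophantine condition and kills each of them. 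Then, with the parabolic case gone, $p \mid k$ follows: if $p \nmid k$ then $p \mid [G:G_x]$ forces, by the standard argument (a Sylow $p$-subgroup of $G$ meets $G_x$ properly, and flag-transitivity plus $|G| = k^2 |G_x|$ with $p \nmid k^2$ would make $G_x$ contain a full Sylow $p$-subgroup of $G$), that $G_x$ contains a Sylow $p$-subgroup of $G$; but a core-free maximal subgroup of an almost simple classical group containing a Sylow $p$-subgroup is parabolic (by Borel–Tits), contradicting what we just proved.

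The main obstacle I expect is step (iv): Saxl's Lemma 2.6 has a genuine list of exceptional (group, parabolic) pairs where his inequality could a priori hold, and one must verify for each that the extra arithmetic we carry — $v$ a perfect square and $\lambda = r/(k+1) = (r/\lambda) \cdot \lambda^2 / r$ wait, simply $\lambda \mid k$ with $v = k^2$ — is incompatible. This is a finite check but requires care: for each exceptional $v$ one writes $k = \sqrt{v}$, checks $\lambda (k+1) = r$ divides $|G_x|$ and that $\lambda \mid k$, and shows no consistent $\lambda$ exists. A secondary subtlety is making sure the reduction "$G_x$ parabolic $\Rightarrow$ $X_x$ parabolic in $X$" is clean; here one uses that for the classical groups under consideration (with the rank bounds of Remark \ref{mag3}) the only novelty parabolics are well understood and behave as required, so that the index $[X:X_x] = [G:G_x] = v$ and Saxl's Lemma 2.6, which is stated for $X$, applies directly. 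Finally, one should double-check that $(v-1)_p$ is indeed divisible by $(k+1)_p$ and not merely that $(k+1) \mid (v-1)$ — but since $v-1 = k^2-1 = (k-1)(k+1)$, the factor $k+1$ divides $v-1$ outright and $(v-1)_p \geq (k+1)_p$, which is all Saxl's argument needs.
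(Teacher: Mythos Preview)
Your overall shape is right --- invoke Saxl, derive inequality (\ref{ThnxSaxl}), rule out the parabolic case, then conclude $p\mid k$ via Borel--Tits/Tits' Lemma --- but two points are genuinely wrong, not merely vague.

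First, you have mis-stated what Saxl's Lemma~2.6 provides and what the inequality actually needs. Lemma~2.6 does not ``classify when $v<(v-1)_p^2$ holds''; it asserts that (outside the exceptional families) the parabolic action has a \emph{subdegree that is a power of $p$}. Since $k+1$ divides every nontrivial subdegree (Lemma~\ref{PP}(3)), this forces $k+1$ itself to be a $p$-power, whence $k+1\le (v-1)_p$ and so $v=k^2<(k+1)^2\le (v-1)_p^2$. Your final paragraph's ``double-check'' that $(v-1)_p\ge (k+1)_p$ is \emph{not} enough: you need $(v-1)_p\ge k+1$, and that only follows because $k+1$ is a $p$-power. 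Without the $p$-power-subdegree input from Saxl, the inequality (\ref{ThnxSaxl}) simply does not follow, and your worry about circularity with $p\mid k$ evaporates for the same reason --- you never need $p\mid k$ in the parabolic argument.

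Second, the exceptions to Saxl's Lemma~2.6 are not ``finitely many listed exceptions'' that can be killed by a hand check. They are the \emph{infinite families} $X\cong PSL_n(q)$ (all parabolics) and $X\cong P\Omega_n^+(q)$ with $n/2$ odd and $G_x$ of type $P_{n/2}$, where no $p$-power subdegree is available. The paper handles these by separate, substantive arguments: for $P\Omega_n^+(q)$ one adapts Saxl's Section~5(1) with $r/\lambda$ in place of $r$ to force $n=10$ and then uses a coprimality/square argument (\cite{Rib}, A7.1) to get a contradiction; for $PSL_n(q)$ one adapts Saxl's Section~7(1) to force the parabolic type $i\le 2$, and then the Diophantine constraint $k^2=\frac{q^n-1}{q-1}$ (for $i=1$) or $k^2=\frac{(q^n-1)(q^{n-1}-1)}{(q^2-1)(q-1)}$ (for $i=2$) is pinned down to a handful of $(n,q)$ via \cite{Rib}, A8.1, each of which is then eliminated by checking that $b=\lambda k(k+1)$ is not a transitive permutation degree for $G$. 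Your step~(iv) as written does not cover any of this.
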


\begin{proof}

Assume that $G_{x}$ is a parabolic subgroup of $G$.
Suppose that $X$ is not isomorphic to $PSL_{n}(q)$, or to $P\Omega _{n}^{+}(q)$ for $n/2
$ odd. Then $k+1=p^{s}$ for some $s\geq 1$ by \cite{Saxl}, Lemma
2.6, and by Lemma \ref{PP}(3). Then $k+1$ divides $\left(
[G:G_{x}]-1\right) _{p}$, as $k^{2}=[G:G_{x}]$, and hence we obtain (\ref{ThnxSaxl}) by Lemma \ref{desdes}(3). \\
Let $i$ be the dimension of the subspace of $PG_{n}(q)$ preserved by $G_{x}$. In \cite{Saxl} it is shown that $\left([G:G_{x}]-1\right) _{p}$ is a divisor of one of the following numbers:

\begin{enumerate}
\item $q$ for $X\cong PSp_{n}(q)$, $P\Omega _{n}(q)\,$;

\item $q$, $q^{2}$ or $q^{3}$ for $X\cong $ $PSU_{n}(q^{1/2})$, $n>3$ and $(n,q) \neq (4,2)$,
according to whether $i\neq \lbrack n/2 \rbrack -1$, or $n$ even and $i=(n-2)/2$, or $n$ odd
and $i=(n-3)/2$ respectively;

\item $q^{2}$ or $8$ for $P\Omega _{n}^{\varepsilon}(q)$, with $\varepsilon = \pm$, and  $n/2$ even and $n/2>4$ for $\varepsilon=+$;

\item $(3,q)q$ or $8$ for $P\Omega _{8}^{+}(q)$.
\end{enumerate}

Hence, the same argument used by Saxl in \cite{Saxl} can be deployed to show that (\ref{ThnxSaxl}) is not fulfilled in cases (1)--(4). So these cases are
ruled out. Clearly, $X\cong PSU_{4}(2)$ cannot occur since $PSU_{4}(2) \cong PSp_{4}(3)$. Finally, $X\cong $ $PSU_{3}(q^{1/2})$ cannot occur. Indeed, $%
k^{2}=q^{3/2}+1$ has no integer solutions by \cite{Rib}, A5.1, since $q^{1/2} \neq 2$.

Assume that $X\cong P\Omega _{n}^{+}(q)$ with $n/2$ odd. As pointed out in \cite{Saxl}, it is still true that $X$ has a subdegree which is a power of $p$, provided that $G_{x}$ is not parabolic of type $n/2$. Hence we still obtain (\ref{ThnxSaxl}). However, it is not fulfilled since $\left([G:G_{x}]-1\right) _{p}$ is $q^{2}$ or $8$. Hence, the cases where $G_{x}$ is parabolic, but not of type $n/2$, are ruled out.\\
Assume that $G_{x}$ is parabolic of type $n/2$. We may repeat the argument of Saxl in \cite{Saxl}, Section 5.(1), with $r/\lambda$ in the role of his $r$ and we obtain that $k^{2}>q^{n/4(n/2-1)}$ and $k+1=r/\lambda \leq q(q^{n/2}-1)$, and hence $n=10$, as $(r/\lambda)^{2}>k^{2}$. Thus
$$k^{2}=\left( q+1\right) ^{2}\left( q^{2}-q+1\right) \left(
q^{2}+1\right) \left( q^{4}+1\right).$$
Hence $ q^{2}-q+1$, which is equal to $(q-1)^2+(q-1)+1$, is a square, as it is coprime to the other factors of $\frac{k^{2}}{(q+1)^{2}}$. However this is impossible by \cite{Rib}, A7.1, since $q$ is a power of a prime.  

Finally, assume that $X\cong PSL_{n}(q)$. Again, proceeding as in \cite{Saxl}%
, Section 7.(1), with $r/\lambda $ in the role of $r$, we obtain $i\leq
2$. Assume that $i=2$. Then $k^{2}=\frac{(q^{n}-1)(q^{n-1}-1)}{(q^{2}-1)(q-1)%
}$. Since $\left( \frac{q^{n}-1}{q-1},\frac{q^{n-1}-1}{q-1}\right) =1$,
either $n$ is odd and $\frac{q^{n}-1}{q-1}$ is a square, or $n$ is even and $%
\frac{q^{n-1}-1}{q-1}$ is a square. Then either $(n,q)=(5,3)$ or $(n-1,q)=(5,3)$,
respectively, by \cite{Rib}, A8.1. However, both case are ruled out as they
yield $k^{2}=1210$ and $\allowbreak 11\,011$ respectively. Thus $i=1$, $k^{2}=\frac{q^{n}-1}{q-1}$, and hence $%
(n,q,k)=(4,7,20)$ or $(5,3,11)$ again by \cite{Rib}, A8.1. If $(n,q,k)=(4,7,20)$,
then $b=21\cdot 20\cdot \lambda $ with $\lambda \mid 20$ and $\lambda \geq 2$%
. However, $ PSL_{4}(7) \unlhd G\leq PGL_{4}(7)$ has no transitive permutation representations
of degree $b$ by \cite{BHRD}, Tables 8.8--8.9. So, $(n,q,k)=(5,3,11)$, and
hence $b=12\cdot 11^{2}$ since $\lambda \mid 11$ and $\lambda \geq 2$.
As $G\cong PSL_{5}(3)$ has no transitive permutation representations
of degree $12\cdot 11^{2}$ by \cite{BHRD}, Tables 8.18--8.19, this case is ruled out. Therefore $G_{x}$ is not a parabolic subgroup of $G$, and hence $[G:G_{x}]$ is divisible by $p$ by Tits' Lemma (see \cite{Sei}, Theorem 1.6). Thus $p \mid k$. 
\end{proof}

\bigskip

A divisor $r$ of $q^{e}-1$ that is coprime to each $q^{i}-1$ for $i<e$ is
said to be a \emph{primitive divisor}, and we call the largest primitive
divisor $\Phi _{e}^{\ast }(q)$ of $q^{e}-1$ the \emph{primitive part} of $%
q^{e}-1$. One should note that $\Phi _{e}^{\ast }(q)$ is strongly related to cyclotomy
in that it is equal to the quotient of the cyclotomic number $\Phi _{e}(q)$ and $(n,\Phi _{e}(q))$
when $e>2$. Also $\Phi _{e}^{\ast }(q)>1$ for $e>2$ and $(q,e)\neq (2,6)$ by
Zsigmondy's Theorem (for instance, see \cite{Rib}, P1.7).

The order of $X$ is clearly divisible by $\Phi _{ef}^{\ast }(p)$, where $e=n$ for $X$ isomorphic to $PSL_{n}(q)$, $%
PSp_{n}(q)$, $P\Omega _{n}^{-}(q)$, or $PSU_{n}(q^{1/2})$, with $q$ square
and $n$ odd, $e=n-1$ for $P\Omega _{n}(q)$, with $nq$ odd, or $%
PSU_{n}(q^{1/2})$, with $q$ square and $n$ even, and $e=n-2$ for $P\Omega _{n}^{+}(q)$.

\bigskip

\begin{lemma}
\label{cyclont}$\Phi _{ef}^{\ast }(p)>1$.
\end{lemma}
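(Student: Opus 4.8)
The plan is to invoke Zsigmondy's theorem directly, after checking that the relevant exponent $ef$ is large enough to avoid its exceptions. Recall that $\Phi_e^{\ast}(q) > 1$ whenever $e > 2$ and $(q,e) \neq (2,6)$, and that in our setting $q = p^f$ so that $\Phi_{ef}^{\ast}(p)$ is the primitive part of $p^{ef}-1$. Thus the statement $\Phi_{ef}^{\ast}(p) > 1$ follows provided $ef > 2$ and $(p, ef) \neq (2,6)$. First I would recall the definition of $e$ from the paragraph preceding the lemma: $e = n$ for $X$ isomorphic to $PSL_n(q)$, $PSp_n(q)$, $P\Omega_n^-(q)$, or $PSU_n(q^{1/2})$ with $q$ square and $n$ odd; $e = n-1$ for $P\Omega_n(q)$ with $nq$ odd or $PSU_n(q^{1/2})$ with $q$ square and $n$ even; and $e = n-2$ for $P\Omega_n^+(q)$.

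Next I would go through the cases listed in Remark \ref{mag3}, where the small-rank cases have already been excluded, to verify the bound $ef \geq 3$ and dispose of the exceptional pair $(p,ef) = (2,6)$. In cases (1), (2) of Remark \ref{mag3} we have $n \geq 3$, so $e = n \geq 3$; in case (3), $n \geq 4$ and $n$ even, so $e = n \geq 4$; in case (4), $X \cong P\Omega_5(q)$ with $q$ odd so $e = n - 1 = 4$; in case (5), $X \cong P\Omega_n^{\pm}(q)$ with $n \geq 6$, so $e \geq n - 2 \geq 4$. In every case $e \geq 3$, hence $ef \geq 3$ as well, since $f \geq 1$. It remains to rule out $(p, ef) = (2,6)$. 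If $p = 2$ then $q = 2^f$; the possibility $ef = 6$ with $f \geq 1$ forces $(e,f) \in \{(6,1),(3,2),(2,3)\}$, and since $e \geq 3$ the only surviving options are $(e,f) = (6,1)$ or $(3,2)$, i.e. $X \cong P\Omega_8^+(2)$ (from $e = n-2 = 6$) or $X$ has $e = 3$, which by the case analysis above would require $X \cong PSL_3(q)$ or $PSU_3(q^{1/2})$ — but the latter is excluded with $q^{1/2} = 2$ in Remark \ref{mag3}(2), leaving $PSL_3(4)$ ($f = 2$, $e = 3$). For these few residual groups one checks $|X|$ is still divisible by a primitive prime divisor of $2^{ef}-1$ by a direct computation, or more simply one observes that in fact $\Phi_e^{\ast}(q) > 1$ already holds here ($\Phi_6^{\ast}(2) = ?$ — actually $2^6 - 1 = 63 = 9 \cdot 7$ and $7$ is primitive, so $\Phi_6^{\ast}(2) = 7 > 1$); the apparent exception $(2,6)$ in Zsigmondy's statement for $\Phi_e^{\ast}$ refers to $2^6-1$ having no primitive prime divisor, but here one verifies directly that the order of $X$ is divisible by $7$ in both residual cases, or simply notes $\Phi_{ef}^{\ast}(p)$ as defined (largest primitive divisor, not necessarily prime) can still exceed $1$.

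The only real subtlety — and the step I would be most careful about — is the interplay between the genuine Zsigmondy exception and what is actually being claimed: the standard exception is that $2^6 - 1$ has no \emph{primitive prime} divisor, yet $63$ has primitive divisors such as $9$ that are not prime-powers of a single primitive prime... actually $63 = 7 \cdot 9$ and $7$ \emph{is} a primitive prime divisor of $2^6-1$, so $6$ is not an exception here; the genuine exception for $\Phi_e^{\ast}(2)=1$ is $e=6$ only when one insists... let me just say: the cleanest route is to note that the stated exceptions to Zsigmondy (namely $\Phi_e^{\ast}(q) = 1$) occur only for $(q,e) = (2,6)$ and for $e \leq 2$, and then verify that neither applies: $ef \geq 3$ always, and if $(p, ef) = (2,6)$ then since $e \geq 3$ we get $e \in \{3, 6\}$, and in each such case (finitely many small groups, already listed) one checks by inspection of $|X|$ that $\Phi_{ef}^{\ast}(p) > 1$ regardless. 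I expect the write-up to be short, with the bulk of the argument being the case-by-case confirmation that $e \geq 3$ using Remark \ref{mag3}, and a one-line dismissal of the $(2,6)$ exception.

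\begin{proof}
By the definition of $e$ recalled above and by Remark \ref{mag3}, in each case we have $e\geq 3$: indeed $e=n\geq 3$ for $X\cong PSL_{n}(q)$ or $PSU_{n}(q^{1/2})$; $e=n\geq 4$ for $X\cong PSp_{n}(q)$; $e=n-1\geq 4$ for $X\cong P\Omega_{5}(q)$ or $X\cong PSU_{n}(q^{1/2})$ with $n$ even; and $e\geq n-2\geq 4$ for $X\cong P\Omega_{n}^{\pm}(q)$ with $n\geq 6$. Hence $ef\geq e\geq 3$, since $f\geq 1$.

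By Zsigmondy's Theorem (see \cite{Rib}, P1.7), $\Phi_{d}^{\ast}(w)>1$ for every prime power $w$ and every integer $d>2$, with the single exception $(w,d)=(2,6)$. Applying this with $w=p$ and $d=ef$, we obtain $\Phi_{ef}^{\ast}(p)>1$ unless $(p,ef)=(2,6)$. Suppose then that $p=2$ and $ef=6$. Since $e\geq 3$ and $e\mid ef$, we have $e\in\{3,6\}$, so either $X\cong P\Omega_{8}^{+}(2)$ (with $e=n-2=6$, $f=1$) or $e=3$, which by the case analysis above forces $X\cong PSL_{3}(q)$ with $q=p^{f}=4$, as $PSU_{3}(2^{1/2})$ is excluded in Remark \ref{mag3}(2). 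In either case $|X|$ is divisible by $7$: for $X\cong PSL_{3}(4)$ this is clear since $|PSL_{3}(4)|=2^{6}\cdot 3^{2}\cdot 5\cdot 7$, and for $X\cong P\Omega_{8}^{+}(2)$ the order $|P\Omega_{8}^{+}(2)|=2^{12}\cdot 3^{5}\cdot 5^{2}\cdot 7$ is likewise divisible by $7$. Since $7$ is a primitive prime divisor of $2^{6}-1=63$, it follows that $\Phi_{6}^{\ast}(2)\geq 7>1$ in these residual cases as well. Therefore $\Phi_{ef}^{\ast}(p)>1$ in all cases.
\end{proof}
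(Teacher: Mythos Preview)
Your proof contains a fatal error: the claim that $7$ is a primitive prime divisor of $2^{6}-1$ is false. Indeed $7=2^{3}-1$, so $7\mid 2^{3}-1$ and hence $7$ is \emph{not} a primitive divisor of $2^{6}-1$. Likewise $3\mid 2^{2}-1$, so $3$ is not primitive either. Thus $\Phi_{6}^{\ast}(2)=1$; this is precisely why $(2,6)$ is the genuine Zsigmondy exception, and your attempt to dismiss it number-theoretically cannot succeed. The lemma is \emph{not} a pure number-theoretic statement: it asserts that, under the standing hypotheses on $\mathcal{D}$ and $G$, the exceptional case $(p,ef)=(2,6)$ does not arise. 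That requires design-theoretic input.

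Your case analysis is also incomplete. With $p=2$ and $ef=6$ you list only $PSL_{3}(4)$ and $P\Omega_{8}^{+}(2)$, but there are further possibilities: for $e=3$, $f=2$ one also gets $PSU_{4}(2)$ (here $n=4$ is even, $q^{1/2}=2$, and $e=n-1=3$); for $e=6$, $f=1$ one gets $PSL_{6}(2)$, $PSp_{6}(2)$ and $P\Omega_{6}^{-}(2)$ in addition to $P\Omega_{8}^{+}(2)$. The paper's proof enumerates exactly these six groups and eliminates each one using the constraint $[X:X_{x}]=k^{2}$ together with information on the subgroup structure and permutation representations of $X$ (from \cite{At}, \cite{BHRD}), plus Lemma~\ref{Orbits} and Lemma~\ref{PP}(3). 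You will need to do the same.
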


\begin{proof}
Assume that $\Phi _{ef}^{\ast }(p)=1$. Since $n\geq 3$ by Remark \ref{mag3},
it follows that $(ef,p)=(6,2)$ (see above). Then $X$ is isomorphic to one of
the groups $PSL_{3}(4)$, $PSU_{4}(2)$, $PSL_{6}(2)$, $PSp_{6}(2)$, $P\Omega
_{6}^{-}(2)$ or $P\Omega _{8}^{+}(2)$.

Assume that $X\cong PSL_{3}(4)$. Then $35\mid \left\vert X_{x}\right\vert $,
since $\left[ X:X_{x}\right] =k^{2}$, but $X$ does not contain such a group
by \cite{At}.

Assume that $X\cong PSU_{4}(2)$. Then $PSU_{4}(2)\trianglelefteq G\leq
PSU_{4}(2):Z_{2}$. Also $5\mid \left\vert X_{x}\right\vert $ and $\left[
X:X_{x}\right] =k^{2}$, with $k\mid 72$ and $k+1\mid \left\vert G\right\vert 
$. Easy computations show that $k=3,4,8$ or $9$. However, $X$ does not have $k^{2}$ as a transitive permutation of degree for any such values of $k$ by \cite{At}.

Assume that $X\cong PSL_{6}(2)$. Thus $2\cdot 5\cdot 31\mid \left\vert
X_{x}\right\vert $, since $\left[ X:X_{x}\right] =k^{2}$, and hence either $%
X_{x}\cong SL_{5}(2)$ or $X_{x}\cong E_{2^{5}}:SL_{5}(2)$ by \cite{BHRD},
Tables 8.24 and 8.25, and by \cite{At}. Since  $\left\vert X_{x}\right\vert _{7}=7$ and $\left\vert X \right\vert _{7}=7^{2}$, it follows that $\left[ X:X_{x}\right]_{7}=7 $, but this contradicts $\left[
X:X_{x}\right] =k^{2}$.

Assume that $X\cong PSp_{6}(2)$. Then $G=X$, since $\mathrm{Out(}X\mathrm{)}%
=1$. Also $\left[ G:G_{x}\right] =k^{2}$ implies $2\cdot 5\cdot 7\mid
\left\vert G_{x}\right\vert $. Then either $G_{x}=A_{7}$ and $k=24$, or $%
G_{x}\cong S_{8}$ and $k=6$ by \cite{At}. The former case is ruled out,
since it contradicts Lemma \ref{PP}(3). Indeed, $k+1=5^{2}$ does not divide $%
\left\vert G_{x}\right\vert $. Thus, $G_{x}\cong S_{8}$, $k=6$ and hence $%
\lambda =2,3$ or $6$, as $\lambda \mid k$ and $\lambda \geq 2$. However, $S_{8}$ does not have transitive permutation representations of degree 
$r=14,21$ or $42$ by \cite{At}. Thus $X\cong PSp_{6}(2)$ is
excluded.

Assume that $X\cong P\Omega _{6}^{-}(2)$. Then $5\mid \left\vert
X_{x}\right\vert $, since $\left[ X:X_{x}\right] =k^{2}$, and hence $k\mid
2^{3}\cdot 3^{2}$. Also $\frac{k+1}{(k+1,2)}$ divides $\left\vert
X\right\vert $ by Lemma \ref{Orbits}, since $\mathrm{Out(}X\mathrm{)}\cong
Z_{2}$. Easy computations show that $k=3,4,8$ or $9$. However, for these $k$
the group $X$ does not have a transitive permutation representation of
degree $k^{2}$ by \cite{At}, and so $X\cong P\Omega _{6}^{-}(2)$ is
ruled out too.

Assume that $X\cong P\Omega _{8}^{+}(2)$. Therefore $21\mid \left\vert
X_{x}\right\vert $, since $\left[ X:X_{x}\right] =k^{2}$, and hence $%
k^{2}\mid 2^{12}\cdot 3^{4}\cdot 5^{2}$. Also $\left[ X:X_{x}\right] $ must
be divisible by $120$, $135$ or $960$ by \cite{At}. Then $15$ divides $\left[
X:X_{x}\right] $ in each of these cases. Actually, $15\mid k$ as $\left[
X:X_{x}\right] =k^{2}$. Then $k=15\theta $, with $\theta \mid 2^{6}\cdot 3$
and $\theta >1$, and $\frac{k+1}{(k+1,2)}\mid \left\vert X\right\vert $ as $%
\mathrm{Out(}X\mathrm{)}\cong S_{3}$. It is straightforward to check that $%
\frac{k+1}{(k+1,2)}\nmid \left\vert X\right\vert $ for any $\theta \mid
2^{6}\cdot 3$ and $\theta >1$, and hence $X\cong P\Omega _{8}^{+}(2)$ cannot
occur.
\end{proof}

\bigskip

The following two lemmas play a central role in proving that $X_{x}$ is large subgroup of $X$ when $\left(
\Phi _{ef}^{\ast }(p),\lambda \left\vert G_{x,B}\right\vert \right) >1$.

\bigskip

\begin{lemma}
\label{Haide}Let $(x,B)$ be any flag of $\mathcal{D}$. Then $\left[ G_{x}:X_{x}%
\right] <1+ef$ provided that one of the following holds:

\begin{enumerate}
\item $\lambda \left\vert G_{x,B}\right\vert $ is odd;

\item $\lambda \left\vert G_{x,B}\right\vert \equiv 2 \pmod 4$, $\left(
\Phi _{ef}^{\ast }(p),\lambda \left\vert G_{x,B}\right\vert \right) >1$ and
either $X\cong PSU_{n}(q^{1/2})$, with $n$ even, or $X\cong P\Omega
_{8}^{+}(q)$.
\end{enumerate}
\end{lemma}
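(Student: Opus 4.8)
The statement bounds the index $[G_x:X_x]$, which (since $X \trianglelefteq G \leq \operatorname{Aut}(X)$) divides $|\operatorname{Out}(X)|$. The key mechanism is already in place: by Lemma~\ref{Orbits}, $\frac{k+1}{\gcd(k+1,|\operatorname{Out}(X)|)}$ divides $|X_x|$, and by Lemma~\ref{PP}(3), $k+1$ divides $|y^{G_x}|$ for every $y \neq x$. So the flag-transitivity equation $|G_x| = \lambda(k+1)|G_{x,B}|$ ties the quantity $\lambda|G_{x,B}|$ to $k+1$ and hence to $[G:X]$. The idea is to combine a parity/prime-divisor argument with the known structure of $\operatorname{Out}(X)$ for classical $X$. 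First I would recall that $|\operatorname{Out}(X)| = d \cdot f \cdot g$ where $d = |\operatorname{Out}(X) \cap \text{diagonal}|$, $f$ is the field-automorphism degree, and $g$ is the graph-automorphism contribution (at most $2$, except $g \leq 6$ for $P\Omega_8^+$), with $d$ itself a small number tied to $\gcd(n, q-\varepsilon)$. The assertion $[G_x:X_x] < 1+ef$ is essentially saying that the diagonal-and-graph part of $[G_x:X_x]$ cannot be too large under the hypotheses, so that $[G_x:X_x]$ is squeezed below $1+ef$.

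For part (1), where $\lambda|G_{x,B}|$ is odd: since $|G_x| = \lambda(k+1)|G_{x,B}|$ and $p \mid k$ by Proposition~\ref{greatsaxl}, while $|X_x| \cdot [G_x:X_x] = |G_x|$, I would extract the $2$-part. The point is that $|X_x|_2$ already accounts for essentially all the $2$-power in $|G_x|$ coming from $X$, forcing $[G_x:X_x]$ and $(k+1)$ together to supply only the $2$-part visible in $|G_x|/|X_x|_2$; combined with $\lambda|G_{x,B}|$ odd, this pins $[G_x:X_x]_2$ down, and then a divisibility count using Lemma~\ref{Orbits} (that $\frac{k+1}{\gcd(k+1,|\operatorname{Out}(X)|)} \mid |X_x|$) rules out $[G_x:X_x]$ having any odd prime factor exceeding what $f$ can contribute — the diagonal part $d$ of $\operatorname{Out}(X)$ divides $\gcd(n,q\mp 1)$, which divides $k^2 = [X:X_x]$, and if a diagonal prime divided $[G_x:X_x]$ it would have to divide $k+1$ as well (since it divides $|\operatorname{Out}(X)|$ and appears in $|G_x|/|X_x|$), contradicting $\gcd(k,k+1)=1$. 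So $[G_x:X_x]$ divides $f$ times at most the graph part, and a short case check on $g$ gives $[G_x:X_x] \leq fg < 1+ef$ since $e \geq 3$ (Remark~\ref{mag3}) and $g$ is small.

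For part (2), the hypothesis $\lambda|G_{x,B}| \equiv 2 \pmod 4$ together with $(\Phi_{ef}^\ast(p), \lambda|G_{x,B}|) > 1$ is designed exactly to handle the cases $X \cong PSU_n(q^{1/2})$ with $n$ even and $X \cong P\Omega_8^+(q)$, where the graph automorphism has even order (order $2$ for unitary, and $\operatorname{Out}$ involves $S_3$ for $P\Omega_8^+$). Here a primitive prime divisor $w$ of $p^{ef}-1$ divides $\lambda|G_{x,B}|$, hence divides $|G_x|$, hence divides $|X_x|$ (as $w \nmid |\operatorname{Out}(X)|$ — primitive prime divisors of $p^{ef}-1$ for $ef \geq 3$ are large and do not divide the small number $|\operatorname{Out}(X)|$; this uses $\Phi_{ef}^\ast(p) > 1$ from Lemma~\ref{cyclont}). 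Then I would run the same $2$-part bookkeeping: the congruence $\lambda|G_{x,B}| \equiv 2 \pmod 4$ says exactly one factor of $2$ comes from that term, and the remaining even part of $|\operatorname{Out}(X)|$-contribution to $[G_x:X_x]$ is then forced to be $1$ by the same "$\gcd(k,k+1)=1$" obstruction applied to the prime $2$ (since $2 \mid k$ in these characteristics is impossible — $q$ is odd for $\Omega$ of odd-dimensional type, but here $q$ may be even, so one must instead argue that a second factor of $2$ in $[G_x:X_x]$ would force $2 \mid k+1$, i.e. $k$ odd, against $p \mid k$ with $p=2$; when $q$ is odd, $2 \mid k+1$ or $2 \mid k$ are both possible, so here one uses the primitive-divisor $w$ instead: $w \mid |X_x|$ means $w^2 \mid k^2$ would be needed if $[G_x:X_x]$ were to absorb $w$, but $w \nmid [G_x:X_x]$, consistent, and the contradiction is purely on the $2$-side). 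After this, $[G_x:X_x]$ divides $f$ times the odd part of $g$, which for $PSU_n$ is $1$ and for $P\Omega_8^+$ is $3$, giving $[G_x:X_x] \leq 3f < 1 + ef$ since $e \geq 6$ for $P\Omega_8^+$ and $e = n-1 \geq 3$ for unitary.

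**Main obstacle.** The delicate point is the $2$-part bookkeeping in part (2) when $q$ is odd, where neither $2 \mid k$ nor $2 \mid k+1$ is a priori forbidden, so one cannot directly invoke $\gcd(k,k+1)=1$ on the prime $2$; the argument must instead leverage that $\lambda \mid k$ forces $\lambda \mid k$ and then compare $2$-adic valuations of $k^2 = [X:X_x]$, $|X_x|$, and $|G_x| = \lambda(k+1)|G_{x,B}|$ simultaneously, using the precise value of $|X|_2$ for $PSU_n(q^{1/2})$ and $P\Omega_8^+(q)$ from the order formulas. I expect this to be where the authors invoke the specific structure of these two families (which is exactly why the statement singles them out), so the proof will split cleanly into the two named cases and use explicit $2$-parts of the group orders; the rest is routine divisibility.
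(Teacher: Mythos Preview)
Your proposal misses the central mechanism in both parts: the proof hinges on showing that $k$ is \emph{even}, and neither of your arguments establishes this.

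For part~(1), the paper argues as follows. Since $\lambda$ is odd, any involution $\sigma\in G$ (one exists because $|X|$ is even) moving some point $y$ must fix at least one of the $\lambda$ blocks through $y$ and $y^{\sigma}$; call it $C$. Then $\sigma\in G_C\setminus G_{y,C}$, and since $|G_{y,C}|=|G_{x,B}|$ is odd while $|G_C|=k\,|G_{y,C}|$, the factor $2$ in $|G_C|$ must come from $k$. Hence $k$ is even, $k+1$ is odd, and therefore $|G_x|=\lambda(k+1)|G_{x,B}|$ is odd. This forces $[G_x:X_x]$ to be odd, so it divides $|\mathrm{Out}(X)|_{2'}$, and a direct case check of $|\mathrm{Out}(X)|_{2'}$ from \cite{KL}, Table~5.1.A, gives the bound $<1+ef$. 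Your proposed argument instead claims that any diagonal prime dividing $[G_x:X_x]$ would divide both $k^2$ and $k+1$; but the assertion that the diagonal part $d=\gcd(n,q\mp 1)$ divides $k^2=[X:X_x]$ is unjustified --- there is no general reason for $d$ to divide the index of a given $X_x$ in $X$, and the step fails.

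For part~(2), the paper again needs $k$ even, and here the proof is far less elementary: assuming $k$ odd means $v=k^2=[G:G_x]$ is odd, so the Liebeck--Saxl classification \cite{LS} of primitive permutation groups of odd degree applies. Each of the five resulting possibilities for $G_x$ (parabolic, stabilizer of a non-singular subspace, $\mathcal{C}_4$, imprimitive, or one of two sporadic $P\Omega_8^+$ cases) is then eliminated using the hypothesis $(\Phi_{ef}^\ast(p),\lambda|G_{x,B}|)>1$ together with Proposition~\ref{greatsaxl} and explicit index computations. Only after this does the $2$-adic bookkeeping you describe come into play: with $k$ even one gets $\lambda(k+1)|G_{x,B}|\equiv 2\pmod 4$, hence $4\nmid[G_x:X_x]$, and the bound follows. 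Your proposal flags exactly this $2$-part argument as ``the delicate point'' but offers no route to proving $k$ even; pure $2$-adic valuation comparisons between $|X|$, $|X_x|$, and $|G_x|$ cannot by themselves exclude $k$ odd, which is why the paper invokes the deep classification result \cite{LS}.
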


\begin{proof}
Suppose that $\lambda \left\vert G_{x,B}\right\vert $ is odd. Let $\sigma $
be an involution of $G$ and let $y$ be any point such that $y^{\sigma }\neq
y $. Then $\sigma $ permutes the $\lambda $ blocks incident with $%
y,y^{\sigma } $, and hence $\sigma $ fixes one of them, say $C$, as $\lambda 
$ is odd. Thus $k$ is even, since $\left\vert G_{C} \right\vert=k\left\vert G_{y,C}\right\vert$ and $\left\vert G_{y,C}\right\vert $ is odd,
and hence $\lambda (k+1)\left\vert G_{x,B}\right\vert $ is odd. Therefore, $\left[
G_{x}:X_{x}\right] $ is a divisor of $\left\vert \mathrm{Out(}X\mathrm{)%
}\right\vert _{2^{\prime }}$, since $\left\vert X_{x}\right\vert =\frac{\left\vert
G_{x}\right\vert }{\left[ G_{x}:X_{x}\right] }=\frac{\lambda (k+1)\left\vert
G_{x,B}\right\vert }{\left[ G_{x}:X_{x}\right] }$. Since $\left\vert \mathrm{%
Out(}X\mathrm{)}\right\vert _{2^{\prime }}$ is less than, or equal to, $nf$, $%
nf/2 $, $3f$ or $f$ for $X\cong PSL_{n}(q)$, $PSU_{n}(q^{1/2})$, $P\Omega
_{8}^{+}(q)$ or $X$ as in the remaining cases, respectively, by \cite{KL},
Table 5.1.A, and as $n\geq 3$ by Remark \ref{mag3}, it follows that $\left[ G_{x}:H_{x}\right] <1+ef$ for any $X$.

Suppose that $\lambda \left\vert G_{x,B}\right\vert \equiv 2 \pmod 4$, $%
\left( \Phi _{ef}^{\ast }(p),\lambda \left\vert G_{x,B}\right\vert \right)
>1 $, and also that either $X\cong PSU_{n}(q^{1/2})$, with $n$ even, or $X\cong P\Omega
_{8}^{+}(q)$. Assume that $k$ is odd. By \cite{LS}, one of the following holds:
\begin{itemize}
\item[(i).] $G_{x}$ is a parabolic subgroup of $G$, and $q$ is even.
\item[(ii).] $G_{x}$ is the stabilizer of a non-singular subspace of $PG_{n-1}(q)$, and $q$ is odd.
\item[(iii).] $G_{x}$ is a $\mathcal{C}_{4}$-subgroup of $G$.
\item[(iv).] $G_{x}$ stabilizes an orthogonal\ decomposition into $t$ subspaces each of dimension $n/t$ in its natural projective module $V$.
\item[(v).] $X\cong P\Omega _{8}^{+}(q)$, $q$ is a prime such that $q\equiv
\pm 3 \pmod 8$, and $X_{x}$ is isomorphic either to $\Omega _{8}^{+}(2)$
or to $2^{3}\cdot 2^{6}\cdot PSL_{3}(2)$.
\end{itemize}
Case (i) is ruled out by Proposition \ref{greatsaxl}. Since $\lambda \left\vert G_{x,B}\right\vert $
divides $\left\vert G_{x}\right\vert $, and since $\left( \Phi _{ef}^{\ast
}(p),\lambda \left\vert G_{x,B}\right\vert \right) >1$, it follows that $%
\left\vert G_{x}\right\vert $ is divisible by a primitive prime divisor $u$
of $p^{ef}-1$. Thus, case (iii) is excluded by \cite{KL}, Section 4.4 and Proposition 5.2.15.(i).\\
Assume that case (ii) holds. If $X\cong PSU_{n}(q^{1/2})$, with $n$ even, then $G_{x}$ is the
stabilizer of a non-singular point of $PG_{n-1}(q)$ by \cite{KL},
Proposition 4.1.4.(II), since $u\mid \left\vert G_{x}\right\vert $. Thus, 
\[
k^{2}=q^{\left( n-1\right) /2}\frac{q^{n/2}-1}{q^{1/2}+1} 
\]%
and hence $f\equiv 0 \pmod 4$, as $q^{\left( n-1\right) /2}$ must be a
square and $n$ is even. So $\frac{q^{n/2}-1}{q^{1/2}+1}$ is even, as $q$ is odd, whereas $k$
is odd.

If $X\cong P\Omega _{8}^{+}(q)$, then $G_{x}$ is either the stabilizer in $G$ of a
non-singular point, or the stabilizer in $G$ of a non-singular line of type $-$ of $PG_{7}(q)$ by \cite{KL}, Propositions 4.1.6.(II), since $q$ is odd and $\left( \Phi _{6f}^{\ast
}(p),\lambda \left\vert G_{x,B}\right\vert \right) >1$. In the former case, $k^{2}=q^{3}(q^{2}-1)\frac{q^{2}+1}{2}$ and hence $q^{2}-1$ is a square. So, $q^{2}=z^{2}+1$ for some positive integer $z$, but this contradicts \cite{Rib}, A3.1. Therefore, $G_{x}$ is the stabilizer in $G$ of a non-singular line of type $-$ of $PG_{7}(q)$, then  
\[
k^{2}=q^{6}\frac{(q-1)^{2}}{(4,q-1)}\allowbreak \left( q^{2}+1\right) \left(
q^{2}+q+1\right). 
\]%
So, $q^{2}+q+1$ is a square, but this contradicts \cite{Rib}, A7.1.

Assume that case (iv) holds. Then $%
u\mid \frac{n}{t}$ by \cite{KL}, Section 4.2, and hence $1+ef\leq u\leq n/2$ by \cite%
{KL}, Proposition 5.2.15.(i). This is impossible, since $n\geq 4$ in both cases by Remark %
\ref{mag3}.\\
Finally, assume that case (v) occurs. Then $\left\vert
G_{x}\right\vert \leq 24\left\vert X_{x}\right\vert $ and hence $\left\vert
P\Omega _{8}^{+}(q)\right\vert \leq 24^{3}\left\vert X_{x}\right\vert ^{3}$,
as $G_{x}$ is a large subgroup of $G$ by Lemma \ref{PP}(2). Then either $X_{x}\cong
\Omega _{8}^{+}(2)$ and $p=3,5$ or $11$, or $X_{x}\cong 2^{3}\cdot
2^{6}\cdot PSL_{3}(2)$ and $p=3$. However, all the cases are ruled out, as
they violate $\left[ X:X_{x}\right] =k^{2}$. Thus, $k$ is even and hence $%
\lambda (k+1)\left\vert G_{x,B}\right\vert \equiv 2 \pmod 4$. Arguing as
above, we see that $\left[ G_{x}:X_{x}\right] \mid \lambda (k+1)\left\vert
G_{x,B}\right\vert $, where $\left[ G_{x}:X_{x}\right] =\frac{\left\vert 
\mathrm{Out(}X\mathrm{)}\right\vert }{c}$ for some $c \geq 1$ such that $4\nmid \frac{\left\vert 
\mathrm{Out(}X\mathrm{)}\right\vert }{c}$.

If $X\cong PSU_{n}(q^{1/2})$, with $n$ even, then $\frac{\left\vert \mathrm{%
Out(}X\mathrm{)}\right\vert }{c}\leq nf/4<1+(n-1)f$, as $\left\vert \mathrm{%
Out(}X\mathrm{)}\right\vert =(q^{1/2}+1,n)f$ and as $f$ is even. Then $\left[
G_{x}:H_{x}\right] =\left\vert \mathrm{Out(}X\mathrm{)}\right\vert
/c<1+(n-1)f$ and we obtain the assertion in this case.

If $X\cong P\Omega _{8}^{+}(q)$, then $\frac{\left\vert 
\mathrm{Out(}X\mathrm{)}\right\vert }{c}\leq 6f$, as $4\nmid \frac{\left\vert 
\mathrm{Out(}X\mathrm{)}\right\vert }{c}$, and hence $\left[ G_{x}:H_{x}%
\right] \leq 6f<1+6f$. This completes the proof.
\end{proof}

\begin{lemma}
\label{tamo}Let $(x,B)$ be any flag of $\mathcal{D}$. $\,$If $\left( \Phi
_{ef}^{\ast }(p),\lambda \left\vert G_{x,B}\right\vert \right) >1$, then $%
\lambda \left\vert G_{x,B}\right\vert \geq \left[ G_{x}:H_{x}\right] $.
\end{lemma}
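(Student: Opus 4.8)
The statement to prove is that, under the hypothesis $\left(\Phi_{ef}^{\ast}(p),\lambda\left\vert G_{x,B}\right\vert\right)>1$, we have $\lambda\left\vert G_{x,B}\right\vert\geq[G_{x}:H_{x}]$ (where $H_{x}=X_{x}$ in the notation of the previous lemma). Since $\left\vert X_{x}\right\vert=\left\vert G_{x}\right\vert/[G_{x}:X_{x}]=\lambda(k+1)\left\vert G_{x,B}\right\vert/[G_{x}:X_{x}]$, a primitive prime divisor $u$ of $p^{ef}-1$ that divides $\lambda\left\vert G_{x,B}\right\vert$ by hypothesis must also divide $\left\vert X\right\vert$, hence divides $\left\vert X_{x}\right\vert$ unless it is cancelled by $[G_{x}:X_{x}]$; but $u$ is odd and large (at least $1+ef$), whereas $[G_{x}:X_{x}]\mid\left\vert\mathrm{Out}(X)\right\vert$, so $u\nmid[G_{x}:X_{x}]$. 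Thus $u\mid\left\vert X_{x}\right\vert$, and $X_{x}$ is a maximal (or the intersection with $X$ of a maximal) subgroup of $X$ whose order is divisible by a primitive prime divisor of $p^{ef}-1$. The key idea is to use this strong divisibility condition to pin down the possible $X_{x}$ via the classification of subgroups of classical groups with order divisible by a large primitive prime divisor, as in \cite{GPPS} (together with \cite{KL}, Section 4.4 and Proposition 5.2.15).

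First I would split according to the Aschbacher class of $X_{x}$. Geometric subgroups whose order is divisible by $u$ are severely restricted: in classes $\mathcal{C}_{1}$ these are essentially the stabilizers of subspaces of the appropriate dimension ($1$, $n-1$, or a non-singular subspace in the unitary/orthogonal cases), and in the other geometric classes $u$ forces $e$ to divide a small invariant (as in the proof of Lemma \ref{Haide}, cases (iii) and (iv)), which combined with $n\geq 3$ from Remark \ref{mag3} eliminates most of them. The parabolic case is already excluded by Proposition \ref{greatsaxl}. For the remaining geometric cases and for $\mathcal{C}_{9}$ (almost simple) subgroups, I would use that $X_{x}$ is large (Lemma \ref{PP}(2)) together with the explicit order formulas: each surviving candidate for $X_{x}$ has a known structure, and one computes $[G_{x}:X_{x}]$ directly. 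In every case the outcome is that $\left\vert\mathrm{Out}(X)\right\vert$ — hence $[G_{x}:X_{x}]$, which divides it — is at most roughly $ef$ (precisely, $[G_{x}:X_{x}]<1+ef$ is exactly what Lemma \ref{Haide} delivers whenever its parity hypotheses hold, and one checks these hold here using $\left(\Phi_{ef}^{\ast}(p),\lambda\left\vert G_{x,B}\right\vert\right)>1$).

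The cleanest route is therefore: show $\lambda\left\vert G_{x,B}\right\vert$ is divisible by a primitive prime divisor $u$ of $p^{ef}-1$, whence $u\mid\lambda\left\vert G_{x,B}\right\vert$; then invoke Lemma \ref{Haide}. Indeed, if $\lambda\left\vert G_{x,B}\right\vert$ is odd, Lemma \ref{Haide}(1) gives $[G_{x}:X_{x}]<1+ef\leq u\leq\lambda\left\vert G_{x,B}\right\vert$ and we are done. If $\lambda\left\vert G_{x,B}\right\vert\equiv 2\pmod 4$, then either $X\cong PSU_{n}(q^{1/2})$ with $n$ even or $X\cong P\Omega_{8}^{+}(q)$ — in which case Lemma \ref{Haide}(2) applies and again $[G_{x}:X_{x}]<1+ef\leq u\leq\lambda\left\vert G_{x,B}\right\vert$ — or $X$ is one of the other types, where $\left\vert\mathrm{Out}(X)\right\vert$ (hence $[G_{x}:X_{x}]$) is at most $nf=ef$ or $(n-1)f$ or $3f$ directly from \cite{KL}, Table 5.1.A, and since $u\geq 1+ef>[G_{x}:X_{x}]$ divides $\lambda\left\vert G_{x,B}\right\vert$ we conclude. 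The one remaining possibility is $4\mid\lambda\left\vert G_{x,B}\right\vert$; here $u$ still divides $\lambda\left\vert G_{x,B}\right\vert$ and $u\geq 1+ef$, while $[G_{x}:X_{x}]\leq\left\vert\mathrm{Out}(X)\right\vert\leq ef$ (using $\left\vert\mathrm{Out}(X)\right\vert\leq nf$ for linear, $\leq(n-1)f\cdot\gcd$-type bounds otherwise, all $\leq ef$ since $e\geq n-2$ and $n\geq 3$, with the $P\Omega_{8}^{+}$ triality factor of $3$ absorbed because then $ef=6f$ is comfortably larger), so $\lambda\left\vert G_{x,B}\right\vert\geq u>[G_{x}:X_{x}]$.

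\textbf{Main obstacle.} The delicate point is the bookkeeping on $\left\vert\mathrm{Out}(X)\right\vert$ versus $ef$ across all five families in the case $4\mid\lambda\left\vert G_{x,B}\right\vert$, where Lemma \ref{Haide} gives nothing: one must verify, family by family, that $\left\vert\mathrm{Out}(X)\right\vert<1+ef$ (equivalently that a primitive prime divisor of $p^{ef}-1$, being $\geq 1+ef$, cannot be absorbed into $[G_{x}:X_{x}]$ and is genuinely a divisor of $\left\vert G_{x,B}\right\vert$ rather than of $\lambda$ or $k+1$). The subtlety is that $u\mid\lambda\left\vert G_{x,B}\right\vert$ does not a priori say $u\mid\left\vert G_{x,B}\right\vert$; but $u$ is odd and coprime to $k+1$ is not automatic — so one argues instead that $u\mid\left\vert X_{x}\right\vert$ (as above) and $\left\vert X_{x}\right\vert$ divides $\lambda(k+1)\left\vert G_{x,B}\right\vert$, then uses that $u\nmid k+1$ would follow only after noting $k+1\mid\left\vert G_{x}\right\vert$ and comparing $u$-parts; the honest statement is simply $u\mid\lambda(k+1)\left\vert G_{x,B}\right\vert$, and one must then show $\lambda(k+1)\left\vert G_{x,B}\right\vert/(k+1)=\lambda\left\vert G_{x,B}\right\vert$ still carries $u$, which holds because $\left\vert X\right\vert_{u}=u$ (the primitive prime divisor appears to the first power in the relevant range, by \cite{KL} Proposition 5.2.15 or \cite{GPPS}) and $u\mid\left\vert X_{x}\right\vert\mid\left\vert G_{x}\right\vert=\lambda(k+1)\left\vert G_{x,B}\right\vert$ forces $u\mid\lambda\left\vert G_{x,B}\right\vert$ once $u\nmid k+1$; and $u\nmid k+1$ because $u\mid\left\vert X_{x}\right\vert$ whereas Lemma \ref{Orbits} shows $(k+1)/\gcd(k+1,\left\vert\mathrm{Out}(X)\right\vert)$ already divides $\left\vert X_{x}\right\vert$, making a further factor $u$ of $k+1$ dividing $\left\vert X_{x}\right\vert$ incompatible with $\left\vert X\right\vert_{u}=u$. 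This chain of divisibility deductions, rather than any single hard estimate, is where care is required.
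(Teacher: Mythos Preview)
Your direct approach has a genuine gap. You try to conclude via
\[
\lambda\lvert G_{x,B}\rvert \geq u \geq 1+ef > \lvert\mathrm{Out}(X)\rvert \geq [G_x:X_x],
\]
but the inequality $\lvert\mathrm{Out}(X)\rvert \leq ef$ (equivalently $<1+ef$) is simply false in general. For $X\cong PSL_n(q)$ one has $\lvert\mathrm{Out}(X)\rvert = 2\gcd(n,q-1)f$, which can reach $2nf=2ef$; for $X\cong P\Omega_8^+(q)$ with $q$ odd one has $\lvert\mathrm{Out}(X)\rvert$ up to $24f$ while $ef=6f$. So in the case $4\mid\lambda\lvert G_{x,B}\rvert$ (and also in the case $\lambda\lvert G_{x,B}\rvert\equiv 2\pmod 4$ for types not covered by Lemma~\ref{Haide}(2)) your chain of inequalities breaks, and nothing you have written salvages it. Splitting according to the $2$-adic valuation of $\lambda\lvert G_{x,B}\rvert$ cannot by itself force the parity hypotheses of Lemma~\ref{Haide}.

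The paper's proof repairs exactly this by arguing \emph{by contradiction}. Assuming $\lambda\lvert G_{x,B}\rvert < [G_x:X_x]\leq\lvert\mathrm{Out}(X)\rvert$ makes $\lambda\lvert G_{x,B}\rvert$ small. Writing $\lambda\lvert G_{x,B}\rvert=\rho\theta$ with $\rho=(\Phi_{ef}^\ast(p),\lambda\lvert G_{x,B}\rvert)$, every prime factor of $\rho$ is $\equiv 1\pmod{ef}$, so either $\rho\geq 3(1+ef)$ or $\rho=1+ef$ is prime. Comparing $\rho\theta<\lvert\mathrm{Out}(X)\rvert$ against the explicit bounds ($2nf$, $2f$, $8f$, $24f$ for the various families) forces, family by family, $\theta\in\{1,2,3\}$ and $\rho=1+ef$ prime. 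In every surviving case this makes $\lambda\lvert G_{x,B}\rvert$ odd, or $\equiv 2\pmod 4$ with $X$ of exactly the type required by Lemma~\ref{Haide}(2); then Lemma~\ref{Haide} yields $[G_x:X_x]<1+ef=\rho\leq\lambda\lvert G_{x,B}\rvert$, the contradiction. The contradiction hypothesis is what produces the smallness that pins down the parity; your direct route never gets access to that.

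Two further remarks. First, your opening paragraph about classifying $X_x$ via \cite{GPPS} is a detour: this lemma needs nothing about the structure of $X_x$, only arithmetic on $\lvert\mathrm{Out}(X)\rvert$ and the size of primitive prime divisors. Second, your ``Main obstacle'' discussion is confused: the hypothesis already hands you a primitive prime divisor $u$ of $p^{ef}-1$ dividing $\lambda\lvert G_{x,B}\rvert$; there is nothing to extract from $k+1$ or from $\lvert X_x\rvert$, and the claim that $u$ appears to the first power in $\lvert X\rvert$ is both unneeded and not generally true.
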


\begin{proof}
Suppose the contrary. Then $\lambda \left\vert G_{x,B}\right\vert < \left[ G_{x}:X_{x}\right] \leq
\left\vert \mathrm{Out(}X\mathrm{)}\right\vert $, as $\left[ G_{x}:H_{x}%
\right] $ divides $\left\vert \mathrm{Out(}X\mathrm{)}\right\vert $. Also, $%
\lambda \left\vert G_{x,B}\right\vert =\rho \theta $, where $\rho =\left(
\Phi _{ef}^{\ast }(p),\lambda \left\vert G_{x,B}\right\vert \right) $ and $%
\theta \geq 1$. Since $\rho >1$, $\rho =u_{1}^{m_{1}}\cdots u_{\ell
}^{m_{\ell }}$ with $u_{i}$ prime for $i=1,...,\ell $, and $u_{i}\neq u_{j}$
for $i\neq j$. Furthermore, $u_{i}\equiv 1 \pmod{ef}$ by \cite{KL}, Proposition
5.2.15.(i), and hence $u_{i}\geq 1+ef$. Therefore, either $\rho \geq 3\left(
1+ef\right) $ (actually, $\rho >6\left( 1+6f\right) $ for $P\Omega_{8}^{+}(q)$ by Remark \ref{mag3}), or $\rho =1+ef$ is a prime. Consequently, $\left\vert G_{x,B}\right\vert \geq \mu (1+ef) \theta$, where $\mu \geq 3$, or $\mu =1$ and $1+ef$ is a prime.

Assume that $X$ isomorphic to one of the groups $PSL_{n}(q)$
or $PSU_{n}(q^{1/2})$. Then $\left\vert \mathrm{Out(}X\mathrm{)%
}\right\vert \leq 2nf$ by \cite{KL}, Table 5.1.A, and hence $\mu \left( 1+ef\right) \theta \leq \lambda \left\vert G_{x,B}\right\vert \leq 2nf$. Then $\mu = 1$, $\rho =1+ef$ is a prime and either $e=n$ and $\theta =1$, or $e=n-1$, $X\cong PSU_{n}(q^{1/2})$, with $n$ even, and $\theta \leq 2$.

If $\theta =1$, then $\lambda \left\vert G_{x,B}\right\vert $ is odd
and hence $\rho =\lambda \left\vert G_{x,B}\right\vert \leq \left[ G_{x}:H_{x}%
\right] <1+ef$ by Lemma \ref{Haide}(1). However, this contradicts $\rho = 1+ef$.

If $\theta =2$, then $X\cong PSU_{n}(q^{1/2})$, with $n$ even. It results $%
2 \rho = \lambda \left\vert G_{x,B}\right\vert \leq \left[ G_{x}:H_{x}\right] <1+ef$
by Lemma \ref{Haide}(2), and we again reach a contradiction.

Assume that $X$ isomorphic to one of the groups $PSp_{n}(q)$ or $P\Omega
_{n}(q)$. Then $\left\vert \mathrm{Out(}X\mathrm{)}\right\vert \leq 2f$ by
\cite{KL}, Table 5.1.A, and hence $\theta \left( 1+ef\right) \leq \lambda \left\vert
G_{x,B}\right\vert \leq 2f$, whereas $e=n-2\geq 2$ by Remark \ref{mag3}.

Assume that $X$ isomorphic to $P\Omega _{n}^{\varepsilon }(q)$, with $%
\varepsilon =\pm $ and $(n,\varepsilon )\neq (8,+)$. Then $\left\vert 
\mathrm{Out(}X\mathrm{)}\right\vert \leq 8f$ by \cite{KL}, Table 5.1.A, and hence $%
\mu \left( 1+ef\right) \theta = \lambda \left\vert G_{x,B}\right\vert \leq 8f$%
. Then $\mu=\theta =1$, $n=6$ and $\rho =1+ef$ is a prime, and hence $\lambda \left\vert
G_{x,B}\right\vert =1+ef $, whereas $\lambda \left\vert
G_{x,B}\right\vert <1+ef $ by Lemma \ref%
{Haide}.

Finally, assume that $X\cong P\Omega _{8}^{+}(q)$. Then $\lambda \left\vert
G_{x,B}\right\vert \leq \left\vert \mathrm{Out(}X\mathrm{)}\right\vert \leq
24f$ and hence $\lambda \left\vert G_{x,B}\right\vert =\theta \rho $, with $%
\theta \leq 3$ and $\rho =1+6f$ a prime. Moreover, either $\theta =1,3$ and $\lambda
\left\vert G_{x,B}\right\vert $ is odd, or $\theta =2$ and $\lambda
\left\vert G_{x,B}\right\vert \equiv 2\pmod4$. Then $\lambda \left\vert
G_{x,B}\right\vert \leq \left[ G_{x}:H_{x}\right] <\rho $ by Lemma \ref%
{Haide}(2)--(3), and we again reach a contradiction. This completes the proof.
\end{proof}

\bigskip

The following theorem, which relies both on numerical properties of $\Phi
_{ef}^{\ast }(p)$ and on Lemmas \ref{Haide} and \ref{daleko},
states that either $X_{x}$ is a large subgroup of $X$ or $\Phi _{ef}^{\ast
}(p)$ is a square, or $\Phi _{ef}^{\ast }(p)$ has a non-trivial factor in
common with $k+1$. As we will see, the theorem is a tool that, together with \cite{GPPS} and \cite{BP}, allows us to control the structure of $X_{x}$.
\bigskip

\begin{theorem}
\label{daleko}Let $(x,B)$ be any flag in $\mathcal{D}$. Then one of the
following holds:

\begin{enumerate}
\item $\left( \Phi _{ef}^{\ast }(p),\left\vert X_{x}\right\vert \right) >1$
and one of the following holds:

\begin{enumerate}
\item $X_{x}$ is a large subgroup of $X$ and $\left( \Phi _{ef}^{\ast
}(p),\left\vert X_{B}\right\vert \right) >1$;

\item $\left( \Phi _{ef}^{\ast }(p),k+1\right) >1$.
\end{enumerate}

\item $\left( \Phi _{ef}^{\ast }(p),\left\vert X_{x}\right\vert \right) =1$, 
$\Phi _{ef}^{\ast }(p)\mid k^{2}$ and $\left( \Phi _{ef}^{\ast
}(p),\left\vert X_{B}\right\vert \right) =\Phi _{ef}^{\ast }(p)^{1/2}$.
\end{enumerate}
\end{theorem}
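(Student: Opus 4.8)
The strategy is to analyze the prime $\Phi_{ef}^{\ast}(p)$ — or rather its prime divisors $u$, which by \cite{KL}, Proposition 5.2.15.(i) all satisfy $u\equiv 1\pmod{ef}$ — and to track where such a $u$ can live in the factorization $k^2=[X:X_x]$, in $\lambda|G_{x,B}|$, and in the index $[G_x:X_x]$. I would begin by recalling from Lemma \ref{PP} the flag-transitivity identities $|G_x|=\lambda(k+1)|G_{x,B}|$ and $|X|=k^2|X_x|$, together with $[G_x:X_x]\mid|\mathrm{Out}(X)|$, which is bounded (by \cite{KL}, Table 5.1.A) by roughly $2nf$ for the linear/unitary cases and much less otherwise. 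Since $\Phi_{ef}^{\ast}(p)\mid|X|=k^2|X_x|$, at least one prime $u\mid\Phi_{ef}^{\ast}(p)$ divides $k^2|X_x|$; the whole argument is a case split on whether such a $u$ divides $|X_x|$ or only $k^2$, and on whether $\Phi_{ef}^{\ast}(p)$ itself divides $k+1$.

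The first main case is $\left(\Phi_{ef}^{\ast}(p),|X_x|\right)>1$, which is conclusion (1). Here I want to show that either $\left(\Phi_{ef}^{\ast}(p),|X_B|\right)>1$ (and then that $X_x$ is large), or $\left(\Phi_{ef}^{\ast}(p),k+1\right)>1$. Suppose the latter fails, i.e. $\left(\Phi_{ef}^{\ast}(p),k+1\right)=1$. Since $|G_x|=\lambda(k+1)|G_{x,B}|$ is divisible by a prime $u\mid\Phi_{ef}^{\ast}(p)$ dividing $|X_x|$ (note $u\nmid[G_x:X_x]$ because $u\geq 1+ef$ exceeds the relevant out-automorphism bounds when $e=n$ or close to it — this needs care in the unitary $n$-even subcase, but is exactly the content available from the size of $|\mathrm{Out}(X)|$), and since $u\nmid k+1$ by assumption, we get $u\mid\lambda|G_{x,B}|$, i.e. $\left(\Phi_{ef}^{\ast}(p),\lambda|G_{x,B}|\right)>1$. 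Now Lemma \ref{tamo} gives $\lambda|G_{x,B}|\geq[G_x:X_x]$, hence $|X_x|=|G_x|/[G_x:X_x]=\lambda(k+1)|G_{x,B}|/[G_x:X_x]\geq k+1$; combined with $u\mid|X_x|$ this should be pushed, together with the order formula $|X|=k^2|X_x|$ and the standard lower bounds for indices of non-large subgroups (\cite{AB}), to force $|X|<|X_x|^3$, i.e. $X_x$ large. I would also need the companion fact $\left(\Phi_{ef}^{\ast}(p),|X_B|\right)>1$: since $B$ is a block, $|X_B|=k|X_{x,B}|$ and one plays off $u\mid|X_x|$, flag-transitivity on $B$, and the orbit-counting of Lemma \ref{PP}(3)/Lemma \ref{Orbits} to see $u$ cannot be wholly absorbed into $k$ (it isn't, by Proposition \ref{greatsaxl} giving $p\mid k$, so $k$ is a $p$-power times a $p'$-part of controlled size).

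The second main case is $\left(\Phi_{ef}^{\ast}(p),|X_x|\right)=1$, giving conclusion (2). Then every prime $u\mid\Phi_{ef}^{\ast}(p)$ divides $k^2$, so $\Phi_{ef}^{\ast}(p)\mid k^2$ follows once we know $\Phi_{ef}^{\ast}(p)$ is squarefree-compatible with $k^2$ — more precisely, writing $\Phi_{ef}^{\ast}(p)=u_1^{m_1}\cdots u_\ell^{m_\ell}$, each $u_i^{m_i}$ must divide $k^2$ since $|X|=k^2|X_x|$ and $u_i\nmid|X_x|$, hence $\Phi_{ef}^{\ast}(p)\mid k^2$. For the claim $\left(\Phi_{ef}^{\ast}(p),|X_B|\right)=\Phi_{ef}^{\ast}(p)^{1/2}$: since $k+1$ divides $|G_x|=\lambda(k+1)|G_{x,B}|$ trivially but we also have from Lemma \ref{PP}(3) that $k+1\mid|G_x|$ and from $\left(\Phi_{ef}^{\ast}(p),k+1\right)=1$ (forced here: if $u\mid k+1$ then $u\mid\gcd(k^2,k+1)=\gcd(k^2,k+1)\mid 1$) that $\Phi_{ef}^{\ast}(p)$ is coprime to $k+1$, the full $\Phi_{ef}^{\ast}(p)$ sits inside $k^2$ and is coprime to $k+1$; as $p\mid k$ the $p'$-part structure of $k$ forces $\Phi_{ef}^{\ast}(p)\mid k^2$ with the square root $\Phi_{ef}^{\ast}(p)^{1/2}$ dividing $k$ itself, and then $|X_B|=k|X_{x,B}|$ contributes exactly one copy, i.e. $\left(\Phi_{ef}^{\ast}(p),|X_B|\right)=\Phi_{ef}^{\ast}(p)^{1/2}$.

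\textbf{Expected main obstacle.} The delicate point is the unitary case $X\cong PSU_n(q^{1/2})$ with $n$ even, where $e=n-1$ and $|\mathrm{Out}(X)|$ can be as large as $nf$, so a prime $u\equiv 1\pmod{(n-1)f}$ is not automatically coprime to $[G_x:X_x]$ unless one argues $u>nf$; this is exactly why Lemmas \ref{Haide} and \ref{tamo} were set up with the $\theta\leq 2$ bookkeeping, and I would lean on them to dispose of the residual $u\mid[G_x:X_x]$ possibility. The second tricky point is extracting ``$X_x$ large'' cleanly from $|X_x|\geq k+1$ and $u\mid|X_x|$: this requires the explicit lower bounds on indices of maximal non-large subgroups from \cite{AB}, and some care that $k+1>k^{2/3}$ is too weak on its own — one really needs the extra arithmetic leverage that $u\geq 1+ef$ is large and divides $|X_x|$, pinning $|X_x|$ above the non-large threshold $|X|^{1/3}$.
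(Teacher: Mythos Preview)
Your overall case split and the use of Lemma~\ref{tamo} are exactly right, and you correctly derive $|X_x|\geq k+1$ from $\lambda|G_{x,B}|\geq[G_x:X_x]$ and the identity $|X_x|=\lambda(k+1)|G_{x,B}|/[G_x:X_x]$. But you then badly overestimate what remains. You write that ``$k+1>k^{2/3}$ is too weak on its own'' and that you ``really need the extra arithmetic leverage that $u\geq 1+ef$'' together with the bounds from \cite{AB}. In fact none of that is needed: from $|X_x|\geq k+1>k$ and $[X:X_x]=k^2$ you get $|X_x|^2>k^2=[X:X_x]$, hence $|X_x|^3>|X|$ immediately. That is the whole argument; the paper does it in one line. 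Your ``expected main obstacle'' is not an obstacle at all.

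Your route to $\bigl(\Phi_{ef}^{\ast}(p),|X_B|\bigr)>1$ also has a gap. You write $|X_B|=k|X_{x,B}|$, but $X$ need not act transitively on $B$ (only $G_B$ does), so this identity is unjustified. The clean argument stays at the level of $G$: you already have $\bigl(\Phi_{ef}^{\ast}(p),\lambda|G_{x,B}|\bigr)>1$; since $\lambda\mid k$ and $|G_B|=k|G_{x,B}|$, this gives $\bigl(\Phi_{ef}^{\ast}(p),|G_B|\bigr)>1$. Now every prime divisor $u$ of $\Phi_{ef}^{\ast}(p)$ satisfies $u\equiv 1\pmod{ef}$, so $\bigl(\Phi_{ef}^{\ast}(p),|\mathrm{Out}(X)|\bigr)=1$ by inspection of \cite{KL}, Table~5.1.A; as $[G_B:X_B]\mid|\mathrm{Out}(X)|$, the divisibility descends to $X_B$. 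The same coprimality with $|\mathrm{Out}(X)|$ is what drives case~(2): from $\bigl(\Phi_{ef}^{\ast}(p),|X_x|\bigr)=1$ you get $\bigl(\Phi_{ef}^{\ast}(p),|G_x|\bigr)=1$, hence $\bigl(\Phi_{ef}^{\ast}(p),|G_{x,B}|\bigr)=1$, and then $\bigl(\Phi_{ef}^{\ast}(p),|G_B|\bigr)=\bigl(\Phi_{ef}^{\ast}(p),k\bigr)$. To finish you need one fact you did not state: $\Phi_{ef}^{\ast}(p)$ divides $|X|$ to exactly the first power (it divides $q^e-1$ and is coprime to $q^i-1$ for $i<e$), so the $\Phi_{ef}^{\ast}(p)$-part of $k^2=[X:X_x]$ is exactly $\Phi_{ef}^{\ast}(p)$, forcing $\bigl(\Phi_{ef}^{\ast}(p),k\bigr)=\Phi_{ef}^{\ast}(p)^{1/2}$, and the conclusion for $|X_B|$ follows via $[G_B:X_B]\mid|\mathrm{Out}(X)|$ as before.
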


\begin{proof}
Since $r=\lambda (k+1)$ and $\left\vert G_{x}\right\vert =r\left\vert
G_{x,B}\right\vert $, it follows that%
\begin{equation}
\left\vert X_{x}\right\vert =\frac{\lambda (k+1)\left\vert
G_{x,B}\right\vert }{\left[ G_{x}:X_{x}\right] } \textit{.}  \label{zdravo}
\end{equation}

Assume that $\left( \Phi _{ef}^{\ast }(p),\left\vert X_{x}\right\vert
\right) >1$. If $\left( \Phi _{ef}^{\ast }(p),k+1\right) =1$, then $\left(
\Phi _{ef}^{\ast }(p),\lambda \left\vert G_{x,B}\right\vert \right) >1$ and
hence $\lambda \left\vert G_{x,B}\right\vert \geq \left[ G_{x}:H_{x}\right] $
by Lemma \ref{tamo}. Thus $\left\vert X_{x}\right\vert \geq k+1$ by (\ref{zdravo}%
), and hence $X_{x}$ is a large subgroup of $X$, as $k^{2}=[X:X_{x}]$.

Since $\lambda \left\vert G_{x,B}\right\vert \mid \left\vert
G_{B}\right\vert $ and $\left( \Phi _{ef}^{\ast }(p),\lambda \left\vert
G_{x,B}\right\vert \right) >1$, it follows that $\left( \Phi _{ef}^{\ast
}(p),\left\vert G_{B}\right\vert \right) >1$. Moreover, as $\Phi _{ef}^{\ast
}(p)\equiv 1 \pmod{ef}$ by \cite{KL}, Proposition 5.2.15.(i), it follows
that $\left( \Phi _{ef}^{\ast }(p),ef\right) =1$ and hence $\left( \Phi
_{ef}^{\ast }(p),\left\vert \mathrm{Out(}X\mathrm{)}\right\vert \right) =1$
(see \cite{KL}, Table 5.1.A). Thus $\left( \Phi _{ef}^{\ast
}(p),\left\vert X_{B}\right\vert \right) >1$, as $\left[ G_{B}:X_{B}\right]
\mid \left\vert \mathrm{Out(}X\mathrm{)}\right\vert $, which is the
assertion (1a).

Assume that $\left( \Phi _{ef}^{\ast }(p),\left\vert X_{x}\right\vert
\right) =1$. Then $\Phi _{ef}^{\ast }(p)\mid k^{2}$, since $\left[ X:X_{x}%
\right] =k^{2}$, and hence $\Phi _{ef}^{\ast }(p)\mid \left\vert X\right\vert $. Arguing as above, we obtain $\left( \Phi _{ef}^{\ast }(p),\left\vert X_{B}\right\vert \right) =\Phi
_{ef}^{\ast }(p)^{1/2}$, which is the assertion (2).
\end{proof}

\bigskip

The following Lemma is the analogue of
Corollary 3.10 of \cite{ABD3}, but we do not make any additional assumption on $X_{x}$ because of Proposition \ref{greatsaxl}.

\bigskip

\begin{lemma}
\label{Sing}The following hold:

\begin{enumerate}
\item $\left\vert X\right\vert < \left\vert \mathrm{Out(}X\mathrm{)}%
\right\vert ^{2}\cdot \left\vert X_{x}\right\vert \cdot \left\vert
X_{x}\right\vert _{p^{\prime }}^{2}$

\item If $\left\vert \mathrm{Out(}X\mathrm{)}\right\vert \leq \left\vert
X_{x}\right\vert _{p}$, then $X_{x}$ is a large subgroup of $X$.
\end{enumerate}
\end{lemma}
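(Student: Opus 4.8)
The plan is to derive both statements from the flag-transitivity relations already recorded, together with the basic fact (Lemma \ref{PP}(2)) that $G_x$ is a large subgroup of $G$. The starting point is the chain of equalities coming from flag-transitivity: $|G| = k^2 |G_x|$ and $|G_x| = \lambda(k+1)|G_{x,B}| = r|G_{x,B}|$, so that
\[
|X| = \frac{|G|}{[G:X]} = \frac{k^2 |G_x|}{[G:X]}, \qquad |X_x| = \frac{|G_x|}{[G_x:X_x]} = \frac{\lambda(k+1)|G_{x,B}|}{[G_x:X_x]}.
\]
Here $[G:X] = [G_x:X_x]\cdot[G\colon G_xX]$ divides $|\mathrm{Out}(X)|$, and likewise $[G_x:X_x]$ divides $|\mathrm{Out}(X)|$. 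The key observation for part (1) is the elementary inequality $\left([G:G_x]-1\right)_p^2 > [G:G_x]$, which is exactly Lemma \ref{desdes}(3) applied with $v = [G:G_x] = k^2$: it says $(k+1)^2 > k^2$, trivially, but the version we want is the $p$-part refinement, namely that $k = (k^2)_p^{1/2}$ hence $|X_x|_p \geq$ a controlled power of $p$. So I would first write $k^2 = [X:X_x]$, whence $|X_x|_{p} = |X|_p / k^2_p = |X|_p / k$; combined with $p \mid k$ from Proposition \ref{greatsaxl}, this gives a concrete lower bound on $|X_x|_p$ in terms of $|X|_p$ and $k$.

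For part (1) of the Lemma, the cleanest route is: since $G_x$ is large in $G$, we have $|G| < |G_x|^3$. Substituting $|G| = |X|[G:X]$ and $|G_x| = |X_x|[G_x:X_x]$, and using that both $[G:X]$ and $[G_x:X_x]$ divide $|\mathrm{Out}(X)|$, we get
\[
|X| \;<\; \frac{|G_x|^3}{[G:X]} \;=\; \frac{|X_x|^3 [G_x:X_x]^3}{[G:X]} \;\leq\; |X_x|^3 \,|\mathrm{Out}(X)|^2.
\]
That is not yet the claimed bound, which has $|X_x|\cdot|X_x|_{p'}^2$ rather than $|X_x|^3$. To improve it I would exploit that $k^2 = [X:X_x]$ forces a matching between $p$-parts: $|X|_p = k^2 \cdot |X_x|_p$, so $|X|_p = k^2 |X_x|_p$ and hence $|X|_{p'} = |X_x|_{p'}\cdot (k^2)_{p'}=|X_x|_{p'}$ times the (trivial, since $p\mid k$ but possibly $k$ has other prime factors) $p'$-part of $k^2$. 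Write $|X| = |X|_p |X|_{p'}$ and $|X_x| = |X_x|_p |X_x|_{p'}$; then the large-subgroup inequality $|X|_p |X|_{p'} < |X_x|^3|\mathrm{Out}(X)|^2$ combined with $|X|_p = k^2|X_x|_p \geq |X_x|_p^{\,2}|X_x|_p$... — more precisely, since $k = (k^2)_p^{1/2} \geq |X_x|_p$ is false in general, I instead use $|X|_p = k^2|X_x|_p$ and $k \geq |X_x|_p^{1/2}$... The right bookkeeping is: $|X| < |G_x|^3/[G:X]$, and $|G_x| = |X_x|[G_x:X_x]$ with the $p$-part of $[G_x:X_x]$ dividing $|\mathrm{Out}(X)|_p$ which for classical groups is at most $f \leq \log_p q$, small; the dominant contribution to $|G_x|^3$ beyond $|X_x|\cdot|X_x|^2$ should be absorbed by noting $|X_x|^2 = |X_x|_p^2 |X_x|_{p'}^2$ and transferring the $|X_x|_p^2$ across using $|X|_p/k^2 = |X_x|_p$, i.e. $|X_x|_p = |X|_p/k^2 \leq |X|_p / |X_x|_p^2$ is wrong too — rather $k^2 \geq |X_x|_p^2$? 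No. I would carefully use $k^2 |X_x|_p = |X|_p$ to rewrite one factor $|X_x|_p^2$ inside $|X_x|^3 = |X_x|\cdot|X_x|_p^2|X_x|_{p'}^2$ and bound $|X_x|_p^2 \leq |X|_p^2/k^4$; since $|X|_p \leq$ (something) $k^2$... The honest statement of the plan is: \textbf{the main obstacle is this exponent bookkeeping} — getting from the crude $|X| < |X_x|^3|\mathrm{Out}(X)|^2$ down to $|X| < |\mathrm{Out}(X)|^2 |X_x|\,|X_x|_{p'}^2$, which requires replacing two of the three factors of $|X_x|$ by $|X_x|_{p'}$ at the cost of absorbing the $p$-parts into $k^2 = [X:X_x]$, using $|X_x|_p \mid |X|_p$ and $p \mid k$.

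Part (2) then follows quickly from part (1): assume $|\mathrm{Out}(X)| \leq |X_x|_p$. From (1), $|X| < |\mathrm{Out}(X)|^2 |X_x|\,|X_x|_{p'}^2 \leq |X_x|_p^2 |X_x|\,|X_x|_{p'}^2 = |X_x|\cdot(|X_x|_p|X_x|_{p'})^2 = |X_x|^3$. Hence $|X_x|^3 > |X|$, which is precisely the definition of $X_x$ being a large subgroup of $X$. So once (1) is in hand, (2) is immediate. I would present (1) first with the careful $p$-part argument, then deduce (2) in two lines. The place to be most careful is ensuring the inequality in (1) is strict and that the $\mathrm{Out}$ factors are correctly split into $p$- and $p'$-parts as they appear in $[G:X]$ versus $[G_x:X_x]$ — here Table 5.1.A of \cite{KL} and the observation $[G:X]=[G_x:X_x][G\colon XG_x]$ do the work.
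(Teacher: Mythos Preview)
Your deduction of (2) from (1) is correct and matches the paper exactly. The problem is your attack on (1): starting from the cube inequality $|G|<|G_x|^3$ is the wrong entry point, and your own write-up shows why --- you obtain only $|X|<|\mathrm{Out}(X)|^2|X_x|^3$ and then cannot see how to trade two factors of $|X_x|$ down to $|X_x|_{p'}$. That bookkeeping does not work, because the cube inequality has already thrown away the arithmetic structure you need.

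The paper's argument is much more direct and never uses $|G|<|G_x|^3$ at all. The point is that $k+1=r/\lambda$ \emph{divides} $|G_x|$ (since $|G_x|=\lambda(k+1)|G_{x,B}|$), and by Proposition~\ref{greatsaxl} we have $p\mid k$, so $(p,k+1)=1$. Hence $k+1$ divides $|G_x|_{p'}$, giving
\[
k+1\;\le\;|G_x|_{p'}\;=\;[G_x:X_x]_{p'}\cdot|X_x|_{p'}\;\le\;|\mathrm{Out}(X)|\cdot|X_x|_{p'}.
\]
Now simply square and use $k^2<(k+1)^2$ together with $|X|=k^2|X_x|$:
\[
|X|\;=\;k^2|X_x|\;<\;(k+1)^2|X_x|\;\le\;|\mathrm{Out}(X)|^2\cdot|X_x|_{p'}^2\cdot|X_x|,
\]
which is exactly (1). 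The key idea you were missing is that $k+1$ is a \emph{$p'$-number} dividing $|G_x|$, so it is bounded by $|G_x|_{p'}$ rather than by $|G_x|$; this immediately isolates the $p'$-part of $|X_x|$ without any juggling of $p$-parts via $|X|_p=k^2|X_x|_p$.
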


\begin{proof}
Since $p \mid k$ by Proposition \ref{greatsaxl}, it follows that $(p,r/\lambda )=1$, as $r/\lambda =k+1$, and hence $%
r/\lambda \leq \left\vert G_{x}\right\vert _{p^{\prime }}\leq \left\vert 
\mathrm{Out(}X\mathrm{)}\right\vert _{p^{\prime }}\left\vert
X_{x}\right\vert _{p^{\prime }}$. Since $k^{2}<\left( r/\lambda \right) ^{2}$%
, it results $\left\vert X\right\vert < \left\vert X_{x}\right\vert
\left\vert \mathrm{Out(}X\mathrm{)}\right\vert _{p^{\prime }}^{2}\left\vert
X_{x}\right\vert _{p^{\prime }}^{2}$, and hence (1) holds. Moreover, if $%
\left\vert \mathrm{Out(}X\mathrm{)}\right\vert \leq \left\vert
X_{x}\right\vert _{p}$, then $\left\vert X\right\vert <
\left\vert X_{x}\right\vert ^{3}$ and hence $X_{x}$ is a large subgroup of $%
X $, which is (2).
\end{proof}

\bigskip

The following lemma and the subsequent corollary provide a lower bound for $\left\vert
X_{B}\right\vert _{p}$.

\bigskip

\begin{lemma}
\label{utakmica}$\left\vert X\right\vert _{p}\leq \left\vert \mathrm{Out(}X%
\mathrm{)}\right\vert _{p}^{3}\left\vert X_{B}\right\vert _{p}^{3}$.
\end{lemma}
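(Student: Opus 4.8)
\textbf{Proof plan for Lemma~\ref{utakmica}.} The target inequality $\left\vert X\right\vert _{p}\leq \left\vert \mathrm{Out(}X\mathrm{)}\right\vert _{p}^{3}\left\vert X_{B}\right\vert _{p}^{3}$ is a ``$p$-part'' analogue of the large-subgroup bound $\left\vert X\right\vert < \left\vert X_{x}\right\vert^{3}$, but applied to the block stabilizer rather than the point stabilizer. The plan is to start from the flag-transitivity identities $\left\vert G\right\vert = b\left\vert G_{B}\right\vert = \lambda k(k+1)\left\vert G_{B}\right\vert$ (using Lemma~\ref{desdes}(2)) and $\left\vert G_{B}\right\vert = k\left\vert G_{x,B}\right\vert$, so that $\left\vert G\right\vert = \lambda k^{2}(k+1)\left\vert G_{x,B}\right\vert$. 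Passing to $p$-parts and using $\left\vert X\right\vert_{p} = \left\vert G\right\vert_{p}/\left\vert \mathrm{Out}(X)\right\vert_{p}$ together with $p \nmid k+1$ (which holds because $p \mid k$ by Proposition~\ref{greatsaxl}, hence $\gcd(k,k+1)=1$), one gets $\left\vert X\right\vert_{p} \leq \left\vert k^{2}\right\vert_{p}\cdot \left\vert \lambda\left\vert G_{x,B}\right\vert\right\vert_{p}$, i.e. essentially $\left\vert X\right\vert_{p}$ is controlled by $\left\vert k\right\vert_{p}^{2}$ times a contribution from $\lambda\left\vert G_{x,B}\right\vert$.

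Next I would bound $\left\vert k\right\vert_{p}$ in terms of $\left\vert X_{B}\right\vert_{p}$. Since $b = \lambda k(k+1)$ and $p\nmid k+1$, we have $k_{p} = b_{p}/\lambda_{p} \mid b_{p}$; and $b = \left[G : G_{B}\right]$, so $b_{p} = \left\vert G\right\vert_{p}/\left\vert G_{B}\right\vert_{p} = \left\vert X\right\vert_{p}\left\vert \mathrm{Out}(X)\right\vert_{p}/\left\vert G_{B}\right\vert_{p}$. Combining, $k_{p}$ divides $\left\vert X\right\vert_{p}\left\vert \mathrm{Out}(X)\right\vert_{p}/\left\vert G_{B}\right\vert_{p}$, and since $\left\vert G_{B}\right\vert_{p} \geq \left\vert X_{B}\right\vert_{p}$ this gives $k_{p} \leq \left\vert X\right\vert_{p}\left\vert \mathrm{Out}(X)\right\vert_{p}/\left\vert X_{B}\right\vert_{p}$. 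Feeding $k_{p}^{2} \leq \left\vert X\right\vert_{p}^{2}\left\vert \mathrm{Out}(X)\right\vert_{p}^{2}/\left\vert X_{B}\right\vert_{p}^{2}$ back would produce $\left\vert X\right\vert_{p}$ on both sides in a way that does not immediately close, so instead I would work directly with $\left\vert X\right\vert_{p} = \left\vert G\right\vert_{p}/\left\vert\mathrm{Out}(X)\right\vert_{p}$ and the single identity $\left\vert G\right\vert = \lambda k^{2}(k+1)\left\vert G_{x,B}\right\vert$. Taking $p$-parts: $\left\vert X\right\vert_{p}\left\vert\mathrm{Out}(X)\right\vert_{p} = (\lambda)_{p}(k^{2})_{p}(\left\vert G_{x,B}\right\vert)_{p}$ (the $(k+1)_p$ factor being $1$). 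Now $\left\vert G_{x,B}\right\vert_{p} \leq \left\vert G_{B}\right\vert_{p} = \left\vert X_{B}\right\vert_{p}\left[G_{B}:X_{B}\right]_{p} \leq \left\vert X_{B}\right\vert_{p}\left\vert\mathrm{Out}(X)\right\vert_{p}$, and similarly $\lambda_{p}\leq k_{p} = (k^{2})_{p}^{1/2}$. The remaining task is to bound $(k^{2})_{p} = k_{p}^{2}$ in terms of $\left\vert X_{B}\right\vert_{p}$ and $\left\vert\mathrm{Out}(X)\right\vert_{p}$.

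For that last bound I would use the same block-divisibility idea once more: $k = \left[G_{B}:G_{x,B}\right]$ divides $\left\vert G_{B}\right\vert$, so $k_{p} \leq \left\vert G_{B}\right\vert_{p} = \left\vert X_{B}\right\vert_{p}\left[G_{B}:X_{B}\right]_{p} \leq \left\vert X_{B}\right\vert_{p}\left\vert\mathrm{Out}(X)\right\vert_{p}$. Substituting $k_{p} \leq \left\vert X_{B}\right\vert_{p}\left\vert\mathrm{Out}(X)\right\vert_{p}$, $\lambda_{p}\leq k_{p}\leq \left\vert X_{B}\right\vert_{p}\left\vert\mathrm{Out}(X)\right\vert_{p}$, and $\left\vert G_{x,B}\right\vert_{p}\leq \left\vert X_{B}\right\vert_{p}\left\vert\mathrm{Out}(X)\right\vert_{p}$ into $\left\vert X\right\vert_{p}\left\vert\mathrm{Out}(X)\right\vert_{p} = \lambda_{p}k_{p}^{2}\left\vert G_{x,B}\right\vert_{p}$ yields $\left\vert X\right\vert_{p}\left\vert\mathrm{Out}(X)\right\vert_{p} \leq \left\vert X_{B}\right\vert_{p}^{4}\left\vert\mathrm{Out}(X)\right\vert_{p}^{4}$, which is slightly weaker than claimed; to sharpen it I would note that $k^2 = [X:X_x]$ is coprime to $\left\vert X_x \right\vert$ only in its $p$-part-free sense and instead use that $k = [G_B : G_{x,B}]$ gives $k_p \cdot \left\vert G_{x,B}\right\vert_p = \left\vert G_B\right\vert_p$ exactly, so $k_p^2 \left\vert G_{x,B}\right\vert_p = k_p \left\vert G_B\right\vert_p \leq \left\vert X_B\right\vert_p \left\vert\mathrm{Out}(X)\right\vert_p \cdot \left\vert G_B\right\vert_p = \left\vert X_B\right\vert_p^2 \left\vert\mathrm{Out}(X)\right\vert_p^2$, and then with $\lambda_p \leq k_p \leq \left\vert X_B\right\vert_p\left\vert\mathrm{Out}(X)\right\vert_p$ divided back out appropriately one reaches $\left\vert X\right\vert_p \left\vert\mathrm{Out}(X)\right\vert_p \leq \left\vert X_B\right\vert_p^3 \left\vert\mathrm{Out}(X)\right\vert_p^3$, i.e. $\left\vert X\right\vert_p \leq \left\vert\mathrm{Out}(X)\right\vert_p^3 \left\vert X_B\right\vert_p^3$ as required. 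The main obstacle, and the step to watch carefully, is exactly this bookkeeping of how the $\left\vert\mathrm{Out}(X)\right\vert_p$ factors and the $\lambda_p$, $k_p$ factors combine — in particular making sure the exponent of $\left\vert X_B\right\vert_p$ comes out to $3$ and not $4$, which forces one to use the \emph{equality} $k_p\left\vert G_{x,B}\right\vert_p = \left\vert G_B\right\vert_p$ rather than a crude divisibility bound, and to exploit $\lambda \mid k$ so that $\lambda_p \leq k_p$ rather than being an independent factor.
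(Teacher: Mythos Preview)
Your approach is essentially the paper's: write $\lvert G\rvert_{p}=\lambda_{p}k_{p}^{2}\lvert G_{x,B}\rvert_{p}$ (using $p\mid k$ so $(k+1)_{p}=1$), use the exact relation $k_{p}\lvert G_{x,B}\rvert_{p}=\lvert G_{B}\rvert_{p}$ together with $\lambda_{p}\leq k_{p}$ (from $\lambda\mid k$) to obtain $\lvert G\rvert_{p}\leq \lvert G_{B}\rvert_{p}^{3}$, and finish with $\lvert G_{B}\rvert_{p}\leq \lvert\mathrm{Out}(X)\rvert_{p}\lvert X_{B}\rvert_{p}$ and $\lvert X\rvert_{p}\leq\lvert G\rvert_{p}$.

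One slip to correct: you write $\lvert X\rvert_{p}=\lvert G\rvert_{p}/\lvert\mathrm{Out}(X)\rvert_{p}$ and later $\lvert X\rvert_{p}\lvert\mathrm{Out}(X)\rvert_{p}=\lambda_{p}k_{p}^{2}\lvert G_{x,B}\rvert_{p}$ as equalities, but $[G:X]$ only \emph{divides} $\lvert\mathrm{Out}(X)\rvert$, so in fact $\lvert X\rvert_{p}\geq \lvert G\rvert_{p}/\lvert\mathrm{Out}(X)\rvert_{p}$, the wrong direction. This is harmless here because all you actually need is the trivial $\lvert X\rvert_{p}\leq \lvert G\rvert_{p}$; drop the spurious $\lvert\mathrm{Out}(X)\rvert_{p}$ on the left and the chain $\lvert X\rvert_{p}\leq \lvert G\rvert_{p}=\lambda_{p}k_{p}\lvert G_{B}\rvert_{p}\leq \lvert G_{B}\rvert_{p}^{3}\leq \lvert\mathrm{Out}(X)\rvert_{p}^{3}\lvert X_{B}\rvert_{p}^{3}$ is exactly the paper's argument (the paper adds a case split on $\lambda_{p}\lessgtr\lvert G_{x,B}\rvert_{p}$, but only to record the sharper square bound in the first case).
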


\begin{proof}
Since $\left[ G:G_{B}\right]=\lambda k(k+1)$, it follows that $\left\vert
G\right\vert _{p}=\lambda _{p}k_{p}(k+1)_{p}\left\vert G_{B}\right\vert _{p}$ by Proposition \ref{greatsaxl}. Thus, $\left\vert G\right\vert _{p}=\lambda _{p}k_{p}^{2}\left\vert
G_{x,B}\right\vert _{p}$. \\If $\lambda _{p}\leq \left\vert G_{x,B}\right\vert
_{p}$, then $\left\vert G\right\vert _{p}\leq \left\vert G_{B}\right\vert
_{p}^{2}$ and hence $\left\vert X\right\vert _{p}\leq \left\vert \mathrm{Out(%
}X\mathrm{)}\right\vert _{p}^{2}\left\vert X_{B}\right\vert _{p}^{2}$, as $\left[
G_{B}:X_{B}\right]$ divides $\left\vert \mathrm{Out(}X\mathrm{)}\right\vert $, whereas if $\lambda _{p}>\left\vert G_{x,B}\right\vert _{p}$, then 
$\left\vert G\right\vert _{p}\leq \left\vert G_{B}\right\vert _{p}^{3}$ and
hence $\left\vert X\right\vert _{p}\leq \left\vert G\right\vert _{p}\leq
\left\vert G_{B}\right\vert _{p}^{3}\leq \left\vert \mathrm{Out(}X\mathrm{)}%
\right\vert _{p}^{3}\left\vert X_{B}\right\vert _{p}^{3}$.
\end{proof}

\bigskip
It should be stressed out that, the following corollary works under the assumptions of Remark \ref{mag3}. Furthermore, the lower bounds for $\left\vert X_{B}\right\vert_{p}$ are not necessarily integers.

\bigskip

\begin{corollary}
\label{kralj} The following hold:

\begin{enumerate}
\item[(i).] $\left\vert X_{B}\right\vert _{p}\geq q^{\frac{\left( n+2\right)
\left( n-3\right) }{6}}$ for $X \cong PSL_{n}(q)$.

\item[(ii).]  $\left\vert X_{B}\right\vert _{p}\geq q^{\frac{ n^{2}-12}{12}}$ \hspace{4mm} for $X \cong PSp_{n}(q)$

\item[(iii).] $\left\vert X_{B}\right\vert _{p}\geq q^{\frac{\left( n+2\right)
\left( n-3\right) }{12}}$ for $X\cong PSU_{n}(q^{1/2})$.

\item[(iv).] $\left\vert X_{B}\right\vert _{p}\geq q^{\frac{(n-1)^{2}-12}{12}}$ for 
$X\cong P\Omega _{n}(q)$.

\item[(v).] $\left\vert X_{B}\right\vert _{p}\geq q^{\frac{n(n-2)-12}{12}}$ for $%
X\cong P\Omega _{n}^{\varepsilon }(q)$.
\end{enumerate}
In each case $\left\vert X_{B}\right\vert _{p} \geq p$
\end{corollary}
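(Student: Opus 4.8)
The plan is to combine the two previous results, Lemma \ref{utakmica} and Lemma \ref{PP}(2), with explicit data on $\left\vert X \right\vert_p$ and $\left\vert \mathrm{Out}(X) \right\vert_p$ for each family in Remark \ref{mag3}. First I would recall that $\left\vert \mathrm{Out}(X) \right\vert_p = 1$ unless $p \mid f$ (for $X \cong PSL_n(q)$ we have $\left\vert \mathrm{Out}(X) \right\vert = (n,q-1) \cdot f \cdot (2, \text{graph part})$, and similarly for the other families, so the $p$-part of $\left\vert \mathrm{Out}(X) \right\vert$ always divides $f_p$, which in turn divides $p^{\lfloor \log_p f \rfloor}$ and is in particular bounded by $f$). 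Thus from Lemma \ref{utakmica}, $\left\vert X_B \right\vert_p \geq \left( \left\vert X \right\vert_p / \left\vert \mathrm{Out}(X) \right\vert_p^3 \right)^{1/3} \geq \left\vert X \right\vert_p^{1/3} / f_p$. The strategy is then to plug in $\left\vert X \right\vert_p = q^{N}$ where $N$ is the number of positive roots of the corresponding algebraic group (i.e. $N = \binom{n}{2}$ for type $A_{n-1}$, $N = n^2/4$ for type $C_{n/2}$ and so on), and to show that the quantity $q^{N/3}/f_p$ exceeds the stated lower bound, which is a fixed power of $q$ slightly below $q^{N/3}$.

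The key computation in each of the five cases is therefore the inequality $q^{N/3}/f_p \geq q^{M}$, where $M$ is the exponent appearing in the statement; equivalently $q^{N/3 - M} \geq f_p$. For instance, in case (i), $X \cong PSL_n(q)$, one has $\left\vert X \right\vert_p = q^{n(n-1)/2}$, so $N/3 = n(n-1)/6$, and the claimed exponent is $(n+2)(n-3)/6 = (n^2 - n - 6)/6 = n(n-1)/6 - 1$, so the gap $q^{N/3 - M} = q$; since $f_p \leq f < q$ always (as $q = p^f \geq p^f > f$ for $p \geq 2$, $f \geq 1$), the inequality holds, with the trivial exceptions handled by Remark \ref{mag3} which guarantees $n \geq 3$ (so the exponent is nonnegative). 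The same arithmetic works in the remaining cases: in each case the claimed exponent $M$ is precisely $N/3$ minus a positive quantity at least $1$, so the surplus power of $q$ absorbs $f_p$. One then needs to separately verify the final sentence $\left\vert X_B \right\vert_p \geq p$: this follows because $X_B$ is a proper subgroup of $X$ of index $b = \lambda k(k+1)$, which is divisible by $p$ only through the $k$-part (as $(p, k+1) = 1$ by Proposition \ref{greatsaxl}), so $\left\vert X \right\vert_p / \left\vert X_B \right\vert_p = k_p^2 \lambda_p$ divides a proper power, forcing $\left\vert X_B \right\vert_p$ to retain at least one factor of $p$ unless $X_B$ is a $p'$-group; but a $p'$-group cannot have index $k^2 \lambda(k+1)/k = k\lambda(k+1)$ in $X$ when $\left\vert X \right\vert_p \geq q^{N} \geq p^2$, since then $\left\vert X \right\vert_p$ would have to equal $k_p^2 \lambda_p \leq k_p^2 \lambda$, which is incompatible with the lower bounds just established unless $p \mid \left\vert X_B \right\vert$.

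The main obstacle I expect is bookkeeping rather than depth: one must be careful about the small-rank exceptional cases excluded in Remark \ref{mag3} (e.g. $n \geq 4$ for $PSp_n$, $n \geq 5$ for $P\Omega_5$ read as $n \geq 5$, $n \geq 6$ for $P\Omega_n^{\pm}$), since for these the exponent $M$ in the statement must be checked to be meaningful (and possibly negative, in which case the bound $\left\vert X_B \right\vert_p \geq p$ is the real content). Also, one must confirm that $\left\vert \mathrm{Out}(X) \right\vert_p$ genuinely divides $f_p$ and not something larger — for unitary and orthogonal groups the diagonal and graph automorphisms contribute factors coprime to $p$ when $p$ is odd, and for $p = 2$ one checks directly that $(q^{1/2}+1, n)$, $(4, q^n - 1)$, etc., are $2$-powers bounded well below $q^{N/3 - M}$. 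I would organize the proof as five short paragraphs, one per case, each citing \cite{KL}, Table 5.1.A for $\left\vert \mathrm{Out}(X) \right\vert$, invoking Lemma \ref{utakmica}, and finishing with a one-line verification that $f_p \leq f < q \leq q^{N/3 - M}$; the last sentence about $\left\vert X_B \right\vert_p \geq p$ then follows uniformly since in every case $N/3 - M \geq 1$, so $\left\vert X_B \right\vert_p \geq q^{M} \geq q^{0} \cdot$ (a positive power of $q$) $\geq p$ whenever $M \geq 0$, and when $M < 0$ one falls back on the index divisibility argument above.
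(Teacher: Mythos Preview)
Your approach is exactly the paper's: invoke Lemma \ref{utakmica}, bound $\left\vert \mathrm{Out}(X)\right\vert_p$ by $q$ (or $q^{1/2}$ in the unitary case), and read off the exponent case by case. One small correction: $\left\vert \mathrm{Out}(X)\right\vert_p$ is not always $f_p$ --- for $PSL_n(q)$ it is $(2f)_p$ and for $P\Omega_n^{\varepsilon}(q)$ it is $(2f)_p$ or $(3f)_p$ (the latter when $(n,\varepsilon,p)=(8,+,3)$) --- but the paper absorbs this the same way you do, via $q\geq 2f\geq (2f)_p$, so the arithmetic is unchanged; for the last sentence the paper simply observes that in each case the bound already forces $\left\vert X_B\right\vert_p>1$, hence $\geq p$, rather than appealing to the index argument you sketch.
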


\begin{proof}
Assume that $X$ is isomorphic to $PSL_{n}(q)$. Then $%
\left\vert X\right\vert _{p}=q^{\frac{n\left( n-1\right) }{2}}$ and $%
\left\vert \mathrm{Out(}X\mathrm{)}\right\vert _{p}=(2f)_{p}$. Since $%
q=p^{f}\geq 2f\geq (2f)_{p}$, it follows that $\left\vert X_{B}\right\vert
_{p}\geq q^{\frac{\left( n+2\right) \left( n-3\right) }{6}}$ by Lemma \ref%
{utakmica}.

Assume that $X\cong \PSp_{n}(q)$. Hence, $\left\vert
X\right\vert _{p}=q^{\frac{n^{2}}{4}}$ and $\left\vert \mathrm{Out(}X%
\mathrm{)}\right\vert _{p}=f_{p}$. Then $\left\vert X_{B}\right\vert
_{p}^{3}\geq q^{\frac{n^{2}}{4}-3}$ by Lemma \ref{utakmica} and hence $%
\left\vert X_{B}\right\vert _{p}\geq q^{\frac{n^{2}-12}{12}}$.

Assume that $X\cong PSU_{n}(q^{1/2})$. Then $\left\vert X\right\vert _{p}=q^{%
\frac{n(n-1)}{4}}$ and $\left\vert \mathrm{Out(}X\mathrm{)}\right\vert
_{p}=f_{p}$. Since $q^{1/2}\geq f$, arguing as above, we obtain $\left\vert
X_{B}\right\vert _{p}\geq q^{\frac{\left( n+2\right) \left( n-3\right) }{12}%
} $.

Assume that $X\cong \Omega _{n}(q)$, with $nq$ odd. Hence, $\left\vert
X\right\vert _{p}=q^{\frac{(n-1)^{2}}{4}}$ and $\left\vert \mathrm{Out(}X%
\mathrm{)}\right\vert _{p}=f_{p}$. Then $\left\vert X_{B}\right\vert
_{p}^{3}\geq q^{\frac{(n-1)^{2}}{4}-3}$ by Lemma \ref{utakmica} and hence $%
\left\vert X_{B}\right\vert _{p}\geq q^{\frac{(n-1)^{2}-12}{12}}$.

Assume that $X\cong P\Omega _{n}^{\varepsilon }(q)$. Then $\left\vert
X\right\vert _{p}=q^{\frac{n(n-2)}{4}}$ and $\left\vert \mathrm{Out(}X%
\mathrm{)}\right\vert _{p}=\left( 3f\right) _{p}$ or $\left( 2f\right) _{p}$
according as $(n,\varepsilon ,p)$ is or is not $(8,+,3)$ respectively. In
both cases we have $q\geq \left\vert \mathrm{Out(}X\mathrm{)}\right\vert
_{p} $, and hence $\left\vert X_{B}\right\vert _{p}\geq q^{\frac{n(n-2)-12}{12%
}}$, again by Lemma \ref{utakmica}.\\
In each case, $\left\vert X_{B}\right\vert _{p}>1$ and hence $\left\vert X_{B}\right\vert _{p} \geq p$. 
\end{proof}

\bigskip

In the sequel, we denote by $A^{\symbol{94}}$ the pre-image of any group $A$ in the corresponding linear group. 

\bigskip

Recall that $e=n,n-1$ or $n-2$. If $e\leq n/2$, then either $n=2$ and $e=1$, or $n=4$ and $e=2$. However, such cases cannot occur by Remark \ref{mag3}. Thus, $n/2<e\leq n$ for each $X$, and hence, if $H$ is any subgroup of $X$ such that $(\Phi_{ef}^{\ast }(p),\left\vert H\right\vert)>1$, then  $H^{\symbol{94}}$ is classified in the Main Theorem of \cite{GPPS}, and therefore $H$ is known.

\bigskip

Now, we are in position to tackle cases (1) and (2) of Theorem \ref{daleko}. They are investigated in separate sections.

\bigskip

\section{Reduction to the case $\left( \Phi _{ef}^{\ast }(p),\left\vert
X_{x}\right\vert \right) >1$}

The aim of this section is to rule out case (2) of Theorem \ref{daleko}. The
proof strategy for doing so is as follows. By using \cite{GPPS}, \cite{KL}
and \cite{GLNP} we determine the structure of $X_{B}^{\symbol{94}}$. From
this we derive an upper bound for $\left\vert X_{B}\right\vert _{p}$. Then,
we show that such a bound is in contrast with the lower bound for $%
\left\vert X_{B}\right\vert _{p}$ determined in Corollary \ref{kralj}, and
hence no cases arise.

\bigskip

Assume $\left( \Phi _{ef}^{\ast }(p),\left\vert X_{x}\right\vert \right) =1$%
, $\Phi _{ef}^{\ast }(p)\mid k^{2}$ and $\left( \Phi _{ef}^{\ast
}(p),\left\vert X_{B}\right\vert \right) =\Phi _{ef}^{\ast }(p)^{1/2}$. Then $\Phi _{ef}^{\ast }(p)^{1/2}\mid \left( \left\vert X_{B}\right\vert ,%
\left[ X:X_{B}\right] \right) $. Also, $p\mid \left\vert X_{B}\right\vert $
by Corollary \ref{kralj} for $n\geq 4$. Then $\Phi _{ef}^{\ast
}(p)^{1/2}\mid \left( \left\vert X_{B}^{\symbol{94}}\right\vert ,\left[
SL_{n}(q):X_{B}^{\symbol{94}}\right] \right) $, with $\Phi _{ef}^{\ast
}(p)>1 $ a square. Moreover, $p\mid \left\vert X_{B}^{\symbol{94}}\right\vert 
$, as $\left\vert X_{B}^{\symbol{94}}\right\vert
_{p}=\left\vert X_{B}\right\vert _{p}$. Then $X_{B}^{\symbol{94}}$ is isomorphic to one of the subgroups classified in \cite{GPPS} and satisfying the previous additional constraints.

\bigskip

Let $Z$ denote the center of $GL_{n}(q)$. A subgroup $Y$ of $GL_{n}(q)$ is 
\emph{nearly simple} if $S\leq Y/(Y\cap Z)\leq Aut(S)$, for some non-abelian
simple group $S$, and if $N$ is the preimage of $S$ in $Y$, then $N$ is
absolutely irreducible on $V_{n}(q)$ and $N$ is not a classical group
defined over a subfield of $GF(q)$ (in its natural representation).

\bigskip

\begin{lemma}
\label{blisanac1}Let $\Phi _{ef}^{\ast }(p)>1$ be a square. If $Y$ is a
nearly simple subgroup of $SL_{n}(q)$ such that $p\mid \left\vert
Y\right\vert $ and $\Phi _{ef}^{\ast }(p)^{1/2}\mid \left( \left\vert
Y\right\vert ,\left[ SL_{n}(q):Y\right] \right) $, then one of the following
holds:

\begin{enumerate}
\item $e=n=4$, $q=7$, $\Phi _{4}^{\ast }(7)=5^{2}$ and $Z_{2}.A_{7} \unlhd Y \leq Z_{2}.S_{7} $.

\item $e=n=5$, $q=3$, $\Phi _{5}^{\ast }(3)=11^{2}$ and either $Y\cong
PSL_{2}(11)$ or $Y\cong M_{11}$.
\end{enumerate}
\end{lemma}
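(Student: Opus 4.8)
The plan is to invoke the Main Theorem of \cite{GPPS}, which classifies the subgroups $Y^{\symbol{94}} \leq GL_n(q)$ (here $Y$ is already in $SL_n(q)$) whose order is divisible by a primitive prime divisor $u$ of $p^{ef}-1$ with $n/2 < e \leq n$; since $\Phi_{ef}^{\ast}(p)^{1/2} > 1$ divides $|Y|$, such a $u$ exists. The nearly simple hypothesis eliminates the geometric (Aschbacher $\mathcal{C}_i$, $i \leq 7$) cases and the classical-subfield case from that theorem's conclusion, so $Y/(Y\cap Z)$ is one of the explicitly listed almost simple groups $S \leq Y/(Y\cap Z) \leq \mathrm{Aut}(S)$ appearing in the relevant table of \cite{GPPS} (the ``Class $\mathcal{S}$'' examples), together with the specific value of $n$, the characteristic $p$, and the primitive prime divisor $u = \Phi_{ef}^{\ast}(p)$ (in these sporadic-type cases $u$ itself, not merely a divisor, is forced). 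First I would extract from that table the finite list of candidate triples $(S, n, p)$ together with the associated $e$; crucially, the condition $\Phi_{ef}^{\ast}(p) > 1$ is a \emph{square} (this is the standing hypothesis, coming from Theorem \ref{daleko}(2)) is an extremely restrictive arithmetic filter on this list, because $\Phi_{ef}^{\ast}(p) = u$ is typically a single prime.

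Next I would run through the surviving candidates one at a time. For each, $\Phi_{ef}^{\ast}(p)^{1/2}$ must be an integer dividing both $|Y|$ and the index $[SL_n(q):Y]$; combined with $p \mid |Y|$, this pins down $q = p^f$ exactly (in the GPPS list $q$ is usually $p$ itself or a very small power), and then one checks divisibility of $[SL_n(q):Y]$ by $\sqrt{\Phi_{ef}^{\ast}(p)}$ by direct computation of orders. Most candidates die because $\Phi_{ef}^{\ast}(p)$ is not a perfect square, or because $p \nmid |S|$, or because the prime $\sqrt{u}$ already divides $|Y|$ to the full power it divides $|SL_n(q)|$ (so it cannot also divide the index). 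The two survivors are exactly cases (1) and (2): for $n=4$, $q=7$ we get $\Phi_4^{\ast}(7)$, and since $7^4-1 = 2400 = 2^5\cdot 3 \cdot 5^2$ while $7^2-1=48$ and $7-1=6$ strip off the $2$'s and the $3$, the primitive part is $5^2$, a square, with $Y$ a group with $\mathrm{Soc}(Y/(Y\cap Z)) \cong A_7$; for $n=5$, $q=3$ we get $\Phi_5^{\ast}(3) = (3^5-1)/(3-1) = 121 = 11^2$, again a square, with $Y/(Y\cap Z) \in \{PSL_2(11), M_{11}\}$, these being precisely the $p$-divisible irreducible nearly simple subgroups of $SL_5(3)$ of order divisible by $11$ per the GPPS tables. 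In each surviving case I would verify the precise shape of $Y$ (the central extension $Z_2.A_7$ resp.\ $Z_2.S_7$ inside $SL_4(7)$, and $PSL_2(11)$ resp.\ $M_{11}$ acting via their $5$-dimensional modules over $\mathbb{F}_3$) against the Atlas \cite{At} or \cite{BHRD}, and check that $\sqrt{u}$ genuinely divides the index (e.g.\ $[SL_5(3):M_{11}]$ is divisible by $11$ since $|SL_5(3)|_{11} = 11^2$ but $|M_{11}|_{11} = 11$).

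The main obstacle, and the bulk of the work, is marshalling the GPPS table accurately: \cite{GPPS} has several sub-tables (distinguishing linear, unitary, symplectic, orthogonal ambient groups and distinguishing the ``$e=n$'' from ``$e<n$'' situations), and one must be careful that we are in the linear case $SL_n(q)$ with $e \in \{n, n-1, n-2\}$, transcribe the right rows, and correctly compute each listed $\Phi_{ef}^{\ast}(p)$. A secondary subtlety is that some GPPS entries list $Y$ only up to the socle and allow field or graph automorphisms on top; one must confirm that the extra outer part still lands inside $SL_n(q)$ (not just $PGL_n(q)$) and still satisfies the index-divisibility constraint — this is what forces the upper bound $Z_2.S_7$ in case (1) and admits both $PSL_2(11)$ and $M_{11}$ in case (2) while excluding, say, $PGL_2(11)$. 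Once the table is in hand the arithmetic is short, so I would present it as a compact case analysis keyed to the rows of \cite{GPPS}, citing \cite{At} and \cite{BHRD} for the module structures and \cite{Rib} only if a genuine Diophantine squareness check on some $\Phi_{ef}^{\ast}(p)$ is needed beyond direct factorisation.
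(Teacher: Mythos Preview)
Your approach is essentially the same as the paper's: invoke \cite{GPPS} to list the nearly simple possibilities, then filter by the squareness of $\Phi_{ef}^{\ast}(p)$ together with $p\mid |Y|$ and the index condition. Two points in your plan need sharpening, however.

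First, the GPPS output is not a single finite table. Besides the sporadic and small alternating entries (Examples~2.6(b)--(c), Tables~2--5, 7--8 of \cite{GPPS}), there are two genuinely infinite families you must dispose of separately: the fully deleted permutation module for $A_\ell$ (Example~2.6(a)), and the Lie-type groups in characteristic~$p$ (Example~2.8, Table~6). For the former the paper bounds $i$ and $t$ in $u=1+te$ via $u^i\mid |A_\ell|$ and $\ell\leq n+2$, forcing $(e,q,u)=(4,7,5)$ and then excluding it by module dimension. For the latter the paper shows that the primitive prime divisor forces the defining field $q_0=q$, and then that the full $\Phi_{ef}^{\ast}(p)$ (not just its square root) divides $|Y|$, so it cannot divide the index---this is a different mechanism from your ``most candidates die because $\Phi$ is not a square''.

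Second, the paper does not factor each candidate $\Phi_{ef}^{\ast}(p)$ by hand. It first argues (from the GPPS tables) that in the finite-list cases $|S|$ is divisible by exactly one primitive prime divisor $u$ of $p^{ef}-1$, so $\Phi_{ef}^{\ast}(p)=u^2$; it then invokes Theorem~3 of \cite{GLNP} to conclude that $(ef,p,u)\in\{(4,7,5),(5,3,11)\}$ are the only such instances. This replaces your proposed row-by-row arithmetic with a single citation and is what makes the case analysis short.
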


\begin{proof}
Assume that $S\cong A_{\ell }$, $\ell \geq 5$. Then $Y$ is one of the groups
listed in Example 2.6.(a)--(c) of \cite{GPPS}. If $Y$ is one of the groups
in Example 2.6.(b)--(c), then $\ell \leq 10$ and $\Phi _{ef}^{\ast
}(p)^{1/2}$ is one of the primes $5$ or $7$, as $n\geq 3$ (see Tables 2--3
in \cite{GPPS}). Then $\Phi _{ef}^{\ast }(p)$ is either $5^{2}$ or $7^{2}$.
Actually, only the former is admissible and occurs for $(ef,p)=(4,7)$ by 
\cite{GLNP}, Theorem 3. Then $Z_{2}.A_{7}\trianglelefteq Y$ by Tables 2--3
in \cite{GPPS}, since $p\mid \left\vert Y\right\vert $, and hence
$Z_{2}.A_{7} \unlhd Y \leq Z_{2}.S_{7} $ by \cite{AtMod}.

Suppose that $Y$ is one of the groups in Example 2.6.(a). Then $V_{n}(q)$, $%
q=p$, is the fully deleted permutation module for $S$ and $A_{\ell }\leq
Y\leq S_{\ell }\times Z_{p-1}$, where $\ell =n+\varepsilon $, $\varepsilon =1
$ or $2$ according as $p$ does not divide or does divide $n$
respectively. Let $u$ be any prime divisor of $\Phi _{ef}^{\ast }(p)$. Then $%
u^{i}\mid \left\vert Y\right\vert $, and hence $u^{i}\mid \left\vert
S\right\vert $, for some $i\geq 1$. Then $i< \frac{\ell -1}{u}$ by 
\cite{DM}, Exercise 2.6.8. On the other hand, $u=1+te$, for some $t\geq
1$, by \cite{KL}, Proposition 5.2.15.(i). Thus $1+i(1+te)<n+\varepsilon$, where $e=n,n-1$ or $n-2$. Then $e \neq n$, $i=1$, and either $t=1$, or $\varepsilon=2$, $e=1$, $n=p=3$ and $t=2$. The latter implies $u=p=3$, and hence a contradiction, whereas the former yields $(e,q,u)=(4,7,5)$ by \cite{GLNP}, Theorem 3, since $\Phi _{e}^{\ast }(q)=(e+1)^{2}$. Then $(\ell,n,\varepsilon) =(7,6,1)$, since $\ell=n+\varepsilon$, $e=n-1,n-2$ and $7 \mid \left\vert Y\right\vert $. However, $A_{7}$ does not afford an irreducible representation on $V_{4}(7)$ by \cite{AtMod}, and this case is excluded.

Assume that $S$ is sporadic. Then $S$ is one of the groups listed in Table 5
of \cite{GPPS}. Note that, the order of each of these groups is divisible by
exactly one primitive prime divisor of $p^{e}-1$, with $e=n,n-1$ or $n-2$. Thus $\Phi _{ef}^{\ast }(p)$ is the square of a prime, and hence $(ef,p,u)=(4,7,5)$ or $(5,3,11)$ by \cite{GLNP},
Theorem 3. Only the latter yields a group that is listed in Table 5 and this is $Y^{\prime }\cong M_{11}$. Then $Y\cong M_{11}$, since $\mathrm{Out(}M_{11}\mathrm{)}=1$ by \cite{AtMod}.

Assume that $S$ is a Lie type simple group in characteristic different from $%
p$. Then $Y$ can be determined from Tables 7--8 of \cite{GPPS}. As in the
sporadic case, the order of the groups in these two tables is divisible by
exactly one primitive prime divisor of $p^{e}-1$, with $e=n,n-1$ or $n-2$. Then $\Phi _{ef}^{\ast }(p)$ is the square of a prime and hence $(ef,p,u)=(4,7,5)$ or $(5,3,11)$ as above. If $S$ is listed in Table 7, then 
$S\cong Sp_{4}(3)$, but no groups $Y$ occur with order divisible by $7$. Hence, assume $S$ is one of the groups listed in Table 8. If $u=5$, then $e=4$ and it is easy to see that $S$ is
isomorphic to one of the groups $PSp_{4}(3)$, or $PSL_{2}(9)$. However, there are no corresponding $Y$ with order divisible by $7$. Thus, $u=11$, $u=2e+1$
and hence $s=11$ and $S\cong PSL_{2}(11)$. Then $Y\cong PSL_{2}(11)$
by \cite{AtMod}. 

Assume that $S$ is a Lie type simple group in characteristic $p$. Then $Y$ is
one of the groups in Example 2.8, and hence in Table 6, of \cite{GPPS}. Let 
$GF(q_{0})$ be a subfield of $GF(q)$ for which $Y\leq GL_{n}(q_{0})\circ
Z_{q-1}$, and $Y\cap GL_{n}(q_{0})$ cannot be realized modulo scalar over a
proper subfield of $GF(q_{0})$. Let $u$ be a prime divisor of\ $\Phi
_{ef}^{\ast }(p)$, then $u$ is a primitive prime divisor of $p^{ef}-1$ and
this forces $q_{0}=q$ by \cite{KL}, Proposition 5.2.15.(ii).\\ 
Assume that $n=8$, $e=6$ and $%
Y^{(\infty )}\cong SL_{2}(q^{3})$. Then $\left[ SL_{8}(q):Y\right] $ divides%
\begin{equation*}
\left[ SL_{8}(q):Y^{(\infty )}\right] =q^{25}\prod_{i=2,i\neq
6}^{8}(q^{i}-1) 
\end{equation*}
and hence $\Phi _{6f}^{\ast }(p)^{1/2}$ does not divide $\left[ SL_{8}(q):Y%
\right] $, a contradiction. A similar reasoning rules out the remaining
cases in Table 6.
\end{proof}

\begin{lemma}
\label{blisanac2}Let $\Phi _{ef}^{\ast }(p)>1$ be a square. If $\Phi
_{ef}^{\ast }(p)^{1/2}\mid \left( \left\vert Y\right\vert ,\left[ SL_{n}(q):Y%
\right] \right) $ and $p\mid \left\vert Y\right\vert $, then $Y$ is not isomorphic to any
of the groups listed in Examples 2.1, 2.3 or 2.5 of \cite{GPPS}.
\end{lemma}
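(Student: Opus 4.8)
The plan is to show that the geometric families of subgroups in Aschbacher classes $\mathcal{C}_1$, $\mathcal{C}_2$ and $\mathcal{C}_3$ (which are precisely the content of Examples 2.1, 2.3 and 2.5 of \cite{GPPS}) cannot serve as $Y=X_B^{\symbol{94}}$, by exploiting the divisibility condition $\Phi_{ef}^{\ast}(p)^{1/2}\mid\left(\left\vert Y\right\vert,\left[SL_n(q):Y\right]\right)$ together with $p\mid\left\vert Y\right\vert$. The key numerical facts I will use throughout are: (i) $\Phi_{ef}^{\ast}(p)>1$ is a square, so by \cite{GLNP}, Theorem 3, the only possibilities are $(ef,p)=(4,7)$ with $\Phi_4^{\ast}(7)=5^2$ and $(ef,p)=(5,3)$ with $\Phi_5^{\ast}(3)=11^2$; in particular $\Phi_{ef}^{\ast}(p)$ is the square of a single prime $u$ with $u\equiv 1\pmod{ef}$ by \cite{KL}, Proposition 5.2.15.(i); and (ii) $n/2<e\leq n$, as recorded just before this section.

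First I would treat Example 2.1 of \cite{GPPS}, the stabilizers of a totally singular or non-degenerate subspace (reducible subgroups). Here $Y$ is essentially a maximal parabolic or a $\mathcal{C}_1$-type stabilizer, and $\left[SL_n(q):Y\right]$ is a product of Gaussian binomial-type factors. Since a primitive prime divisor $u$ of $p^{ef}-1$ divides $p^{i}-1$ only when $ef\mid i$, and the index of such a $Y$ involves only factors $q^{j}-1$ with $j<e$ (because $e>n/2$ forces the relevant subspace dimension to be less than $e$), $u$ cannot divide the index unless $u$ divides a factor $q^{e}-1$ appearing in $\left\vert Y\right\vert$ but then it also divides $\left\vert X\right\vert$ in a way incompatible with $u^2\mid\left(\left\vert Y\right\vert,\left[X:Y\right]\right)$. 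In other words $\left\vert X\right\vert_u=u$ while we need $u^2$ to split between $\left\vert Y\right\vert$ and the index, a contradiction. I would make this precise by checking that in each of the two arithmetic cases the full $u$-part of $\left\vert SL_n(q)\right\vert$ is exactly $u$ (using $u\equiv 1\pmod{ef}$ and the bound $n\leq e+2$), so $u^2\nmid\left\vert SL_n(q)\right\vert$, which already defeats the hypothesis for \emph{every} subgroup $Y$, not just reducible ones. This observation simultaneously disposes of Examples 2.3 and 2.5 as well, provided $u^2\nmid\left\vert SL_n(q)\right\vert$.

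The main obstacle is exactly the borderline case where $u^2$ \emph{does} divide $\left\vert SL_n(q)\right\vert$: this happens only when $e<n$, so that $q^{e}-1$ and $q^{2e}-1$ (or a second factor) both contribute a factor of $u$ — but with $e>n/2$ and $n\leq e+2$ the only room is $n=e+1$ or $n=e+2$ with a further $u\mid q-1$ or $u\mid q^2-1$, which the constraints $u\equiv 1\pmod{ef}$ and $p\in\{3,7\}$ rule out by direct inspection. Thus one reduces in each remaining case to $\left\vert X\right\vert_u=u$ and the hypothesis collapses. For Examples 2.3 (imprimitive, $\mathcal{C}_2$: $GL_m(q)\wr S_t$ with $mt=n$) and 2.5 (field-extension, $\mathcal{C}_3$: $GL_{n/s}(q^s).s$ with $s$ prime) I would additionally note the structural point that in $\mathcal{C}_3$ a primitive prime divisor of $p^{ef}-1$ with $e>n/2$ forces $e$ to be a multiple of $n/s$, hence $s=1$, a contradiction with $s$ prime; and in $\mathcal{C}_2$ the order $\left\vert GL_m(q)\wr S_t\right\vert$ involves no factor $q^{j}-1$ with $j>m\leq n/2<e$ beyond the $S_t$-part, so $u$ cannot divide $\left\vert Y\right\vert$ at all unless $u\mid t!$, impossible since $u\geq 1+ef>n\geq t$.

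Concretely, I would organize the write-up as: (1) recall $\Phi_{ef}^{\ast}(p)=u^2$ with $(ef,p,u)\in\{(4,7,5),(5,3,11)\}$ and $u\equiv 1\pmod{ef}$; (2) compute $\left\vert SL_n(q)\right\vert_u$ in each case using $n/2<e\leq n$ and $n\leq e+2$, concluding $\left\vert SL_n(q)\right\vert_u=u$ (so that $u^2\nmid\left\vert Y\right\vert\cdot 1$ is automatic once we know $u^2\mid\left\vert Y\right\vert$ is needed — here I must be careful: the hypothesis only needs $u\mid\left\vert Y\right\vert$ and $u\mid\left[X:Y\right]$, i.e. $u^2\mid\left\vert X\right\vert$, so step (2) is the whole point); (3) since $u^2\nmid\left\vert SL_n(q)\right\vert$, the hypothesis $\Phi_{ef}^{\ast}(p)^{1/2}\mid(\left\vert Y\right\vert,[SL_n(q):Y])$ is never satisfiable, and in particular $Y$ cannot be any of the groups in Examples 2.1, 2.3 or 2.5. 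The delicate part, and the place a referee would look hardest, is verifying $\left\vert SL_n(q)\right\vert_u=u$ for \emph{all} admissible $n$ in the two cases $q=7$, $e=4$ (so $n\in\{4,5,6\}$) and $q=3$, $e=5$ (so $n\in\{5,6,7\}$); this is a short finite check that $5\nmid 7^{i}-1$ for $i\in\{1,2,3\}$ and $5^2\nmid 7^4-1=2400$, and $11\nmid 3^{i}-1$ for $i\in\{1,2,3,4\}$ and $11^2\nmid 3^5-1=242$, together with the observation that higher factors $q^{j}-1$ with $j\leq n$ repeat the $u$-part only when $ef\mid j$, which does not occur for $j\leq n\leq e+2<2e\leq 2ef$.
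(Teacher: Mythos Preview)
Your proposal has two fatal errors.

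First, you have misidentified Examples 2.1 and 2.5 of \cite{GPPS}. Example 2.1 is the \emph{Classical Examples} (the natural classical subgroups such as $Sp_n(q)$, $O_n^{\varepsilon}(q)$, $SU_n(q^{1/2})$ sitting inside $SL_n(q)$), not the reducible $\mathcal{C}_1$-subgroups; those are Example 2.2. Example 2.5 is the \emph{Symplectic Type Examples} (normalizers of extraspecial groups, Aschbacher class $\mathcal{C}_6$), not the $\mathcal{C}_3$ field-extension subgroups; those are Example 2.4. The paper's proof handles Example 2.1 in one line: for a classical subgroup $Y$ the full $\Phi_{ef}^{\ast}(p)$ divides $|Y|$ and is coprime to $[SL_n(q):Y]$, so $\Phi_{ef}^{\ast}(p)^{1/2}$ cannot divide the index. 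For Example 2.5 one has $e=n=2^m$ and the only primitive prime divisor in $|Y|$ is $u=n\pm 1$ to the first power, forcing $\Phi_{ef}^{\ast}(p)=u^2$ and hence $(e,q,u)=(4,7,5)$; but then $Y\leq ((D_8\circ Q_8)\cdot S_5)\circ Z_6$ has order coprime to $p=7$, contradicting $p\mid |Y|$.

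Second, and more seriously, your central numerical claim is false. You assert that $5^2\nmid 7^4-1=2400$ and $11^2\nmid 3^5-1=242$, and build the entire argument on $|SL_n(q)|_u=u$. But $2400=2^5\cdot 3\cdot 5^2$ and $242=2\cdot 11^2$, so in fact $u^2\mid q^e-1$ in both cases. This is not an accident: by definition $\Phi_{ef}^{\ast}(p)$ divides $p^{ef}-1=q^e-1$, so if $\Phi_{ef}^{\ast}(p)=u^2$ then automatically $u^2\mid q^e-1\mid |SL_n(q)|$. The hypothesis $u\mid |Y|$ and $u\mid [SL_n(q):Y]$ is therefore perfectly consistent with the group orders, and your strategy of deriving a contradiction from $u^2\nmid |SL_n(q)|$ cannot work. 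The paper instead uses the structural constraints specific to each example (for 2.3, that $Y\leq GL_1(q)\wr S_n$ forces $u\leq n$, pinning down $(ef,p,u)=(4,7,5)$ and $n=5$, whence $7\nmid |GL_1(7)\wr S_5|$) together with the hypothesis $p\mid |Y|$.
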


\begin{proof}
Assume that $Y$ is isomorphic to one of the groups defined as in Example 2.1 of \cite{GPPS}. Then $\Phi
_{ef}^{\ast }(p)$ divides the order of $Y$, but it is coprime to $\left[ SL_{n}(q):Y%
\right] $, and this contradicts our assumptions.

Assume that $Y$ is isomorphic to one of the groups defined as in Example 2.3 of \cite{GPPS}. Then $%
Y\leq GL_{1}(q)\wr S_{n}$ and $u=e+1\leq n$ for each prime divisor $u$ of $\Phi
_{ef}^{\ast }(p)^{1/2}$. Then $e=n-1$ and $u=n$ or $e=n-2$ and $u=n-1$. Thus 
$\Phi _{ef}^{\ast }(p)=u^{2}=(e+1)^{2}$ and hence $(ef,p,u)=(4,7,5)$ by \cite%
{GLNP}, Theorem 3. Therefore, $f=1$, $u=n=5$ and hence $Y\leq GL_{1}(7)\wr S_{5}$%
. However, this group cannot occur since $7 \nmid \left\vert
Y\right\vert $.

Assume that $Y$ is isomorphic to one of the groups defined as in Example 2.5 of \cite{GPPS}. Let $%
u $ be a prime divisor of $\Phi _{ef}^{\ast }(p)$. Then $e=n=2^{m}$ and
either $u=n+1$ is a Fermat prime or $u=n-1$ is a Mersenne prime. Moreover, $%
\Phi _{ef}^{\ast }(p)^{1/2}=u$ by Table 1 of \cite{GPPS}. Hence $\Phi
_{ef}^{\ast }(p)=u^{2}$ and, as above, $(e,q,u)=(4,7,5)$ since $n$ is even. Thus $Y\leq \left(
(D_{8}\circ Q_{8})\cdot S_{5}\right) \circ Z_{6}$, which cannot occur since $%
7$ does not divide $\left\vert Y\right\vert $. This completes the proof
\end{proof}

\begin{proposition}
\label{hdu}Let $\Phi _{ef}^{\ast }(p)>1$ be a square. If $\Phi _{ef}^{\ast
}(p)^{1/2}\mid \left( \left\vert Y\right\vert ,\left[ SL_{n}(q):Y\right]
\right) $ and $p\mid \left\vert Y\right\vert $, then one of the following
holds:

\begin{enumerate}
\item $e=n$ and one of the following holds:

\begin{enumerate}
\item $Y\leq GL_{1}(q^{n})\cdot Z_{n}$ and $p\mid n$.

\item $e=n=4$, $q=7$,\ $\Phi _{4}^{\ast }(7)=5^{2}$ and $Z_{2}.A_{7} \unlhd Y \leq Z_{2}.S_{7} $.

\item $e=n=5$, $q=3$, $\Phi _{5}^{\ast }(3)=11^{2}$ and either $Y\cong
PSL_{2}(11)$ or $Y\cong M_{11}$.
\end{enumerate}

\item $e=n-1$ and $Y$ lies in a parabolic subgroup of type $P_{1}$ of $%
SL_{n}(q)$.

\item $e=n-2$ and $Y$ lies in a parabolic subgroup of type $P_{2}$ or $%
P_{1,n-1}$ of $SL_{n}(q)$.
\end{enumerate}
\end{proposition}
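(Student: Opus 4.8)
The plan is to invoke the Main Theorem of \cite{GPPS} to enumerate all possibilities for the subgroup $Y$ of $SL_{n}(q)$ whose order is divisible by a primitive prime divisor of $p^{ef}-1$, and then to sieve that list against the two numerical hypotheses: $p \mid \left\vert Y\right\vert$ and $\Phi_{ef}^{\ast}(p)^{1/2} \mid \left( \left\vert Y\right\vert , \left[ SL_{n}(q):Y\right] \right)$, with $\Phi_{ef}^{\ast}(p)$ a perfect square $>1$. Recall from the discussion preceding the statement that $n/2 < e \leq n$ for every admissible $X$, so the \cite{GPPS} classification applies verbatim to $Y = X_{B}^{\symbol{94}}$.

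First I would recall that the \cite{GPPS} Main Theorem partitions the candidates for $Y$ into the families of Examples 2.1--2.9. Lemma \ref{blisanac2} already disposes of Examples 2.1, 2.3 and 2.5, and Lemma \ref{blisanac1} handles the nearly simple groups (Examples 2.6 and 2.8), yielding precisely the sporadic/alternating/Lie-type exceptions $Z_{2}.A_{7} \unlhd Y \leq Z_{2}.S_{7}$ with $(e,q)=(4,7)$, and $Y \cong PSL_{2}(11)$ or $M_{11}$ with $(e,q)=(5,3)$ --- these feed directly into conclusions (1b) and (1c). So the remaining work is to analyse Examples 2.2, 2.4, 2.7 and 2.9 of \cite{GPPS}, i.e.\ the cases where $Y$ is (essentially) a Singer-cycle normalizer, an imprimitive-type group over an extension field, a classical group in a smaller dimension over an extension field, or a group lying in a parabolic. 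The key point throughout is that a primitive prime divisor $u$ of $p^{ef}-1$ satisfies $u \equiv 1 \pmod{ef}$ by \cite{KL}, Proposition 5.2.15.(i), so $u \geq 1+ef$; combined with $\Phi_{ef}^{\ast}(p) = (ef+1)^{2}$ being forced whenever it is a square with at most one prime divisor (via \cite{GLNP}, Theorem 3), this pins down $(e,q)$ to the two exceptional pairs in most families, and then one checks whether $p \mid \left\vert Y\right\vert$ and the divisibility of $\left[SL_{n}(q):Y\right]$ can simultaneously hold.

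The structure of the argument in each remaining family is: if $Y$ is a field-extension (Singer-type) group of Example 2.2, then $Y \leq GL_{e}(q^{\,n/e})\cdot Z_{\text{something}}$ with $e \mid n$, and since $n/2 < e \leq n$ this forces $e = n$; the requirement $p \mid \left\vert Y\right\vert$ then forces $p \mid n$ (the scalar-cycle part contributes no $p$, so the $p$-part must come from the $Z_{n}$ on top), giving conclusion (1a). For the imprimitive and subfield-classical families (Examples 2.4, 2.7), the divisibility $\Phi_{ef}^{\ast}(p)^{1/2} \mid \left[SL_{n}(q):Y\right]$ together with the explicit index formulae --- exactly as in the $SL_{2}(q^{3})$ computation carried out at the end of the proof of Lemma \ref{blisanac1}, where one writes $\left[SL_{n}(q):Y^{(\infty)}\right] = q^{a}\prod_{i \in I}(q^{i}-1)$ with $ef \notin I$ --- shows that no primitive prime divisor of $p^{ef}-1$ can divide the index, a contradiction; so these families are eliminated entirely. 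Finally, for Example 2.9, $Y$ lies in a parabolic subgroup, and one reads off from the \cite{GPPS} tables that $e = n-1$ forces the parabolic type to be $P_{1}$ (a primitive prime divisor of $p^{(n-1)f}-1$ can only act on an $(n-1)$-dimensional section), giving conclusion (2), while $e = n-2$ forces type $P_{2}$ or $P_{1,n-1}$, giving conclusion (3); the case $e = n$ with $Y$ parabolic is impossible since a parabolic has no composition factor of dimension $n$ over $GF(q)$ on which a ppd of $p^{nf}-1$ can act irreducibly, so the only $e=n$ possibilities are the three already listed in (1a)--(1c).

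The main obstacle I anticipate is the bookkeeping in Example 2.9 (the parabolic case): one must argue carefully that a primitive prime divisor $u$ of $p^{ef}-1$ dividing $\left\vert Y\right\vert$, where $Y$ sits inside a parabolic $P_{J}$, actually pins down the Levi type --- this uses that $u$ acts irreducibly (hence fixed-point-freely) on an $e$-dimensional $GF(q)$-subspace and so must "see" a Levi factor of the shape $GL_{e}(q)$ (up to the relevant isogeny), which for $e = n-1$ means the stabilized subspace has dimension $1$ or $n-1$, and for $e = n-2$ means dimension $2$, $n-2$, or the flag $1 \subset n-1$. One also has to be slightly careful that the exceptional pairs $(e,q) = (4,7)$ and $(5,3)$ coming from $\Phi_{e}^{\ast}(q) = (e+1)^{2}$ do not sneak an extra parabolic-type configuration past the index-divisibility check; but since in those two cases $e = n$, they fall under (1b)--(1c) rather than (2)--(3), so there is no clash. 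Everything else is a routine, if somewhat lengthy, pass through Tables 1--8 of \cite{GPPS} combined with \cite{GLNP}, Theorem 3.
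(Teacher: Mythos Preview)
Your overall plan is right---invoke \cite{GPPS} and sieve---but you have misremembered the labelling of the Examples in \cite{GPPS}, and this leads to a genuine gap. In that paper, Example 2.2 is the \emph{reducible} case (the group stabilises a subspace or a quotient of dimension $m\geq e$) and Example 2.4 is the \emph{extension field} case ($Y\leq GL_{n/b}(q^{b})\cdot Z_{b}$ for a prime $b\mid n$); there is no ``Example 2.9 = parabolic''. Once Lemmas \ref{blisanac1} and \ref{blisanac2} are in hand, only Examples 2.2 and 2.4 remain, and both must be analysed for each of $e=n$, $n-1$, $n-2$.

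The substantive gap is in your treatment of the extension-field family. You claim it forces $e=n$, but this is false: for $e=n-1$ one still has the case $Y\leq GL_{1}(q^{n-1})\cdot Z_{n-1}$ (Example 2.4(a) of \cite{GPPS}, with the ppd $u=n$), and here one must observe that the normal Sylow $u$-subgroup fixes a unique hyperplane, so $Y$ lies in a $P_{1}$---this is how conclusion (2) absorbs that case. Likewise for $e=n-2$ one gets $n$ even and $Y\leq GL_{n/2}(q^{2})\cdot Z_{2}$, and a second application of the analysis (now with $n'=n/2$, $e'=n/2-1$) is needed to place $Y$ inside a $P_{2}$. Your ``index argument'' does not apply to these extension-field groups: the index $[SL_{n}(q):GL_{n/b}(q^{b})\cdot Z_{b}]$ \emph{is} divisible by $\Phi_{ef}^{\ast}(p)^{1/2}$ (the full primitive part sits in the subgroup, but only half is required in the index), so they are not eliminated that way. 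Finally, two smaller points: for $e=n$ you need an iteration (apply Lemmas \ref{blisanac1}--\ref{blisanac2} inside $SL_{n/c}(q^{c})$ repeatedly) to descend from $GL_{n/c}(q^{c})\cdot Z_{c}$ all the way to $GL_{1}(q^{n})\cdot Z_{n}$; and for $e=n-2$, when Example 2.2 hands you a stabilised subspace of dimension $n-1$ rather than $n-2$, you must pass to the $(n-1)$-dimensional quotient and re-run the $e'=n'-1$ analysis there to land in $P_{2}$ or $P_{1,n-1}$---``reading off from the tables'' is not enough.
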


\begin{proof}
It follows from Lemmas \ref{blisanac1} and \ref%
{blisanac2} that either assertions (1.b) or (1.c) hold, or $Y$ is isomorphic to one of the
groups listed in Examples 2.2 and 2.4 of \cite{GPPS}.

Assume that $e=n$.\ If $Y$ is isomorphic to one of the groups listed as in Example 2.2 of \cite%
{GPPS}, then $Y$ is the stabilizer of a subspace or a quotient of a subspace $V_{n}(q)$ of dimension $m\geq e$, and hence this case cannot occur for $e=n$.
Then $Y$ is listed in Example 2.4 of \cite{GPPS} and hence $Y\leq
GL_{n/c}(q^{c})\cdot Z_{c}$ for some divisor $c$ of $n$ with $c>1$. Clearly, 
$\Phi _{nf}^{\ast }(p)$ divides the order of $GL_{n/c}(q^{c})\cdot Z_{c}$.
Then $\Phi _{nf}^{\ast }(p)$ divides the order of $SL_{n/c}(q^{c})$, as $%
\Phi _{nf}^{\ast }(p)\equiv 1 \pmod{nf}$ by \cite{KL}, Proposition
5.2.15.(i). Also $\Phi _{nf}^{\ast }(p)$ is coprime to $\left[
SL_{n}(q):SL_{n/c}(q^{c})\right] $, and hence $\Phi _{nf}^{\ast }(p)^{1/2}$
divides the order of $Y\cap SL_{n/c}(q^{c})$, since $Y\leq GL_{n/c}(q^{c})\cdot Z_{c}$.

Set $f^{\prime }=cf$. Clearly, $\Phi _{nf}^{\ast }(p)=\Phi _{\frac{n}{c}%
f^{\prime }}^{\ast }(p)$ and $\Phi _{\frac{n}{c}f^{\prime }}^{\ast
}(p)^{1/2} $ divides $\left\vert Y\cap SL_{n/c}(q^{c})\right\vert $ and $%
\left[ SL_{n/c}(q^{c}):Y\cap SL_{n/c}(q^{c})\right] $. Hence, we may apply
Lemmas \ref{blisanac1} and \ref{blisanac2} with $Y\cap SL_{n/c}(q^{c})$ and $%
SL_{n/c}(q^{c})$ in the role $Y$ and $SL_{n}(q)$ respectively, thus obtaining $Y\cap SL_{n/c}(q^{c})\leq GL_{n/c^{\prime }}(q^{c^{\prime
}})\cdot Z_{c^{\prime }}$ with $c\mid c^{\prime }$. Indeed, no cases arise from Lemma \ref{blisanac1}, since $c>1$. We may iterate the previous argument and eventually obtain that $Y\leq GL_{1}(q^{n})\cdot
Z_{n}$ and $p\mid n$, as $p\mid \left\vert Y\right\vert $, which is (1.a).

Assume that $e=n-1$. If $Y$ is a group listed in Example 2.2 of \cite{GPPS},
then $Y$ is the stabilizer of a subspace or a quotient of a subspace $%
V_{n}(q)$ of dimension $n-1$ and we obtain (2).

Assume that $Y$ is listed in Example 2.4 of \cite{GPPS}. Let $u$ is prime
divisor of $\Phi _{(n-1)f}^{\ast }(p)$, then $u=n$ and $Y\leq
GL_{1}(q^{n-1})\cdot Z_{n-1}$ by Example 2.4(a) of \cite{GPPS}. Since $%
Y $ contains a normal Sylow $u$-subgroup $U$ which stabilizes a unique
hyperplane of $V_{n}(q)$, we obtain (2).

Assume that $e=n-2$. If $Y$ is one of the groups listed in Example 2.2 of \cite%
{GPPS}, then $Y$ is the stabilizer of a subspace or a quotient of a subspace 
$Q$ of $V_{n}(q)$ of dimension $m=n-1$ or $n-2$. In the latter case $Y$ lies in a subgroup of type $P_{2}$ of $SL_{n}(q)$, and hence (3) holds.

Assume that $m=n-1$. Suppose that $\dim Q=1$. Then $Y\leq \lbrack
q^{n-1}]:GL_{n-1}(q)$, with $[q^{n-1}]$ fixing $Q$ pointwise, by \cite{KL}, Proposition 4.1.17.(II). Set $%
W=Y/(Y\cap \lbrack q^{n-1}])$ and $n^{\prime }=n-1$. Hence $e=n-2=n^{\prime
}-1$ and $W$ acts on $V_{n}(q)/Q$ inducing a subgroup of $GL_{n^{\prime
}}(q) $. Moreover, arguing as in the $e=n-1$ case, we see that $W$ is the
stabilizer of a subspace or a quotient of a subspace $V_{n}(q)/Q$ of
dimension $n^{\prime}-1$. Hence $Y$ lies in a parabolic subgroup of type $P_{2}$ or $%
P_{1,n-1}$ of $SL_{n}(q)$, which is (3). The same conclusions hold for $\dim
Q=n-1$.

Finally, assume that $Y$ is one of the groups in Example 2.4 of \cite{GPPS}. Then $n$
is even and $Y\leq GL_{n/2}(q^{2})\cdot Z_{2}$. If $f^{\prime}=2f$, then $\Phi _{\left(
n/2-1\right) f^{\prime }}^{\ast }(p)^{1/2}\mid \left( \left\vert
Y\right\vert ,\left[ SL_{n/2}(q^{2}):Y\right] \right) $ and $p\mid
\left\vert Y\right\vert $. Arguing as in the $e=n-1$ case, we get that $Y$
fixes subspace or a quotient of a subspace $V_{n/2}(q^{2})$ of dimension $%
n/2-1$. Then $Y$ is lies in a parabolic subgroup of type $P_{2}$ of $%
SL_{n}(q)$, and we obtain (3).
\end{proof}

\bigskip

\begin{corollary}
\label{rem}$X_{B}^{\symbol{94}}$ is isomorphic to one of the groups listed in Proposition %
\ref{hdu}.
\end{corollary}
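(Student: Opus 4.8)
The plan is to obtain Corollary \ref{rem} as the instance $Y=X_{B}^{\symbol{94}}$ of Proposition \ref{hdu}. First I would record that $X_{B}^{\symbol{94}}$ really is a subgroup of $SL_{n}(q)$: whichever of the five families $X$ belongs to, the corresponding linear group --- one of $SL_{n}(q)$, $Sp_{n}(q)$, $SU_{n}(q^{1/2})$, $\Omega_{n}(q)$, $\Omega_{n}^{\varepsilon}(q)$, the unitary one acting over the quadratic extension $GF(q)$ of $GF(q^{1/2})$ --- sits inside $SL_{n}(q)$ in its natural action on $V_{n}(q)$, and hence so does the full preimage $X_{B}^{\symbol{94}}$ of the block-stabilizer $X_{B}$.

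Next I would check that $Y=X_{B}^{\symbol{94}}$ satisfies the three hypotheses of Proposition \ref{hdu}; this is precisely the content of the discussion preceding Lemma \ref{blisanac1}, which I would reproduce. Since we are in case (2) of Theorem \ref{daleko}, $\left(\Phi_{ef}^{\ast}(p),\left\vert X_{B}\right\vert\right)=\Phi_{ef}^{\ast}(p)^{1/2}$, so $\Phi_{ef}^{\ast}(p)$ is a perfect square, and $\Phi_{ef}^{\ast}(p)>1$ by Lemma \ref{cyclont}. Moreover $\left\vert X_{B}^{\symbol{94}}\right\vert_{p}=\left\vert X_{B}\right\vert_{p}\geq p$ by Corollary \ref{kralj}, so $p\mid\left\vert X_{B}^{\symbol{94}}\right\vert$. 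Finally $\Phi_{ef}^{\ast}(p)^{1/2}$ divides $\left\vert X_{B}\right\vert$, hence $\left\vert X_{B}^{\symbol{94}}\right\vert$; and, since $\Phi_{ef}^{\ast}(p)\mid k^{2}$ by Theorem \ref{daleko}(2) and $\Phi_{ef}^{\ast}(p)$ is a square, $\Phi_{ef}^{\ast}(p)^{1/2}\mid k$, so $\Phi_{ef}^{\ast}(p)^{1/2}$ divides $b=\lambda k(k+1)=[X:X_{B}]$ by Lemma \ref{desdes}(2), and this index in turn divides $[SL_{n}(q):X_{B}^{\symbol{94}}]$. Hence $\Phi_{ef}^{\ast}(p)^{1/2}\mid\left(\left\vert X_{B}^{\symbol{94}}\right\vert,[SL_{n}(q):X_{B}^{\symbol{94}}]\right)$ with $p\mid\left\vert X_{B}^{\symbol{94}}\right\vert$ and $\Phi_{ef}^{\ast}(p)>1$ a square, which are exactly the hypotheses of Proposition \ref{hdu}. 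Invoking that proposition with $Y=X_{B}^{\symbol{94}}$ then yields the assertion.

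I do not foresee a genuine obstacle here: the corollary merely packages, for use in the subsequent sections, the outcome of the \cite{GPPS}-based analysis carried out in Lemmas \ref{blisanac1}--\ref{blisanac2} and Proposition \ref{hdu}. The only point demanding a little care is the index and order bookkeeping relating $X_{B}$ to $X_{B}^{\symbol{94}}$ --- one passes through the centre of the relevant linear group, and, when that group is not $SL_{n}(q)$ itself, through its index in $SL_{n}(q)$ --- but both passages are harmless, since $\left\vert X_{B}\right\vert$ divides $\left\vert X_{B}^{\symbol{94}}\right\vert$, $[X:X_{B}]$ divides $[SL_{n}(q):X_{B}^{\symbol{94}}]$, and $\left\vert X_{B}^{\symbol{94}}\right\vert_{p}=\left\vert X_{B}\right\vert_{p}$.
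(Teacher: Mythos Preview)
Your approach is the same as the paper's—verify the hypotheses of Proposition \ref{hdu} for $Y=X_{B}^{\symbol{94}}$ and invoke it—and is essentially correct. There is, however, one slip: you write $b=\lambda k(k+1)=[X:X_{B}]$, but $b=[G:G_{B}]$, and nothing so far guarantees that $X$ is block-transitive; in general $[X:X_{B}]$ is only a divisor of $b$, so $\Phi_{ef}^{\ast}(p)^{1/2}\mid b$ does not by itself yield $\Phi_{ef}^{\ast}(p)^{1/2}\mid [X:X_{B}]$.

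The detour through $b$ is in any case unnecessary. Theorem \ref{daleko}(2) already gives $\left(\Phi_{ef}^{\ast}(p),\left\vert X_{B}\right\vert\right)=\Phi_{ef}^{\ast}(p)^{1/2}$, and by the choice of $e$ the $\Phi_{ef}^{\ast}(p)$-part of $\left\vert X\right\vert$ is exactly $\Phi_{ef}^{\ast}(p)$; hence the $\Phi_{ef}^{\ast}(p)$-part of $[X:X_{B}]$ is exactly $\Phi_{ef}^{\ast}(p)^{1/2}$, so $\Phi_{ef}^{\ast}(p)^{1/2}\mid [X:X_{B}]=[X^{\symbol{94}}:X_{B}^{\symbol{94}}]\mid [SL_{n}(q):X_{B}^{\symbol{94}}]$ directly. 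This is precisely the route taken in the discussion immediately preceding Lemma \ref{blisanac1}; the paper's one-line proof of Corollary \ref{rem} then simply records that the hypotheses of Proposition \ref{hdu} have already been established there.
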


\begin{proof}
Since $\Phi _{ef}^{\ast }(p)^{1/2}\mid \left( \left\vert X_{B}^{\symbol{94}%
}\right\vert ,\left[ SL_{n}(q):X_{B}^{\symbol{94}}\right] \right) $ and $%
q\mid \left\vert X_{B}^{\symbol{94}}\right\vert $, the assertion follows.
\end{proof}

\bigskip

Now, we analyze the cases $e=n,n-1$ or $n-2$ separately.

\bigskip

\begin{proposition}
\label{Trougao}If $\left( \Phi _{ef}^{\ast }(p),\left\vert X_{x}\right\vert
\right) =1$, then $e\neq n$.
\end{proposition}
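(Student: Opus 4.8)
The plan is to argue by contradiction: suppose $e=n$. The hypothesis $\left( \Phi _{ef}^{\ast }(p),\left\vert X_{x}\right\vert \right) =1$ is exactly case (2) of Theorem \ref{daleko}, so $\Phi _{ef}^{\ast }(p)$ is a perfect square and, by Corollary \ref{rem}, $X_{B}^{\symbol{94}}$ is one of the groups of Proposition \ref{hdu}; since $e=n$, it falls under case (1) of that proposition, that is, $X_{B}^{\symbol{94}}$ is one of the configurations (1.a), (1.b), (1.c). I will eliminate each of them.

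Case (1.a) is ruled out at once. Since $\Phi _{ef}^{\ast }(p)>1$ is a perfect square and $ef=nf\geq 3$ by Remark \ref{mag3}, the classification of perfect-power values of $\Phi _{ef}^{\ast }(p)$ (\cite{GLNP}, Theorem 3; cf.\ the proof of Lemma \ref{blisanac1}) forces $(ef,p)\in \{(4,7),(5,3)\}$, hence $(n,q)=(4,7)$ or $(5,3)$; but in neither case does $p$ divide $n$, contradicting the defining condition of (1.a). Equivalently: from $X_{B}^{\symbol{94}}\leq GL_{1}(q^{n})\cdot Z_{n}\leq \Gamma L_{1}(q^{n})$, with $GL_{1}(q^{n})$ a normal $p^{\prime }$-subgroup of index $n$, one gets $\left\vert X_{B}\right\vert _{p}\mid n_{p}\leq n$, which against the lower bounds of Corollary \ref{kralj} leaves only a short list of pairs $(n,q)$; these are then cleared by the same square obstruction together with Lemma \ref{cyclont}.

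Case (1.c) gives $e=n=5$ and $q=3$; since $q$ is not a square, Remark \ref{mag3} forces $X\cong PSL_{5}(3)=SL_{5}(3)$, so $X_{B}=X_{B}^{\symbol{94}}$ is $PSL_{2}(11)$ or $M_{11}$. But Corollary \ref{kralj}(i) gives $\left\vert X_{B}\right\vert _{p}\geq q^{(n+2)(n-3)/6}=3^{7/3}>9$, so $\left\vert X_{B}\right\vert _{3}$, being a power of $3$, is at least $27$, whereas $\left\vert PSL_{2}(11)\right\vert _{3}=3$ and $\left\vert M_{11}\right\vert _{3}=9$ -- a contradiction.

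Case (1.b), the hard one, gives $e=n=4$ and $q=7$; as $P\Omega _{4}^{-}(7)\cong PSL_{2}(49)$ is excluded by Remark \ref{mag3}, we have $X\in \{PSL_{4}(7),PSp_{4}(7)\}$. The shape $Z_{2}.A_{7}\unlhd X_{B}^{\symbol{94}}\leq Z_{2}.S_{7}$, together with the fact that $X_{B}^{\symbol{94}}$ contains the centre of $X^{\symbol{94}}$, pins $\left\vert X_{B}\right\vert$ down to one of two explicit values, so $[X:X_{B}]=\left\vert X\right\vert /\left\vert X_{B}\right\vert$ is known. Since $b=\lambda k(k+1)=[X:X_{B}]\cdot [G:X]/[G_{B}:X_{B}]$ with both indices dividing $\left\vert \Out(X)\right\vert$ (namely $4$, respectively $2$), the factorisation of $\lambda k(k+1)$ is determined up to a power of $2$. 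Using $\Phi _{4}^{\ast }(7)=25\mid k^{2}$ (hence $5\mid k$, and in fact $k_{5}=5$ because $\left\vert X\right\vert _{5}=5^{2}$ and $k^{2}\mid \left\vert X\right\vert$), the divisibility $k^{2}\mid \left\vert X\right\vert$, the fact that $\left\vert X\right\vert _{19}=19$ forces $19\nmid k$ and so $19\mid k+1$, and $\lambda \mid k$ with $\lambda \geq 2$, a short $7$-adic and residue analysis leaves only a handful of admissible values of $k$ in each case, for each of which $k+1$ is divisible by a prime not dividing $\left\vert X\right\vert \cdot \left\vert \Out(X)\right\vert$ -- contradicting $b\mid \left\vert G\right\vert$. (The remaining candidate values of $k$ are discarded because no integral $\lambda$ fits, or because $X_{x}$ would then be parabolic, against Proposition \ref{greatsaxl}.) The main obstacle is precisely this last case: there the $p$-part inequalities of Corollary \ref{kralj} are attained with equality (by $A_{7}$ inside $PSL_{4}(7)$), so one must exploit the order equation $b=\lambda k(k+1)=[X:X_{B}]\cdot [G:X]/[G_{B}:X_{B}]$ itself, and the bookkeeping of the two outer-automorphism indices has to be carried out carefully to obtain a clean, finitely checkable contradiction rather than an open-ended search.
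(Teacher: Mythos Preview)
Your overall architecture matches the paper's: assume $e=n$, invoke Corollary \ref{rem} to place $X_{B}^{\symbol{94}}$ among the groups of Proposition \ref{hdu}(1), and eliminate (1.a), (1.b), (1.c) in turn. Cases (1.a) and (1.c) are handled correctly, and your use of \cite{GLNP} to dispose of (1.a) in one stroke is in fact tidier than the paper's route via Corollary \ref{kralj}. Two genuine gaps remain, however.

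First, a small logical issue at the outset: Corollary \ref{rem} (through Proposition \ref{hdu}) requires $p\mid\left\vert X_{B}^{\symbol{94}}\right\vert$, and the paper only secures this via Corollary \ref{kralj} for $n\geq 4$; for $n=3$ the exponent $(n+2)(n-3)/6$ vanishes. The paper therefore treats $n=3$ separately first, using Lemma \ref{utakmica} directly. Your GLNP argument does in fact exclude $n=3$ (since $3f\in\{4,5\}$ is impossible), but you should make that observation \emph{before} appealing to Corollary \ref{rem}, not after.

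Second, and more seriously, case (1.b) contains an error and is not actually completed. You assert $\left\vert X\right\vert_{19}=19$ for both candidates, but $19\nmid\left\vert PSp_{4}(7)\right\vert=2^{8}\cdot 3^{2}\cdot 5^{2}\cdot 7^{4}$, so the ``$19\mid k+1$'' step collapses for that group. More broadly, the promised ``short $7$-adic and residue analysis'' is never carried out; you yourself flag this as ``the main obstacle'' where ``the bookkeeping\dots has to be carried out carefully''. The paper's treatment here is more direct and avoids these pitfalls: knowing $X_{B}\cong A_{7}$, one has $[X:X_{B}]=54880$ for $PSp_{4}(7)$ and $[X:X_{B}]=919679040$ for $PSL_{4}(7)$; then $b=\lambda k(k+1)=[X:X_{B}]\cdot t$ with $t\mid\left\vert\Out(X)\right\vert$, and one checks by a bounded search (using $\lambda\geq 2$, $\lambda\mid k$) that no triple $(\lambda,k,t)$ satisfies this equation. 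For $PSp_{4}(7)$ this is immediate once you note $35\mid k$ forces $k+1$ to be a power of $2$, which is incompatible with $k\geq 35$; for $PSL_{4}(7)$ your $19\mid k+1$ observation is valid and, combined with $k\mid 2^{4}\cdot 3^{2}\cdot 5\cdot 7^{3}$, does reduce to a short finite check --- but you must actually perform it.
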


\begin{proof}
Assume that $\left( \Phi _{nf}^{\ast }(p),\left\vert X_{x}\right\vert
\right) =1$. If $n=3$, then $X$ is isomorphic to one of the groups $%
PSL_{3}(q)$ or $PSU_{3}(q^{1/2})$. Then $\left\vert X\right\vert
_{p}=p^{3f}$, $\left\vert \mathrm{Out(}X\mathrm{)}\right\vert _{p} \leq (2f)_{p}$
and $\left\vert X_{B}\right\vert _{p}\leq (3,p)$. Then $p^{3f}\leq
(2f)_{p}^{3}(3,p)^{3}$ by Lemma \ref{utakmica}, and hence $p^{f}\leq
(2f)_{p}(3,p)$. If $p\neq 3$, then $p^{f}\leq 2f$ and hence $p=2$ and $f=1,2$%
. Then $X$ is isomorphic either $PSL_{3}(2)$ or to $PSL_{3}(4)$. However,
these cases cannot occur by Lemmas \ref{Due nonVale} and \ref{cyclont},
respectively. Thus $p=3$, $3^{f}\leq 3f$ and hence $f=1$. Then $X\cong
PSL_{3}(3)$, and we get a contradiction since $\Phi _{3}^{\ast }(3)=13$ is
not a square. Therefore, $n\geq 4$.

Assume that $X_{B}^{\symbol{94}}$ is nearly simple. If $n=5$ and $q=3$, then 
$X\cong PSL_{5}(3)$ and either $X_{B}\cong PSL_{2}(11)$ or $X_{B}\cong
M_{11} $. However, these groups cannot occur since $\left\vert
X_{B}\right\vert _{3} = 3^{2}$, whereas $\left\vert
X_{B}\right\vert _{3} \geq 3^{3}$ by Corollary \ref{kralj}(1). Then $n=4$, $%
q=7$, $X$ is isomorphic to one of the groups $PSL_{4}(7)$ or $PSp_{4}(7)$
(see Remark \ref{mag3}), and $X_{B}\cong A_{7}$. \\
If $X \cong PSp_{4}(7)$, then $b=[X:X_{B}]=54880$, since $A_{7}$ is maximal in $PSp_{4}(7)$, and this one has a unique conjugacy class of subgroups isomorphic to $A_{7}$, by \cite{BHRD}, Tables 8.13--8.13. Then $\lambda (k+1)k=54880$, with $2 \leq \lambda \leq k \leq 165$ and $\lambda \mid k$, since $b= \lambda (k+1)k$ and $\lambda \mid k$. However, the previous equation does not have solutions, and hence this case is ruled out.\\
If $X \cong PSL_{4}(7)$, then $[X:X_{B}]=919679040$. Hence $b=919679040 \cdot t$, for some $t$ divisor of $4$, since $b=\frac{[G:X]}{[G_B:X_B]}[X:X_B]$ and $\frac{[G:X]}{[G_B:X_B]}$ divides $\left\vert \mathrm{Out(}X\mathrm{)}\right\vert$, and since $\left\vert \mathrm{Out(}X\mathrm{)}\right\vert = 4$. Then $\lambda (k+1)k=919679040 \cdot t$, with $2 \leq \lambda \leq k \leq 21443 \cdot t$ and $\lambda \mid k$, since $b= \lambda (k+1)k$ and $\lambda \mid k$. However, the previous equation does not have solutions, and hence this case is excluded.\\
Assume that $X_{B}^{\symbol{94}}\leq GL_{1}(q^{n})\cdot Z_{n}$ and $p\mid n$%
. Then $\left\vert X_{B}\right\vert _{p}=\left\vert X_{B}^{\symbol{94}%
}\right\vert _{p}\leq n_{p}\leq n$ by Proposition \ref{hdu}(1). On the other
hand, by Corollary \ref{kralj}(i)--(iii), either $\left\vert X_{B}\right\vert _{p}\geq q^{%
\frac{n^{2}}{12 }}$ for $X\cong PSp_{n}(q)$, or $\left\vert X_{B}\right\vert _{p}\geq q^{%
\frac{\left( n+2\right) \left( n-3\right) }{6\theta }}$ with $\theta =1$ for $X\cong PSL_{n}(q)$ and $\theta =2$ for $X\cong PSU_{n}(q^{1/2})$ and $n$ odd. In the former case, $q^{%
\frac{n^{2}}{12 }} \leq n$ and hence $(q,n)=(2,4)$ but this case was already ruled out (see the remark before Lemma \ref{lambada}). Thus $q^{\frac{\left( n+2\right) \left( n-3\right) }{6\theta }}\leq n$ and hence
either $\theta =1$ and $(n,q)=(4,2),(4,3),(4,4),(4,5),(5,2)$, or $%
\theta =2$ and $(q,n)=(4,5),(9,5),(16,5)$. However, $%
\Phi _{nf}^{\ast }(p)$ is a square for none of these pairs, and hence they are excluded.
\end{proof}

\begin{proposition}
\label{demonstracija}If $\left( \Phi _{ef}^{\ast }(p),\left\vert X_{x}\right\vert
\right) =1$, then $e\neq n-1$.
\end{proposition}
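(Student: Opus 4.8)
The plan is to derive a contradiction from $e=n-1$ by a dimension descent. Suppose, for a contradiction, that $e=n-1$. By Remark~\ref{mag3}, $X$ is then $P\Omega_n(q)$ with $nq$ odd and $n\geq5$, or $PSU_n(q^{1/2})$ with $n$ even and $n\geq4$, and by Theorem~\ref{daleko}(2) the integer $\Phi:=\Phi_{ef}^{\ast}(p)$ is a perfect square, $\Phi>1$, with $\Phi^{1/2}\mid(|X_B|,[X:X_B])$ and $p\mid|X_B|$. Fix a primitive prime divisor $u$ of $p^{ef}-1$: it exists by Lemma~\ref{cyclont}, divides $\Phi^{1/2}$ and hence $|X_B|$, and has order $ef=(n-1)f$ modulo $p$. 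By Corollary~\ref{rem} and Proposition~\ref{hdu}(2), $X_B^{\symbol{94}}$ lies in a parabolic subgroup of type $P_1$ of $SL_n(q)$; since $X_B^{\symbol{94}}$ preserves the form defining $X$, it fixes a $1$-subspace $U$ of $V_n(q)$ (replacing a fixed hyperplane by its perp if necessary), and $U$ is totally singular or non-degenerate. The singular case is excluded at once: there $X_B^{\symbol{94}}$ lies in a maximal parabolic $P_1$ of $X^{\symbol{94}}$, whose unipotent radical is a $p$-group and whose Levi complement is $GL_1(q)\times\Omega_{n-2}(q)$, respectively $GL_1(q)\times SU_{n-2}(q^{1/2})$, and a comparison of cyclotomic factors shows $u$ divides none of these orders, contradicting $u\mid|X_B|$. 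Hence $U$ is non-degenerate, $X_B^{\symbol{94}}$ is contained in the $X^{\symbol{94}}$-stabiliser of $\langle U\rangle$, which has type $O_1(q)\perp O_{n-1}^{\varepsilon}(q)$ (respectively $GU_{n-1}(q^{1/2})$), and $\varepsilon=-$ in the orthogonal case because $u\mid q^{(n-1)/2}+1$ while $u\nmid|O_{n-1}^{+}(q)|$.

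The key step is a descent to dimension $n-1$. Set $Y=X_B^{\symbol{94}}\cap\Omega_{n-1}^{-}(q)$, respectively $Y=X_B^{\symbol{94}}\cap SU_{n-1}(q^{1/2})$, a subgroup of $SL_{n-1}(q)$. The index $[X_B^{\symbol{94}}:Y]$ divides a power of $2$ (orthogonal case, $q$ odd) or $q^{1/2}+1$ (unitary case), so it is coprime to $p$ and to $u$; hence $p\mid|Y|$ and $|X_B|_p=|Y|_p$. A direct comparison of $u$-parts — using that $\Phi$ is coprime both to the index in $X^{\symbol{94}}$ of the stabiliser of $\langle U\rangle$ and to $[SL_{n-1}(q):\Omega_{n-1}^{-}(q)]$, respectively $[SL_{n-1}(q):SU_{n-1}(q^{1/2})]$ — turns $\Phi^{1/2}\mid[X:X_B]$ into $\Phi^{1/2}\mid[SL_{n-1}(q):Y]$. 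Moreover the primitive prime divisors of $p^{(n-1)f}-1=q^{n-1}-1$ are primitive over $GF(q)$, so $Y\leq SL_{n-1}(q)$ satisfies the hypotheses of Proposition~\ref{hdu} with $n-1\geq3$ in the role of $n$ and with $e=n-1$ equal to that dimension; this is case~(1) of the proposition. Consequently $Y\leq GL_1(q^{n-1})\cdot Z_{n-1}$ with $p\mid n-1$, or $(n-1,q)=(4,7)$ with $Z_2.A_7\trianglelefteq Y$, or $(n-1,q)=(5,3)$. The latter two are impossible: $(n-1,q)=(4,7)$ forces $n=5$ but $|Z_2.A_7|\nmid|\Omega_4^{-}(7)|$, and $(n-1,q)=(5,3)$ forces $n=6$, contradicting that $q$ is a square.

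Thus $Y\leq GL_1(q^{n-1})\cdot Z_{n-1}$ with $p\mid n-1$, whence $|X_B|_p=|Y|_p\leq(n-1)_p\leq n-1$. Combined with the lower bounds $|X_B|_p\geq q^{((n-1)^2-12)/12}$ of Corollary~\ref{kralj}(iv) in the orthogonal case and $|X_B|_p\geq q^{(n+2)(n-3)/12}$ of Corollary~\ref{kralj}(iii) in the unitary case, this forces $n$ to be so small that the remaining possibilities can be checked one by one: in the orthogonal case $p\mid n-1$ with $n-1$ even contradicts $q$ odd, and in the unitary case only $PSU_4(q)$ with $q\in\{4,9\}$ survives, where $PSU_4(2)\cong PSp_4(3)$ is already excluded by Remark~\ref{mag3} and $\Phi_6^{\ast}(3)=7$ is not a square. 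This contradiction establishes $e\neq n-1$.

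The step I expect to be most delicate is the bookkeeping inside the descent: checking that $\Phi^{1/2}\mid[SL_{n-1}(q):Y]$ genuinely survives the passage from $X^{\symbol{94}}$ to $\Omega_{n-1}^{-}(q)$, respectively $SU_{n-1}(q^{1/2})$ — this requires tracking the $u$-part through the centres and the small cyclic correction factors — together with verifying that the two nearly simple exceptions of Proposition~\ref{hdu}(1) do not embed into the smaller classical group. The remaining steps combine Corollary~\ref{kralj} with elementary size estimates, as in Proposition~\ref{Trougao}.
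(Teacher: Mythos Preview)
Your proposal is correct and follows essentially the same strategy as the paper: identify $X$ as $P\Omega_n(q)$ ($nq$ odd) or $PSU_n(q^{1/2})$ ($n$ even), use Proposition~\ref{hdu}(2) to place $X_B^{\symbol{94}}$ in the stabiliser of a non-degenerate $1$-space, descend to $SL_{n-1}(q)$ and apply Proposition~\ref{hdu}(1), then pit the resulting bound $|X_B|_p\leq n-1$ against Corollary~\ref{kralj}.

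Two small points of exposition. First, the clause ``$p\mid n-1$ with $n-1$ even contradicts $q$ odd'' is only valid once the inequality $q^{((n-1)^2-12)/12}\leq n-1$ has already forced $n=5$ (so that $n-1=4$ has $2$ as its only prime factor); for $n=7$ one would have $p\mid 6$, which allows $p=3$. You should say explicitly that $n\geq 7$ is eliminated by the bound and that $n=5$ then gives $p=2$. This is actually a slight streamlining of the paper's argument, which instead pins down $q=7$ via the square constraint on $\Phi_{4f}^{\ast}(p)$ and only then invokes $7\nmid 4$. Second, $PSU_4(2)$ is not excluded by Remark~\ref{mag3} (it satisfies $n\geq 3$ and $(n,q^{1/2})\neq(3,2)$); the correct reference is Lemma~\ref{cyclont}, where $\Phi_{6}^{\ast}(2)=1$.
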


\begin{proof}
Assume that $\left( \Phi _{(n-1)f}^{\ast }(p),\left\vert X_{x}\right\vert
\right) =1$. Then $X$ is isomorphic either to\ $\Omega _{n}(q)$ with $nq$ odd, or to $PSU_{n}(q^{1/2})$ with $n$ even. Suppose that
the latter occurs. Then $X_{B}^{\symbol{94}}$ fixes a non-degenerate $1$%
-dimensional subspace of $V_{n}(q)$ by Proposition \ref{hdu}(2) and by \cite{KL}, Propositions
4.1.4 and 4.1.20, since its order is divisible by $\Phi
_{(n-1)f}^{\ast }(p)^{1/2}$. Hence $X_{B}^{\symbol{94}}\leq GU_{n-1}(q^{1/2})$. Then $%
X_{B}^{\symbol{94}}\cap SU_{n-1}(q^{1/2})$ is a subgroup of $SL_{n-1}(q)$
such that $\Phi _{n-1f}^{\ast }(p)$ divides the order of $X_{B}^{\symbol{94}%
}\cap SU_{n-1}(q^{1/2})$ and the index of this one in $SL_{n-1}(q)$. Also $p$
divides the order of $X_{B}^{\symbol{94}}\cap SU_{n-1}(q^{1/2})$. Then we
may apply Proposition \ref{hdu}, with $X_{B}^{\symbol{94}}\cap SU_{n-1}(q^{1/2})
$ in the role of $Y$ and $SL_{n-1}(q)$ in the role of $SL_{n}(q)$, and we obtain that $X_{B}^{\symbol{94}}\cap SU_{n-1}(q^{1/2})$ is contained in $%
GL_{1}(q^{n-1})\cdot Z_{n-1}$, with $p \mid n-1$, since $q$ is a square. Then $\left\vert X_{B}^{\symbol{94}%
}\right\vert _{p}\leq (n-1)_{p}\leq n-1$, as $\left\vert X_{B}^{\symbol{94}%
}\cap SU_{n-1}(q^{1/2})\right\vert _{p}=\left\vert X_{B}^{\symbol{94}%
}\right\vert _{p}$. On the other hand, $\left\vert X_{B}\right\vert _{p}\geq
q^{\frac{\left( n+2\right) \left( n-3\right) }{12}}$ by Corollary \ref{kralj}(iii)%
. Therefore $q^{\frac{\left( n+2\right) \left( n-3\right) }{12}}\leq n-1$
and hence $n=4$ and $q=4$ or $9$, but in none of these cases $\Phi
_{2(n-1)}^{\ast }(p)$ is a square.

Assume that $X\cong $\ $\Omega _{n}(q)$ with $nq$ odd. Then $X_{B}^{\symbol{%
94}}$ preserves a non-degenerate $1$-dimensional subspace of $V_{n}(q)$ by
Proposition \ref{hdu} and by \cite{KL}, Propositions 4.1.6 and 4.1.14, since $\Phi
_{(n-1)f}^{\ast }(p)^{1/2}$ divides $\left( \left\vert X_{B}^{\symbol{94}%
}\right\vert ,\left[ SL_{n}(q):X_{B}^{\symbol{94}}\right] \right)$ and since $p$ divides $\left\vert X_{B}^{\symbol{94}}\right\vert$ by Corollary \ref{kralj}. Thus $X_{B}^{\symbol{94}}\leq \Omega _{n-1}^{-}(q).Z_{2}<SL_{n-1}(q)$ and hence $%
X_{B}^{\symbol{94}}$ is a subgroup of $SL_{n-1}(q)$ such that $\Phi
_{(n-1)f}^{\ast }(p)^{1/2}$ divides $\left( \left\vert X_{B}^{\symbol{94}%
}\right\vert ,\left[ SL_{n-1}(q):X_{B}^{\symbol{94}}\right] \right) $. Also $%
p\mid \left\vert X_{B}^{\symbol{94}}\right\vert $.
Then we may apply Proposition \ref{hdu}, with $X_{B}^{\symbol{94}}$ in the
role of $Y$ and $SL_{n-1}(q)$ in that of $SL_{n}(q)$, thus obtaining either $q=7$, $n-1=4$ and $Z_{2}.A_{7} \unlhd X_{B}^{\symbol{94}} \leq Z_{2}.A_{7}$, or $X_{B}^{\symbol{94}}\leq GL_{1}(q^{n-1})\cdot Z_{n-1}$ and $%
p\mid n-1$. The former case cannot occur, since $X \cong \Omega _{5}(7) \cong \PSp_{4}(7)$ contains $A_{7}$ but not its covering group $Z_{2}.A_{7}$. Thus $X_{B}^{\symbol{94}}\leq
GL_{1}(q^{n-1})\cdot Z_{n-1}$ and $p\mid n-1$, and hence $\left\vert
X_{B}\right\vert _{p}\leq (n-1)_{p}\leq n-1$. On the other hand, $q^{\frac{%
(n-1)^{2}-12}{12}}\leq \left\vert X_{B}\right\vert _{p}$ by Corollary \ref%
{kralj}(3). Then $q^{\frac{(n-1)^{2}-12}{12}}\leq n-1$ and hence $n=5$ and $%
q<4^{3}$, since $n\geq 5$ by Remark, \ref{mag3}. Then $q=7$, since $\Phi
_{4f}^{\ast }(p)$ must be a square, but $7$ does not divide $n-1=4$. So, this case cannot occur and the proof is completed.
\end{proof}

\begin{theorem}
\label{gotovo}$\left( \Phi _{ef}^{\ast }(p),\left\vert X_{x}\right\vert
\right) >1$.
\end{theorem}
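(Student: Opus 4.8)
The plan is to argue by contradiction. Suppose $(\Phi_{ef}^{\ast}(p),|X_x|)=1$; then case (2) of Theorem~\ref{daleko} holds, so $\Phi_{ef}^{\ast}(p)>1$ is a perfect square, $\Phi_{ef}^{\ast}(p)\mid k^{2}$ and $(\Phi_{ef}^{\ast}(p),|X_B|)=\Phi_{ef}^{\ast}(p)^{1/2}$. By Propositions~\ref{Trougao} and~\ref{demonstracija} we have $e\neq n$ and $e\neq n-1$, and since $e\in\{n,n-1,n-2\}$ this forces $e=n-2$; by the description of $e$ preceding Lemma~\ref{cyclont} together with Remark~\ref{mag3}, this means $X\cong P\Omega_{n}^{+}(q)$ with $n$ even and $n\geq 6$. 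Everything thus reduces to excluding this configuration, which I would do in the spirit of Proposition~\ref{demonstracija}.

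Fix a primitive prime divisor $u$ of $p^{(n-2)f}-1$; as $n-2\geq 4$ one has $u\geq 5$, and since $\Phi_{ef}^{\ast}(p)$ is a square we get $u^{2}\mid\Phi_{ef}^{\ast}(p)$, hence $u\mid|X_{B}^{\symbol{94}}|$ while $u\nmid|X_x|$. By Corollary~\ref{rem} and Proposition~\ref{hdu}(3), $X_{B}^{\symbol{94}}$ stabilises a $2$-subspace, or an incident pair (a $1$-space inside an $(n-1)$-space), of the natural module $V_{n}(q)$. Since also $X_{B}^{\symbol{94}}\leq\Omega_{n}^{+}(q)$, I would run through the types of these subspaces with respect to the quadratic form using \cite{KL}, Section 4.1: when the stabilised $2$-space is totally singular, nondegenerate of $+$ type, or degenerate, and when the stabilised $1$-space is singular, the corresponding $\mathcal{C}_{1}$ or parabolic subgroup of $\Omega_{n}^{+}(q)$ has order coprime to $u$ — its unipotent radical is a $p$-group and its remaining orthogonal and linear factors only involve $q^{j}-1$ with $j<n-2$ — contradicting $u\mid|X_{B}^{\symbol{94}}|$. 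Hence $X_{B}^{\symbol{94}}$ stabilises a nondegenerate $2$-space of $-$ type, or a nonsingular $1$-space, and in either case embeds (preserving its $p$-part, since the discarded factors $O_{2}^{-}(q)$, $O_{1}(q)$ have order prime to $p$) into $\Omega_{n-2}^{-}(q)<SL_{n-2}(q)$ — directly in the first case, after one more use of Proposition~\ref{hdu}(2) and the same sieve in the second if $q$ is odd, and into $Sp_{n-2}(q)<SL_{n-2}(q)$ in the second if $q$ is even.

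Applying Proposition~\ref{hdu} once more with $SL_{n-2}(q)$ and its top degree $n-2$ in the roles of $SL_{n}(q)$ and $n$, and discarding parts (1.b)--(1.c) there (which would require $q=7$ with a section $Z_{2}.A_{7}$ of $\Omega_{4}^{-}(7)$, impossible by order, or $n-2=5$ odd), I obtain $X_{B}^{\symbol{94}}\leq GL_{1}(q^{n-2})\cdot Z_{n-2}$ with $p\mid n-2$; since $GL_{1}(q^{n-2})$ has order prime to $p$, this gives $|X_{B}|_{p}=|X_{B}^{\symbol{94}}|_{p}\leq(n-2)_{p}$. Corollary~\ref{kralj}(v), on the other hand, gives $|X_{B}|_{p}\geq q^{\frac{n(n-2)-12}{12}}$, so $q^{\frac{n(n-2)-12}{12}}\leq(n-2)_{p}$. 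This is impossible for $n\geq 8$, and for $n=6$ it forces $p=2$ and $q\in\{2,4\}$; but then $\Phi_{(n-2)f}^{\ast}(p)\in\{5,17\}$ is not a perfect square, contradicting that $\Phi_{ef}^{\ast}(p)^{1/2}$ is an integer. Therefore case (2) of Theorem~\ref{daleko} cannot occur, and $(\Phi_{ef}^{\ast}(p),|X_x|)>1$.

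The main obstacle is the geometric sieve in the second paragraph: correctly enumerating the stabiliser types of \cite{KL} inside $\Omega_{n}^{+}(q)$, checking case by case whether the order is divisible by the primitive prime $u$, and threading the further application of Proposition~\ref{hdu} without leaving a loose subcase — in particular handling the even-characteristic branch, where one lands inside $Sp_{n-2}(q)$ rather than $\Omega_{n-1}(q)$, and ensuring that the relevant $p$-parts survive the passage to the projections.
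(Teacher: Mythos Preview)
Your argument is essentially correct and follows the same overall strategy as the paper: reduce to $X\cong P\Omega_{n}^{+}(q)$, use Proposition~\ref{hdu}(3) to place $X_{B}^{\symbol{94}}$ in $P_{2}$ or $P_{1,n-1}$, sieve with the primitive prime $u$ to land inside $\Omega_{n-2}^{-}(q)$ (or $Sp_{n-2}(q)$), apply Proposition~\ref{hdu}(1) once more, and contradict Corollary~\ref{kralj}(v). The one genuine slip is the parenthetical ``$O_{2}^{-}(q)$ has order prime to $p$'': for $q$ even this is false, since $|O_{2}^{-}(q)|=2(q+1)$ has $2$-part equal to $2$. Consequently your bound $|X_{B}|_{p}\leq (n-2)_{p}$ should be $|X_{B}|_{p}\leq (2,q)(n-2)$, which for $n=6$ admits the extra case $q=8$; this is harmless because $\Phi_{12}^{\ast}(2)=13$ is not a square, but the claim as written is inaccurate.

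The only structural difference is in the $P_{1,n-1}$ branch. The paper observes that besides the nonsingular point $U$ one also has the fixed hyperplane $W$, and that $W^{\perp}\oplus U$ is then a $X_{B}^{\symbol{94}}$-invariant $2$-space which (by the same $u$-sieve) must be nondegenerate of type $-$; this collapses $P_{1,n-1}$ into the $P_{2}$ case in one step. Your route instead passes through the point stabiliser $\Omega_{n-1}(q)$ (or $Sp_{n-2}(q)$ for $q$ even) and invokes Proposition~\ref{hdu}(2) once more before reaching $\Omega_{n-2}^{-}(q)$. Both are valid; the paper's shortcut avoids the extra recursion and the separate even-characteristic branch you flagged as the main obstacle.
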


\begin{proof}
Suppose the contrary. Then $e =n-2$ by Lemmas \ref{Trougao} and \ref%
{demonstracija}, and hence $X\cong P\Omega _{n}^{+}(q)$. By Proposition \ref{hdu}, $%
X_{B}^{\symbol{94}}$ lies in a parabolic subgroup of type $P_{2}$ or $%
P_{1,n-1}$ of $SL_{n}(q)$. If the former occurs, then $X_{B}^{\symbol{94}}$
preserves a non-degenerate $2$-dimensional subspace of $V_{n}(q)$ of type $-$ by \cite%
{KL}, Propositions 4.1.6 and 4.1.20, since $n\geq 6$ by Remark \ref{mag3}.\\
If $X_{B}^{\symbol{94}}$ lies in a parabolic subgroup of
type $P_{1,n-1}$ of $SL_{n}(q)$, then $X_{B}^{\symbol{94}}$ preserves
two subspaces $U$, $W$ of $V_{n}(q)$ such that $U<W$ and $\dim U=1$ and $%
\dim W=n-1$. Also, $U$ is non-degenerate by \cite{KL}, Proposition 4.1.20,
since $\Phi _{(n-2)f}^{\ast }(q)^{1/2}$ divides the order of $X_{B}^{\symbol{%
94}}$. Thus $W^{\perp }\neq U$, since $W^{\perp }<U^{\perp }$, and hence $X_{B}^{%
\symbol{94}}$ preserves $W^{\perp }$ with $\dim W^{\perp }=1$. Then $%
W^{\perp }\oplus U$ is a non-degenerate $2$-dimensional subspace of $V_{n}(q)
$ of type $-$ preserved by $X_{B}^{\symbol{94}}$. Therefore, $X_{B}^{\symbol{94}}$
preserves a non-degenerate $2$-dimensional subspace of $V_{n}(q)$ of type $-$ in each
case.

The group $X_{B}^{\symbol{94}}$ is contained in a maximal subgroup $M$ of $%
X^{\symbol{94}}$, where $M$ is either $\left( Z_{\frac{q+1}{(q-1,2)}}\times
\Omega _{n-2}^{-}(q)\right) .Z_{2}$ or $\left( Z_{\frac{q+1}{(q-1,2)}}\times
\Omega _{n-2}^{-}(q)\right) .[4]$ according to whether $q$ is
even or odd respectively, by \cite{KL}, Propositions 4.1.6. Thus 
\begin{equation}\label{melanpo}
\left\vert X_{B}\right\vert _{p}=\left\vert X_{B}^{\symbol{94}}\right\vert
_{p}=(2,q)\left\vert X_{B}^{\symbol{94}}\cap \Omega _{n-2}^{-}(q)\right\vert
_{p}. 
\end{equation}

Note that, $\Phi _{(n-2)f}^{\ast }(q)^{1/2}$ divides the order of $X_{B}^{%
\symbol{94}}\cap \Omega _{n-2}^{-}(q)$ and the index of it in $SL_{n-2}(q)$.
Also $p\mid \left\vert X_{B}^{\symbol{94}}\right\vert $ by Corollary \ref%
{kralj}, and hence $p$ divides the order of $X_{B}\cap \Omega _{n-2}^{-}(q)$ by (\ref{melanpo}), since $X \ncong P\Omega_{6}(2)^{+}$ by Lemma \ref{cyclont}.
Then we may apply Proposition \ref{hdu}, with $X_{B}\cap \Omega _{n-2}^{-}(q)$
in the role of $Y$ and $SL_{n-2}(q)$ in the role of $SL_{n}(q)$, thus obtaining either $q=7$, $n-2=4$ and $X_{B}^{\symbol{94}}\cap \Omega _{n-2}^{-}(q) \cong Z_{2}.A_{7}$, or $X_{B}^{\symbol{94}}\cap \Omega
_{n-2}^{-}(q)\leq GL_{1}(q^{n-2})\cdot Z_{n-2}$ with $p\mid n-2$. Since $X\cong P\Omega _{6}^{+}(7) \cong PSL_{4}(7)$, we may apply the same argument used in the proof of Proposition \ref{demonstracija} to rule out this case. Thus $X_{B}^{%
\symbol{94}}\cap \Omega _{n-2}^{-}(q)\leq GL_{1}(q^{n-2})\cdot Z_{n-2}$ and
hence $\left\vert X_{B}^{\symbol{94}}\cap \Omega _{n-2}^{-}(q)\right\vert
_{p}=(n-2)_{p}\leq (n-2)$. Therefore 
\begin{equation} \label{ciokciok}
q^{\frac{n(n-2)-12}{12}}\leq \left\vert X_{B}\right\vert _{p}\leq (2,q)(n-2) 
\end{equation}
by Corollary \ref{kralj}(v), where $n\geq 6$ by Remark \ref{mag3}. It is
straightforward to see that, only $n=6$ and $q=2,4$ or $8$ fulfill (\ref{ciokciok}). However, all these cases are ruled out. Indeed, $q=2$ cannot occur by Lemma \ref{cyclont}, whereas $q=4$ or $8$
cannot occur since $\Phi _{8}^{\ast }(2)=17$ and $\Phi _{12}^{\ast }(2)=13$ respectively.
\end{proof}

\section{$X_{x}$ is a large subgroup of $X$}

The aim of this section is to show that $\left( \Phi _{ef}^{\ast
}(p),k+1\right) >1$ implies that $X_{x}$ is a large subgroup of $X$. This allows us to treat Cases (1.a) and (1.b) of
Theorem \ref{daleko} simultaneously.

The proof strategy is as follows. Since $\left( \Phi _{ef}^{\ast
}(p),k+1\right) >1$, and since $k+1$ divides the length of each $G_{x}$%
-orbit distinct from $\left\{ x\right\} $ by Lemma \ref{PP}(3), we derive that $%
\Phi _{ef}^{\ast }(p)\mid \left\vert X_{x}\right\vert $ and, moreover, that $%
X_{x}$ contains a Singer cyclic subgroup of $X$ for $e=n$ by using simple
group-theoretical arguments. Then $X_{x}^{\symbol{94}}$ is a subgroup
of $SL_{n}(q)$ classified in Theorem 3.1 
of \cite{BP} for $e<n$. For $e=n$, we use instead the classification for $%
X_{x}^{\symbol{94}}$ contained in \cite{Bre}. By combining the fact that the candidates $X_{x}$ must fulfill $\left[ X:X_{x}\right]
=k^{2}$ with the information contained in Lemma 3.13 of \cite{ABD3}, we show that $\left\vert \mathrm{Out(}%
X\mathrm{)}\right\vert \geq \left\vert X_{x}\right\vert _{p}$. Hence, $%
X_{x}$ is a large subgroup of $X$ by Lemma \ref{Sing}(2).

\bigskip

\begin{lemma}
\label{beforeXmas}If $\left( \Phi _{ef}^{\ast }(p),k+1\right) >1$, then $%
\Phi _{ef}^{\ast }(p)\mid \left\vert X_{x}\right\vert $. Moreover, if $e=n$,
then $X_{x}$ contains a Singer cyclic subgroup of $X$.
\end{lemma}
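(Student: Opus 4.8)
The plan is to exploit a single prime $u$ dividing $\left( \Phi _{ef}^{\ast }(p),k+1\right)$: I will show that a Sylow $u$-subgroup $U$ of $X_{x}$ is a (cyclic) Sylow $u$-subgroup of $X$ fixing only the point $x$, so that $N_{X}(U)\leq X_{x}$, and then I will push the relevant maximal torus of $X$ (a Singer cyclic subgroup when $e=n$) into $X_{x}$ through $N_{X}(U)$.

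First I would fix a prime $u\mid \left( \Phi _{ef}^{\ast }(p),k+1\right)$, which exists by hypothesis. As recalled in the proof of Theorem \ref{daleko}, $\Phi _{ef}^{\ast }(p)\equiv 1\pmod{ef}$ by \cite{KL}, Proposition 5.2.15.(i), whence $\left( \Phi _{ef}^{\ast }(p),\left\vert \mathrm{Out}(X)\right\vert \right) =1$ by \cite{KL}, Table 5.1.A; in particular $u\nmid \left\vert \mathrm{Out}(X)\right\vert $. Since $\left[ X:X_{x}\right] $ divides $\left[ G:G_{x}\right] \left[ G_{x}:X_{x}\right] =k^{2}\left[ G_{x}:X_{x}\right] $, and $u\nmid k^{2}$ because $u\mid k+1$, while $u\nmid \left[ G_{x}:X_{x}\right] $ because $\left[ G_{x}:X_{x}\right] $ divides $\left\vert \mathrm{Out}(X)\right\vert $, it follows that $u\nmid \left[ X:X_{x}\right] $ and hence $\left\vert X_{x}\right\vert _{u}=\left\vert X\right\vert _{u}$; thus any Sylow $u$-subgroup $U$ of $X_{x}$ is a Sylow $u$-subgroup of $X$.

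Next I would prove that $U$ fixes no point of $\mathcal{D}$ other than $x$. By Lemma \ref{Orbits}, $\frac{k+1}{\gcd \left( k+1,\left\vert \mathrm{Out}(X)\right\vert \right) }$ divides $\left\vert y^{X_{x}}\right\vert $ for every point $y\neq x$, and $u$ divides this number because $u\mid k+1$ and $u\nmid \left\vert \mathrm{Out}(X)\right\vert $. If $U$ fixed some $y\neq x$, then $U\leq X_{x,y}$, so $\left\vert y^{X_{x}}\right\vert =\left[ X_{x}:X_{x,y}\right] $ would divide $\left[ X_{x}:U\right] $, which is coprime to $u$ — a contradiction. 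Hence $x$ is the unique point of $\mathcal{D}$ fixed by $U$, and therefore $N_{X}(U)\leq X_{x}$.

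Finally I would invoke primitive prime divisor theory for classical groups. As $u$ is a primitive prime divisor of $p^{ef}-1$ with $n/2<e\leq n$, a Sylow $u$-subgroup of $X$ is cyclic, say $U=\langle h\rangle $, and $C_{X}(U)=C_{X}(h)$ contains a maximal torus $T$ of $X$ with $\Phi _{ef}^{\ast }(p)\mid \left\vert T\right\vert $ — indeed $C_{X}(h)=T$ is a Singer cyclic subgroup of $X$ when $e=n$, since then $h$ acts irreducibly on the natural module (see \cite{KL} and \cite{GPPS}). Since $T$ is abelian, $T\leq C_{X}(U)\leq N_{X}(U)\leq X_{x}$, so $\Phi _{ef}^{\ast }(p)\mid \left\vert X_{x}\right\vert $; and when $e=n$, $X_{x}$ contains the Singer cyclic subgroup $T$ of $X$, as required. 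The step I expect to be the main obstacle is this last one: for each of the five families of $X$ in Remark \ref{mag3} one must pin down the abelian (cyclic, when $e=n$) maximal torus containing $U$ and check that its order is divisible by $\Phi _{ef}^{\ast }(p)$ — the coprimality of $\Phi _{ef}^{\ast }(p)$ with the ``small'' cofactors $\gcd (n,q-1)$, $q-1$, $(2,q-1)$, etc.\ occurring in the torus orders is forced by $\Phi _{ef}^{\ast }(p)\equiv 1\pmod{ef}$ together with $e>n/2$, but has to be verified type by type.
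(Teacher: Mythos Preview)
Your proof is correct and follows essentially the same route as the paper's: pick a primitive prime $u\mid(\Phi_{ef}^{\ast}(p),k+1)$, show its Sylow subgroup fixes only $x$ (you use Lemma~\ref{Orbits}, the paper uses Lemma~\ref{PP}(3); both work), deduce $N_X(U)\leq X_x$, and then push a cyclic group of order divisible by $\Phi_{ef}^{\ast}(p)$ into $X_x$ via this normalizer.

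The only real difference is in the last step. You argue abstractly through tori: $U$ is cyclic, and the abelian torus $T\leq C_X(U)$ of the right order lies in $N_X(U)$. The paper instead argues geometrically: $U$ preserves a unique $(e-1)$-subspace $\mathcal{H}$ of $PG_{n-1}(q)$ on which it acts irreducibly (citing \cite{He1/2}, Theorem~3.5), so $N_X(U)$ stabilizes $\mathcal{H}$; then $U$ sits inside a cyclic $Q\leq X_{\mathcal{H}}$ inducing a Singer cycle on $\mathcal{H}$, and $Q\leq N_X(U)$ since $Q$ is abelian. This geometric version sidesteps exactly the obstacle you flag --- the type-by-type verification of torus orders --- because the order of a Singer cycle on $\mathcal{H}\cong PG_{e-1}(q)$ is visibly divisible by $\Phi_{ef}^{\ast}(p)$, uniformly across the families. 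Your approach is valid but, as you note, requires that case check; the paper's framing avoids it. (A minor aside: your detour through $[X:X_x]\mid k^2[G_x:X_x]$ is unnecessary, since $[X:X_x]=k^2$ directly by transitivity of $X$.)
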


\begin{proof}
Let $w$ be any primitive prime divisor $w$ of $q^{e}-1$ dividing $k+1$ and
let $W$ be a Sylow $w$-subgroup of $G_{x}$. Then $w\equiv 1 \pmod {ef}$ and
hence $(u,ef)=1$. Thus $w$ is coprime to $\left\vert 
\mathrm{Out(}X\mathrm{)}\right\vert $ (see \cite{KL}, Table 5.1.A). Therefore, $%
W\leq X_{x}$.

Assume that $W$ fixes a further point $y$ of $\mathcal{D}$. Then $\left\vert
y^{G_{x}}\right\vert $ has order prime to $w$. However, this is impossible,
since $k+1$ divides $\left\vert y^{G_{x}}\right\vert $ by Lemma \ref{PP}(3), and since $u \mid k+1$.
Thus $x$ is the unique point fixed by $W$. Then $N_{X}(W)\leq N_{G}(W)\leq
G_{x}$, and hence $N_{X}(W)\leq X_{x}$, as $W\leq X_{x}$. \\
The group $W$ preserves a unique $(e-1)$-subspace $\mathcal{H}$ of $PG_{n-1}(q)$ and acts
faithfully and irreducibly on it (see \cite{He1/2}, Theorem 3.5). So does $N_{X}(W)$. It is not difficult to
see that $W$ lies in a cyclic subgroup $Q$\ of $X_{\mathcal{H}}$ inducing a
Singer cyclic subgroup $\bar{Q}$ on $\mathcal{H}$. The order of $\bar{Q}$ is
well known and, for instance, can be found in \cite{Bre},\cite{Hes},\cite%
{Hup}, or determined in \cite{KL}, Section 4.3. Thus $\Phi _{ef}^{\ast }(p)$ divides the order of $\bar{Q}$, and
hence that of $Q$. Then $\Phi _{ef}^{\ast }(p)$ divides $\left\vert
N_{X}(W)\right\vert $, and hence $\left\vert X_{x}\right\vert $, and as $%
Q\leq N_{X}(W)$. Finally, if $e=n$, clearly $\mathcal{H}=PG_{n-1}(q)$ and hence $Q$ is a Singer cyclic subgroup of $X$.
\end{proof}

\medskip

Now, case $e=n,n-1$ and $n-2$ are analyzed separately.

\medskip

\begin{lemma}
\label{e=n}If $\left( \Phi _{nf}^{\ast }(p),k+1\right) >1$, then $X_{x}$ is
a large maximal $\mathcal{C}_{3}$-subgroup of $X$.
\end{lemma}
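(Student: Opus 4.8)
The plan is to combine the Singer-cycle information of Lemma \ref{beforeXmas} with Bereczky's classification \cite{Bre} of the subgroups of a classical group containing a Singer element, and then to obtain largeness from Lemma \ref{Sing}(2) together with the constraint that $[X:X_{x}]=k^{2}$ be a perfect square.

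First I would fix the set-up. Since $e=n$, the group $X$ is one of $PSL_{n}(q)$, $PSp_{n}(q)$, $P\Omega_{n}^{-}(q)$, or $PSU_{n}(q^{1/2})$ with $n$ odd, and $n\geq 3$ by Remark \ref{mag3}. By Lemma \ref{PP} the stabiliser $G_{x}$ is maximal in $G$, so $G_{x}=N_{G}(X_{x})$ and $[G:G_{x}]=[X:X_{x}]=k^{2}$; by Lemma \ref{beforeXmas}, $X_{x}$ contains a Singer cyclic subgroup of $X$ and $\Phi_{nf}^{\ast}(p)\mid\left\vert X_{x}\right\vert$. Note also that $(\Phi_{nf}^{\ast}(p),k+1)>1$ forces $k+1\nmid\left\vert\mathrm{Out}(X)\right\vert$, since the primes dividing $\Phi_{nf}^{\ast}(p)$ are $\equiv 1\pmod{nf}$ and hence coprime to $\left\vert\mathrm{Out}(X)\right\vert$; this is what makes Lemmas \ref{Nov} and \ref{Orbits} effective in the step below.

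Next I would pin down $X_{x}$. Passing to the ambient matrix group, $X_{x}^{\symbol{94}}$ contains a Singer element, so by \cite{Bre} it is either contained in a $\mathcal{C}_{3}$-subgroup of $X$ — a field-extension subgroup of type $\Gamma L_{n/r}(q^{r})$ with $r$ a prime divisor of $n$, or one of its symplectic, unitary or orthogonal analogues, the normalizer of the full Singer cycle being the case $r=n$ — or is one of the finitely many exceptional overgroups recorded there. The exceptional $X_{x}$ I would discard by arithmetic, in the spirit of Lemma \ref{blisanac1}: $[X:X_{x}]=k^{2}$ must be a perfect square, $p\mid k$ by Proposition \ref{greatsaxl}, $k+1\mid\left\vert G_{x}\right\vert$ by Lemma \ref{PP}(3), and $\Phi_{nf}^{\ast}(p)\mid\left\vert X_{x}\right\vert$ while $(\Phi_{nf}^{\ast}(p),k+1)>1$; for the classical overgroups one computes $[X:X_{x}]$ and exhibits a factor of the shape $q^{i}-1$ with $i$ odd, or $q^{i}+1$, which is not a square by the relevant item of \cite{Rib}, and the sporadic entries are checked one at a time. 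The same perfect-square and divisibility constraints rule out the possibility that $X_{x}$ is a proper subgroup of the $\mathcal{C}_{3}$-subgroup containing it (with the aid of Lemmas \ref{Nov} and \ref{Orbits} when $X$ is imprimitive), so $X_{x}$ is in fact a maximal $\mathcal{C}_{3}$-subgroup of $X$, and $X$ is point-primitive.

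It remains to show $\left\vert X\right\vert<\left\vert X_{x}\right\vert^{3}$. If $X_{x}$ has field-extension degree $r<n$, then $\left\vert X_{x}\right\vert_{p}$ is a power of $q$ comparable to $\left\vert X\right\vert_{p}^{1/r}$, so $\left\vert X_{x}\right\vert_{p}\geq\left\vert\mathrm{Out}(X)\right\vert$ and Lemma \ref{Sing}(2) applies. If $X_{x}$ is the Singer-cycle normalizer ($r=n$), then $\left\vert X_{x}\right\vert_{p}=n_{p}$, which is too small for that uniform bound; here I would first invoke Lemma 3.13 of \cite{ABD3} to obtain $\left\vert\mathrm{Out}(X)\right\vert\leq\left\vert X_{x}\right\vert_{p}$ whenever it holds, and in the residual cases use the perfect-square condition — for $X\cong PSL_{n}(q)$ it reads $k^{2}=\frac{1}{n}q^{n(n-1)/2}\prod_{i=1}^{n-1}(q^{i}-1)$, with analogous formulas in the other three families — which forces $(n,q)$ into a very short list, each member of which I would settle by comparing $\left\vert X\right\vert$ with $\left\vert X_{x}\right\vert^{3}$ directly. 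I expect this Singer-normalizer subcase to be the main obstacle: the uniform $p$-part bound is unavailable, so one really must exploit that $[X:X_{x}]$ is a perfect square to cut down to finitely many $(n,q)$, and organising the Bereczky exceptions together with these computations so that nothing slips through is the delicate point; it is also the subcase that ultimately survives, pinning $X$ down to $PSL_{3}(3)$.
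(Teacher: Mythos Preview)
Your plan is essentially the paper's: invoke Lemma \ref{beforeXmas} to place a Singer cycle in $X_{x}$, apply Bereczky's classification \cite{Bre} to force $X_{x}$ into a maximal $\mathcal{C}_{3}$-subgroup $Y$ (after disposing of the listed exceptions), use Lemma \ref{Nov} to get $X_{x}=Y$, and then argue largeness by contraposition via Lemma \ref{Sing}(2) and \cite[Lemma 3.13]{ABD3}.

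One point of confusion to clear up: your closing sentence suggests the Singer-normaliser subcase ``survives'' this lemma and pins $X$ to $PSL_{3}(3)$. That is not what happens here. The present lemma must establish largeness in \emph{every} case satisfying the hypothesis, and it does: in the paper's treatment, once \cite[Lemma 3.13]{ABD3} reduces the non-large possibilities to $X_{x}$ of type $GL_{1}(q^{n})$, $GU_{1}(q^{n/2})$, or $X\cong P\Omega_{6}^{-}(q)$, each is eliminated outright by the square condition on $[X:X_{x}]$ together with Lemma \ref{Orbits} and the results in \cite{Rib}. In particular, for $PSL_{3}(q)$ with $q\equiv 0\pmod 3$ the equation $k^{2}=q^{3}(q-1)^{2}(q+1)/3$ forces $q=3$, but then $\left\vert X_{x}\right\vert_{3}=3>2=\left\vert\mathrm{Out}(X)\right\vert$, contradicting the standing assumption $\left\vert X_{x}\right\vert_{p}<\left\vert\mathrm{Out}(X)\right\vert$; so this case does not survive \emph{here}. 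The identification of $PSL_{3}(3)$ occurs only later, in Lemma \ref{PSL}, once largeness is in hand. Also, your ``discard by arithmetic'' for the Bereczky exceptions is a little thin: the paper handles $PSU_{3}(3)$, $PSU_{3}(5)$, $PSU_{5}(2)$, $PSp_{8}(2)$, $P\Omega_{6}^{-}(3)$, and the $\mathcal{C}_{8}$-overgroup $SO_{n}^{-}(q)\leq PSp_{n}(q)$ ($q$ even) individually, each needing a tailored argument (Atlas degree checks, $k+1\nmid\left\vert G\right\vert$, or \cite{Rib} on $q^{n/2}-1$).
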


\begin{proof}
Assume that $\left( \Phi _{nf}^{\ast }(p),k+1\right) >1$. Then $X_{x}$ contains a Singer cyclic subgroup of $X$ by
Lemma \ref{beforeXmas}. Let $Y$ be a maximal subgroup of $X$ containing $X_{x}$%
. It follows from the Main Theorem of \cite{Bre} that, either $Y\in \mathcal{%
C}_{3}(X)$ or, since $\Phi _{nf}^{\ast }(p)>1$ and $\Phi _{nf}^{\ast
}(p)\mid \left\vert Y\right\vert $, one of the following holds:

\begin{enumerate}
\item $X\cong PSU_{n}(q)$, where $(n,q)=(3,3),(3,5)$ or $(5,2)$.

\item $X\cong PSp_{n}(q)$, where either $q$ is even and $\Omega
_{n}^{-}(q)\leq Y$, or $(n,q)=(8,2)$.

\item $X\cong P\Omega _{6}^{-}(3)$.
\end{enumerate}

The case $X\cong PSU_{3}(3)$ is ruled out in Lemma 3.6 of \cite{MF}, as $%
PSU_{3}(3)\cong G_{2}(2)^{\prime }$. The group $PSU_{3}(5)$
is ruled out by \cite{At}, as it does not have a transitive permutation representation of
square degree. The same conclusion holds for $PSp_{8}(2)$ by \cite{BHRD},
Tables 8.48--8.49. If $X$ is isomorphic either to $PSU_{5}(2)$ or to $%
P\Omega _{6}^{-}(3)$, then $X_{x}\cong PSL_{2}(11)$ and $k=2^{4}\cdot 3^{2}$%
, or $X_{x}\cong A_{7}$ and $k=2^{4}\cdot 3^{2}$ , respectively, by \cite{At}%
. However, these cases are ruled out since $k+1$ does not divide the order
of $\mathrm{Aut(}X\mathrm{)}$ and hence that of $G$.

Suppose that $X\cong PSp_{n}(q)$, with $q$ even, and $\Omega
_{n}^{-}(q)\leq Y$. Then $Y\cong SO_{n}^{-}(q)$ by \cite{KL}, Proposition
4.8.6. If $X_{x} \neq Y$, then $X_{x}\cong \Omega _{n}^{-}(q)$ again by Main Theorem of \cite%
{Bre} but applied to $Y$. Thus, $X_{x}$ is isomorphic either to $\Omega _{n}^{-}(q)$ or to $SO _{n}^{-}(q)$. If $\rho$ is the index of $X_{x}$ in $Y$, then $\rho =1,2$ and hence $k^{2}=\frac{q^{n(n+2)/4}}{\rho}(q^{n/2}-1)$, which has no solutions by \cite{Rib}, A3.1, since $n\geq 4$ by Remark \ref{mag3}. Thus, $Y\in \mathcal{C}_{3}(X)$.

The group $X$ contains a unique conjugacy class of subgroups isomorphic to $%
Y $, except for $X\cong P\Omega _{n}^{-}(q)$ and $O_{n/2}(q^{2}) $ with $qn/2$ odd by \cite{KL}, Section 4.3, since $\Phi _{nf}^{\ast
}(p)\mid \left\vert X_{x}\right\vert $ and since $n \geq 6$ (see Proposition 4.3.20.(I) for the exception). Then $X_{x}=Y$ by Lemma \ref{Nov}(1) when $Y$ is none of the
exceptions. In the exceptional cases, $X_{x}$ is a type 1 novelty with
respect to $Y$ by Lemma \ref{Nov}(2). However, this is impossible by \cite%
{KL}, Tables 3.5.H--I, for $n\geq 13$ and \cite{BHRD} for $3\leq n\leq 12$.
Thus $X_{x}=Y$ and hence $X_{x}$ is a maximal $\mathcal{C}_{3}$-subgroup of $X$.

Suppose that $X_{x}$ is not a large subgroup of $X$. Then $\left\vert
X_{x}\right\vert _{p}<\left\vert \mathrm{Out(}X\mathrm{)}\right\vert $ by
Lemma \ref{Sing}. Hence, either $X\cong PSL_{n}(q)$ and $X_{x}$ is of type $%
GL_{1}(q^{n})$, or $X\cong PSU_{n}(q^{1/2})$, with $n$ odd, and $X_{x}$ is
of type $GU_{1}(q^{n/2})$, or $X\cong P\Omega _{6}^{-}(q)$ by \cite{ABD3},
Lemma 3.13, since $\Phi _{nf}^{\ast }(p)\mid \left\vert X_{x}\right\vert $.

Assume that $X\cong PSL_{n}(q)$ and $X_{x}$ is of type $%
GL_{1}(q^{n})$. Then  
\[
q^{n^{2}-2}<\left\vert X\right\vert \leq \left\vert G\right\vert \leq \left( (q^{n}-1)nf\right) ^{3}\leq
q^{3n+3}n^{3} 
\]%
since $G_{x}$ is a large subgroup of $G$ normalizing a Singer cyclic subgroup of $X$ and by \cite{AB}, Corollary 4.3.(i). Hence $n=3,4$. If $n=4$, then
\begin{equation}\label{sbs}
k^{2}=\frac{q^{6}(q-1)^{2}(q^{2}-1)(q^{2}+q+1)}{4} 
\end{equation}%
by \cite{KL}, Proposition 4.3.6.(II). On the other hand, $\frac{k+1}{(k+1,f)}\mid 4 \frac{q^{4}-1}{(q-1)(4,q-1)} $ by
Lemma \ref{Orbits}, and hence $k+1\leq 4f\frac{q^{4}-1}{(q-1)(4,q-1)}$, which, compared to (\ref{sbs}), yields $q \leq 5$. However, none of these values fulfills (\ref{sbs}). Thus $n=3$ and hence $\left\vert X_{x}\right\vert
=3 \frac{q^{2}+q+1}{(3,q-1)}$ again by \cite{KL}, Proposition 4.3.6.(II). Therefore, 
\[
k^{2}=\frac{q^{3}(q-1)^{2}(q+1)}{3 }. 
\]%
If $q\equiv 1 \pmod 3$, then both $q$ and $q+1$ are squares, but this contradicts 
\cite{Rib}, A5.1. If $q\equiv 0 \pmod 3$, then both $q^{3}/3$ and $q+1$ are squares
and hence $q=3$ again by \cite{Rib}, A5.1. So $\left\vert X_{x}\right\vert
_{p}=3$, whereas $\left\vert \mathrm{Out(}X\mathrm{)}\right\vert =2$. Therefore, $%
q\equiv 2 \pmod 3$ and hence both $q$ and $\frac{q+1}{3}$ are squares, but this is impossible by \cite{Rib}, P5.5.

Assume $X\cong PSU_{n}(q^{1/2})$, with $n$ odd, and $X_{x}$ is
of type $GU_{1}(q^{n/2})$. Then  
\[
(q^{1/2}-1)q^{\frac{n^{2}-3}{2}}<\left\vert X\right\vert \leq \left\vert G\right\vert \leq\left(
(q^{n/2}+1)nf\right) ^{3}\leq (q^{n/2}+1)^{3}q^{3/2}n^{3} 
\]%
since $G_{x}$ is a large subgroup of $G$ normalizing a Singer cyclic subgroup of $X$, since $f$ is even and hence $q^{1/2}\geq f$, and by \cite{AB}, Corollary 4.3.(ii). Thus either $n=3$, or $n=5$ and $q^{1/2}=2$, since $n$ is odd. We have seen above that $PSU_{5}(2)$ cannot occur, hence the latter is ruled out. Thus $n=3$ and hence $%
\left\vert X_{x}\right\vert =3 \frac{q-q^{1/2}+1}{(3,q^{1/2}+1)}$ and%
\begin{equation}\label{bmonday}
k^{2}=\frac{q^{3/2}(q^{1/2}-1)(q^{1/2}+1)^{2}}{3 }\text{.} 
\end{equation}
by \cite{KL}, Proposition 4.3.6.(II). Then $q^{1/2}\equiv 0,1 \pmod 3 $ by \cite{Rib}, A3.1. Since $\frac{k+1}{(k+1,f)}\mid 3\left( q-q^{1/2}+1\right) $ by
Lemma \ref{Orbits}, it follows that $k+1\leq 3f\left( q-q^{1/2}+1\right)$, which, compared to (\ref{bmonday}), yields $q^{1/2}=3,4$. However, none of these fulfills (\ref{bmonday}).

Finally, assume that $X\cong P\Omega _{6}^{-}(q)$. Then $%
\left\vert \mathrm{Out(}X\mathrm{)}\right\vert =(4,q^{3}+1)2f$ and $X_{x}$
is a subgroup of $\frac{1}{(2,q-1)}GU_{3}(q)$ by \cite{KL}, Propositions
4.3.18.(II) and 4.3.20.(II), as $\Phi _{6f}^{\ast }(p)\mid \left\vert
X_{x}\right\vert $. Since $q\mid
\left\vert X_{x}\right\vert _{p}$ by Corollary \ref{kralj}(v), and since $\left\vert
X_{x}\right\vert _{p}<(4,q^{3}+1)2f$, it follows that $q<8f$ and hence $q=2,3,5$ or $7$. Actually, $%
q\neq 2,3$ as $P\Omega _{6}^{-}(2)$ is ruled out in Lemma \ref{cyclont}, since $%
\Phi _{6}^{\ast }(2)=1$, and $P\Omega _{6}^{-}(3)$ is ruled out above in
this proof. Since $\left[ X:X_{x}\right] =k^{2}$, it is not difficult to obtain that $2\cdot 7\cdot 13$ or $43$
divides $\left\vert X_{x}\right\vert $ according to whether $q=5$ or $7$
respectively. However, no cases arise by \cite{BHRD}, Tables 8.10--8.11,
since $X_{x}$ is a maximal $\mathcal{C}_{3}$-subgroup of $X$.
\end{proof}

\begin{lemma}
\label{e=n-1}If $\left( \Phi _{(n-1)f}^{\ast }(p),k+1\right) >1$, then $%
X_{x} $ is a large subgroup of $X$.
\end{lemma}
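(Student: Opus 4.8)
The plan is to adapt the strategy of the proof of Lemma~\ref{e=n} to the case $e=n-1$. First I would invoke Lemma~\ref{beforeXmas}: since $\left(\Phi_{(n-1)f}^{\ast}(p),k+1\right)>1$, it yields $\Phi_{(n-1)f}^{\ast}(p)\mid\left\vert X_{x}\right\vert$, and from its proof $X_{x}$ contains the cyclic group $Q$ with $N_{X}(W)\le X_{x}$, where $W$ is the Sylow $w$-subgroup attached to a primitive prime divisor $w$ of $q^{n-1}-1$ dividing $k+1$, and $W$ fixes a unique hyperplane $\mathcal{H}$ of $PG_{n-1}(q)$ on which it acts irreducibly. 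By the definition of $e$ together with Remark~\ref{mag3}, the equality $e=n-1$ forces $X\cong P\Omega_{n}(q)$ with $n\ge 5$ and $nq$ odd, or $X\cong PSU_{n}(q^{1/2})$ with $n\ge 4$ even; in the latter case $f$ is even and $q_{0}:=q^{1/2}\ge f$.

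Next I would pin down a maximal overgroup $Y$ of $X_{x}$ in $X$. Since $\Phi_{(n-1)f}^{\ast}(p)\mid\left\vert Y\right\vert$ and $n/2<n-1\le n$, the group $Y^{\symbol{94}}$ is one of the subgroups of $SL_{n}(q)$ classified in Theorem~3.1 of \cite{BP}. As $w\equiv 1\pmod{(n-1)f}$, the prime $w$ divides neither $\left\vert\mathrm{Out}(X)\right\vert$, nor the order of any subfield subgroup of $X$, nor that of any $\mathcal{C}_{2}$- or $\mathcal{C}_{3}$-subgroup of $X$ (whose natural defining dimension over $GF(q)$ is smaller than $e=n-1$); hence those families are excluded. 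An elementary count in Hermitian, respectively orthogonal, geometry shows that among the $\mathcal{C}_{1}$-subgroups only the stabilizer of a non-degenerate $1$-space $\langle v\rangle$ has order divisible by $\Phi_{(n-1)f}^{\ast}(p)$ — and, in the orthogonal case, only when $\langle v\rangle^{\perp}$ is of minus type — these being exactly the subgroups preserving the hyperplane $\mathcal{H}$; thus $Y^{\symbol{94}}$ is of type $\mathrm{O}_{n-1}^{-}(q)$ or $\mathrm{GU}_{n-1}(q_{0})$ by \cite{KL}, Propositions~4.1.6 and 4.1.20. The parabolic possibility is ruled out by Proposition~\ref{greatsaxl}. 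There remain the finitely many nearly simple candidates of \cite{BP}, Theorem~3.1, together with the handful of pairs $(X,q)$ for which this $\mathcal{C}_{1}$-subgroup fails to be maximal; I would dispose of each exactly as in Lemmas~\ref{cyclont} and \ref{e=n}, by confronting it with $\left[X:X_{x}\right]=k^{2}$, with $k+1\mid\left\vert G_{x}\right\vert$ (Lemma~\ref{PP}(3)) and $\frac{k+1}{(k+1,\left\vert\mathrm{Out}(X)\right\vert)}\mid\left\vert X_{x}\right\vert$ (Lemma~\ref{Orbits}), $\lambda\mid k$ with $\lambda\ge 2$, and the permutation-representation data of \cite{At} and \cite{BHRD}.

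It then remains to identify $X_{x}$ with $Y$. By \cite{KL}, Section~4.1, such a subgroup $Y$ lies in a single conjugacy class of $X$ (the non-degenerate $1$-spaces of a fixed type, respectively the non-isotropic points, form one $X$-orbit), so Lemma~\ref{Nov}(1) forces $X_{x}=Y$ (one checks $k+1>\left\vert\mathrm{Out}(X)\right\vert$ from $k^{2}=[X:Y]$ in the ranges of $n$ above, so no novelty occurs). Finally I would conclude via Lemma~\ref{Sing}(2): in the orthogonal case $\left\vert X_{x}\right\vert_{p}=q^{(n-1)(n-3)/4}$ and $\left\vert\mathrm{Out}(X)\right\vert=2f\le q\le q^{(n-1)(n-3)/4}$ since $n\ge 5$; in the unitary case $\left\vert X_{x}\right\vert_{p}=q_{0}^{(n-1)(n-2)/2}$ and $\left\vert\mathrm{Out}(X)\right\vert\le nf\le q_{0}^{(n-1)(n-2)/2}$ since $n\ge 4$ and $q_{0}\ge f$ — these $p$-parts being those recorded in Lemma~3.13 of \cite{ABD3}. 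In either case $\left\vert\mathrm{Out}(X)\right\vert\le\left\vert X_{x}\right\vert_{p}$, so $\left\vert X\right\vert<\left\vert X_{x}\right\vert^{3}$ and $X_{x}$ is a large subgroup of $X$ by Lemma~\ref{Sing}(2).

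I expect the main obstacle to lie in the second step: a clean, exhaustive elimination of the nearly simple subgroups appearing in \cite{BP}, Theorem~3.1 for $e=n-1$ (and of the small-$(n,q)$ exceptions where the reducible subgroup is non-maximal), which demands case-by-case arithmetic with $\left[X:X_{x}\right]=k^{2}$, $\lambda\mid k$, and the divisibility constraints of Lemmas~\ref{PP} and \ref{Orbits} — in the same spirit as, but somewhat more laborious than, the exceptional-case analyses in Lemmas~\ref{cyclont} and \ref{e=n}.
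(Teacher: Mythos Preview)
Your plan is sound and close in spirit to the paper's argument: both routes invoke Lemma~\ref{beforeXmas}, feed the point stabilizer through the Bamberg--Penttila classification in \cite{BP}, eliminate the nearly simple and sporadic-parameter exceptions by arithmetic with $[X:X_{x}]=k^{2}$ and Lemmas~\ref{PP}--\ref{Orbits}, and close via Lemma~\ref{Sing}(2). The execution differs in two ways worth noting.

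First, the paper applies \cite{BP} (Theorem~4.2 for the unitary case, Theorem~3.1 for the orthogonal case) directly to $X_{x}^{\symbol{94}}$ rather than to a maximal overgroup $Y$; this avoids your appeal to Lemma~\ref{Nov} and the conjugacy-class count. In the orthogonal case the paper then argues by contraposition: it assumes $\left\vert X_{x}\right\vert_{p}<\left\vert\mathrm{Out}(X)\right\vert$, which immediately kills most of the nearly simple candidates (their $p$-parts are too large), and once $X_{x}$ is reduced to a point- or hyperplane-stabilizer it invokes \cite{ABD3}, Lemma~3.13, for the final contradiction---bypassing any explicit identification $X_{x}=Y$.

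Second, and more consequentially, in the unitary case the paper proves strictly more than you propose: having identified $X_{x}$ with the stabilizer of a non-isotropic point, it computes $k^{2}=q^{(n-1)/2}(q^{n/2}-1)/(q^{1/2}+1)$, forces $f\equiv 0\pmod 4$, and then derives a contradiction from the subdegree bound $k+1\mid (s^{2}+1)(s^{2(n-1)}+1)/4$ with $s=q^{1/4}$. Thus $X\not\cong PSU_{n}(q^{1/2})$ under the hypothesis $(\Phi_{(n-1)f}^{\ast}(p),k+1)>1$. Your weaker conclusion that $X_{x}$ is large suffices for Lemma~\ref{e=n-1} as stated, but the paper explicitly recycles this impossibility argument in the proof of Lemma~\ref{PSU} (see the reference back to~(\ref{kq})); if you stop at largeness here you will have to reproduce the subdegree contradiction there.
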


\begin{proof}
Assume that $\left( \Phi _{(n-1)f}^{\ast }(p),k+1\right) >1$. If $X\cong PSU_{n}(q^{1/2})$, with $n$ even, then either 
$n=4$ and $PSL_{3}(2^{t})\leq X_{x}$ with $(t,q)=(1,3),(1,5),(2,3)$, or $X_{x}$
preserves either a point or a hyperplane of $PG_{n-1}(q)$ by \cite{BP},
Theorem 4.2. If $(t,q)=(2,3)$, then $X_{x}\cong PSL_{3}(4)$ by \cite{At} and
hence $k^{2}=162$, a contradiction. If $(t,q)=(1,3)$, then $PSL_{3}(2)\leq
X_{x}\leq M$, where $M$ is one of the following maximal subgroups of $X$ by \cite{At}: 
$PSL_{3}(4)$, $PSU_{3}(3)$ or $A_{7}$. Either $X_{x}\cong PSL_{3}(2)$ or $X_{x}=M$, since $PSL_{3}(2)$ is maximal in $M$.
Actually, the unique admissible case is $X_{x}\cong A_{7}$ and $k=36$, since $\left[ X:X_{x}\right] =k^{2}$. However, this case is ruled out, since $k+1$ does not divide the order of $%
P\Gamma U_{4}(3)$, and hence that of $G$.

Assume that $X_{x}$ preserves either a point or a hyperplane of $PG_{n-1}(q)$%
. Actually, $G_{x}$ is the stabilizer of a non-isotropic point $P$ of $%
PG_{n-1}(q)$ by Proposition \ref{greatsaxl} and by \cite{KL}, Proposition 4.1.6.(II), since $\Phi _{ef}^{\ast }(p)\mid \left\vert X_{x}\right\vert $. Hence, the actions of $G$ on the point set of $\mathcal{D}$
and on the set of non-singular points of $PG_{n-1}(q)$ are equivalent. Thus, $k^{2}=q^{\left( n-1\right) /2}\frac{q^{n/2}-1}{q^{1/2}+1}$ and hence $f\equiv 0 \pmod 4$ as $q^{\left( n-1\right) /2}$ must be a
square, being $n$ is even. Then
\begin{equation}
k^{2}=s^{2\left( n-1\right) }\frac{\left( s^{n}+1\right) \left(
s^{n}-1\right) }{s^{2}+1}\text{,}   \label{kq}
\end{equation}%
where $s=q^{1/4}$. Then $s$ is odd by \cite{Rib}, A3.1. Moreover, if we consider a
further non-singular point $y$ of $PG_{n-1}(q)$ such that $y\in x^{\perp }$%
, we see that $\left\vert y^{G_{x}}\right\vert $ divides $%
(s^{2}+1)(s^{2(n-1)}+1)$. Then $k+1\mid $ $(s^{2}+1)(s^{2(n-1)}+1)$ by Lemma %
\ref{PP}(3). Actually, $k+1\mid $ $\frac{(s^{2}+1)(s^{2(n-1)}+1)}{4}$ as $s$
is odd, which, compared to (\ref{kq}), leads to a contradiction as $n \geq 4$. Thus, $X\ncong PSU_{n}(q^{1/2})$ for $\left( \Phi _{(n-1)f}^{\ast
}(p),k+1\right) >1$.

Assume that $X\cong P\Omega _{n}(q)$, with $nq$ odd. Assume also that $\left\vert
X_{x}\right\vert _{p}<\left\vert \mathrm{Out(}X\mathrm{)}\right\vert $. Then 
$\left\vert X_{x}\right\vert _{p}<2f$, as $\left\vert \mathrm{Out(}X%
\mathrm{)}\right\vert =2f$. Since $X_{x}<X<PSL_{n}(q)$ and $\Phi
_{(n-1)f}^{\ast }(p)$ divides $\left\vert X_{x}\right\vert $, it follows that $X_{x}^{\symbol{94}}$ is one of the groups listed in Theorem 3.1 of \cite{BP}.

Assume that $X_{x}^{\symbol{94}}$ is a nearly simple subgroup of $SL_{n}(q)$%
. Suppose that $\mathrm{Soc(}X_{x}\mathrm{)}$ is alternating. Then $X_{x}$
arises from the action of $A_{\ell }$ on its fully deleted permutation
module, since $nq$ is odd. Then $q$ is prime, $n=\ell -1$ or $\ell -2$
according as $q$ does not divide or does divide $n$ respectively. In both cases, we have $%
\Phi _{n-1}^{\ast }(q)=n$, with $n=4,6$, whereas $n$ is odd.

Assume that $\mathrm{Soc(}X_{x}\mathrm{)}$ is a sporadic group. Then $%
\mathrm{Soc(}X_{x}\mathrm{)}\cong M_{11}\trianglelefteq X_{x}$, $n=5$ and $%
q=3$. So $\left\vert X_{x}\right\vert _{3}=3^{2}$, whereas $\left\vert 
\mathrm{Out(}X\mathrm{)}\right\vert =2$.

Assume that $\mathrm{Soc(}X_{x}\mathrm{)}$ is a simple Lie type group in
cross characteristic. $\left\vert
X_{x}\right\vert _{p}<\left\vert \mathrm{Out(}X\mathrm{)}\right\vert $
implies $n=7$, $q=5$ and $\mathrm{Soc(}X_{x}\mathrm{)}$ isomorphic to one of
the groups $PSL_{2}(7)$, $PSL_{2}(8)$ or $PSU_{3}(3)$. However, these cases
are ruled out. Indeed, $31$ must divide $\left\vert X_{x}\right\vert $, since $\left\vert%
P\Omega _{7}(5)\right\vert$ is divisible by $31$, and not by $31^{2}$, and since $\left[
X:X_{x}\right] =k^{2}$.

Assume that $\mathrm{Soc(}X_{x}\mathrm{)}$ is a simple group of Lie type in
characteristic $p$. Then $n=7$, $q$ is odd, with $q$ dividing $\left\vert
X_{x}\right\vert $ by Theorem 3.1 of \cite{BP}. So $q<\left\vert \mathrm{Out(%
}X\mathrm{)}\right\vert =2f$, which has no solutions for $q$ odd.

Clearly $X_{x}^{\symbol{94}}$ is not a Symplectic Type Example. Also, $%
X_{x}^{\symbol{94}}$ is not a Classical Example since $X_{x}<X\cong P\Omega
_{n}(q)$. If $X_{x}^{\symbol{94}}$ is an Imprimitive Example, then $q=p$ and 
$X_{x}^{\symbol{94}}\leq GL_{1}(p)\wr S_{n}$, where either $q=3$ and $n=5,7$,
or $q=5$ and $n=7$ by Theorem 3.1 of \cite{BP}, being $n$ odd. The case $%
q=3$ and $n=5$ cannot occur, since $P\Omega _{5}(3)\cong PSU_{4}(2)$ has
been ruled out above. The groups $P\Omega _{7}(3)$ and $P\Omega _{7}(5)$ are
ruled out, as they do not have a transitive permutation representation of
square degree by \cite{BHRD}, Tables 8.39 and 8.40. Therefore, $X_{x}$ fixes
either a point or a hyperplane of $PG_{n-1}(q)$. So does $G_{x}$, as $%
X_{x}\trianglelefteq G_{x}$. Then $G_{x}$ should be one of the groups listed in \cite{ABD3}, Lemma
3.13, since $\left\vert X_{x}\right\vert _{p}<\left\vert \mathrm{Out(}X%
\mathrm{)}\right\vert $, whereas it is not, and we obtain a contradiction. Thus, $\left\vert
X_{x}\right\vert _{p}\geq \left\vert \mathrm{Out(}X\mathrm{)}\right\vert $
and hence $X_{x}$ is a large subgroup of $X$ by Lemma \ref{Sing}(2).
\end{proof}

\begin{theorem}
\label{kappa+1}$X_{x}$ is a large subgroup of $X$ such that $\left( \Phi
_{ef}^{\ast }(p),\left\vert X_{x}\right\vert \right) >1$.
\end{theorem}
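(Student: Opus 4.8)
The plan is to glue Theorem~\ref{gotovo} to the lemmas proved in this section. The second assertion, $\left(\Phi_{ef}^{\ast}(p),\left\vert X_{x}\right\vert\right)>1$, is precisely Theorem~\ref{gotovo}, so nothing is needed there. For the first assertion I would start from Theorem~\ref{gotovo} together with Theorem~\ref{daleko}: since $\left(\Phi_{ef}^{\ast}(p),\left\vert X_{x}\right\vert\right)>1$, we lie in case (1) of Theorem~\ref{daleko}, so that either (1a) or (1b) of that theorem holds. In case (1a) there is nothing further to prove, because $X_{x}$ is then a large subgroup of $X$ by the very formulation of that case. Hence the heart of the argument is case (1b), namely the implication that $\left(\Phi_{ef}^{\ast}(p),k+1\right)>1$ forces $X_{x}$ to be large.

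Under the hypothesis of case (1b) I would distinguish the values $e\in\{n,n-1,n-2\}$ (recall $n/2<e\leq n$ for each $X$). For $e=n$, Lemma~\ref{e=n} already gives that $X_{x}$ is a large maximal $\mathcal{C}_{3}$-subgroup of $X$, in particular a large subgroup; for $e=n-1$, Lemma~\ref{e=n-1} gives that $X_{x}$ is large. In both cases we are done, so the only remaining possibility is $e=n-2$, which forces $X\cong P\Omega_{n}^{+}(q)$ with $n\geq 6$ by Remark~\ref{mag3} and the list of groups. Here I would mimic the proof of Lemma~\ref{e=n-1}: by Lemma~\ref{beforeXmas}, $\Phi_{(n-2)f}^{\ast}(p)\mid\left\vert X_{x}\right\vert$; assuming for contradiction that $X_{x}$ is not large, Lemma~\ref{Sing}(2) forces $\left\vert X_{x}\right\vert_{p}<\left\vert\mathrm{Out}(X)\right\vert$. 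Since $X_{x}^{\symbol{94}}<SL_{n}(q)$ has order divisible by $\Phi_{(n-2)f}^{\ast}(p)$ and $n/2<n-2<n$, the group $X_{x}^{\symbol{94}}$ is one of those classified in Theorem~3.1 of \cite{BP}. I would then discard the nearly simple, imprimitive, symplectic-type and classical-over-a-subfield candidates by combining $[X:X_{x}]=k^{2}$ with $\left\vert X_{x}\right\vert_{p}<\left\vert\mathrm{Out}(X)\right\vert$ (and the tables of \cite{At} and \cite{BHRD} for the few small residual groups), leaving only the option that $X_{x}$, hence also $G_{x}$, stabilizes a point or a hyperplane of $PG_{n-1}(q)$. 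But no such $G_{x}$ appears in Lemma~3.13 of \cite{ABD3} once $\left\vert X_{x}\right\vert_{p}<\left\vert\mathrm{Out}(X)\right\vert$, a contradiction; therefore $\left\vert X_{x}\right\vert_{p}\geq\left\vert\mathrm{Out}(X)\right\vert$ and $X_{x}$ is large by Lemma~\ref{Sing}(2).

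The step I expect to be the main obstacle is exactly this $e=n-2$ analysis for $X\cong P\Omega_{n}^{+}(q)$: the candidate list in Theorem~3.1 of \cite{BP} is long and every entry has to be removed by a delicate arithmetic argument, the comparatively large outer automorphism group of $P\Omega_{n}^{+}(q)$ (and the triality automorphisms when $n=8$) loosens the estimate $\left\vert X_{x}\right\vert_{p}<\left\vert\mathrm{Out}(X)\right\vert$, and the low-dimensional residues --- above all $P\Omega_{6}^{+}(q)\cong PSL_{4}(q)$ and $P\Omega_{8}^{+}(q)$ --- must be cross-checked against the linear-group computations in the proof of Lemma~\ref{e=n} or else disposed of by a direct count of transitive permutation representations of degree $k^{2}$.
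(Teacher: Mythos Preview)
Your overall architecture is exactly that of the paper: Theorem~\ref{gotovo} gives $(\Phi_{ef}^{\ast}(p),|X_{x}|)>1$, Theorem~\ref{daleko}(1) reduces to cases (1a) and (1b), and in case (1b) the values $e=n$ and $e=n-1$ are dispatched by Lemmas~\ref{e=n} and~\ref{e=n-1}, leaving only $e=n-2$ with $X\cong P\Omega_{n}^{+}(q)$, to be handled via Lemma~\ref{Sing}(2) and Theorem~3.1 of \cite{BP}. Your anticipation that $P\Omega_{6}^{+}(q)\cong PSL_{4}(q)$ is the residual case from the reducible (point/hyperplane) branch, to be fed back into Lemma~\ref{e=n}, is also correct.

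There is one genuine gap in your case list. From Theorem~3.1 of \cite{BP} you propose to discard the ``nearly simple, imprimitive, symplectic-type and classical-over-a-subfield'' candidates, concluding that only the reducible (point/hyperplane) option survives. But this omits the \emph{Extension Field Examples}, i.e.\ the $\mathcal{C}_{3}$-type candidates $X_{x}^{\symbol{94}}\leq \Gamma O_{n/2}(q^{2})$ (for $n\equiv 2\pmod 4$, $q$ odd) or $X_{x}^{\symbol{94}}\leq \Gamma U_{n/2}(q)$ (for $n\equiv 0\pmod 4$). These are not covered by ``classical-over-a-subfield'' (that is $\mathcal{C}_{5}$), nor by any of the other headings you list, and they do not immediately collapse under the inequality $|X_{x}|_{p}<|\mathrm{Out}(X)|$. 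The paper handles them with a separate argument: using Lemma~\ref{Nov}(2) and the novelty tables to force $X_{x}=Y$ maximal in $\mathcal{C}_{3}(X)$, whence $|X_{x}|_{p}\geq q^{2}$ (and $\geq q^{3}$ when $n=8$), so that $|X_{x}|_{p}<|\mathrm{Out}(X)|$ reduces to $(n,q)=(8,2)$, eliminated by Lemma~\ref{cyclont}. You should insert this step; without it the case analysis for $e=n-2$ is incomplete.

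A minor point: your sentence ``no such $G_{x}$ appears in Lemma~3.13 of \cite{ABD3}'' is slightly off. Lemma~3.13 of \cite{ABD3} does not directly yield a contradiction; rather it forces $X\cong P\Omega_{6}^{+}(q)$, and it is the isomorphism $P\Omega_{6}^{+}(q)\cong PSL_{4}(q)$ together with Lemma~\ref{e=n} (where $e=n$ for $PSL_{4}(q)$) that gives the contradiction. You clearly anticipate this in your final paragraph, so just make sure the written argument reflects it.
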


\begin{proof}
Since the assertion follows from Lemma \ref{e=n} and \ref{e=n-1} for $e=n$
or $e=n-1$, respectively, in order to prove the theorem, it remains to
tackle the case $e=n-2$ and $X\cong P\Omega _{n}^{+}(q)$. Clearly, $\left( \Phi _{(n-2)f}^{\ast }(p),\left\vert X_{x}\right\vert
\right) >1$.

Suppose that $X_{x}$ is not a large subgroup of $X$. Then $\left\vert
X_{x}\right\vert _{p}<\left\vert \mathrm{Out(}X\mathrm{)}\right\vert $ by
Lemma \ref{Sing}(2). Recall that $n\geq 6$ by Remark \ref{mag3}, and that $%
\left\vert \mathrm{Out(}X\mathrm{)}\right\vert $ is either $2(4,q^{n/2}-1)f$
or $6(4,q^{n/2}-1)f$ according as~$n\neq 8$ or $n=8$ respectively.

Since $X_{x}<X<PSL_{n}(q)$ and $\Phi _{(n-2)f}^{\ast }(p)$ divides $%
\left\vert X_{x}\right\vert $, the group $X_{x}^{\symbol{94}}$ is one the
groups listed in Theorem 3.1 of \cite{BP}.

Assume that $X_{x}^{\symbol{94}}$ is a nearly simple subgroup of $SL_{n}(q)$%
. Suppose that $\mathrm{Soc(}X_{x}\mathrm{)}$ is alternating. If $X_{x}^{%
\symbol{94}}$ arises from the action of $A_{\ell }$ on its fully deleted
permutation module, then $q$ is a prime (hence $f=1$), $n=e+2$, $A_{e+4}\leq
X_{x}\leq S_{e+4}\times Z_{2}$ where $%
(p,e)=(2,4),(2,10),(2,12),(2,18),(3,4),(3,6),(5,6)$. If $(p,e)\neq (3,4)$,
then $\left\vert X_{x}\right\vert _{2}\geq 2^{6}$, $\left\vert
X_{x}\right\vert _{3}\geq 9$ and $\left\vert X_{x}\right\vert _{5}\geq 25$,
whereas $\left\vert \mathrm{Out(}X\mathrm{)}\right\vert \leq 24$ in the
first case and $\left\vert \mathrm{Out(}X\mathrm{)}\right\vert \leq 8$ in
the remaining ones. So, these cases are excluded. Then $(p,e)=(3,4)$, $%
X\cong P\Omega _{8}^{+}(3)$ and $A_{8}\leq X_{x}\leq S_{8}\times Z_{2}$.
However, this is impossible since $13$ must divide $\left\vert
X_{x}\right\vert $, since $P\Omega _{8}^{+}(3)$ is divisible by $13$, and not by $%
13^{2}$, and since $\left[ X:X_{x}\right] =k^{2}$.

Suppose that $\mathrm{Soc(}X_{x}\mathrm{)}$ does not arise from the action of $A_{\ell }$ on its
fully deleted permutation module. Then $n=8$ and either $q=5$ and $\ell
=7,8,9,10$, or $q=3$ and $\ell =9,8$. The above argument excludes $q=3$. Case $q=5$ is ruled out too. Indeed, $P\Omega _{8}^{+}(5)$ is
divisible by $31$ and not by $31^{2}$.

Assume that $\mathrm{Soc(}X_{x}\mathrm{)}$ is either a sporadic group or a simple Lie type group in cross
characteristic. Set $S\cong \mathrm{Soc(}X_{x}\mathrm{)}$. By Theorem 3.1 of 
\cite{BP}, one of the following holds:

\begin{enumerate}
\item $(n,q)=(6,3)$ and $S\cong M_{12},PSL_{3}(4)$;

\item $(n,q)=(8,3)$ and $S\cong P\Omega _{8}^{+}(2)$, $PSp_{6}(2)$;

\item $(n,q)=(8,5)$ and $S\cong PSL_{3}(4)$, $P\Omega _{8}^{+}(2)$, $%
PSp_{6}(2)$, or $Sz(8)$;

\item $(n,q)=(14,2)$ and $S\cong PSL_{2}(13),$ $G_{2}(3)$;

\item $(n,q)=(20,2)$ and $S\cong J_{2},PSL_{2}(19)$.
\end{enumerate}

$M_{12},PSL_{3}(4)$ are not subgroups of $P\Omega _{6}^{+}(3)$, hence Case
(1) cannot occur. Cases (2) and (3) cannot occur by Lemma \ref{cyclont}. Finally, $127$ divides the order of $P\Omega
_{14}^{+}(2)$ and of $P\Omega _{20}^{+}(2)$. Then $127^{2}$ must divide the order of 
$S$, since $\left[ X:X_{x}\right] =k^{2}$, and since $S\trianglelefteq
X_{x}\leq \mathrm{Aut(}S\mathrm{)}$. However, this is impossible for $S$ as
in Cases (4) or (5), and hence these are ruled out.

The group $X_{x}$ cannot be a Classical Example by \cite{KL}, Section 4.8, as $X_{x}<X$. The same
argument used above to rule out Case (1) excludes the possibility for $X_{x}$ to
be a Symplectic Type Example as well. Now, assume that is an Imprimitive
Example. Then $q=p$, $e=n-2$, $\Phi _{e}^{\ast }(p)=e$ and $X_{x}^{\symbol{94%
}}\leq GL_{1}(q)\wr S_{n}$ with $%
(n,q)=(6,2),(12,2),(14,2),(20,2),(6,3),(8,3),(8,5)$. Actually, $(n,q)\neq
(6,2)$ since $P\Omega _{6}^{+}(2)\cong A_{8}$ has been ruled out in Lemma
3.5 of \cite{MF}. In the remaining cases, there is a prime $w$ such that $%
w\mid \left\vert X\right\vert $, $w^{2}\nmid \left\vert X\right\vert $. Such
a prime $w$ is $31$, $127$, $257$, $13$, $13$, $31$, respectively. Then $%
w\mid \left\vert X_{x}\right\vert $, since $\left[ X:X_{x}\right] =k^{2}$,
whereas $w>n$ and $w \nmid q-1$. So, $X_{x}$ cannot be an Imprimitive Example. Thus, again by
Theorem 3.1 of \cite{BP}, either $X_{x}$ preserves a point or a hyperplane of $%
PG_{n-1}(q)$, or $X_{x}^{\symbol{94}}$ is an Extension Field Example.

Assume that $X_{x}^{\symbol{94}}$ is an Extension Field Example. Since $%
X_{x}^{\symbol{94}}$ preserves an orthogonal form on $V_{n}(q)$, Theorem
3.1 of \cite{BP} implies that either $n\equiv 2 \pmod 4$, $q$ is odd and $%
X_{x}^{\symbol{94}}\leq \Gamma O_{n/2}(q^{2})$, or $n\equiv 0 \pmod 4$
and $X_{x}^{\symbol{94}}\leq \Gamma U_{n/2-1}(q)$ respectively. Thus $X_{x}$ lies in a
maximal member $Y$ of $\mathcal{C}_{3}(X)$ of type $O_{n/2}(q^{2})$ or $%
GU_{n/2-1}(q)$ respectively. If $X_{x}<Y$, then $X_{x}$ is a type 1 novelty with respect
to $Y$ by Lemma \ref{Nov}(2). However, this is impossible by \cite{KL},
Tables 3.5.H--I, for $n\geq 13$ and \cite{BHRD} for $6\leq n\leq 12$. Therefore, $%
X_{x}$ is a maximal $\mathcal{C}_{3}$-subgroup of $X$. Thus, $\left\vert
X_{x}\right\vert _{p}\geq q^{2}$ for $n\neq 8$, and $\left\vert
X_{x}\right\vert _{p}\geq q^{3}$ for $n=8$. Hence, $\left\vert
X_{x}\right\vert _{p}<\left\vert \mathrm{Out(}X\mathrm{)}\right\vert $
implies $n=8$ and $q=2$, as $\left\vert \mathrm{Out(}X\mathrm{)}\right\vert $
is either $2(4,q^{n/2}-1)f$ or $6(4,q^{n/2}-1)f$ according as $n\neq 8$, or $%
n=8$, respectively. However, $X\ncong P\Omega _{8}^{+}(2)$ by Lemma \ref%
{cyclont}.

Finally, assume that $X_{x}$ preserves either a point or a hyperplane of $%
PG_{n-1}(q)$. Note that $X_{x}$ does not lie in a maximal parabolic subgroup of $X$ by Proposition \ref{greatsaxl}. Then $X_{x}$ preserves either a non-degenerate
point, or a non-degenerate line of $PG_{n-1}(q)$ of type $-$ by \cite{KL}, Proposition
4.1.6.(II), since $\Phi _{n-2}^{\ast }(p)\mid \left\vert
X_{x}\right\vert $. So does $G_{x}$. Then $X\cong P\Omega _{6}^{+}(q)$ by \cite{ABD3}, Lemma 3.13. However, this is impossible by Lemma \ref{e=n}, since $X\cong PSL_{4}(q)$. This completes the proof.
\end{proof}

\section{Proof of Theorem \protect\ref{main}}

This final section is devoted to the completion of the proof of Theorem \ref{main}. The proof strategy is as follows. $X_{x}$ lies in a large maximal subgroup $Y$ of $X$ by Theorem \ref{daleko}. Moreover, $Y$ is classified in \cite{AB}. We filter the $Y$-candidates with respect to $\left( \Phi _{ef}^{\ast }(p),\left\vert
Y\right\vert \right) >1$. Then, by comparing the information contained in \cite{KL} and \cite{BHRD} on the number of conjugacy classes of the subgroups of $X$ isomorphic to $Y$, with those contained in Lemma \ref{Nov}, we obtain $X_{x}=Y$. Hence, $X_{x}$ is a known large maximal subgroup of $X$. At this point, we can precisely evaluate the diophantine equation $k^{2}=[X:X_{x}]$, and, by combining some of the results contained in \cite{Rib} with properties arising from the geometry of the classical groups, we reduce to the case $X \cong PSL_{3}(3)$, $k=12$ and hence to Cases (3) and (4) of Theorem \ref{main}.

\bigskip

\begin{theorem}
\label{LergeGeo}$X_{x}$ lies in a large maximal geometric subgroup of $X$.
\end{theorem}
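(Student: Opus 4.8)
The plan is to deduce the statement from the classification of the large maximal subgroups of the finite simple classical groups in \cite{AB}. By Theorem~\ref{kappa+1} the point-stabiliser $X_{x}$ is a large subgroup of $X$ with $\left(\Phi_{ef}^{\ast}(p),\left\vert X_{x}\right\vert\right)>1$, and $X_{x}<X$ by Remark~\ref{mag3}. Hence we may choose a maximal subgroup $Y$ of $X$ with $X_{x}\leq Y$. Then $\left\vert X\right\vert<\left\vert X_{x}\right\vert^{3}\leq\left\vert Y\right\vert^{3}$, so $Y$ too is a large subgroup of $X$, and moreover $\left(\Phi_{ef}^{\ast}(p),\left\vert Y\right\vert\right)>1$ since $\left\vert X_{x}\right\vert$ divides $\left\vert Y\right\vert$. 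According to \cite{AB}, every large maximal subgroup of $X$ is either a \emph{geometric} subgroup, that is, a member of one of the Aschbacher classes $\mathcal{C}_{1},\dots,\mathcal{C}_{8}$, or else it lies in the class $\mathcal{S}$ of almost simple subgroups acting irreducibly on the natural module. If $Y$ is geometric we are done; thus the whole task is to rule out the alternative $Y\in\mathcal{S}$. Since this applies to every maximal overgroup of $X_{x}$ in $X$, all of which are large and meet $\Phi_{ef}^{\ast}(p)$ nontrivially, excluding $\mathcal{S}$ will show that $X_{x}$ lies in a large maximal geometric subgroup of $X$, as required.

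So assume $Y\in\mathcal{S}$ and put $S=\Soc(Y)$, a non-abelian simple group acting absolutely irreducibly (projectively) on the natural $n$-dimensional module over $\GF(q)$ and not realisable over a proper subfield. Class-$\mathcal{S}$ subgroups obey strong order bounds, so large maximal ones occur only in bounded dimension and are listed explicitly in \cite{AB}. The plan is to run through that list family by family --- alternating groups on fully deleted permutation modules, $\PSL_{2}(q)$ in symmetric-power representations, the sporadic and cross-characteristic examples, and the few defining-characteristic examples --- and to eliminate each candidate. The primary tool is the primitive prime divisor condition: a primitive prime divisor $u$ of $p^{ef}-1$ dividing $\left\vert Y\right\vert$ satisfies $u\equiv 1\pmod{ef}$ by \cite{KL}, Proposition~5.2.15.(i), and since the central and outer parts of the almost simple group $Y$ are far too small to contain such a $u$, one gets $u\mid\left\vert S\right\vert$ with $u\geq 1+ef$. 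As $n/2<e\leq n$ for every $X$ under consideration (Remark~\ref{mag3}), this lower bound on $u$ is already incompatible with $u\mid\left\vert S\right\vert$ for all but finitely many entries of the list, which are thereby excluded.

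For the finitely many residual entries I would invoke the remaining constraints. First, $[X:X_{x}]=k^{2}$, so $[X:Y]$ divides $k^{2}$ and $[X:Y]\cdot[Y:X_{x}]$ is a perfect square; in practice this forces a prime $w$ dividing $\left\vert X\right\vert$ only once to divide $\left\vert S\right\vert$ to an even power, which is impossible, killing such cases. Secondly, $k+1=r/\lambda$ divides $\left\vert G_{x}\right\vert$ by Lemma~\ref{PP}(3), and for several of the smallest candidates $k+1$ fails to divide $\left\vert\Aut(X)\right\vert$, excluding them; here one reads off the precise subgroup orders and conjugacy data from \cite{At} and \cite{BHRD}. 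A number of tiny classical groups have moreover already been removed --- $\PSL_{2}(q)$ by Proposition~\ref{Due nonVale}, the groups with $\Phi_{ef}^{\ast}(p)=1$ by Lemma~\ref{cyclont}, and the low-dimensional coincidences by Remark~\ref{mag3} --- which shortens the list further. The hard part will be precisely this bookkeeping: the $\mathcal{S}$-tables of \cite{AB} comprise several infinite families together with many isolated entries, and each must be tested against $u\mid\left\vert S\right\vert$ with $u\geq 1+ef$ and $e>n/2$, and, when that is inconclusive, against the perfect-square condition on $[X:X_{x}]$ and against $k+1\mid\left\vert G_{x}\right\vert$. The argument is long rather than conceptually deep: the organising principle throughout is that $e>n/2$ makes $\Phi_{ef}^{\ast}(p)$ too large to divide the order of any irreducible $\mathcal{S}$-subgroup in dimension $n$ over $\GF(q)$.
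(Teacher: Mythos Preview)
Your approach is essentially the same as the paper's: pick a maximal overgroup $Y$ of $X_{x}$ in $X$, note that $Y$ is large with $(\Phi_{ef}^{\ast}(p),|Y|)>1$, and eliminate the possibility $Y\in\mathcal{S}$ by running through the list in \cite{AB}, Proposition~4.28. Your outline of the elimination tools (the primitive-prime-divisor bound, the perfect-square constraint on $[X:X_{x}]$, a prime $w$ with $w\mid|X|$ but $w^{2}\nmid|X|$, and $k+1\mid|G_{x}|$) matches what the paper actually uses.

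The one point your plan understates is the treatment of the defining-characteristic families. After the primitive-prime-divisor filter, four \emph{infinite} families survive (for all $q$): $\Omega_{7}(q)$ or $Sp_{6}(q)$ inside $P\Omega_{8}^{+}(q)$, $G_{2}(q)$ inside $\Omega_{7}(q)$ or $Sp_{6}(q)$, and $Sz(q)$ inside $PSp_{4}(q)$. For these it is not enough to test whether $[X:Y]$ is a square, because a priori $X_{x}$ may be a proper subgroup of $Y$. The paper handles this by using the largeness bound $|X_{x}|>|X|^{1/3}$ together with the maximal-subgroup tables of $Y$ (from \cite{BHRD}) to pin down the finitely many possibilities for $X_{x}$ inside $Y$, and then kills each by showing $[X:X_{x}]$ is not a square via specific Diophantine facts from \cite{Rib} (notably that $q^{2}+q+1$ is never a square, A7.1, and that $q^{2}-1$ is never a square, A3.1). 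Your proposal mentions the perfect-square constraint but does not indicate that one must first descend from $Y$ to $X_{x}$ using the subgroup structure of $Y$; without that step the argument for these four families is incomplete.
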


\begin{proof}
Let $Y$ be a maximal subgroup of $X$ containing $X_{x}$. Then $Y$ is a large
subgroup of $X$ such that $\left( \Phi _{ef}^{\ast }(p),\left\vert
Y\right\vert \right) >1$ by Theorem \ref{kappa+1}. Assume that $Y$
is a member of $\mathcal{S}(X)$. Then the pairs $(X,Y)$ are listed in \cite%
{AB}, Proposition 4.28. Filtering such pairs $(X,Y)$ with respect to the property $%
\left( \Phi _{ef}^{\ast }(p),\left\vert Y\right\vert \right) >1$, one of the
followings occurs:

\begin{enumerate}
\item $X\cong P\Omega _{8}^{+}(q)$ and $Y$ is isomorphic either to $\Omega
_{7}(q)$ or to $Sp_{6}(q)$ according to whether $q$ is odd or even
respectively.

\item $X\cong \Omega _{7}(q)$ and $Y\cong G_{2}(q)$.

\item $X\cong Sp_{6}(q)$, $q$ even, and $Y\cong G_{2}(q)$.

\item $X\cong PSp_{4}(q)$, $q$ an odd power of $%
2 $, $q>2$, and $Y\cong Sz(q)$.

\item $(X,Y)$ are listed in the Table \ref{signprime}.
\end{enumerate}

\begin{table}
\centering
\caption{\label{signprime}\small{Admissible pairs $(X,Y)$ and significant
primes dividing the order of $X$.}}
\begin{tabular}{|l|l|l|l|l|l|}
\hline
$X$ & $(n,q,e)$ & $\left\vert \mathrm{Out(}X\mathrm{)}\right\vert $ & $\Phi
_{ef}^{\ast }(p)$ & $Y$ & $s$ \\
\hline 
$PSL_{n}(q)$ & $(5,3,5)$ & $2$ & $11^{2}$ & $M_{11}$ & $13$ \\ 
& $(4,2,4)$ & $2$ & $5$ & $A_{7}$ & $-$ \\ 
& $(4,7,4)$ & $2$ & $5^{2}$ & $PSU_{4}(2)$ & $19$ \\
\hline 
$PSp_{n}(q)$ & $(12,2,12)$ & $1$ & $13$ & $S_{14}$ & $31$ \\ 
& $(6,5,6)$ & $2$ & $7$ & $J_{2}$ & $31$ \\ 
& $(4,7,4)$ & $2$ & $5^{2}$ & $A_{7}$ & $-$ \\
\hline 
$PSU_{n}(q)$ & $(4,3,3)$ & $8$ & $5$ & $A_{7},PSL_{3}(4)$ & $-$ \\
\hline 
$P\Omega _{n}^{-}(q)$ & $(18,2,12)$ & $2$ & $19$ & $A_{20}$ & $257$ \\ 
& $(12,2,12)$ & $2$ & $13$ & $A_{13}$ & $31$ \\ 
& $(10,2,10)$ & $2$ & $11$ & $A_{12}$ & $17$ \\
\hline 
$P\Omega _{n}(q)$ & $(7,3,6)$ & $2$ & $7$ & $PSp_{6}(2),S_{9}$ & $13$ \\ 
& $(7,5,6)$ & $2$ & $7$ & $PSp_{6}(2)$ & $31$ \\ 
\hline
$P\Omega _{n}^{+}(q)$ & $(14,2,12)$ & $2$ & $13$ & $A_{16}$ & $127$ \\ 
& $(8,3,6)$ & $24$ & $7$ & $P\Omega _{8}^{+}(2)$ & $13$ \\ 
& $(8,5,6)$ & $24$ & $7$ & $P\Omega _{8}^{+}(2)$ & $31$ \\
\hline
\end{tabular}
\end{table}

Assume that Cases (1)--(4) hold. Since $X_{x}$ is a large subgroup of $X$,
it follows that $\left\vert X_{x}\right\vert >\left\vert X\right\vert ^{1/3}$%
. By using the lower bounds for $\left\vert X\right\vert $ provided in Lemma 4.3 of\ 
\cite{AB}, we see that $\left\vert X_{x}\right\vert $ is greater than $q^{28/3}/2$, 
$q^{7}/\sqrt[3]{4}$, $q^{7}/\sqrt[3]{2}$ and $q^{10/3}/\sqrt[3]{2}$ in Cases
(1)--(4) respectively.

Assume that $X\cong P\Omega _{8}^{+}(q)$. Then $Y$ is isomorphic either to $%
\Omega _{7}(q)$, or to $Sp_{6}(q)$, according to whether $q$ is odd or even
respectively. Note that, $q=2$ is ruled out by Lemma \ref{cyclont}. Also, $%
q=3$ cannot occur. Indeed, in this case $13\mid \left\vert X\right\vert $, $%
13^{2}\nmid \left\vert X\right\vert $, but $13$ does not divide $\left\vert
X_{x}\right\vert $ and $k^{2}$. Hence $q>3$.\\
Assume that $X_{x} \neq Y$. Then either $X_{x}\cong \Omega
_{6}^{-}(q).Z_{\epsilon }$, where $\epsilon =1,2$, or $X_{x}\cong G_{2}(q)$, or $X_{x} \cong Sp_{2}(q^{3}) \cdot Z_{3}$ with $q$ even and $2<q<2^{8}$ by \cite{BHRD}, Tables 8.28--8.30, 8.33--8.34 and 8.39--8.42, since $\left\vert
X_{x}\right\vert >$ $q^{28/3}/2$ and $\left( \Phi _{6f}^{\ast
}(p),\left\vert X_{x}\right\vert \right) >1$. It is not difficult to check that, $k^{2}=[X:X_{x}]$ has no solutions for $X_{x} \cong Sp_{2}(q^{3}) \cdot Z_{3}$ with $q$ even and $2<q<2^{8}$. So, this case is ruled out.

If $X_{x}\cong \Omega _{6}^{-}(q).Z_{\epsilon }$, then 
\[
k^{2}=\frac{q^{6}(q+1)(q^{2}+1)(q-1)^{2}(q^{2}+q+1)}{%
(4,q^{4}-1)\epsilon } 
\]%
and hence $q^{2}+q+1$ is a square, but this is impossible by \cite{Rib},
A7.1.

If $X_{x}\cong G_{2}(q)$, then $k^{2}=\frac{q^{6}(q^{4}-1)^{2}}{(4,q^{4}-1)}$
and hence $k=\frac{q^{3}(q^{4}-1)}{(2,q-1)}$. Therefore $k+1=q^{3}(q^{2}-1)%
\frac{q^{2}+1}{(2,q-1)}+1$. Set $\theta =(k+1,6(4,q^{4}-1)f)$. Then $\frac{%
k+1}{\theta }\mid \left\vert X_{x}\right\vert $ by Lemma \ref{Orbits}.
Furthermore, $\theta =(k+1,3f)$, since $k+1$ is odd. Thus $\frac{k+1}{\theta 
}\mid \frac{q^{6}-1}{q-1}$, since $\frac{k+1}{\theta }$ is coprime to $%
q^{2}(q^{2}-1)^{2}$. Then $q^{3}(q^{2}-1)\frac{q^{2}+1}{(2,q-1)}<\frac{%
q^{6}-1}{q-1}\cdot 3f$, which has no solutions for $q>3$. \\
Finally, assume that $X_{x}=Y$. We treat $X_{x} \cong \Omega _{7}(q)$ and $X_{x} \cong Sp_{6}(q)$ at the same time. Then $ k^{2}=q^{3 }(q^{2}-1)\frac{(q^{2}+1)}{(2,q-1)}$, and since $\frac{(q^{2}+1)}{(2,q-1)}$ is odd, it follows that $q^{2}-1$ is a square. However, this is impossible by \cite{Rib}, A3.1. \\ 
Assume that $X$ is either $\Omega _{7}(q)$, or $Sp_{6}(q)$ for $q$ even, and 
$Y\cong G_{2}(q)$. Assume that $X_{x}\neq Y$. Since $\left\vert
X_{x}\right\vert >q^{7}/\sqrt[3]{2}$ and $\left( \Phi _{6f}^{\ast
}(p),\left\vert X_{x}\right\vert \right) >1$, it follows from \cite{BHRD},
Tables 8.30 and 8.41--8.43, that $X_{x}\cong SU_{3}(q):Z_{\epsilon }$, where 
$\epsilon =1,2$, or $q=3^{2m+1}$, $m\geq 1$, and $X_{x}\cong $ $^{2}G_{2}(q)$%
. Thus, 
\[
k^{2}=\frac{q^{6}(q^{2}-1)^{2}(q^{2}+q+1)(q^{2}+1)}{(2,q-1)\theta }\text{,} 
\]%
where either $\theta =(q+1)\epsilon $, or $\theta =1$ respectively. So, $q^{2}+q+1$
is a square in both cases, but this is impossible by \cite{Rib}, A7.1. Thus $%
X_{x}=Y$ and hence $k^{2}=\frac{q^{3}(q^{4}-1)}{(2,q-1)}$, which, as we have already seen, does not have solutions. 

Assume that $X\cong PSp_{4}(q)$ and $Y\cong Sz(q)$, where $q$ is an odd
power of $2$, $q>2$. Since $\left\vert X_{x}\right\vert >q^{10/3}/\sqrt[3]{2}
$ and $\left( \Phi _{4f}^{\ast }(p),\left\vert X_{x}\right\vert \right) >1$,
it follows that $X_{x}=Y$ by \cite{BHRD}, Table 8.16. Hence $%
k^{2}=q^{2}\left( q-1\right) \left( q+1\right) ^{2}$, but this contradicts 
\cite{Rib}, A3.1, since $q$ is an odd power of $2$ and $q>2$. So far, we
have ruled out Cases (1)--(4).

Assume that Case (5) holds. Then $(X,Y)$ are listed in Table \ref{signprime}%
, in which the symbol $s$ in Column 6, when recorded, denotes the largest
prime divisor of $\left\vert X\right\vert $ such that $s\nmid \Phi _{ef}^{\ast
}(p)$, $s \nmid \left\vert Y\right\vert$ and $s^{2}\nmid \left\vert X\right\vert $. The previous
constraints imply $s\mid \left[ X:X_{x}\right] $ and hence $s^{2}\mid
\left\vert X\right\vert $, which is a contradiction. Hence, only the groups,
whose corresponding cell in Column 6 of Table \ref{signprime} is empty are
admissible, and these are in Lines 2 and 6--7 of the same table. The group $%
X\cong PSL_{4}(2)\cong A_{8}$ is ruled out in Lemma 3.5 of \cite{MF}. If $%
X\cong PSp_{4}(7)$ and $X_{x}\cong A_{7}$, then $7^{3}$ is the highest power
of $7$ dividing $\left[ X:X_{x}\right] $, a contradiction. Finally, if $%
X\cong PSU_{4}(3)$ and $X_{x}\cong A_{7},PSL_{3}(4)$, then $k^{2}=1296$ or $%
162$. The latter is ruled out since it is not a square, whereas the former
yields $k=36$. Then $k+1=37$ must divide the order of $G$ by Lemma \ref%
{PP}(3). However, this is impossible since $G\leq P\Gamma U_{4}(3)$, and the order of $P\Gamma U_{4}(3)$ has not such
a divisor. This completes the proof.
\end{proof}

\begin{lemma}
\label{PSL}If $X\cong PSL_{n}(q)$, then $n=q=3$ and $k=12$.
\end{lemma}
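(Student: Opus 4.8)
The plan is to combine Theorems \ref{kappa+1} and \ref{LergeGeo} with the classification of the large maximal subgroups of $PSL_n(q)$ and then to grind the resulting diophantine equation $k^{2}=[X:X_x]$. By Theorem \ref{LergeGeo}, $X_x$ lies in a large maximal \emph{geometric} subgroup $Y$ of $X\cong PSL_n(q)$, where $n\geq 3$ by Remark \ref{mag3}; by Theorem \ref{kappa+1} the order of $X_x$, hence that of $Y$, is divisible by a primitive prime divisor $u$ of $p^{nf}-1=q^{n}-1$ (recall $e=n$ here). As $u\equiv 1\pmod{nf}$ by \cite{KL}, Proposition 5.2.15.(i), the prime $u$ is coprime to $\left\vert\mathrm{Out}(X)\right\vert$ (see \cite{KL}, Table 5.1.A).

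First I would pin down $Y$. Since $G_x$ is not parabolic by Proposition \ref{greatsaxl}, $Y$ is not parabolic either. Running through the geometric Aschbacher classes with \cite{KL} and comparing with the list of large maximal subgroups in \cite{AB}: a member of $\mathcal{C}_1$ (non-parabolic), $\mathcal{C}_2$, $\mathcal{C}_4$, $\mathcal{C}_5$, $\mathcal{C}_6$ or $\mathcal{C}_7$, as well as a $\mathcal{C}_8$-subgroup of type $O_n^{+}(q)$, or $O_n(q)$ with $n$ odd, or $U_n(q^{1/2})$ with $n$ even, has order not divisible by $u$, because the cyclotomic factors occurring in its order have degree $<n$ (or are $q^{n/2}\pm1$ in a way that precludes $u$). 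Hence $Y$ is a $\mathcal{C}_3$-subgroup of type $GL_{n/s}(q^{s})$ with $s$ a prime divisor of $n$, or a $\mathcal{C}_8$-subgroup of type $Sp_n(q)$ or $O_n^{-}(q)$ ($n$ even), or $U_n(q^{1/2})$ ($n$ odd, $q$ a square).

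Next I would reduce to $X_x=Y$. If $X_x\neq Y$, then $X$ is point-imprimitive and the block stabilizer $Y$ is maximal in $X$; assuming $k+1\nmid\left\vert\mathrm{Out}(X)\right\vert$ (the contrary forces $k$ so small that the case is settled at once), Lemma \ref{Nov} shows that $Y$ splits into at least two $X$-classes and that $G_x$ is a type $1$ novelty with respect to $Y$. Checking the number of $X$-classes of subgroups isomorphic to $Y$ and the novelty data in \cite{KL}, Tables 3.5.A--I, and \cite{BHRD}, this leaves only a handful of configurations, each discarded because $[X:X_x]$ is not a perfect square or because $k+1$ fails to divide $\left\vert G_x\right\vert$ (Lemma \ref{PP}(3), Lemma \ref{Orbits}). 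Thus $X_x=Y$, so $X_x$ is an explicitly known large maximal subgroup.

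Finally I would solve $k^{2}=[X:X_x]$. In each surviving case $[X:X_x]$ is a power of $q$ times a product of cyclotomic-type factors, one of which — typically $\tfrac{q^{3}-1}{q-1}=q^{2}+q+1$, or $\tfrac{q^{5}-1}{q-1}$, or an analogous $\Phi_d(q)$ of degree at least $2$ — is coprime to the rest up to a bounded gcd (a power of $3$ or $5$). Since such a value is never a perfect square for $q\geq2$ by \cite{Rib}, A7.1, and $\Phi_e^{\ast}(q)$ is a perfect square only for $(e,q)=(4,7)$ by \cite{GLNP}, Theorem 3, this forces its $p'$-part to be a square — impossible; using $p\mid k$ (Proposition \ref{greatsaxl}) to govern the $q$-part, every $\mathcal{C}_8$-case and every $\mathcal{C}_3$-case with $n>3$ is ruled out. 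For $n=3$ the only $\mathcal{C}_3$-subgroup is the Singer normalizer $GL_1(q^{3}).3\cap SL_3(q)$, whence $k^{2}=q^{3}(q-1)^{2}(q+1)(3,q-1)/3$; a short analysis modulo $3$ — if $q\equiv1$ then $q(q+1)$ is a square (impossible); if $q\equiv2$ then $q$ is a square, impossible since then $q\equiv0,1\pmod 3$; if $3\mid q$ then $3^{f}+1$ is a square, forcing $f=1$ — yields $q=3$ and $k=12$, the configuration of Example \ref{Ex5}. The main obstacle is this last step: keeping careful track of the small prime ($3$ or $5$) that may be shared between the dominant cyclotomic factor and the remainder of $k^{2}$, so that the squarefree-type argument is valid; a secondary difficulty is the class/novelty bookkeeping of step three for the $\mathcal{C}_8$ cases, where a few small groups $X$ must be examined individually.
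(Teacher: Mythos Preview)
Your overall strategy matches the paper's: use Theorems \ref{kappa+1} and \ref{LergeGeo} to reduce to a known large maximal geometric $Y$ with $\Phi_{nf}^\ast(p)\mid |Y|$, conclude $X_x=Y$ via Lemma \ref{Nov}, and then test $k^2=[X:X_x]$ against Ribenboim-type results. For the $\mathcal{C}_3$-cases and for the $\mathcal{C}_8$-types $Sp_n(q)$ and $U_n(q^{1/2})$ this is exactly what the paper does (with more explicit bounds from \cite{ABD3} and \cite{AB} to cut down $n$), and your treatment of $n=3$ is essentially the paper's, modulo a spurious factor $(3,q-1)$ in your index formula: the correct value is $k^2=q^3(q-1)^2(q+1)/3$.

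There is, however, a genuine gap in the $\mathcal{C}_8$-case $Y$ of type $O_n^-(q)$ (Case (III) in the paper). Your plan is to isolate a cyclotomic factor of $[X:X_x]$ coprime to the rest and invoke ``$\Phi_e^\ast(q)$ is a perfect square only for $(e,q)=(4,7)$ by \cite{GLNP}, Theorem 3''. But \cite{GLNP} does not say this: the paper only uses it to decide when $\Phi_{ef}^\ast(p)$ equals a \emph{specified} small value such as $5^2$ or $11^2$, not to rule out squares in general. Moreover, in the $O_n^-$ index formula the odd-degree factors $q^{2j+1}-1$ share many common divisors, so no single factor is cleanly coprime to the rest. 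The paper handles this case by a different mechanism: from $\Phi_{(n-1)f}^\ast(p)\mid k^2$ and $(\Phi_{(n-1)f}^\ast(p),|G_x|)=1$ it deduces $\Phi_{(n-1)f}^\ast(p)^{1/2}\mid |X_B|$ and $\Phi_{(n-1)f}^\ast(p)^{1/2}\mid [X:X_B]$, then applies Proposition \ref{hdu}(2) to force $X_B$ to fix a point of $PG_{n-1}(q)$; this gives $\Phi_{nf}^\ast(p)\mid b$, hence $\Phi_{nf}^\ast(p)\mid k+1$, and Lemma \ref{e=n} then yields $X_x\in\mathcal{C}_3(X)$, a contradiction. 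Your sketch does not supply this block-stabilizer argument or any substitute for it.
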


\begin{proof}
Assume that $X\cong PSL_{n}(q)$. Then $n\geq 3$ by Remark \ref{mag3}, and $%
X_{x}$ lies in a large maximal geometric subgroup $Y$ of $X$ by Theorem %
\ref{LergeGeo}. Then $Y$ is one of the groups listed in Proposition 4.7 of \cite{AB}. It follows from Theorem 3.5 of \cite{He1/2} and Proposition
5.2.15.(i) of \cite{KL}, that $Y\notin \mathcal{C}_{1}(X)$ and $Y\notin 
\mathcal{C}_{2}(X)\cup \mathcal{C}_{5}(X)$, respectively, since $\left( \Phi
_{nf}^{\ast }(p),\left\vert Y\right\vert \right) >1$. For the same reason $%
(X,Y)$ is not equal to $(PSL_{4}(5),2^{4}.A_{6})$ or to $(PSL_{3}(4),PSU_{3}(2))$. Thus, one of
the following holds:

\begin{enumerate}
\item[(i).] $Y$ is a $\mathcal{C}_{3}$-subgroup of type $GL_{n/t}(q^{t})$,
where $t=2$, or $t=3$ and either $q\in \left\{ 2,3\right\} $, or $q=5$ and $%
n $ is odd.

\item[(ii).] $Y\in \mathcal{C}_{8}(X)$.
\end{enumerate}

Assume that Case (i) holds. Then $Y\cong SL_{n/t}(q^{t})\cdot
(q^{t}-1)(q-1)^{-1}\cdot t$ by \cite{KL}, Proposition 4.3.6.(II). Also $%
X_{x}=Y$ by Lemma \ref{Nov}(1) and by \cite{KL}, Proposition 4.3.6.(I), for $%
n\geq 13$ and by \cite{BHRD} for $n<13$.

Assume that $t=3$. Note that $\left\vert X\right\vert \leq \left\vert 
\mathrm{Out(}X\mathrm{)}\right\vert ^{2}\cdot \left\vert X_{x}\right\vert
\cdot \left\vert X_{x}\right\vert _{p^{\prime }}^{2}$ by Lemma \ref{Sing}%
(1). Also, since $2f\leq q\,$\ implies $\left\vert \mathrm{Out(}X\mathrm{)}%
\right\vert \leq q(q-1)$, we get $\left\vert X\right\vert \leq
q^{2}(q-1)^{2}\cdot \left\vert X_{x}\right\vert \cdot \left\vert
X_{x}\right\vert _{p^{\prime }}^{2}$. This in turn yields the inequality
(4.11) of \cite{ABD3} (see part (3) in the proof of Proposition 4.2), which
leads to $n/t=q=2$, or to $n=t=3$ and $q=2,3$ or $5$. The former is ruled
out, since $X$ is not isomorphic to $PSL_{6}(2)$ by Lemma \ref{cyclont}, whereas $(n,q)=(3,2)$ is ruled out in Proposition \ref{Due
nonVale}, since $PSL_{3}(2)\cong PSL_{2}(7)$. Finally, in the remaining
cases, $[X:X_{x}]=144$ and $4000$ for $q=3$ or $5$ respectively. Thus, only
the former is admissible, since $[X:X_{x}]$ is a square, and hence $k=12$.

Assume that $t=2$. Then $n$ is even and $X_{x}\cong SL_{n/2}(q^{2})\cdot
(q+1)\cdot 2$. Then $\left\vert X_{x}\right\vert \leq 2(q+1)\frac{q^{n^{2}/2}}{q^{4}-1} \leq 2q^{n^{2}/2-2}$ and $\left\vert X\right\vert \geq q^{n^{2}-2}$ by \cite{AB}, Corollary 4.3.(i), hence $k^{2}=[X:X_{x}]>q^{n^{2}/2}/2$. On the other hand, if $%
n\geq 8$ then $k+1$ divides $(q^{n}-1)(q^{n-2}-1)$ by Lemma \ref{PP}(3) and
by \cite{ABD3}, Lemma 3.6. Therefore, $q^{n^{2}/2}/2<k^{2}<q^{2n-2}$ and
hence $q^{(n-2)^{2}}<4$, which has no solutions for $n \geq 8$.

If $n=6$, then 
\begin{equation}
k^{2}=\frac{\allowbreak q^{9}\left( q-1\right)\left( q^{3}-1\right)
\left( q^{5}-1\right) }{2(6,q-1)}\text{.}  \label{k^2}
\end{equation}%
On the other hand, $k+1\mid \left\vert X_{x}\right\vert \cdot \left\vert 
\mathrm{Out(}X\mathrm{)}\right\vert $ by Lemma \ref{Orbits}, and hence $k+1\mid
(q^{2}+1)(q^{3}+1)(q+1)^{2} \cdot 3f$, with $k+1$ odd. So, we get 
\begin{equation}
k+1\leq (q^{2}+1)(q^{3}+1)(q+1)q\text{.}  \label{k+1}
\end{equation}%
However, there are no solutions in (\ref{k^2}) by comparing it with (\ref{k+1}).

If $n=4$, then 
\[
k^{2}=(q^{2}+q+1)\frac{q^{4}(q-1)^{2}}{2(4,q-1)} 
\]
and hence $q^{2}+q+1$ is a square, but this contradicts \cite%
{Rib}, A7.1.

Assume that Case (ii) holds. Then, by \cite{KL}, Propositions 4.8.3.(II),
48.4.(II) and 4.8.5.(II), one of the following holds:

\begin{enumerate}
\item[(I).] $Y\cong PSp_{n}(q)\cdot [\frac{(q-1,n/2)(q-1,2)}{(q-1,n)}]$ with $n\geq 4$.

\item[(II).] $Y\cong PSU_{n}(q^{1/2})\cdot [\frac{(q^{1/2}+1,n)c}{(q-1,n)}]$, with $c=\frac{q-1}{(q^{1/2}+1,(q-1)/(q-1,n))}$, $n$ odd and $n\geq 3$.

\item[(III).] $Y\cong PSO_{n}^{-}(q)\cdot 2$, with $q$ odd, and $n\geq 4$.
\end{enumerate}

If $X_{x}<Y$, then $G_{x}$ is a type 1 novelty with respect to $Y$ by Lemma %
\ref{Nov}(2). However, this is impossible by \cite{KL}, Tables 3.5.H--I for $%
n\geq 13$ and by \cite{BHRD} for $n<13$. Thus $X_{x}=Y$ in Cases (I)--(III).

Assume that case (I) holds. Then $\left\vert X_{x}\right\vert \leq
4q^{n(n+1)/2}$ and $\left\vert X\right\vert > q^{n^{2}-2}$, by \cite{AB},
Corollary 4.3.(i) and (iii) respectively. Hence $%
k^{2}>q^{(n^{2}-n-4)/2}$. On the other hand, if $n\geq 8$ then $k+1$ divides 
$(q^{n}-1)(q^{n-2}-1)$ by Lemma \ref{PP}(3) and by \cite{ABD3}, Lemma 3.6.
So $q^{(n^{2}-n-4)/2}/4<k^{2}<q^{2n-2}$ and we again reach a contradiction.
Thus, $3\leq n<8$ and hence $n=4$ or $6$ as $n$ is even. If $n=6$, then 
\begin{equation}\label{umoran}
k^{2}=\frac{q^{6}\left( q-1\right) ^{2}\left( q^{2}+q+1\right) \left(
q^{4}+q^{3}+q^{2}+q+1\right) }{(q-1,3)} 
\end{equation}
and hence $q^{4}+q^{3}+q^{2}+q+1$ is a square. So $q=3$ by \cite{Rib}, A8.1. However, it does not fulfill (\ref{umoran}). 

If $n=4$, then $k^{2}=\frac{\allowbreak q^{2}\left( q^{3}-1\right) }{%
(q-1,2)}$. On the other hand, $k+1$ divides $2f \cdot q(q^{2}-1)$ by Lemma \ref{PP}(3), and hence $k+1$ divides $2f \cdot (q+1)$ as $k$ is coprime to $q\frac{q-1}{(2,q-1)}$. So, $k<2f \cdot (q+1)$, which, compared to $k^{2}=\frac{\allowbreak q^{2}\left( q^{3}-1\right) }{%
(q-1,2)}$ yields a contradiction.

Assume that Case (II) holds. Then $\left\vert X_{x}\right\vert < \frac{%
(q^{1/2}+1,n)(q^{1/2}-1)}{(q^{1/2}+1,n)}q^{\frac{n^{2}-1}{2}}<2q^{n^{2}/2}$ and $\left\vert
X\right\vert \geq q^{n^{2}-2}$, by \cite{AB}, Lemma 4.2.(ii) and Corollary
4.3.(i) respectively. Hence $k^{2}>q^{n^{2}/2-2}/2$. On the other hand, if 
$n\geq 5$ then $k+1$ divides $q^{n/2}+1$ by Lemma \ref{PP}(3) and by 
\cite{ABD3}, Lemma 3.6, as $n$ is odd. Thus $q^{n^{2}/2-2}/2<q^{n/2}+1$, which has
no solutions for $n\geq 5$. Therefore, $n=3$ and hence%
\begin{equation}
k^{2}=q^{3/2}(q+q^{1/2}+1)\frac{(q^{1/2}-1,3)(q+1)}{(q-1,3)}%
.  \label{ksuv}
\end{equation}%
So $q+q^{1/2}+1$ is a square, but this contradicts \cite{Rib}, A7.1.

Finally, assume that Case (III) holds. Then 
\begin{equation}
k^{2}=\frac{q^{n^{2}/4}(4,q^{n/2}+1)\left( q^{n/2}-1\right) }{2 \rho(q-1,n)}%
\prod_{j=1}^{n/2-1}(q^{2j+1}-1)\text{,}  \label{kk}
\end{equation}%
with $\rho=1,2$, and hence $q^{n-1}-1$ divides $k^{2}$. Also $\Phi _{(n-1)f}^{\ast }(p)\mid
k^{2}$, where $\Phi _{(n-1)f}^{\ast }(p)>1$, by \cite{Rib}, P1.7, since $%
n\geq 4$ and $n$ is even. Thus $\Phi _{(n-1)f}^{\ast }(p)^{1/2}\mid
\left\vert G_{B}\right\vert $, as $k\mid \left\vert G_{B}\right\vert $.

Note that $\left( \Phi _{(n-1)f}^{\ast }(p),\left\vert X_{x}\right\vert
\right) =1$, as $X_{x}\cong PSO_{n}^{-}(q)\cdot 2$, with $q$ odd and $n\geq 4$. Also, $\left( \Phi _{(n-1)f}^{\ast }(p),\left\vert \mathrm{Out(}X\mathrm{)}%
\right\vert \right) =1$, as $\Phi _{(n-1)f}^{\ast }(p)\equiv 1 \pmod{(n-1)f}$ by \cite{KL}, Proposition 5.2.15.(i). Thus, $\left( \Phi
_{(n-1)f}^{\ast }(p),\left\vert G_{x}\right\vert \right) =1$ and $\left(
\Phi _{(n-1)f}^{\ast }(p),\left[ G_{B}:X_{B}\right] \right) =1$, as both $%
\left[ G_{x}:X_{x}\right] $ and $\left[ G_{B}:X_{B}\right] $ divide $%
\left\vert \mathrm{Out(}X\mathrm{)}\right\vert $. Then $\Phi _{(n-1)f}^{\ast
}(p)^{1/2}\mid \left\vert X_{B}\right\vert $ and $\Phi _{(n-1)f}^{\ast
}(p)^{1/2}\mid \left[ X:X_{B}\right] $, as $\left\vert G_{x}\right\vert
=\lambda (k+1)\left\vert G_{x,B}\right\vert $ and $\left\vert G_{B}\right\vert
=k\left\vert G_{x,B}\right\vert $. Therefore, $\Phi
_{(n-1)f}^{\ast }(p)^{1/2}$ divides both $\left\vert X_{B}^{\symbol{94}%
}\right\vert $ and $\left[ SL_{n}(q):X_{B}^{\symbol{94}}\right] $. Moreover, 
$q$ divides $\left\vert X_{B}^{\symbol{94}}\right\vert $ by Corollary \ref%
{kralj}(i) for $n\geq 4$, since $\left\vert X_{B}\right\vert _{p}=\left\vert
X_{B}^{\symbol{94}}\right\vert _{p}$. Then $X_{B}^{\symbol{94}}$ lies in a
parabolic subgroup of type $P_{1}$ of $SL_{n}(q)$ by Proposition \ref{hdu}%
(2). Hence $X_{B}$ fixes a point of $PG_{n-1}(q)$. Therefore, $\frac{q^{n}-1%
}{q-1}$ divides $\left[ X:X_{B}\right] $ and hence $b$. Thus $\Phi
_{nf}^{\ast }(p)\mid b$. It follows from (\ref{kk}) that, $\Phi _{nf}^{\ast
}(p)$ is coprime to $k$. Thus $\Phi _{nf}^{\ast }(p)\mid k+1$, since $%
b=k(k+1)\lambda $ and $\lambda \mid k$. Then $X_{x}\in \mathcal{C}_{3}(X)$
by Lemma \ref{e=n}, as $X_{x}$ is maximal in $X$. However, this is
impossible, since $X_{x}\in \mathcal{C}_{8}(X)$.
\end{proof}

\bigskip

Before analyzing the case $X\cong PSL_{3}(3)$ and $k=12$, we recall some useful
facts about $PSL_{3}(3):\left\langle \sigma \right\rangle $ and about the action
of some of its subgroups on $PG_{2}(3)$.\\

\bigskip

The group $PSL_{3}(3)$ has two conjugacy classes of subgroups of order $3$,
say $\mathcal{K}_{1}$ and $\mathcal{K}_{2}$ by \cite{At}. If $H_{1}\in \mathcal{K}_{1}$,
then $H_{1}$ is a group of elations of $PG_{2}(3)$ having the same center $C$
and the same axis $\ell $ (e.g. see \cite{HP}). Hence, $H_{1}$ fixes seven
flags of $PG_{2}(3)$. If $H_{2}\in \mathcal{K}_{2}$, then $H_{2}$ normalizes
six suitable Sylow $13$-subgroups of $PSL_{3}(3)$, and fixes a unique flag
of $PG_{2}(3)$. No further points or lines of $PG_{2}(3)$ are fixed by $H_{2}
$. Finally, $\sigma $ is a symmetric polarity of $PG_{2}(3)$. Thus $\sigma $
switches points and lines of $PG_{2}(3)$ and hence preserves both $\mathcal{K%
}_{1}$ and $\mathcal{K}_{2}$. Therefore, $\mathcal{K}_{1}$ and $\mathcal{K}%
_{2}$ are also conjugacy $PSL_{3}(3):\left\langle \sigma \right\rangle $-classes.

\bigskip

\begin{lemma}
\label{L1}The following hold:

\begin{enumerate}
\item If $H_{1}\in \mathcal{K}_{1}$, then $N_{PSL_{3}(3)}(H_{1})\cong
3_{+}^{1+2}:E_{4}$ and $N_{PSL_{3}(3):\left\langle \sigma \right\rangle
}(H_{1})=3_{+}^{1+2}:D_{8}$.

\item If $H_{2}\in \mathcal{K}_{2}$, then $N_{PSL_{3}(3)}(H_{2})\cong
E_{3}\times S_{3}$ and $N_{PSL_{3}(3):\left\langle \sigma \right\rangle
}(H_{2})\cong S_{3}\times S_{3}$.
\end{enumerate}
\end{lemma}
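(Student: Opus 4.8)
The plan is to compute the normalizers of representatives $H_1 \in \mathcal{K}_1$ and $H_2 \in \mathcal{K}_2$ directly inside $PSL_3(3)$ using the geometry of $PG_2(3)$ recalled above, and then to determine how the polarity $\sigma$ enlarges each normalizer. Throughout I would use that $|PSL_3(3)| = 5616 = 2^4 \cdot 3^3 \cdot 13$ and that $|\mathrm{Out}(PSL_3(3))| = 2$, with $PSL_3(3):\langle\sigma\rangle \cong \mathrm{Aut}(PSL_3(3))$.

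\medskip

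For part (1): let $H_1 \in \mathcal{K}_1$, a group of order $3$ consisting of elations with common centre $C$ and common axis $\ell$ (with $C \in \ell$, since a non-trivial group of perspectivities all sharing a centre and an axis must have $C$ on $\ell$). Any $g \in PSL_3(3)$ normalizing $H_1$ must permute the non-trivial elations in $H_1$, hence must fix the pair $(C,\ell)$; conversely, the stabilizer of the incident point-line pair $(C,\ell)$ in $PSL_3(3)$ normalizes the full elation group with centre $C$ and axis $\ell$, which is exactly $H_1$. So $N_{PSL_3(3)}(H_1) = PSL_3(3)_{(C,\ell)}$, a maximal parabolic subgroup (the stabilizer of an incident point-line flag). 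Computing its order: the number of incident point-line pairs in $PG_2(3)$ is $13 \cdot 4 = 52$, so $|N_{PSL_3(3)}(H_1)| = 5616/52 = 108 = 2^2 \cdot 3^3$. Its structure is the classical one: the unipotent radical is the special group $3^{1+2}_{+}$ of order $27$ (all elations and homologies... more precisely the full group of transvections fixing the flag), and a Levi complement $GL_1(3) \times GL_1(3)$ of order $(q-1)^2 = 4$ acting as $E_4$; thus $N_{PSL_3(3)}(H_1) \cong 3^{1+2}_{+}:E_4$. Since $\sigma$ is a polarity fixing (as a \emph{set-stabilizing} operation) the class $\mathcal{K}_1$, and since $H_1^\sigma$ is again an elation group sharing a centre and axis, one can choose $\sigma$ to fix the particular flag $(C,\ell)$ up to conjugacy (replacing $\sigma$ by an $N$-conjugate), so $\sigma$ normalizes $N_{PSL_3(3)}(H_1)$; it acts on the Levi $E_4$ by swapping the two $GL_1(3)$ factors, turning $E_4$ into $D_8$. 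Hence $N_{PSL_3(3):\langle\sigma\rangle}(H_1) = 3^{1+2}_{+}:D_8$, of order $216$, confirmed by $\left[PSL_3(3):\langle\sigma\rangle : N\right] = 11232/216 = 52$.

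\medskip

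For part (2): let $H_2 \in \mathcal{K}_2$; by the stated description $H_2$ fixes a unique flag of $PG_2(3)$ and normalizes six Sylow $13$-subgroups. One identifies $H_2$ as a subgroup of a maximal torus: an element of order $3$ whose action on $V_3(3)$ is semisimple with eigenvalues being the three cube roots of unity over $\mathbb{F}_3$ gives rise to such a class (or, in the $PSL$ picture, a diagonalizable element). Its centralizer in $PGL_3(3) = PSL_3(3)$ is then the image of the diagonal torus $\cong (\mathbb{Z}_2)^2$ extended appropriately; more carefully, $C_{PSL_3(3)}(H_2) \cong E_3 \times \mathbb{Z}_2$ of order... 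I would instead directly compute $|N_{PSL_3(3)}(H_2)|$ from the number of subgroups in $\mathcal{K}_2$. The class size of $\mathcal{K}_2$ equals $5616/|N_{PSL_3(3)}(H_2)|$; using the character-table data in \cite{At} (the two classes of order-$3$ elements and their centralizer orders), one gets $|N_{PSL_3(3)}(H_2)| = 18$, so $N_{PSL_3(3)}(H_2)$ is a group of order $18 = 2 \cdot 3^2$ containing $H_2$ normally. Since $H_2$ lies in a torus with a complement of order $3$ (the other diagonal direction) together with the Weyl-type involution inverting it, the structure is $E_3 \times S_3$ (the $E_3 = H_2 \times \langle\text{second diagonal}\rangle$ — wait, more precisely $H_2$ together with a commuting $\mathbb{Z}_3$ forms $E_9$, and an involution acts on it; the stated answer $E_3 \times S_3$ means $N = \langle a\rangle \times (\langle b\rangle : \langle t\rangle)$ with $a,b$ of order $3$, $t$ of order $2$ inverting $b$ and centralizing $a$). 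Then, since $\sigma$ preserves $\mathcal{K}_2$ and (up to conjugacy) normalizes $N_{PSL_3(3)}(H_2)$, the polarity supplies an extra involution normalizing the first $E_3 = \langle a \rangle$ factor and acting on it by inversion, promoting $E_3 \times S_3$ to $S_3 \times S_3$ of order $36$; this is consistent with $\left[PSL_3(3):\langle\sigma\rangle : N\right] = 11232/36 = 312 = 2 \cdot 5616/36$, i.e. the class $\mathcal{K}_2$ has size $312$ in both groups (it does not split nor fuse).

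\medskip

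The main obstacle I anticipate is pinning down the precise extension structures $3^{1+2}_{+}:E_4$ versus, say, $E_{27}:E_4$, and $E_3 \times S_3$ versus $\mathbb{Z}_3 \times A_4$ or $\mathbb{Z}_9:\mathbb{Z}_2 \times \ldots$ — the orders are forced by index counts, but the isomorphism type requires a genuine argument. For part (1) this is handled by the standard theory of parabolic subgroups of $SL_3(q)$: the unipotent radical of a point-line flag stabilizer is the Heisenberg group $q^{1+2}$, which for $q = 3$ and odd is $3^{1+2}_{+}$ (exponent $3$), and the Levi action is the obvious diagonal one. For part (2), the cleanest route is to exhibit an explicit order-$3$ diagonal(izable) element, compute its normalizer in $GL_3(3)$ as monomial-plus-torus, and project to $PSL_3(3)$; the $\sigma$-action is then just transpose-inverse, and one checks it inverts the surviving $\mathbb{Z}_3$ factor. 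All of these are finite mechanical verifications — in the worst case one confirms every claim by a short \texttt{GAP} computation or by reading the maximal-subgroup and class data off \cite{At} and \cite{BHRD} — so the lemma follows.
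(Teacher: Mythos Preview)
Your treatment of part (1) is correct and matches the paper's: identify $N_{PSL_3(3)}(H_1)$ with the flag stabilizer (the Borel, not a maximal parabolic as you wrote) $U:E_4$, where $U\cong 3^{1+2}_+$ is the Sylow $3$-subgroup, and then use that $\mathcal{K}_1$ remains a single conjugacy class in $PSL_3(3):\langle\sigma\rangle$ to double the normalizer to $3^{1+2}_+:D_8$.

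Part (2), however, contains a genuine conceptual error. You propose to regard $H_2$ as generated by a semisimple element ``with eigenvalues being the three cube roots of unity over $\mathbb{F}_3$'' and then to read off the normalizer from torus/Weyl-group data. But in characteristic $3$ every element of order $3$ in $GL_3(3)$ is unipotent: the only cube root of unity in $\mathbb{F}_3^{*}$ is $1$, so no order-$3$ element is diagonalizable. The two classes $\mathcal{K}_1,\mathcal{K}_2$ are the unipotent Jordan types $(2,1)$ (elations) and $(3)$ (regular unipotents), respectively, so the torus picture you invoke simply does not apply, and the structural deduction built on it (``$H_2$ lies in a torus with a complement of order $3$ \ldots'') is invalid. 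Your eventual fallback to Atlas/\textsc{GAP} would of course settle the question, but the intended argument does not go through.

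The paper handles part (2) differently and more cleanly: it places $H_2$ as a Sylow $3$-subgroup of a conic stabilizer $PSL_3(3)_{\mathcal{C}}\cong PGL_2(3)\cong S_4$ (which contains no elations, so its $3$-elements lie in $\mathcal{K}_2$). This immediately yields $S_3\leq N_{PSL_3(3)}(H_2)$; together with $C_{PSL_3(3)}(H_2)\cong E_9$ from \cite{At} one gets $|N_{PSL_3(3)}(H_2)|=18$ and the structure $E_3\times S_3$. The passage to $S_3\times S_3$ under $\sigma$ then follows as in part (1).
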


\begin{proof}
Let $H_{1}\in \mathcal{K}_{1}$. Then $H_{1}$ is $(C,\ell )$-elation group of 
$PG_{2}(3)$ and hence $N_{PSL_{3}(3)}(H_{1})$ is the stabilizer in $%
PSL_{3}(3)$ of the flag $(C,\ell )$. Thus $N_{PSL_{3}(3)}(H_{1})\cong
U:E_{4} $, and hence $N_{PSL_{3}(3):\left\langle \sigma \right\rangle
}(H_{1})=U:D_{8}$, where $U$ is a Sylow $3$-subgroup of $PSL_{3}(3)$, since $%
\mathcal{K}_{1}$ and $\mathcal{K}_{2}$, are conjugacy $PSL_{3}(3):\left%
\langle \sigma \right\rangle $-classes. This proves (1).

Let $H_{2}\in \mathcal{K}_{2}$ and let $\mathcal{C}$ be any non-degenerate
conic of $PG_{2}(3)$. It is well known that $PSL_{3}(3)_{\mathcal{C}%
}\cong PGL_{2}(3)\cong S_{4}$ and that $PSL_{3}(3)_{\mathcal{C}}$ does not
contain elations of $PG_{2}(3)$ (e.g. see \cite{HP}, Theorem 2.37). Then any
Sylow $3$-subgroup of $PSL_{3}(3)_{\mathcal{C}}$ lie in $\mathcal{K}_{2}$.
Hence, we may assume that $H_{2}$ is such a group. Then $S_{3}\leq
N_{PGL_{2}(3)}(H_{2})$ and hence $N_{PGL_{2}(3)}(H_{2})\cong E_{3}\times
S_{3}$, since $C_{PSL_{3}(3)}(H_{2})\cong E_{9}$ by \cite{At}. Now, it is
easy to see that $N_{PGL_{2}(3):\left\langle \sigma \right\rangle
}(H_{2})\cong S_{3}\times S_{3}$, which is (2).
\end{proof}

\begin{lemma}
\label{L2}Let $T\leq PSL_{3}(3):\left\langle \sigma \right\rangle $ . Then
the followings hold:

\begin{enumerate}
\item If $T$ contains at least any two elements of $\mathcal{K}_{1}$, and $%
\left\vert T\right\vert \mid 24$, then $T\leq PSL_{3}(3)$ and $T\cong
SL_{2}(3)$.

\item If $T$ contains at least any two elements of $\mathcal{K}_{2}$, and $%
\left\vert T\right\vert =12$ or $24$, then $A_{4}\trianglelefteq T\cap
PSL_{3}(3)$ and $T\cong A_{4}$, $S_{4}$ or $A_{4}\times Z_{2}$.
\end{enumerate}
\end{lemma}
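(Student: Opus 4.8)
The plan is to work inside the overgroup $PSL_3(3)$ (and then track the effect of $\sigma$) using the structure of the normalizers computed in Lemma \ref{L1}, together with standard facts about $SL_2(3)$, $A_4$ and $S_4$. For part (1), suppose $T$ contains two distinct subgroups $H_1, H_1' \in \mathcal{K}_1$, so $T$ contains at least two distinct groups of $(C,\ell)$-elations (each with its own center and axis). First I would observe that no element of $\mathcal{K}_1$ is normal in $T$: if $H_1 \trianglelefteq T$ then $T \leq N_{PSL_3(3):\langle\sigma\rangle}(H_1) = 3^{1+2}_+ : D_8$ by Lemma \ref{L1}(1), and any further conjugate of $H_1$ inside this group would still be a $(C,\ell)$-elation group (since conjugation by an element of $3^{1+2}_+:D_8$ preserves the incident point-line pair), contradicting distinctness unless the second subgroup is again $H_1$. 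So the Sylow $3$-subgroup $T_3$ of $T$ is elementary abelian of rank $\geq 2$ or is non-abelian; but $|T| \mid 24$ forces $|T_3| \in \{1,3\}$, hence $|T_3| = 3$, a single Sylow $3$-subgroup, which by Sylow must then be normal — contradiction — unless the number of Sylow $3$-subgroups is $4$. Thus $|T_3|=3$, $n_3(T)=4$, $|T|$ is divisible by $3\cdot 4 = 12$, so $|T| \in \{12, 24\}$. A group of order $12$ with four Sylow $3$-subgroups is $A_4$; but $A_4$ contains no subgroup in $\mathcal{K}_1$ that is a group of elations sharing a center and axis with a conjugate — more precisely, in $A_4$ all order-$3$ subgroups are conjugate and a quick check shows $A_4 \not\leq 3^{1+2}_+:D_8$-normalizer pattern is inconsistent; so $|T| = 24$ and $n_3 = 4$ gives $T \cong SL_2(3)$ or $T \cong S_4$ or $T \cong Z_3 \rtimes Z_8$, etc. The decisive point is that $T$ must be generated by (or at least contain) elements of order $3$ that are transvections/elations; in $SL_3(3)$ two elations generate an $SL_2(3)$ when their axes and centers are in "general position", and I would pin down $T \cong SL_2(3)$ by eliminating $S_4$ (which has order $24$ but its Sylow $3$-subgroups are the $A_4$-type, lying in $\mathcal{K}_2$, not $\mathcal{K}_1$) and by noting $SL_2(3) \leq SL_3(3)$ has its unique involution central, which forces $T \leq PSL_3(3)$ because $\sigma$ inverts rather than centralizes.

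For part (2), suppose $T$ contains two distinct $H_2, H_2' \in \mathcal{K}_2$ with $|T| = 12$ or $24$. By Lemma \ref{L1}(2), $N_{PSL_3(3)}(H_2) \cong E_3 \times S_3$ of order $18$, and $N_{PSL_3(3):\langle\sigma\rangle}(H_2) \cong S_3 \times S_3$ of order $36$. As in (1), no element of $\mathcal{K}_2$ is normal in $T$ (else $T$ embeds in $N(H_2)$, and I would check the subgroup structure of $E_3\times S_3$ and $S_3\times S_3$ of order $12$ or $24$ to see whether such a $T$ can contain a second conjugate of $H_2$ — in $S_3 \times S_3$ the order-$24$ subgroups don't exist since $|S_3\times S_3|=36$ and $24 \nmid 36$, and order-$12$ subgroups of $S_3 \times S_3$ are $S_3 \times Z_2$, $Z_2 \times (Z_2\times S_3)$-type or $A_4$-free, none containing two distinct subgroups from $\mathcal{K}_2$ with $H_2$ normal). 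Hence $T$ has $\geq 4$ Sylow $3$-subgroups; combined with $|T| \in \{12,24\}$ and $|T|_3 = 3$, we get $n_3(T) = 4$. For $|T| = 12$ this forces $T \cong A_4$. For $|T| = 24$ with $n_3 = 4$, the possibilities are $SL_2(3)$, $S_4$, and $A_4 \times Z_2$; but $SL_2(3)$ has its $3$-elements acting as in part (1)'s elation picture — actually the key discriminator is that the order-$3$ elements of $\mathcal{K}_2$ fix a non-degenerate conic, and by the proof of Lemma \ref{L1}(2) the stabilizer of a conic in $PSL_3(3)$ is $S_4$, so a subgroup $T$ containing two $\mathcal{K}_2$-subgroups lies (up to the $\sigma$-extension) inside $\mathrm{Stab}(\mathcal{C}) \cong S_4$ or its $\sigma$-normalizer. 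I would argue that $\langle H_2, H_2'\rangle$ fixes a common conic (the conic is determined by the Sylow $3$-normalizer data), hence $\langle H_2, H_2'\rangle \leq S_4$, giving $A_4 \trianglelefteq T \cap PSL_3(3)$; the three cases $T \cong A_4, S_4, A_4 \times Z_2$ then correspond to $T \leq PSL_3(3)$ (giving $A_4$ or $S_4 = PGL_2(3)$, but $S_4$ needs the full $\mathrm{Stab}(\mathcal{C})$), or $T$ meeting the $\sigma$-coset (giving $A_4 \times Z_2$ when $\sigma$ acts trivially on the conic-stabilizer $A_4$, which happens since $\sigma$ is the polarity defining that conic).

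The main obstacle I anticipate is the bookkeeping in distinguishing $SL_2(3)$ from $S_4$ (both order $24$, both with $n_3 = 4$) in part (1), and symmetrically distinguishing the three order-$24$ groups in part (2): the cleanest route is to use the geometric characterization — a subgroup generated by elations (class $\mathcal{K}_1$) versus a subgroup fixing a conic (class $\mathcal{K}_2$) — rather than pure group theory, since the two conjugacy classes $\mathcal{K}_1, \mathcal{K}_2$ are geometrically distinguished (elations vs. conic-preserving order-$3$ elements) as recalled just before Lemma \ref{L1}. Concretely: in part (1) the involution of $T \cong SL_2(3)$ must be central and must normalize both elation groups; checking that $\sigma$ cannot play this role (as $\sigma$ is a polarity, not an interior automorphism fixing an incident point-line pair) forces $T \leq PSL_3(3)$. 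In part (2), the conic $\mathcal{C}$ is canonically attached to any $\mathcal{K}_2$-subgroup via its action on the six Sylow $13$-subgroups it normalizes, so two such subgroups in $T$ pin down $\mathcal{C}$, and then $T \cap PSL_3(3) \leq \mathrm{Stab}_{PSL_3(3)}(\mathcal{C}) \cong S_4$ with a Klein-four-free $3$-Sylow structure forces $A_4 \trianglelefteq T \cap PSL_3(3)$; the list $A_4, S_4, A_4 \times Z_2$ then exhausts the extensions of $A_4$ by at most $Z_2$ that occur inside $N_{PSL_3(3):\langle\sigma\rangle}(\mathcal{C}) = S_4 \times \langle\sigma\rangle$-type group, after ruling out $SL_2(3)$ (no normal $A_4$) on structural grounds. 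I would keep the $\sigma$-tracking explicit throughout, using $|PSL_3(3):\langle\sigma\rangle : PSL_3(3)| = 2$ and the normalizer indices from Lemma \ref{L1}.
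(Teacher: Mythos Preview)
Your approach diverges from the paper's in both parts, and in part~(2) there is a genuine gap.

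For part~(1), the paper works geometrically with the two elation groups $H,K\in\mathcal{K}_1$: since $|T|\mid 24$ forces $|T|_3=3$, the centers and axes must differ, and a short case analysis (using that two elations with the same center or the same axis generate an $E_9$) shows that $\langle H,K\rangle$ fixes the anti-flag $(\ell_1\cap\ell_2,\,C_1C_2)$. The anti-flag stabilizer in $PSL_3(3)$ is $GL_2(3)$, two elations in general position generate its $SL_2(3)$, and $|T|\mid 24$ gives $T=\langle H,K\rangle\cong SL_2(3)$ directly. This simultaneously yields $T\leq PSL_3(3)$ without any separate $\sigma$-tracking. Your Sylow route ($n_3=4$, $|T|\in\{12,24\}$) is correct as far as it goes, but your elimination of $A_4$, $S_4$, $A_4\times Z_2$ is only asserted: you would still need to show that every $A_4\leq PSL_3(3)$ has its Sylow $3$-subgroups in $\mathcal{K}_2$ (this is true---any such $A_4$ must lie in a maximal $S_4$ since $A_4\cap E_9\trianglelefteq A_4$ forces $A_4\cap E_9=1$ yet $A_4\not\leq GL_2(3)$---but you have not argued it), and your claim that ``$\sigma$ inverts rather than centralizes'' does not by itself force $T\leq PSL_3(3)$.

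For part~(2), your central step---that two distinct $\mathcal{K}_2$-subgroups in $T$ automatically fix a \emph{common} conic---is not justified, and the proposed mechanism (``the conic is determined by the Sylow $13$-normalizer data'') does not work: a single $H_2\in\mathcal{K}_2$ normalizes six Sylow $13$-subgroups and lies in several conic stabilizers, and there is no a~priori reason two such subgroups share one. The paper instead locates $T_0=T\cap PSL_3(3)$ inside a maximal subgroup of $PSL_3(3)$ via the Atlas: either $T_0\leq E_9{:}GL_2(3)$ or $T_0\leq S_4$. In the parabolic case $T_0\cap E_9=1$ (since the $3$-elements of $T_0$ lie in $\mathcal{K}_2$, not $\mathcal{K}_1$), so $T_0\hookrightarrow GL_2(3)$; the only order-$12$ or $24$ subgroup of $GL_2(3)$ with more than one Sylow $3$-subgroup is $SL_2(3)$, and then the central involution of $T_0$ would be a homology fixing an anti-flag, contradicting that $\mathcal{K}_2$-elements fix only a flag. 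This forces $T_0\leq S_4$, giving $A_4\trianglelefteq T_0$ and the stated list for $T$. Your sketch recovers the correct list only after assuming the common-conic fact that the paper's argument is needed to establish.
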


\begin{proof}
Let $H,K$ be any two elements of $\mathcal{K}_{1}$ contained $T$. Then they are
elation groups of $PG_{2}(3)$ with center $C_{i}$ and axis $\ell _{i}$,
where $i=1,2$, respectively. If either $C_{1}=C_{2}$ or $\ell _{1}=\ell _{2}$%
, then $\left\langle H,K\right\rangle $ is elementary abelian of order $9$
by \cite{HP}, Theorem 4.14, but this contradicts $\left\vert T\right\vert
\mid 24$. Therefore, $C_{1}\neq C_{2}$ and $\ell _{1}\neq \ell _{2}$. Then $%
\left\langle H,K\right\rangle $ fixes the point $\ell _{1}\cap \ell _{2}$
and the line $C_{1}C_{2}$. If $\ell _{1}\cap \ell _{2}\in C_{1}C_{2}$, then $%
\ell _{1}\cap \ell _{2}\in \left\{ C_{1},C_{2}\right\} $, as $\ell_{1} \neq \ell_{2}$. Suppose that $\ell
_{1}\cap \ell _{2}=\left\{ C_{1}\right\} $, then $K$ fixes $C_{1}$ and there
is $\gamma \in K$ such that $\ell _{1}^{\gamma }\neq \ell _{1}$ as $%
C_{1}\neq C_{2}$. So $\left\langle H,H^{\gamma }\right\rangle \leq T$, with $%
\left\langle H,H^{\gamma }\right\rangle $ elementary abelian of order $9$
again by \cite{HP}, Theorem 4.14, and we reach a contradiction. The case $\ell
_{1}\cap \ell _{2}=\left\{ C_{2}\right\}$ is ruled out similarly. Thus, $\ell
_{1}\cap \ell _{2}\notin C_{1}C_{2}$. Then $SL_{2}(3)\trianglelefteq
\left\langle H,K\right\rangle \leq GL_{2}(3)$, as the stabilizer in $%
PSL_{3}(3)$ of $\ell _{1}\cap \ell _{2}$ and of $C_{1}C_{2}$ is $GL_{2}(3)$.
Therefore, $T=\left\langle H,K\right\rangle \cong SL_{2}(3)$, as $\left\langle
H,K\right\rangle \leq T$ and $\left\vert T\right\vert \mid 24$. This proves
(1).

Assume that $T$ contains at least two elements $L,M$ of $\mathcal{K}_{2}$.
Then $T_{0}$ contains $L,M$, where $T_{0}=T\cap PSL_{3}(3)$. By \cite{At}, either $%
T_{0}\leq W:GL_{2}(3)$, where $W$ is an elementary abelian group of order $9$
consisting of elations of $PG_{2}(3)$, or $T_{0}\leq S_{4}$ and $T_{0}$
preserves a conic of $PG_{2}(3)$. If $T_{0}\leq W:GL_{2}(3)$,
then $T_{0}\cap W=1$, since the Sylow $3$-subgroups of $T_{0}$ belong to $%
\mathcal{K}_{2}$. Hence, $T_{0}$ is isomorphic to a subgroup of $GL_{2}(3)$.
Since $GL_{2}(3)$ does not contain subgroups of order $12$ with at least two
distinct Sylow $3$-subgroups, and since $\left\vert T\right\vert =12$ or $24$%
, it follows that $T_{0}\cong SL_{2}(3)$. Then $T_{0}$ fixes an anti-flag of $%
PG_{2}(3)$, as $Z(T)\cong Z_{2}$ is an homology of $PG_{2}(3)$. However this case is ruled out, since any element of $\mathcal{K}_{2}$ fixes just a flag and no more
points or lines of $PG_{2}(3)$. Thus, $T_{0}\leq S_{4}$, $T_{0}$ preserves a
conic of $PG_{2}(3)$ and hence $A_{4}\trianglelefteq T_{0}\leq S_{4}$, since 
$\left\langle H,K\right\rangle \leq T_{0}$. Then either $T_{0}=T$ and $%
T\cong A_{4}$, $S_{4}$, or $T_{0} \neq T$, $T_{0}\cong A_{4}$ and $T\cong S_{4}$, $%
A_{4}\times Z_{2}$, since $\left\vert T\right\vert =12$ or $24$. This proves
(2).
\end{proof}

\begin{proposition}
\label{ExmplPSL}If $X\cong PSL_{3}(3)$, then $\mathcal{D}$ is isomorphic to
one of the $2$-designs constructed in Example \ref{Ex5}.
\end{proposition}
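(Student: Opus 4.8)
The plan is to pin down the flag–stabilizer data $(G_x,G_B)$ and then reconstruct $\mathcal{D}$ as the associated coset incidence structure. By Lemma \ref{PSL} we already know $X\cong PSL_3(3)$, $v=144$, $k=12$, and that $X_x$ is the $\mathcal{C}_3$-subgroup of type $GL_1(27)$ of $X$, i.e.\ a Frobenius group of order $39$ with cyclic normal subgroup of order $13$; this is exactly the subgroup $P$ of Example \ref{Ex5}. So we may take $X_x=P$ and $G_x=N_G(P)$, which equals $P$ or $P:\langle\sigma\rangle$ according as $G=PSL_3(3)$ or $G=PSL_3(3):\langle\sigma\rangle$ (the only possibilities for $G$, since $\mathrm{Aut}(X)=X:\langle\sigma\rangle$). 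In particular the point action of $G$ on $\mathcal{P}$ is the one analysed in Example \ref{Ex5}, and I would reuse the double-coset computation carried out there to record the $G_x$-orbit lengths on $\mathcal{P}$: $1,13^5,39^2$ when $G_x=P$, with the corresponding fusion of orbits when $G_x=P:\langle\sigma\rangle$.

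Next I would extract the numerical constraints. From $r=\lambda(k+1)=13\lambda$, $b=\lambda k(k+1)=156\lambda$, $\lambda\mid k=12$, $\lambda\geq2$ (Lemmas \ref{desdes}, \ref{lambada}) and $r\mid|G_x|$ (Lemma \ref{PP}(3)), one gets $\lambda=3$ when $G=PSL_3(3)$ (hence $|G_B|=12$ and $|G_{x,B}|=1$), and $\lambda\in\{2,3,6\}$ when $G=PSL_3(3):\langle\sigma\rangle$ (hence $|G_B|=36,24,12$ and $|G_{x,B}|=3,2,1$ respectively). In every surviving case flag-transitivity makes $G_B$ act transitively on the $12$-set $B$, so $B=x^{G_B}$ is a single $G_B$-orbit and $|B|=[G_B:G_{x,B}]$; moreover, by Lemma \ref{PP}(3), $B$ meets each $G_x$-orbit $\mathcal{O}\neq\{x\}$ in $|\mathcal{O}|/13$ points, so for $G_x=P$ the block $B$ consists of $x$, one point from each of the five orbits of length $13$, and three points from each of the two orbits of length $39$.

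The heart of the argument is identifying $G_B$ up to conjugacy. Here I would examine a Sylow $3$-subgroup of $G_B$, whose non-trivial elements lie in the class $\mathcal{K}_1$ or $\mathcal{K}_2$, and apply Lemmas \ref{L1} and \ref{L2} together with the order data above, with the facts that $P$ has odd order (so a $3$-subgroup of $G_{x,B}$ is a $\mathcal{K}_2$-subgroup of $P$) and that every involution of $P:\langle\sigma\rangle$ lies in the $\sigma$-class. Combining this with $k^2=[X:X_x]$, $r\mid|G_x|$, the lists of transitive permutation representations of $PSL_3(3)$ and $PSL_3(3):\langle\sigma\rangle$ from \cite{At} and \cite{BHRD}, and, for the $2$-$(144,12,6)$ design, the fact (proved in Example \ref{Ex5}) that its full automorphism group is $PSL_3(3):\langle\sigma\rangle$, I expect the cases $\lambda=2$ and $\lambda=3$ with $G=PSL_3(3):\langle\sigma\rangle$ to be eliminated, leaving in all remaining cases $G_B\cong A_4\cong\Omega_3(3)$ lying in the stabilizer of a non-degenerate conic, i.e.\ $G_B$ is $G$-conjugate to the subgroup $L$ of Example \ref{Ex5}. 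This step — excluding the spurious values of $\lambda$ and nailing the exact $X$-conjugacy class of $G_B$ — is the main obstacle, since a handful of small subgroups of order $12$, $24$ and $36$ survive a priori and must be ruled out one at a time.

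Finally, I would observe that a flag-transitive incidence structure is determined up to isomorphism by the pair $(G_x,G_B)$ up to simultaneous $G$-conjugacy: fixing the flag $(x,B)$ and using $B=x^{G_B}$, the point $x^g$ is incident with the block $B^h$ if and only if $gh^{-1}\in G_xG_B$. Since $X$ has a unique conjugacy class of subgroups isomorphic to $P$ and the relevant class of copies of $A_4$ is forced, the pair $(G_x,G_B)$ is $G$-conjugate to $(N_G(P),L)$ (respectively to its extension by $\langle\sigma\rangle$ when $\lambda=6$), whence $\mathcal{D}$ is isomorphic to the coset incidence structure of Example \ref{Ex5}; equivalently one applies \cite{HM}, Lemmas 1 and 2, exactly as in the proof of Example \ref{Ex5}. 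Matching the parameters then puts $\mathcal{D}$ in case (3) of Theorem \ref{main} when $G=PSL_3(3)$ and in case (4) when $G=PSL_3(3):\langle\sigma\rangle$, which completes the proof.
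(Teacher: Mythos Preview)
Your plan follows the paper's route closely: same case split on $(\lambda,c)\in\{(2,2),(3,1),(3,2),(6,2)\}$, same use of the $\mathcal{K}_1/\mathcal{K}_2$ dichotomy and Lemmas \ref{L1}--\ref{L2} to constrain $G_B$, and the same coset-geometry reconstruction at the end. The framework is correct.

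The genuine gap is exactly the step you flag as ``the main obstacle'', and it is not as accessible by hand as your outline suggests. After Lemmas \ref{L1}--\ref{L2} one is left with several candidate isomorphism types for $G_B$ (for $(\lambda,c)=(3,1)$: $D_{12}$ or $A_4$; for $(\lambda,c)=(6,2)$: $Z_{12}$, three classes of $D_{12}$, or $A_4$), and one must both (a) rule out all the non-$A_4$ candidates and (b) prove that, among the $A_4$ choices, exactly one flag-transitive $2$-design arises up to isomorphism. The paper does \emph{both} of these by a \textsf{GAP} computation; neither the Atlas tables nor Lemmas \ref{L1}--\ref{L2} alone settle them. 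In particular, your closing claim that ``a flag-transitive incidence structure is determined up to isomorphism by the pair $(G_x,G_B)$ up to simultaneous $G$-conjugacy'' is not sufficient: the incidence also depends on the choice of $G_x$--$G_B$ double coset (equivalently, on which $G_B$-orbit through $x$ is the block), and one must check that every such choice either fails to be a $2$-design or yields the design of Example \ref{Ex5}. Example \ref{Ex5} handles only the forward direction for the specific pair $(P,L)$; the converse uniqueness is the computer step.

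Two smaller points. For $(\lambda,c)=(2,2)$ the paper's elimination is not via Lemmas \ref{L1}--\ref{L2} but via a fixed-point count: $G_{x,B}\cong Z_3\in\mathcal{K}_2$ fixes exactly $6$ points of $\mathcal{D}$, all forced to lie in $B$, which contradicts $\lambda=2$ by a replication count. For $(\lambda,c)=(3,2)$ the reduction uses that $G'=PSL_3(3)$ is already flag-transitive (since $(G')_{x,B}=1$), hence $\mathcal{D}$ is the $\lambda=3$ design of Example \ref{Ex5}(1), whose full automorphism group is $PSL_3(3)$; you cite the automorphism group of the $\lambda=6$ design instead, which is the wrong one for this step.
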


\begin{proof}
Assume that $G$ is either isomorphic to $PSL_{3}(3)$ or to $PSL_{3}(3):\left\langle \sigma
\right\rangle $, where $\sigma $ is of order $2$, and that $k=12$. If $(x,B)$ is any flag of $\mathcal{D}$, then either $G_{x} \cong F_{39}$, or $G_{x} \cong F_{78}$, respectively, by \cite{At}.
Moreover $b=12\cdot 13\cdot \lambda $ and the cyclic subgroups of order $3$
lying in $G_{x}$ belong to the conjugacy class $\mathcal{K}_{2}$ defined
above. Hence $\left\vert G_{B}\right\vert =\frac{36c}{\lambda }$, where $c=%
\left[ G:PSL_{3}(3)\right] $. Then $\lambda \mid 3c$, as $12\mid \left\vert
G_{B}\right\vert $, with $\lambda \geq 2$. Thus, $\left( \lambda ,c\right) =(2,2),(3,1)$, $(3,2)$ or $%
(6,2)$.

\begin{enumerate}
\item[(i).] $\left( \lambda ,c\right) \neq (2,2)$.
\end{enumerate}

Assume that $\left( \lambda ,c\right) =(2,2)$. Then $G\cong
PSL_{3}(3):\left\langle \sigma \right\rangle $, $\left\vert G_{B}\right\vert
=36$, and $G_{x,B}\cong Z_{3}$, with $ G_{x,B} \in \mathcal{K}_{2}$. Then the point-$G_{x}$%
-orbits on $\mathcal{D}$ are $\left\{ x\right\} $, one of length $13$, two ones
of length $26$ and two ones of length $39$ by \cite{AtMod}. Then $B$ intersects
each of these orbits in $1,1,2,2,3,3$ points respectively by Lemma \ref{PP}%
(3). Hence, $G_{x,B}$ fixes at least six points on $B$. On the other hand,
Lemma \ref{L1}(2) implies $\left[ N_{G}(G_{x,B}):N_{G_{x}}(G_{x,B})\right]
=6 $. Therefore, $\left\vert \mathrm{Fix(}G_{x,B}\mathrm{)}\right\vert =6$.
Thus, $\mathrm{Fix(}G_{x,B}\mathrm{)}\subset B$ and hence the number of
blocks of $\mathcal{D}$ containing $\mathrm{Fix(}G_{x,B}\mathrm{)}$ is at most $2$, as $%
\lambda =2$. Thus either $r=\left\vert \mathcal{K}_{2}\right\vert $ or $%
r=2\left\vert \mathcal{K}_{2}\right\vert $ and we reach a contradiction, as $%
r=24$ but $\left\vert \mathcal{K}_{2}\right\vert =312$.

\begin{enumerate}
\item[(ii).] If $\left( \lambda ,c\right) =(3,1)$, then $\mathcal{D}$ is
isomorphic to the $2$-design constructed in Example \ref{Ex5}(1).
\end{enumerate}

Assume that $\left( \lambda ,c\right) =(3,1)$. Then $G\cong PSL_{3}(3)$ and $%
G_{B}$ acts regularly on $B$. Let $H$ be a Sylow $3$-subgroup of $G_{B}$. By
Lemma \ref{L1}(1)--(2), either $H\in \mathcal{K}_{1}$ and $G_{B}\cong D_{12}$%
, or $H\in \mathcal{K}_{2}$ and $G_{B}\cong A_{4}$, according to whether $H$
is or is not normal in $G_{B}$ respectively. With the aid of \cite{GAP} we
see that, no flag-transitive $2$-designs occur with $G_{B}\cong D_{12}$, whereas, up to
isomorphism, only one flag-transitive $2$-$(12^{2},12,3)$ design occurs with $G_{B}\cong A_{4}$. Thus, such a $2$-design is
necessarily isomorphic to that constructed in Example \ref{Ex5}(1).

\begin{enumerate}
\item[(iii).] $\left( \lambda ,c\right) \neq (3,2)$.
\end{enumerate}

Assume that $\left( \lambda ,c\right) =(3,2)$. Then $G\cong
PSL_{3}(3):\left\langle \sigma \right\rangle $, $G_{x,B}\cong Z_{2}$ and $%
r=39$. Since $(G^{\prime })_{x}\cong F_{39}$ by \cite{At}, it follows that $%
(G^{\prime })_{x,B}=1$. Therefore, $(G^{\prime })_{x}$ acts transitively on
the set of blocks of $\mathcal{D}$ incident with $x$. Also, $G^{\prime }$
acts point-transitively on $\mathcal{D}$, as $G$ acts point-primitively on $%
\mathcal{D}$. Thus, $G^{\prime }$ acts flag-transitively on $\mathcal{D}$.
Therefore, $\mathcal{D}$ is isomorphic to the $2$-design constructed in Example \ref{Ex5}(1) by (ii), and hence $G^{\prime }$
is its full flag-transitive automorphism group.
However, this is impossible, since $G\cong PSL_{3}(3):\left\langle \sigma
\right\rangle $ acts flag-transitively on $\mathcal{D}$ by our assumption.

\begin{enumerate}
\item[(iv).] If $\left( \lambda ,c\right) =(6,2)$, then $\mathcal{D}$ is
isomorphic to the $2$-design constructed in Example \ref{Ex5}(2).
\end{enumerate}

If $\left( \lambda ,c\right) =(6,2)$, then $G\cong PSL_{3}(3):\left\langle
\sigma \right\rangle $, $r=78$, $G_{x,B}=1$ and $\left\vert G_{B}\right\vert
=12$. By using Lemma \ref{L1}(1)--(2), it is easy to see that, either $G_{B}$
is isomorphic to $Z_{12}$, $D_{12}$ (three conjugacy classes) or to $A_{4}$.
With the aid of \cite{GAP} we see that, no flag-transitive $2$-designs occur with $%
G_{B}\cong Z_{12}$ or $D_{12}$, whereas, up to isomorphism, only one flag-transitive $2$-$%
(12^{2},12,6)$ design occurs with $G_{B} \cong A_{4}$. Thus, such $2$-design is
necessarily isomorphic to that constructed in Example \ref{Ex5}(2).
\end{proof}

\begin{lemma}
\label{PSp}$X$ is not isomorphic to $PSp_{n}(q)$ for $n\geq 4$.
\end{lemma}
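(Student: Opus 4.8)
The plan is to assume $X\cong \PSp_n(q)$ and derive a contradiction; by Remark \ref{mag3} we then have $n\geq 4$ and $e=n$. By Theorems \ref{kappa+1} and \ref{LergeGeo}, $X_x$ is a large subgroup of $X$ contained in a large maximal geometric subgroup $Y$ of $X$ with $\left(\Phi_{nf}^{\ast}(p),\left\vert Y\right\vert\right)>1$. First I would run through the classification of the large maximal subgroups of $\PSp_n(q)$ in \cite{AB} and retain only those $Y$ whose order is divisible by $\Phi_{nf}^{\ast}(p)$. Since $\Phi_{nf}^{\ast}(p)\equiv 1\pmod{nf}$ by \cite{KL}, Proposition 5.2.15.(i), this primitive part can divide $\left\vert Y\right\vert$ only through a cyclic torus of order divisible by $q^{n}-1$ or by $q^{n/2}+1$; inspecting the order formulas then discards every parabolic member, every stabilizer of a non-degenerate subspace, the $O_n^{+}(q)$ members, and all members of the classes $\mathcal{C}_2$, $\mathcal{C}_4$, $\mathcal{C}_5$, $\mathcal{C}_6$, $\mathcal{C}_7$, since their orders involve only the factors $q^{i}-1$ with $i<n$ together with $q^{n/2}-1$, hence are coprime to $\Phi_{nf}^{\ast}(p)$. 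Thus $Y$ is either a $\mathcal{C}_3$-subgroup of type $\Sp_{n/2}(q^{2})$ (so $4\mid n$), a $\mathcal{C}_3$-subgroup of type $\GU_{n/2}(q)$ with $n/2$ odd, or, when $q$ is even, a $\mathcal{C}_1$-subgroup of type $O_n^{-}(q)$.

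The last possibility is disposed of exactly as in the proof of Lemma \ref{e=n}: applying the Main Theorem of \cite{Bre} to $Y\cong SO_n^{-}(q)$ forces $X_x$ to be isomorphic to $\Omega_n^{-}(q)$ or to $SO_n^{-}(q)$, whence $k^{2}=[X:X_x]=q^{n/2}(q^{n/2}-1)/\rho$ for some $\rho\in\{1,2\}$, so $q^{n/2}-1$ is a perfect square, contradicting \cite{Rib}, A3.1, since $n\geq 4$. For the two $\mathcal{C}_3$ cases I would first show $X_x=Y$: as $X$ has a single conjugacy class of subgroups of each of these $\mathcal{C}_3$-types by \cite{KL}, Section 4.3, Lemma \ref{Nov}(1) gives $X_x=Y$ unless $X_x$ is a type 1 novelty with respect to $Y$, and this does not occur by Table 3.5.C of \cite{KL} for $n\geq 13$ and by \cite{BHRD} for $n\leq 12$ (here one uses $k+1\nmid\left\vert\Out(X)\right\vert$, automatic since $k^{2}=[X:X_x]\geq[X:Y]$ is large while $\left\vert\Out(X)\right\vert\leq 2f$, apart from finitely many small $(n,q)$ treated directly).

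With $X_x=Y$, the equation $k^{2}=[X:X_x]$ becomes explicit. Cancelling in $\left\vert\Sp_n(q)\right\vert=q^{n^{2}/4}\prod_{i=1}^{n/2}(q^{2i}-1)$ yields $k^{2}=c^{-1}q^{n^{2}/8}\prod(q^{2i}-1)$, the product being over odd $i$ with $1\leq i\leq n/2$, in the $\Sp_{n/2}(q^{2})$ case, and $k^{2}=c^{-1}q^{(n/2)(n/2+1)/2}\prod_{i\ \mathrm{odd}}(q^{i}-1)\prod_{i\ \mathrm{even}}(q^{i}+1)$ (products over $i\leq n/2$) in the $\GU_{n/2}(q)$ case, where $c$ is a small constant recording the scalars and the field-automorphism factor. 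For $n\geq 6$ I would isolate, for the largest admissible $e'$, the primitive part $\Phi_{e'f}^{\ast}(p)$ — for instance $e'=3$ in the unitary case, which gives the factor $q^{2}+q+1$ since $n/2\geq 3$ is odd: this primitive part divides $[X:X_x]$ to exactly the first power and is coprime both to $c$ and to the $p$-part, so $k^{2}$ is not a perfect square unless $\Phi_{e'f}^{\ast}(p)$ is itself a square; but by Zsigmondy's theorem $\Phi_{e'f}^{\ast}(p)>1$, and by \cite{GLNP}, Theorem 3, together with \cite{Rib}, A7.1 and A8.1, it is a square only for a short list of pairs $(e',q)$, which are then eliminated one by one using the order identities and \cite{BHRD}.

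The delicate case is $n=4$, i.e. $X\cong \PSp_4(q)$ with $q$ odd, where the equation reduces to $k^{2}=q^{2}(q^{2}-1)/2$: here the ``isolated primitive prime divisor'' argument degenerates (the surviving diophantine condition is essentially a Pell equation), and one must instead combine Lemma \ref{PP}(3) and Lemma \ref{Orbits}, bounding $k+1$ by divisors of $\left\vert G_x\right\vert$ and by the short subdegrees of the action of $X_x$ on the point set, to rule out every $q$. This last point, together with the enumeration and elimination of the Zsigmondy exceptions and of the small groups $\PSp_4(2)'\cong A_6$, $\PSp_6(2)$, $\PSp_8(2)$ already excluded by Lemma \ref{cyclont} and by earlier results, is where the bulk of the work lies and is the main obstacle.
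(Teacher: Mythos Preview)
Your overall strategy matches the paper's, but there are real gaps in the case analysis.

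First, you omit the $\mathcal{C}_3$-subgroup of type $\Sp_{n/3}(q^{3})$. The classification in \cite{AB}, Proposition 4.22, together with the filter $(\Phi_{nf}^{\ast}(p),|Y|)>1$, leaves \emph{three} $\mathcal{C}_3$-types, not two: $\Sp_{n/2}(q^{2})$, $\Sp_{n/3}(q^{3})$, and $\GU_{n/2}(q)$ with $n/2$ odd. The paper handles $t=3$ by reducing (via Lemma \ref{Sing}(1) and the inequality of \cite{ABD3}) to $n=6$, where $k^{2}=q^{3}(q^{2}-1)^{2}(q^{2}+1)/3$ and $q^{2}+1$ would have to be a square, contradicting \cite{Rib}, A3.1.

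Second, your claim that every $\mathcal{C}_6$-member is discarded by the order filter is false for small $n$. From \cite{AB}, Proposition 4.22, the pairs $(X,Y)=(\PSp_4(7),2^{4}.O_4^{-}(2))$ and $(\PSp_4(3),2^{4}.\Omega_4^{-}(2))$ survive: in each case $\Phi_4^{\ast}(q)$ is divisible by $5$, and $5$ divides $|O_4^{-}(2)|$. The paper disposes of these separately (the first because $5^{2}\mid|X|$ forces $5^{2}\mid|X_x|$, which fails; the second by direct inspection of permutation degrees).

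Third, for the $\GU_{n/2}(q)$ case your plan to ``isolate $\Phi_{3f}^{\ast}(p)=q^{2}+q+1$ to the first power'' is not robust: once $n/2\geq 9$ the factor $q^{9}-1$ enters the product and $q^{2}+q+1$ appears again. The paper takes a quite different route here: it observes that $\Phi_{(n-2)f}^{\ast}(p)\mid k^{2}$ while $(\Phi_{(n-2)f}^{\ast}(p),|X_x|)=1$, so $\Phi_{(n-2)f}^{\ast}(p)^{1/2}$ divides both $|X_B^{\symbol{94}}|$ and $[\SL_n(q):X_B^{\symbol{94}}]$; then Proposition \ref{hdu}(3) forces $X_B^{\symbol{94}}$ into a parabolic of type $P_2$ or $P_{1,n-1}$, which bounds $|X_B|_p$ and contradicts Corollary \ref{kralj}(ii) except for $n=6,10$, handled by hand.

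Finally, for $n=4$ (type $\Sp_2(q^{2})$) the paper does not use subdegrees at all. From $k^{2}=q^{2}(q^{2}-1)/2$ with $q$ odd one writes $\tfrac{q\pm1}{2}=z_1^{2}$ and $2(q\mp1)=z_2^{2}$, whence $z_1^{2}\mp1=(z_2/2)^{2}$, impossible by \cite{Rib}, A3.1. This purely diophantine step replaces the argument you flagged as the main obstacle. (Incidentally, $O_n^{-}(q)$ for $q$ even is a $\mathcal{C}_8$-subgroup of $\PSp_n(q)$, not $\mathcal{C}_1$; your treatment of it is otherwise fine.)
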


\begin{proof}
Assume that $X\cong PSp_{n}(q)$ with $n \geq 4$. Then $X_{x}$ lies in a large maximal
geometric subgroup $Y$ of $X$ by Theorem \ref{LergeGeo}. Hence, $Y$ is
one of the groups listed in \cite{AB}, Proposition 4.22. It follows from
Theorem 3.5 of \cite{He1/2} and Proposition 5.2.15.(i) of \cite{KL}, that $%
Y\notin \mathcal{C}_{1}(X)$ and $Y\notin \mathcal{C}_{2}(X)\cup \mathcal{C}%
_{5}(X)$, respectively, since $\left( \Phi _{nf}^{\ast }(p),\left\vert
Y\right\vert \right) >1$. For the same reason $(X,Y)\neq
(PSp_{8}(3),2^{6}.\Omega _{6}^{-}(2))$, $(PSp_{4}(5),2^{4}.\Omega _{4}^{-}(2))$%
. Thus, one of the following holds:

\begin{enumerate}
\item $Y$ is a $\mathcal{C}_{8}$-subgroup of $X$;

\item $Y$ is a $\mathcal{C}_{3}$-subgroup of $X$ of type $Sp_{n/2}(q^{2})$, $%
Sp_{n/3}(q^{3})$ or $GU_{n/2}(q)$;

\item $(X,Y)$ is either $(PSp_{4}(7),2^{4}.O_{4}^{-}(2))$ or $%
(PSp_{4}(3),2^{4}.\Omega _{4}^{-}(2))$.
\end{enumerate}

Assume that Case (1) holds. Then $Y\cong O_{n}^{-}(q)$ with $q$ even by \cite%
{KL}, Proposition 4.8.6.(II), as $\left( \Phi _{nf}^{\ast
}(p),\left\vert Y\right\vert \right) >1$. Also $X_{x}=Y$ by Lemma \ref{Nov}%
(1) and by \cite{KL}, Proposition 4.8.6.(I). Then $k^{2}=q^{n/2}(q^{n/2}-1)$%
, since $k^{2}=\left[ X:X_{x}\right] $, and hence $q^{n/2}-1$ is a square.
However, this is impossible by \cite{Rib}, A3.1, since $n\geq 4$.

Assume that Case (2) holds. If $Y$ is a $\mathcal{C}_{3}$-subgroup of $X$ of
type $Sp_{2i}(q^{t})$, $t=2,3$, where $n=2it$, then $Y\cong
PSp_{2i}(q^{t})\cdot Z_{t}$ by \cite{KL}, Proposition 4.3.10.(II). Also $%
X_{x}=Y$ by Lemma \ref{Nov}(1) and by \cite{KL}, Proposition 4.3.10.(I). It
follows from Lemma \ref{Sing}(1) that $\left\vert X\right\vert \leq
\left\vert \mathrm{Out(}X\mathrm{)}\right\vert ^{2}\cdot \left\vert
X_{x}\right\vert \cdot \left\vert X_{x}\right\vert _{p^{\prime }}^{2}$,
which in turn yields the first (centered) inequality at page 21 of \cite%
{ABD3} (see part (4) in the proof of Proposition 4.4). Then $i=1$ for $t=3$
and $i\leq 2$ for $t=2$. The case $t=3$ is clear, whereas some remarks
should be done for $t=2$. In \cite{ABD3} the authors assert that the
replication number of the $2$-designs they investigate divides $%
\left\vert \mathrm{Out(}X\mathrm{)}\right\vert (q^{n/t}-1)$ by making use of
their their Lemmas 3.6 and 3.7.(c)--(d). In our context, Lemmas 3.6 of \cite%
{ABD3} still works. Indeed, it is a general group-theoretical result. The role
of Lemma 3.7(c) is replaced by the inequality $\left( r/\lambda \right)
^{2}>v$, since $r=(k+1)\lambda $ and $v=k^{2}$. Finally, the role of Lemma
3.7.(d) is replaced by Lemma \ref{PP}(3). Thus, we get $r/\lambda \mid
\left\vert \mathrm{Out(}X\mathrm{)}\right\vert (q^{4i}-1)$. At this point we
may proceed as in \cite{ABD3} to gain $q^{4i^{2}/8}<k^{2}<\left( r/\lambda
\right) ^{2}<2q^{8i+1}$, as $\left\vert \mathrm{Out(}X\mathrm{)}\right\vert
^{2}\leq 2q$. Therefore $i\leq 2$.

Assume that $t=3$ and $i=1$. Then $X\cong PSp_{6}(q)$ and $X_{x}\cong
PSp_{2}(q^{3})\cdot Z_{3}$. Hence, $k^{2}=q^{3}(q^{2}-1)^{2}(q^{2}+1)/3$.
Since $3$ does not divide $q^{2}+1$, this one is a square. However, this is
impossible by \cite{Rib}, A3.1.

Assume that $t=2$ and $i=2$. Then $X\cong PSp_{8}(q)$ and $X_{x}\cong
PSp_{4}(q^{2})\cdot Z_{2}$. Then $k^{2}=q^{8}\left( q^{2}-1\right)
^{2}\left( q^{4}+q^{2}+1\right) /2$, but this is impossible by \cite{Rib},
A7.1.

Assume that $t=2$ and $i=1$. Then $X\cong PSp_{4}(q)$ and $X_{x}\cong
PSp_{2}(q^{2})\cdot Z_{2}$. Then $k^{2}=q^{2}(q^{2}-1)/2$. Then $q$ is odd
by \cite{Rib}, A3.1. Then $\frac{q\pm 1}{2}%
=z_{1}^{2}$ and $2(q\mp 1)=z_{2}^{2}$ for some positive integers $%
z_{1},z_{2} $, according to whether $q$ is equivalent to $1$ or $3$ modulo $%
4 $ respectively. So $z_{1}^{2}\mp 1=(z_{2}/2)^{2}$, which has no solutions again by \cite{Rib}, A3.1.

Finally, assume that $Y$ is a $\mathcal{C}_{3}$-group of type $GU_{n/2}(q)$.
Then $X_{x}=Y\cong Z_{\frac{q-1}{2}}.PGU_{n/2}(q).Z_{2}$ by Lemma \ref{Nov}(1) and by \cite{KL}%
, Proposition 4.3.7.(I)--(II). Moreover, $n/2$ is odd, since $\left( \Phi
_{nf}^{\ast }(p),\left\vert Y\right\vert \right) >1$, and hence $n\geq 6$.
Then 
\begin{equation}
k^{2}=\frac{1}{(2,q-1)}q^{\frac{1}{8}n\left( n+2\right)
}\prod_{i=1}^{n/2}(q^{i}+(-1)^{i})  \label{kvadrat}
\end{equation}%
and hence $q^{n/2-1}+1$ divides $k^{2}$. Moreover, (\ref{kvadrat})
is not fulfilled for $(n/2-1,q)=(6,2)$. Then $\Phi _{(n-2)f}^{\ast }(p)\mid
k^{2}$, where $\Phi _{(n-2)f}^{\ast }(p)>1$, by \cite{Rib}, P1.7, and hence $%
\Phi _{(n-2)f}^{\ast }(p)^{1/2}\mid \left\vert G_{B}\right\vert $. Moreover, 
$\left( \Phi _{(n-2)f}^{\ast }(p),\left\vert X_{x}\right\vert \right) =1$,
since $n/2$ is odd and $n\geq 6$, and $\left( \Phi _{(n-2)f}^{\ast
}(p),\left\vert \mathrm{Out(}X\mathrm{)}\right\vert \right) =1$, as $\Phi
_{(n-2)f}^{\ast }(p)\equiv 1 \pmod{(n-2)f}$ by \cite{KL}, Proposition
5.2.1.5.(i). Thus $\left( \Phi _{(n-2)f}^{\ast }(p),\left\vert
G_{x}\right\vert \right) =1$ and $\left( \Phi _{(n-2)f}^{\ast }(p),\left[
G_{B}:X_{B}\right] \right) =1$ as both $\left[ G_{x}:X_{x}\right] $ and $%
\left[ G_{B}:X_{B}\right] $ divide $\left\vert \mathrm{Out(}X\mathrm{)}%
\right\vert $. Then $\Phi _{(n-2)f}^{\ast }(p)^{1/2}\mid \left\vert
X_{B}\right\vert $ and $\Phi _{(n-2)f}^{\ast }(p)^{1/2}\mid \left[ X:X_{B}%
\right] $ as $\left\vert G_{x}\right\vert =\lambda (k+1)\left\vert
G_{x,B}\right\vert $ and $\left\vert G_{B}\right\vert = k \left\vert
G_{x,B}\right\vert $. Therefore $\Phi _{(n-2)f}^{\ast }(p)^{1/2}$ divides
both $\left\vert X_{B}^{\symbol{94}}\right\vert $ and $\left[ SL_{n}: X_{B}^{\symbol{94}}\right] $. Moreover, $p$ divides $\left\vert X_{B}^{\symbol{94%
}}\right\vert $ by Corollary \ref{kralj}(ii), as $\left\vert X_{B}^{\symbol{94}%
}\right\vert _{p}=\left\vert X_{B}\right\vert _{p}$ and $n\geq 6$. Then $%
X_{B}$ lies in a parabolic subgroup of type $P_{2}$ or $P_{1,n-1}$ of $%
SL_{n}(q)$ by Proposition \ref{hdu}(3).

Assume that $X_{B}^{\symbol{94}}$ lies in a parabolic subgroup of type $%
P_{2} $ of $SL_{n}(q)$. Then $X_{B}$ fixes a line $\ell $ of $PG_{n-1}(q)$,
and hence $\ell $ is non-degenerate by \cite{KL}, Propositions 4.1.3.(II)
and 4.1.19.(II), since $\Phi _{(n-2)f}^{\ast }(p)^{1/2}\mid \left\vert
X_{B}^{\symbol{94}}\right\vert $. Also $X_{B}\leq SL_{2}(q)\circ Sp_{n-2}(q)$. Since $\Phi
_{(n-2)f}^{\ast }(p)^{1/2}$ divides $\left\vert \left( X_{B}\right) _{\ell
^{\perp }}^{\ell ^{\perp }}\right\vert $ and $\left[ SL_{n-2}(q):\left(
X_{B}\right) _{\ell ^{\perp }}^{\ell ^{\perp }}\right] $, where $\left( X_{B}\right) _{\ell
^{\perp }}^{\ell ^{\perp }}$ denotes the group induced by $\left( X_{B}\right) _{\ell
^{\perp }}$ on $\ell^{\perp }$, it follows that
either $n-2=4$ and $q=7$, or $\left( X_{B}\right) _{\ell ^{\perp }}^{\ell
^{\perp }}\leq GL_{1}(q^{n-2})\cdot Z_{n-2}$ and $p\mid n-2$ by Proposition %
\ref{hdu}(1). The former does not fulfill (\ref{kvadrat}), the latter
implies $\left\vert X_{B}\right\vert _{p}\leq q\cdot (n-2)_{p}$. Then $q^{%
\frac{n^{2}-12}{12}}\leq q(n-2)$ by Corollary \ref{kralj}(ii), which yields $n=6$ and $q \leq 5$, as $n\geq 6$. However, none of these values fulfills (\ref{kvadrat}).

Assume that $X_{B}$ lies in a parabolic subgroup of type $P_{1,n-1}$ of $%
SL_{n}(q)$. Then $X_{B}$ is a subgroup of $X$ fixing a point $\left\langle
w\right\rangle $ of $PG_{n-1}(q)$. Then $X_{B}\leq \lbrack
q^{n-1}]:Z_{q-1}\circ Sp_{n-2}(q)$ by \cite{KL}, Proposition 4.1.19.(II). A
similar argument to that used above shows that $\Phi _{(n-2)f}^{\ast
}(p)^{1/2}$ divides both $\left\vert \left( X_{B}\right) _{\left\langle
w\right\rangle ^{\perp }/\left\langle w\right\rangle }^{\left\langle
w\right\rangle ^{\perp }/\left\langle w\right\rangle }\right\vert $ and $%
\left[ Sp_{n-2}(q):\left( X_{B}\right) _{\left\langle w\right\rangle ^{\perp
}/\left\langle w\right\rangle }^{\left\langle w\right\rangle ^{\perp
}/\left\langle w\right\rangle }\right] $. Thus $\left( X_{B}\right)
_{\left\langle w\right\rangle ^{\perp }/\left\langle w\right\rangle
}^{\left\langle w\right\rangle ^{\perp }/\left\langle w\right\rangle }\leq
GL_{1}(q^{n-2})\cdot Z_{n-2}$ and hence $\left\vert X_{B}\right\vert
_{p}\leq q^{n-1}\cdot (n-2)_{p}$. Then $q^{\frac{n^{2}-12}{12}}\leq
q^{n-1}(n-2)$ by Corollary \ref{kralj}(ii). Thus either $n=6$ or $n=10$, as $n/2$ is odd. 
If $n=6$, then (\ref{kvadrat}) becomes 
\[
k^{2}=\frac{q^{6}\left( q^{2}+1\right) \left( q-1\right) ^{2}}{(2,q-1)}%
\left( q^{2}+q+1\right),
\]%
and we reach a contradiction by \cite{Rib}, A7.1.\\
If $n=10$, then
\[
k^{2}=\frac{q^{15}\left( q-1\right) \left( q^{2}+1\right) \left( q^{3}-1\right) \left( q^{4}+1\right) \left( q^{5}-1\right)}{(2,q-1)},
\]
and we obtain a contradiction, as $k+1$ divides $(q+1)^{3}(q^{3}+1)(2,q-1)f$ by Lemma \ref{PP}(3).\\
Finally, assume that case (3) holds. If $%
(X,Y)=(PSp_{4}(7),2^{4}.O_{4}^{-}(2))$, then $\Phi _{4}^{\ast }(7)=25$ is
the highest power of $5$ dividing $\left\vert X\right\vert $. Since $5\mid
\left\vert X_{x}\right\vert $ and $\left[ X:X_{x}\right] $ is a square, it
follows that $5^{2}$ divides $\left\vert X_{x}\right\vert $ and hence $%
\left\vert Y\right\vert $, a contradiction.

If $(X,Y)=(PSp_{4}(3),2^{4}.\Omega _{4}^{-}(2))$, then $5\mid \left\vert
X_{x}\right\vert $ and hence $k\mid 2^{3}\cdot 3^{2}$. Then $k=3,4,8,9$,
since $k+1\mid \left\vert G\right\vert $, with $G\leq PSp_{4}(3).Z_{2}$, by Lemma %
\ref{PP}(3). However, $X$ does not have $k^{2}$ as a transitive
permutation representation degree for any such values of $k$
by \cite{At}.
\end{proof}

\begin{lemma}
\label{PSU}$X$ is not isomorphic to $PSU_{n}(q)$ for $n\geq 3$.
\end{lemma}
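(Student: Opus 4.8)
The plan is to mirror the structure of Lemmas \ref{PSL} and \ref{PSp}: invoke Theorem \ref{LergeGeo} to place $X_x$ inside a large maximal geometric subgroup $Y$ of $X\cong PSU_n(q^{1/2})$ with $n\geq 3$, consult the classification of such $Y$ in \cite{AB} (Proposition 4.24 or thereabouts), and filter the list by the constraint $\left(\Phi_{ef}^{\ast}(p),\left\vert Y\right\vert\right)>1$. Recall from Remark \ref{mag3} that $n\geq 3$ and $(n,q^{1/2})\neq(3,2)$, and that $e=n$ for $n$ odd while $e=n-1$ for $n$ even. As in the symplectic case, Theorem 3.5 of \cite{He1/2} kills the $\mathcal{C}_1$ members and Proposition 5.2.15(i) of \cite{KL} kills the $\mathcal{C}_2$ and $\mathcal{C}_5$ members, together with the few sporadic-looking pairs where a primitive prime divisor cannot survive. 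What should remain are: $\mathcal{C}_3$-subgroups of type $GU_{n/t}(q^{1/2t})$ with $t$ prime dividing $n$ (and $n/t$ odd when relevant, since $e>n/2$), $\mathcal{C}_8$-subgroups (orthogonal or symplectic forms, the latter when $q^{1/2}$ is odd), and possibly the $\mathcal{S}$-type or small exceptional pairs already tabulated in Theorem \ref{LergeGeo}, most of which will have been eliminated there.

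For each surviving family the plan is: first use Lemma \ref{Nov}(1) together with the conjugacy-class counts in \cite{KL} (Propositions 4.3.x, 4.8.x) for $n\geq 13$ and \cite{BHRD} for $n<13$ to force $X_x=Y$; then write out the diophantine equation $k^2=[X:X_x]$ explicitly using the index formulas in \cite{KL}. In every case I expect the index to factor as a $p$-power times a product of cyclotomic-type factors, one of which — typically $q^{1/2(j)}\pm q^{1/4(j)}+1$ or $q^{i}-1$ — is coprime to the rest and therefore must itself be a square; then \cite{Rib} A3.1, A5.1, A7.1 or A8.1 gives a contradiction, or pins down a tiny $(n,q)$ which is then excluded by \cite{At}/\cite{BHRD} on the grounds that $X$ has no transitive permutation representation of degree $k^2$ (or that $k+1$ fails to divide $\left\vert G\right\vert$, using Lemma \ref{PP}(3)). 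For the $\mathcal{C}_8$ orthogonal case one gets $k^2=q^{1/2\cdot?}(q^{n/2(1/2)}\pm 1)$-type expressions forcing $q^{\text{(something)}}\pm1$ to be a square; for the $\mathcal{C}_3$ type $GU$ case one recycles verbatim the argument used for the $GU_{n/2}(q)$ subcase of Lemma \ref{PSp}, reducing to $P_2$ or $P_{1,n-1}$ parabolic containment of $X_B^{\symbol{94}}$ via Proposition \ref{hdu}(3), bounding $\left\vert X_B\right\vert_p$ from above and colliding with Corollary \ref{kralj}(iii).

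The main obstacle I anticipate is the $\mathcal{C}_3$ extension-field subgroups of type $GU_{n/t}(q^{1/2t})$ with $t$ odd (especially $t=3$) and the need to reiterate the "descent" argument — pass to $SU_{n/t}(q^{1/2t})$, apply Lemma \ref{Sing}(1) to bound $\left\vert X\right\vert$ against $\left\vert X_x\right\vert\cdot\left\vert X_x\right\vert_{p'}^2\cdot\left\vert\mathrm{Out}(X)\right\vert^2$, invoke the analogue of inequality (4.11) of \cite{ABD3}, and thereby cap $n$ — all while keeping track of the unitary-specific bookkeeping $\left\vert\mathrm{Out}(X)\right\vert=(q^{1/2}+1,n)\cdot 2f$ and the subtlety that for $n$ even we have $e=n-1$ rather than $e=n$, which changes which primitive prime divisor is available. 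A secondary nuisance will be the handful of exceptional small pairs (e.g. around $PSU_4(3)$, $PSU_5(2)$, $PSU_6(2)$) where one cannot argue purely numerically and must appeal to \cite{At} or \cite{BHRD} case by case; since $PSU_5(2)$ has already been disposed of in the proof of Lemma \ref{e=n} and $PSU_4(3)$ in Theorem \ref{LergeGeo}, the residue should be manageable. If all these threads close, $X\ncong PSU_n(q^{1/2})$ for $n\geq 3$ follows, completing this lemma.
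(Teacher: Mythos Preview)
Your plan contains a structural error that would make the proof fail. You assert that ``Theorem 3.5 of \cite{He1/2} kills the $\mathcal{C}_1$ members'', but this is only valid when $e=n$: a subgroup of order divisible by $\Phi_{nf}^\ast(p)$ must act irreducibly on $V_n(q)$. For $PSU_n(q^{1/2})$ with $n$ even one has $e=n-1$, and then the stabilizer of a non-isotropic point contains $SU_{n-1}(q^{1/2})$, whose order \emph{is} divisible by $\Phi_{(n-1)f}^\ast(p)$. So $\mathcal{C}_1$ is not killed; in fact in the paper's proof it is the \emph{only} surviving case for $n$ even. The paper then identifies $X_x$ with this point stabilizer via Lemma \ref{Nov}(1), obtains $k^2=q^{(n-1)/2}\frac{q^{n/2}-1}{q^{1/2}+1}$, and observes that this is exactly equation (\ref{kq}) from Lemma \ref{e=n-1}, so that argument finishes the even case with no further work.

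Two further misfires: unitary groups have no $\mathcal{C}_8$ class in the Aschbacher--Kleidman--Liebeck scheme, so that part of your plan is vacuous; and your proposed recycling of the $GU_{n/2}(q)$ argument from Lemma \ref{PSp} via Proposition \ref{hdu}(3) is inapplicable here, since that machinery is keyed to $e=n-2$. For $n$ odd the paper's filtering against \cite{AB}, Proposition 4.17, is much sharper than you anticipate: only a single $\mathcal{C}_3$-subgroup of type $GU_{n/3}$ survives, and Lemma \ref{Sing}(1) together with the inequality from \cite{ABD1} forces $n=3$, giving $k^2=288$, a contradiction. The whole proof is therefore considerably shorter than your outline suggests, and hinges on treating the parities of $n$ separately from the outset.
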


\begin{proof}
Assume that $X\cong PSU_{n}(q^{1/2})$, with $n \geq 3$. Then $X_{x}$ lies in a large maximal
geometric subgroup $Y$ of $X$ by Theorem \ref{LergeGeo}, and hence $Y$ is one of
the groups listed in \cite{AB}, Proposition 4.17. Also $\left(
\Phi _{ef}^{\ast }(p),\left\vert X_{x}\right\vert \right) >1$, and hence $%
\left( \Phi _{ef}^{\ast }(p),\left\vert Y\right\vert \right) >1$, where $e=n$
or $n-1$ according as $n$ is odd or even respectively. 

Assume that $n$ is odd. Filtering the list of subgroups in \cite{AB},
Proposition 4.17, with respect to the property of $\left( \Phi _{nf}^{\ast
}(p),\left\vert Y\right\vert \right) >1$, we see that the unique admissible
case is when $Y$ is a $\mathcal{C}_{3}$-subgroup of $X$ of type $GU_{n/3}(3^{3})$. Then $Y\cong
Z_{7}.PSU_{n/3}(3^{3}).Z_{(n/3,7)}.Z_{3}$ by \cite{KL}, Proposition 4.3.6(II). It
follows from Lemma \ref{Sing}(1) that $\left\vert X\right\vert \leq
\left\vert \mathrm{Out(}X\mathrm{)}\right\vert ^{2}\cdot \left\vert
Y\right\vert \cdot \left\vert Y\right\vert _{p^{\prime }}^{2}$ which yields
inequality (6) in the proof of Proposition 4.3 of \cite{ABD1} for $t=q=3$%
. Thus $(n/t,t)=(1,3)$, and hence $k^{2}=\allowbreak 288$, which is a
contradiction.

Assume that $n$ is even. Recall that $n\geq 4$ by Remark \ref{mag3}. The
constraint $\left( \Phi _{(n-1)f}^{\ast }(p),\left\vert Y\right\vert \right)
>1$ reduces the list of the subgroups in \cite{AB}, Proposition 4.17 to the
case $Y\in \mathcal{C}_{1}(X)$. Then $Y$ is the stabilizer in $X$ of a non-isotropic point of $PG_{n-1}(q)$ by \cite{KL}%
, Propositions 4.1.4.(II) and 4.1.18.(II). Moreover, $X_{x}=Y$ by Lemma \ref{Nov}%
(1) and by \cite{KL}, Proposition 4.1.4.(I). So, $k^{2}=q^{(n-1)/2}\frac{%
q^{n/2}-1}{(q^{1/2}+1)}$, which is exactly (\ref{kq}) in Lemma \ref{e=n-1}, with $s=q^{1/2}$. Hence, the same argument used in Lemma \ref{e=n-1} rules out this case.
\end{proof}

\begin{lemma}
\label{POmegadisp}$X$ is not isomorphic to $P\Omega _{n}(q)$, with $nq$
odd, for $n\geq 5$.
\end{lemma}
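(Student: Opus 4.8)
Assume that $X\cong P\Omega_n(q)$ with $nq$ odd; by Remark \ref{mag3} we have $n\geq 5$, so $e=n-1\geq 4$. By Theorem \ref{LergeGeo}, $X_x$ lies in a large maximal geometric subgroup $Y$ of $X$, so $Y$ is one of the subgroups of $\Omega_n(q)$ classified in \cite{AB}, and by Theorem \ref{kappa+1} we have $\left(\Phi_{(n-1)f}^{\ast}(p),\left\vert X_x\right\vert\right)>1$, whence $\left(\Phi_{(n-1)f}^{\ast}(p),\left\vert Y\right\vert\right)>1$. The plan is to filter the list of \cite{AB} through this divisibility condition in order to identify $Y$, to deduce $X_x=Y$ via Lemma \ref{Nov}, and then to reach a contradiction by analysing the diophantine equation $k^2=[X:X_x]$.

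For the filtering I would argue as in the proofs of Lemmas \ref{PSL}, \ref{PSp} and \ref{PSU}. By \cite{KL}, Proposition 5.2.15.(i), every prime divisor $u$ of $\Phi_{(n-1)f}^{\ast}(p)$ satisfies $u\equiv 1\pmod{(n-1)f}$, so $u>n-1$ and $u$ is coprime to $\left\vert\mathrm{Out(}X\mathrm{)}\right\vert=2f$. Combined with Theorem 3.5 of \cite{He1/2} and \cite{KL}, Proposition 5.2.15.(ii), this excludes the $\mathcal{C}_2$- and $\mathcal{C}_5$-candidates (the residual large imprimitive and subfield subgroups survive only for very small $q$ and are dispatched by \cite{At}, \cite{BHRD} and Lemma \ref{cyclont} together with $[X:X_x]=k^2$), and it excludes the $\mathcal{C}_3$-candidates, since a subgroup of type $O_{n/r}(q^r).Z_r$ with $r\geq 3$ prime has order whose largest cyclotomic factor divides $q^{\,n-r}-1$, while $0<n-r<n-1$ precludes a primitive prime divisor of $q^{\,n-1}-1$. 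The parabolic subgroups are excluded by Proposition \ref{greatsaxl}; among the remaining $\mathcal{C}_1$-candidates---the stabilisers of non-degenerate subspaces of $PG_{n-1}(q)$---only the stabiliser of a non-degenerate $1$-space $\langle v\rangle$ with $\langle v\rangle^{\perp}$ of minus type can occur, because $\Omega_{n-1}^-(q)$ is the only derived subgroup of a non-degenerate subspace stabiliser whose order is divisible by a primitive prime divisor of $q^{\,n-1}-1$. Hence $Y\cong\Omega_{n-1}^-(q).[c]$ by \cite{KL}, Proposition 4.1.6.(II), and $X$ has a single conjugacy class of such subgroups by \cite{KL}, Proposition 4.1.6.(I), so $X_x=Y$ by Lemma \ref{Nov}(1). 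Consequently, reading the index off \cite{KL}, Proposition 4.1.6,
\[
k^2=[X:X_x]=\frac{q^{(n-1)/2}\left(q^{(n-1)/2}-1\right)}{2}.
\]

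To finish, set $m=(n-1)/2\geq 2$. Since $q$ is odd we have $\gcd\!\left(q^m,\tfrac{q^m-1}{2}\right)=1$, so $k^2=q^m\cdot\tfrac{q^m-1}{2}$ forces both $q^m$ and $\tfrac{q^m-1}{2}$ to be perfect squares; in particular $q^m=z^2$ for some integer $z$, and $z^2-1=2c^2$ where $c^2=\tfrac{q^m-1}{2}$. Factoring $(z-1)(z+1)=2c^2$ and using $\gcd(z-1,z+1)=2$ reduces the problem to a Pell equation $t^2-2s^2=\pm 1$ subject to $z^2=q^m$ being a prime power; by \cite{Rib}, A3.1 and A5.1, this can happen only for $m=2$, that is $n=5$. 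But then $X\cong P\Omega_5(q)\cong PSp_4(q)$, which contradicts Lemma \ref{PSp}. Therefore $X$ is not isomorphic to $P\Omega_n(q)$ with $nq$ odd for $n\geq 5$.

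The step I expect to be the main obstacle is the filtering: one has to go carefully through all families of large maximal geometric subgroups of $\Omega_n(q)$ for $nq$ odd given in \cite{AB}, verify in each family that the condition $\left(\Phi_{(n-1)f}^{\ast}(p),\left\vert Y\right\vert\right)>1$ leaves only the minus-type non-degenerate point stabiliser, and dispose individually of the sporadic small groups (such as $\Omega_7(3)$ and $\Omega_7(5)$) that escape the generic bounds, using \cite{At}, \cite{BHRD} and Lemma \ref{PP}(3). The diophantine endgame is comparatively routine once the index $[X:X_x]$ has been pinned down, being of the same type as the computations already carried out for $PSp_n(q)$ and $PSU_n(q)$.
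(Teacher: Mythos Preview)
Your filtering of the large maximal geometric subgroups of $\Omega_n(q)$ and the identification $X_x=Y\cong\Omega_{n-1}^-(q).2$ match the paper's argument, and the index formula $k^2=q^m(q^m-1)/2$ with $m=(n-1)/2$ is correct. The divergence is in the endgame.

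The paper does \emph{not} attack $k^2=q^m(q^m-1)/2$ by pure number theory. Instead it uses the subdegree structure of the action of $X$ on non-singular points of minus type (taken from \cite{Saxl}, p.~331): one subdegree equals $q^{(n-3)/2}\frac{q^{(n-1)/2}+1}{2}$, so Lemma~\ref{PP}(3) together with $p\mid k$ gives $k+1\mid\frac{q^{(n-1)/2}+1}{2}$. Combining this with the factorisation $k^2-1=\frac{(q^{(n-1)/2}+1)(q^{(n-1)/2}-2)}{2}$ forces $q^{(n-1)/2}-2\mid k-1$, and the two divisibilities are immediately incompatible. This avoids any delicate Diophantine analysis.

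Your number-theoretic route can be made to work, but as written it has a gap. After factoring $(z-1)(z+1)=2c^2$ with $\gcd(z-1,z+1)=2$ you obtain $\{z-1,z+1\}=\{4s^2,2t^2\}$ and hence $t^2-2s^2=\pm 1$; however the Pell equation \emph{by itself} does not force $m=2$ (it has infinitely many solutions), and Ribenboim A3.1/A5.1 do not directly say ``prime-power Pell solutions have exponent $1$''. What actually pins down $m$ is the relation $z\mp 1=(2s)^2$ with $z=p^{fm/2}$: in one case $p^{fm/2}=(2s-1)(2s+1)$ forces $2s-1=1$ and $p^{fm/2}=3$ by coprimality, and in the other $p^{fm/2}-(2s)^2=1$ is a genuine Catalan-type equation requiring $fm/2=1$. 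You should make this step explicit rather than gesturing at the Pell equation. Once $fm=2$ (hence $m=2$) is established, your appeal to $P\Omega_5(q)\cong PSp_4(q)$ and Lemma~\ref{PSp} is a neat shortcut the paper does not take.
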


\begin{proof}
Assume that $X\cong P\Omega _{n}(q)$ with $nq$ is odd and $n\geq 5$. The group $X_{x}$ lies in a large maximal geometric
subgroup $Y$ of $X$ such that $\left( \Phi _{(n-1)f}^{\ast
}(p),\left\vert Y\right\vert \right) >1$ by Theorem \ref{LergeGeo}. Then $Y$
is one of the groups listed in \cite{AB}, Proposition 4.23, and hence $Y\in 
\mathcal{C}_{1}(X)$, since $\left( \Phi _{(n-1)f}^{\ast }(p),\left\vert
Y\right\vert \right) >1$. Thus either $X \cong P \Omega_{7}(q)$, with $q=3,5$, and $Y$ of type $O^{\varepsilon^{\prime}}_{1}(q) \wr S_{7}$, or $Y \in \mathcal{C}_{1}(X)$. The former is ruled out since $13 \mid \left\vert X \right\vert$, but $13^{2} \nmid \left\vert X \right\vert$, and $13 \nmid \left\vert X_{x} \right\vert$. Thus, $Y \in \mathcal{C}_{1}(X)$ and hence $Y\cong \Omega _{n-1}^{-}(q).Z_{2}$ is
the stabilizer of a non-singular point of $PG_{n-1}(q)$ by \cite{KL},
Propositions 4.1.6.(II) and 4.1.20.(II). Moreover, $X_{x}=Y$ by Lemma \ref%
{Nov}(1) and by \cite{KL}, Proposition 4.1.6.(I). Then $k^{2}=q^{(n-1)/2}%
\frac{q^{(n-1)/2}-1}{2}$, as $k^{2}=\left[ X:X_{x}\right] $ and $q$ is odd.
Hence, 
\begin{equation}
k^{2}-1=\frac{(q^{(n-1)/2}+1)\left( q^{(n-1)/2}-2\right) }{2}\text{.}
\label{mu}
\end{equation}%
On the other hand, $X$ has subdegrees $q^{(n-1)}-1$, $q^{(n-3)/2}\frac{%
(q^{(n-1)/2}+1)}{2}$ and $(q-3)/2$ times $q^{(n-3)/2}(q^{(n-1)/2}+1)$ (e.g.
see \cite{Saxl}, p. 331). Thus $q^{(n-3)/2}\frac{(q^{(n-1)/2}+1)}{2}$ is a
subdegree for $G$ too, and hence $k+1\mid \frac{(q^{(n-1)/2}+1)}{2}$ by
Lemma \ref{PP}(3). So, $q^{(n-1)/2}-2\mid k-1$ by (\ref{mu}), and hence $q=2$%
, $n=5$ and $k^{2}=6$, which is a contradiction.
\end{proof}

\bigskip

We recall some useful facts about the orthogonal group in order to tackle the remaining cases $X\cong P\Omega
_{n}^{\varepsilon }(q)$, where $\varepsilon =\pm $.

Let $\mathbb{L}=GF(q^{2})$, $\mathbb{K}=GF(q)$, $q$ odd, let $\zeta $ be generator
of $\mathbb{L}^{\ast }$ and let $\omega =\zeta ^{\frac{q+1}{2}}$ and $%
\epsilon =\zeta ^{\frac{q-1}{2}}$. Then $\omega ^{2}$ is a generator of 
$\mathbb{K}^{\ast }$, $\omega ^{q}=-\omega $ and $\epsilon \zeta =\omega $%
, and $\left\{ 1,\omega \right\} $ is a basis of $\mathbb{L}$ over $\mathbb{K}
$. If $\theta \in \mathbb{L}$, then $\theta =\alpha +\beta \omega $, with $%
\alpha ,\beta \in \mathbb{K}$. If we fix a basis $\left\{
e_{1},...,e_{n/2}\right\} $ of the $\mathbb{L}$-vector space $\mathbb{L}%
^{n/2}$, where $n$ is an even integer, we may consider the $\mathbb{K}$-isomorphism%
\[
\Phi :\mathbb{L}^{n/2}\longrightarrow \mathbb{K}^{n},(\theta _{1},...,\theta
_{n})\longmapsto (\alpha _{1},\beta _{1},...,\alpha _{n},\beta _{n})\text{,} 
\]%
where $\theta _{i}=\alpha _{i}+\beta _{i}\omega $ for each $i=1,...,n$. We
will denote both $y$ and $\Phi (y)$ simply by $y$. It will be clear from the
context if we are regarding $y$ as $n/2$-dimensional $\mathbb{L}$-vector, or as
a $n$-dimensional $\mathbb{K}$-vector. Hence, $\Phi (\left\langle
y\right\rangle _{\mathbb{L}})=\left\langle y,\omega y\right\rangle _{\mathbb{%
K}}$ is a $2$-dimensional subspace of $\mathbb{K}^{n}$.

A set of lines of $PG_{n-1}(q)$ partitioning the point set of $PG_{n-1}(q)$ is
said \emph{spread} of $PG_{n-1}(q)$. The set $\mathcal{S}$ of $1$%
-dimensional $\mathbb{L}$-subspaces of $\mathbb{L}^{n/2}$ is mapped by $\Phi $
onto a spread of $PG_{n-1}(q)$ by \cite{CK}, Proposition 2.1. As for the
vectors, we denote both $\mathcal{S}$ and its image under $\Phi $ simply by $%
\mathcal{S}$.\\
If $Q_{\#}$ is quadratic form on $%
\mathbb{L}^{n/2}$ polarized by the symmetric bilinear form $F_{\#}$, and $T:%
\mathbb{L}\longrightarrow \mathbb{K},t\longmapsto t+t^{q}$ is the trace form
from $\mathbb{L}$ onto $\mathbb{K}$, it follows that $Q=T\circ Q_{\#}$ is a
quadratic form on $\mathbb{K}^{n}$ polarized by the symmetric bilinear
form $F=T\circ F_{\#}$. As pointed out in \cite{CK}, $ \Phi $ maps singular $1$-dimensional subspaces
of $\mathbb{L}^{n/2}$ onto totally singular $2$-dimensional subspaces of $%
\mathbb{K}^{n}$, non-degenerate $1$-dimensional subspaces of $\mathbb{L}%
^{n/2} $ onto non-degenerate $2$-dimensional subspaces of $\mathbb{K}^{n}$
and provides an embedding of $O_{n/2}^{\varepsilon^{\prime}}(q^{2})$ in $O_{n}^{\varepsilon}(q)$. The approach in \cite{CK} is geometric, its group-theoretical counterpart is contained in \cite{KL}, Propositions 4.3.14, 4.3.16 and 4.3.20. \\ We are actually interested in $\left(\varepsilon,\varepsilon^{\prime} \right) =\left( -,-\right)$ for $n/2$ even. Note that, $O_{n/2}^{-}(q^{2}).Z_{2}$ preserves $\mathcal{S}$ by \cite{CK}, Proposition 2.4. Then $X_{\mathcal{S}} \cong P\Omega _{n/2}^{-}(q^{2}).Z_{2}$ by \cite{CK}, Main Theorem (b), and by \cite{KL}, Proposition 4.3.16.(II). We use these information to prove the following lemma, which will play a central role in excluding some of the cases where $X_{x}$ is a $\mathcal{C}_{3}$-subgroup of $X$.

\bigskip

\begin{lemma}\label{2in1}
If $X\cong P\Omega_{n}^{\varepsilon}(q)$, where $\varepsilon =\pm$, then there is a collineation $\alpha \in X-X_{\mathcal{S}}$ such that $\Omega^{-} _{n/2-1}(q^{2})\leq X_{\mathcal{S},\mathcal{S}^{\alpha }}$.
\end{lemma}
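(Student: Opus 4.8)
The plan is to construct the collineation $\alpha$ by combining two facts about the spread $\mathcal{S}$ and the subfield embedding $\Phi$ described above. First I would recall that $X_{\mathcal{S}}\cong P\Omega_{n/2}^{-}(q^{2}).Z_{2}$ acts on the point set of $PG_{n/2-1}(q^{2})$ (equivalently, on the lines of $\mathcal{S}$) as the full orthogonal group, and that inside $X_{\mathcal{S}}$ the stabilizer of a non-degenerate $1$-dimensional $\mathbb{L}$-subspace $\left\langle v\right\rangle_{\mathbb{L}}$ of type $+$ induces $\Omega^{-}_{n/2-1}(q^{2})$ on $\langle v\rangle_{\mathbb{L}}^{\perp_{\#}}$ (here I use that removing a $+$-type point from a $-$-type space of even dimension leaves a $-$-type space of odd dimension, by Witt). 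Via $\Phi$, the subspace $\langle v\rangle_{\mathbb{L}}$ maps onto a non-degenerate $2$-dimensional $\mathbb{K}$-subspace $W=\langle v,\omega v\rangle_{\mathbb{K}}$ of $\mathbb{K}^{n}$, and the subgroup $H\cong\Omega^{-}_{n/2-1}(q^{2})$ fixing $\langle v\rangle_{\mathbb{L}}$ stabilizes $W$, hence also $W^{\perp}$, and acts trivially on $W$ while acting on $W^{\perp}$ through its natural embedding in $\Omega^{-}_{n-2}(q)$.

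Next I would produce the second spread. The point is that $H$, being proper in $X_{\mathcal{S}}$, does not generate all of $X_{\mathcal{S}}$, and in fact $H$ fixes more points of $PG_{n-1}(q)$ than just those in the line of $\mathcal{S}$ corresponding to $\langle v\rangle_{\mathbb{L}}$: since $H$ acts trivially on the $2$-space $W$, every point of $W$ is $H$-fixed. In particular the lines through the fixed line $\Phi(\langle v\rangle_{\mathbb{L}})$ internal to $W$ are $H$-invariant, but more relevantly $H$ preserves a second spread $\mathcal{S}'$ of $PG_{n-1}(q)$ obtained by applying to $\mathcal{S}$ a collineation $\alpha\in X$ chosen so that $\alpha$ fixes $W$ pointwise. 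Concretely, $N_{X}(H)$ (or the pointwise stabilizer of $W^{\perp}$ inside $X_W$, which contains $\Omega(W)\cong\Omega_2^{\pm}(q)$ or $\SO_2^{\pm}(q)$) acts on the set of $H$-invariant spreads, and this action is non-trivial because $X$ has more than one conjugacy class of such $\mathcal{C}_3$-subspace-spreads once we move $\mathcal{S}$ by an element fixing $W$; any $\alpha$ in this stabilizer with $\mathcal{S}^\alpha\neq\mathcal{S}$ does the job, since then $H=\Omega^{-}_{n/2-1}(q^{2})$ fixes both $\mathcal{S}$ and $\mathcal{S}^{\alpha}$ by construction, i.e. $H\leq X_{\mathcal{S},\mathcal{S}^{\alpha}}$, and $\alpha\notin X_{\mathcal{S}}$.

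The remaining task is to verify that such an $\alpha$ exists, i.e. that the group of collineations in $X$ fixing $W$ pointwise does not stabilize $\mathcal{S}$. This is where I expect the main obstacle to lie: one must exhibit explicitly, or argue on orders, an element of $O(W^{\perp})\cap X$ acting trivially on $W$ that moves $\mathcal{S}$. I would argue as follows. The stabilizer $X_{W}$ contains $\Omega(W^{\perp})\cong\Omega^{\varepsilon}_{n-2}(q)$ acting trivially on $W$; on $\mathbb{L}^{n/2}$ this subgroup corresponds to the stabilizer of $\langle v\rangle_{\mathbb{L}}$ acting on $\langle v\rangle_{\mathbb{L}}^{\perp_{\#}}$, which is $\Omega^{-}_{n/2-1}(q^{2}).Z_{2}$ inside $X_{\mathcal{S}}$ — but that is only the part of $\Omega(W^{\perp})$ that also normalizes the $\mathbb{L}$-structure. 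Since $\Omega^{\varepsilon}_{n-2}(q)$ is strictly larger than its subgroup $\Omega^{-}_{n/2-1}(q^{2}).Z_{2}$ (compare orders: $q^{(n/2-1)(n/2-2)}\prod(q^{2i}-\cdots)$ versus $q^{(n-2)(n-4)/4}\prod(q^{j}-\cdots)$, the latter having strictly larger $p$-part for $n\geq 6$ by Corollary \ref{kralj}-type estimates), there exists $\alpha\in\Omega(W^{\perp})\subseteq X_{W}$ with $\alpha\notin X_{\mathcal{S}}$. For this $\alpha$ we have $\mathcal{S}^{\alpha}\neq\mathcal{S}$, $\alpha$ fixes $W$ pointwise hence centralizes $H$, and $H$ fixes both $\mathcal{S}$ and $\mathcal{S}^{\alpha}=\mathcal{S}^{\alpha}$; therefore $\Omega^{-}_{n/2-1}(q^{2})\cong H\leq X_{\mathcal{S},\mathcal{S}^{\alpha}}$ with $\alpha\in X-X_{\mathcal{S}}$, as required. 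One should double-check the low-rank cases $n=6,8$ separately against the tables in \cite{KL} and \cite{BHRD} to make sure $\Omega(W^{\perp})\not\leq X_{\mathcal{S}}$ there too.
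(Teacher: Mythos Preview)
Your overall strategy---fix a spread line $\ell\leftrightarrow W$, take the subgroup $H\cong\Omega_{n/2-1}(q^{2})$ of $X_{\mathcal S}$ stabilizing it, and find $\alpha\in X\setminus X_{\mathcal S}$ with $H\leq X_{\mathcal S,\mathcal S^{\alpha}}$---is exactly the paper's. But the crucial step is inverted. You take $\alpha\in\Omega(W^{\perp})$, i.e.\ $\alpha$ acts trivially on $W$ and nontrivially on $W^{\perp}$, and then assert ``$\alpha$ fixes $W$ pointwise hence centralizes $H$.'' This inference is false: $H$ \emph{also} acts trivially on $W$ and nontrivially on $W^{\perp}$, so $\alpha$ and $H$ both live inside the group induced on $W^{\perp}$, where there is no reason for them to commute. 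Without $[\alpha,H]=1$ (or at least $\alpha$ normalizing $H$) you cannot conclude $H=H^{\alpha^{-1}}\leq X_{\mathcal S}$, and the argument collapses.

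The paper does the complementary thing: it chooses $\alpha$ in $X(\ell^{\perp})$, the \emph{pointwise stabilizer of $W^{\perp}$}, so $\alpha$ acts only on the $2$-dimensional piece $W$ while $H$ acts only on $W^{\perp}$. With respect to $V=W\oplus W^{\perp}$ these are block-diagonal on disjoint blocks, hence commute, and $H^{\alpha}=H\leq X_{\mathcal S}$ gives $H\leq X_{\mathcal S,\mathcal S^{\alpha}}$ immediately. The existence of such an $\alpha\notin X_{\mathcal S}$ comes from comparing $X(\ell^{\perp})\supseteq\Omega_{2}^{-}(q)$ (nontrivial) with $X_{\mathcal S}(\ell^{\perp})=1$; the latter holds because an $\mathbb L$-linear isometry fixing the hyperplane $\langle v\rangle_{\mathbb L}^{\perp_{\#}}$ pointwise is trivial in $P\Omega$. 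You actually brush against the right object in your parenthetical ``the pointwise stabilizer of $W^{\perp}$ inside $X_{W}$, which contains $\Omega(W)$,'' but then abandon it for the wrong side. Swap the roles of $W$ and $W^{\perp}$ in your choice of $\alpha$ and the proof goes through. (A minor side remark: in the intended application $n/2$ is even, so $n/2-1$ is odd and the superscript in $\Omega^{-}_{n/2-1}(q^{2})$ is a typo in the statement; the group is just $\Omega_{n/2-1}(q^{2})$.)
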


\begin{proof}
Let $\ell \in \mathcal{S}$ corresponding to a non-degenerate $2$-dimensional
subspace $U$ of $\mathbb{K}^{n}$ of type $-$. Such a
subspace does exists. Indeed, $U$ is either $\left\langle z,\omega z\right\rangle
_{\mathbb{K}}$ or $\left\langle z,\epsilon z\right\rangle _{\mathbb{K}}$
according to whether $q$ is equivalent to $1$ or $3$ modulo $4$ by \cite{CK}%
, Proposition 2.3.(c). Moreover, $D(Q)=\boxtimes $ by \cite{KL}, Proposition
2.5.10.(ii), as $n/2$ is even. Then $X_{\ell }\cong \left( \Omega _{2}^{-}(q)\times \Omega _{n-2}^{+}(q)\right) .[4]$ by \cite{KL}, Proposition 4.1.6.(II). If $\ell ^{\perp }$ denotes the $(n-3)$-dimensional subspace of $PG_{n-1}(q)$ corresponding to $U^{\perp }$, we see that $X(\ell ^{\perp })$, the pointwise stabilizer in $X$ of $\ell ^{\perp }$, contains $\Omega _{2}^{-}(q)$.

The subspace $U$ is the image under $\Phi$ of a non-degenerate $1$-dimensional subspace of $\mathbb{L%
}^{n/2}$. Hence, by \cite{KL}, Proposition 4.1.6.(II), and by \cite{CK}, Proposition 2.3.(c), we see that 
\begin{equation*}
X_{\mathcal{S},\ell }\cong \left\lbrace
\begin{split}  
& \Omega _{n/2-1}(q^{2})\hphantom{.2}\hphantom{oho} \text{ for } q(n-1)/8 \text{ odd} \\
& \Omega _{n/2-1}(q^{2}).2\hphantom{oho} \text{ for } q(n-1)/8 \text{ even.} \\
\end{split}
\right.
\end{equation*}

Therefore $ X_{\mathcal{S}}(\ell ^{\perp })=1$. Thus $ X(\ell ^{\perp }) \neq%
X_{\mathcal{S}}(\ell ^{\perp })$, and hence there is $\alpha \in X(\ell ^{\perp })-X_{\mathcal{S}}(\ell ^{\perp })$ such that $\Omega^{-} _{n/2-1}(q^{2})\leq X_{\mathcal{S},\mathcal{S}^{\alpha }}$.
\end{proof}

\bigskip
Let $\mathbb{L}$, $\mathbb{K}$ and $\zeta $ as above, but this time $q$ can also be even. Then $\left\{ 1,\zeta \right\} $ is a basis of $\mathbb{L}$ over $\mathbb{K}
$. If $\theta \in \mathbb{L}$, then $\theta =\alpha +\beta \zeta $, with $%
\alpha ,\beta \in \mathbb{K}$. If we fix a basis $\left\{
e_{1},...,e_{n/2}\right\} $ of the $\mathbb{L}$-vector space $\mathbb{L}%
^{n/2}$, where $n$ is an even integer, we may consider the $\mathbb{K}$-isomorphism%
\[
\Psi :\mathbb{L}^{n/2}\longrightarrow \mathbb{K}^{n},(\theta _{1},...,\theta
_{n})\longmapsto (\alpha _{1},\beta _{1},...,\alpha _{n},\beta _{n})\text{,} 
\]%
where $\theta _{i}=\alpha _{i}+\beta _{i}\zeta $ for each $i=1,...,n$. As done for the map $\Phi$, we
will denote both $y$ and $\Psi (y)$ simply by $y$, but differently from $\Phi$, the map $\Psi$ works also for $q$ even.\\
Let $\varphi$ be a Hermitian form on $\mathbb{L}^{n/2}$ and let $Q$ be a quadratic form on $%
\mathbb{K}^{n}$ defined by $Q(z)=\varphi (z,z)$. Then $\Psi $ maps totally isotropic 
subspaces of $\mathbb{L}^{n/2}$ onto totally singular subspaces of $\mathbb{K}^{n}$, and non-degenerate   subspaces of $\mathbb{L}^{n/2}$ onto non-degenerate subspaces of $\mathbb{K}^{n}$ by \cite{Dye0}, Lemma 1.(iii), and provides an embedding of $U_{n/2}(q^{2})$ in $O_{n}^{\varepsilon }(q)$.

The group $X$ preserves a non-degenerate quadric $\mathcal{Q}$ of $PG_{n-1}(q)$, which
is either elliptic or hyperbolic according to whether $\varepsilon =-$ or $+$
respectively. A partition of $\mathcal{Q}$ by a set of skew, singular lines of $%
PG_{n-1}(q)$ is said \emph{spread of lines} of $\mathcal{Q}$. The isotropic $1$-dimensional subspaces of $\mathbb{L}^{n/2}$ are the points of an hermitian variety $\mathcal{H}$ of $PG_{n/2-1}(q^{2})$ and are mapped by $\Psi $ onto a spread of lines of $\mathcal{Q}$ by \cite{Dye0}, Theorem 1. We will denote both $\mathcal{H}$, and its image under $\Psi $, simply, by $%
\mathcal{H}$. Then 
\begin{equation}\label{moretuge}
X_{\mathcal{H}}\cong \left[\frac{q+1}{\left( q+1,3-\varepsilon \right) }\right]%
.PSU_{n/2}(q).\left[ \mu \left( q+1,n/2\right) \right] \text{,} 
\end{equation}
where $\mu =1$ or $(n/2,2)$ according as $q$ is odd or even, respectively,
by \cite{Dye0}, Theorems 2 and 3, and by \cite{KL}, Proposition 4.3.18.

\begin{lemma}
\label{2in1Unitary}If $X\cong P\Omega
_{n}^{\pm }(q)$, then there is a collineation $\beta \in X-X_{\mathcal{H}}$ such that $$\left[
X_{\mathcal{H}}:X_{\mathcal{H},\mathcal{H}^{\beta }}\right] \mid
2(q+1,n/2)(q+1,n/2-2)q^{n-3}(q^{n/2}-(-1)^{n/2})(q^{n/2-1}-(-1)^{n/2-1}).$$
\end{lemma}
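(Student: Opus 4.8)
The plan is to reproduce, for the Hermitian spread of lines $\mathcal{H}$, the argument used in Lemma~\ref{2in1} for the orthogonal spread $\mathcal{S}$, the only structural difference being that the totally isotropic objects of $PG_{n/2-1}(q^{2})$ are now carried by $\Psi$ onto totally singular objects of the quadric $\mathcal{Q}$, and that $\Psi$ (unlike $\Phi$) is available for $q$ even as well. First I would fix an isotropic point $P$ of $\mathcal{H}$ and a further isotropic point $P'$ with $P'\nleq P^{\perp}$, so that $h=\langle P,P'\rangle_{\mathbb{L}}$ is a non-degenerate Hermitian line and $\mathbb{L}^{n/2}=h\perp h^{\perp}$, with $h^{\perp}$ non-degenerate of dimension $n/2-2$. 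Applying $\Psi$ one gets an orthogonal decomposition $\mathbb{K}^{n}=\Psi(h)\perp\Psi(h^{\perp})$, where $\Psi(h)$ is a non-degenerate $4$-space, $\Psi(h^{\perp})$ is non-degenerate of dimension $n-4$, and $\Psi$ maps the Hermitian variety $\mathcal{H}'$ of $h^{\perp}$ onto the spread of lines of the quadric carried by $\Psi(h^{\perp})$.

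Next I would compare the pointwise stabilizer $X(\Psi(h^{\perp}))$ with $X_{\mathcal{H}}(\Psi(h^{\perp}))$. The former induces on $\Psi(h)$ a copy of $\Omega_{4}^{\pm}(q)$, whereas the latter preserves the line-spread that $\Psi$ induces on $\Psi(h)$ and hence induces on $\Psi(h)$ only a section of $GU_{2}(q)$, of order at most $|PGU_{2}(q)|=q(q^{2}-1)$; since this is far smaller than $|\Omega_{4}^{\pm}(q)|$, there exists $\beta\in X(\Psi(h^{\perp}))-X_{\mathcal{H}}(\Psi(h^{\perp}))$, so that $\beta\in X-X_{\mathcal{H}}$ and $\mathcal{H}^{\beta}\neq\mathcal{H}$. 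Let $N$ be the subgroup of $X_{\mathcal{H},h}$ acting trivially on $\Psi(h)$: it acts on $\Psi(h^{\perp})$, and by (\ref{moretuge}) applied to $\mathcal{H}'$ (together with \cite{Dye0}, Theorems~2 and 3, and \cite{KL}, Proposition~4.3.18) the group it induces there contains $PSU_{n/2-2}(q)$. Since $N$ is trivial on $\Psi(h)$ and $\beta$ is trivial on $\Psi(h^{\perp})$, the two commute, and arguing exactly as at the end of the proof of Lemma~\ref{2in1} we obtain $N\leq X_{\mathcal{H}}\cap X_{\mathcal{H}^{\beta}}=X_{\mathcal{H},\mathcal{H}^{\beta}}$; hence $[X_{\mathcal{H}}:X_{\mathcal{H},\mathcal{H}^{\beta}}]$ divides $[X_{\mathcal{H}}:N]$.

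It then remains to compute $[X_{\mathcal{H}}:N]=[X_{\mathcal{H}}:X_{\mathcal{H},h}]\cdot[X_{\mathcal{H},h}:N]$. The first factor is the number of non-degenerate Hermitian lines of $PG_{n/2-1}(q^{2})$, namely $q^{n-4}(q^{n/2}-(-1)^{n/2})(q^{n/2-1}-(-1)^{n/2-1})/((q+1)^{2}(q-1))$, read off from $|GU_{n/2}(q)|$ and $|GU_{2}(q)\times GU_{n/2-2}(q)|$; the second factor divides the order of the section of $GU_{2}(q)$ induced on $\Psi(h)$, hence a divisor of $q(q^{2}-1)(q+1)$, multiplied by the diagonal factors $\tfrac{q+1}{(q+1,3-\varepsilon)}$, $\mu$ and $(q+1,n/2)$ occurring in (\ref{moretuge}). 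Combining these, and using that $q+1$ divides exactly one of $q^{n/2}-(-1)^{n/2}$ and $q^{n/2-1}-(-1)^{n/2-1}$ according to the parity of $n/2$, a routine cancellation shows that $[X_{\mathcal{H}}:N]$ divides $2(q+1,n/2)(q+1,n/2-2)q^{n-3}(q^{n/2}-(-1)^{n/2})(q^{n/2-1}-(-1)^{n/2-1})$; the factor $q^{n-3}=q^{n-4}\cdot q$ arises because the group induced on $\Psi(h)$ contributes one further power of $q$ beyond the $q^{n-4}$ coming from the line count.

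The hard part will be this last step: keeping track of the scalars and diagonal automorphisms when moving between the $\mathbb{L}$-unitary description (in which $GU_{n/2}(q)$ and $N$ are transparent) and the $\mathbb{K}$-orthogonal description (in which $X=P\Omega_{n}^{\pm}(q)$ and the collineation $\beta$ live), so as to be sure the crude bound for $[X_{\mathcal{H},h}:N]$ together with the count of non-degenerate Hermitian lines really collapses to the stated divisor for every admissible $(n,q,\varepsilon)$ — in particular in the even-$q$ cases, where (\ref{moretuge}) carries the extra factor $\mu=(n/2,2)$, and in the small cases where some of the quotients in (\ref{moretuge}) or in $PGU_{2}(q)$ degenerate; a secondary point requiring care is the verification that $X_{\mathcal{H}}(\Psi(h^{\perp}))$ is indeed proper in $X(\Psi(h^{\perp}))$, i.e.\ that $\beta$ exists, which is where the inequality $|PGU_{2}(q)|<|\Omega_{4}^{\pm}(q)|$ is used.
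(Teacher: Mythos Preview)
Your approach is essentially the same as the paper's. Both arguments select a non-degenerate $2$-dimensional Hermitian subspace of $\mathbb{L}^{n/2}$ (the paper via a pair of orthonormal vectors $w_1,w_2$, you via a hyperbolic pair $P,P'$; these are in the same $GU_{n/2}(q)$-orbit by Witt), observe that its $\Psi$-image is an $O_4^+$-subspace of $\mathbb{K}^n$, and then compare the pointwise stabilizer of the complement in $X$ with that in $X_{\mathcal{H}}$ to produce $\beta$; the commuting argument and the index bound via \cite{KL}, Proposition~4.1.4(II), are then identical in substance.

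Two small remarks. First, your $\Psi(h)$ is necessarily of type $+$, not $\pm$: it contains the totally singular $2$-spaces $\Psi(P)$ and $\Psi(P')$, so its Witt index is $2$. Second, the paper shortcuts your order comparison by looking only at $p$-parts: $\left\vert X(\ell^{\perp})\right\vert_p=q^{2}$ from $\Omega_4^+(q)$, while $\left\vert X_{\mathcal{H}}(\ell^{\perp})\right\vert_p=q$ from $GU_2(q)$, which is slightly cleaner than invoking $\left\vert PGU_2(q)\right\vert<\left\vert\Omega_4^+(q)\right\vert$. For the final divisibility, the paper does not split the index as $[X_{\mathcal{H}}:X_{\mathcal{H},h}]\cdot[X_{\mathcal{H},h}:N]$ but simply quotes Proposition~4.1.4(II) of \cite{KL} for $[X_{\mathcal{H}}:X_{\mathcal{H}}(\ell)]$; your two-step computation recovers the same quantity, since $[SU_{n/2}(q):SU_{n/2-2}(q)]=q^{n-3}(q^{n/2}-(-1)^{n/2})(q^{n/2-1}-(-1)^{n/2-1})$ exactly, and the factor $2(q+1,n/2)(q+1,n/2-2)$ absorbs the passage from $SU$ to the extension $X_{\mathcal{H}}$ described in (\ref{moretuge}).
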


\begin{proof}
Let $ w_{1},w_{2}$ be two orthonormal vectors of $\mathbb{L}^{n/2}$ with respect $\varphi$, let $W=\langle w_{1},w_{2}\rangle$ and let $\ell$ be the corresponding line of $PG_{n/2-1}(q^{2})$. Since $Q(x)=\varphi(x,x)$, it is clear that $\langle w_{i}\rangle$, $i=1,2$, is a non-degenerate $2$-dimensional subspace of type $-$ of $\mathbb{K}^{n}$. Then $W$ is a non-degenerate $4$-dimensional subspace of type $+$ of $\mathbb{K}^{n}$ by Proposition 2.5.11.(ii).
If $\ell^{\perp }$ denotes the $n-3$ subspace of $PG_{n-1}(q)$
corresponding to $W^{\perp }$, then $X \left( \ell^{\perp } \right)$ contains a
subgroup isomorphic to $\Omega _{4}^{+}(q)$ inducing on $\ell^{\perp }$ either the identity, or a central involution, by \cite{KL},
Proposition 4.1.6.(II). Thus $ \left\vert X \left( \ell^{\perp } \right)_{p} \right\vert=q^{2}$. On the
other hand, $\left\vert X_{\mathcal{H}}(\ell^{\perp }) \right\vert_{p}=q$ by \cite{KL}, Proposition 4.1.4.(II). Therefore, $X(\ell^{\perp }) \neq X_{\mathcal{H}}(\ell^{\perp })$, and hence there is $\beta \in X(\ell^{\perp }) - X_{\mathcal{H}}(\ell^{\perp })$ such that $X_{\mathcal{H}}\left( \ell \right)\leq X_{\mathcal{H},%
\mathcal{H}^{\beta }}$ by \cite{KL}, Proposition 4.1.4.(II). Then $\left[ X_{\mathcal{H}}:X_{\mathcal{H}}(\ell )\right] $ divides $2(q+1,n/2)(q+1,n/2-2)q^{n-3}(q^{n/2}-(-1)^{n/2})(q^{n/2-1}-(-1)^{n/2-1})2$ again by \cite{KL}, Proposition 4.1.4(II). So does $\left[
X_{\mathcal{H}}:X_{\mathcal{H},\mathcal{H}^{\beta }}\right] $, as $X_{\mathcal{H}}\left( \ell \right)\leq X_{\mathcal{H},%
\mathcal{H}^{\beta }}$,  which is the assertion.
\end{proof}

\begin{lemma}
\label{POmegaMeno}$X$ is not isomorphic to $P\Omega _{n}^{-}(q)$ for $%
n\geq 6$.
\end{lemma}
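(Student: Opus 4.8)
The plan is to suppose, for a contradiction, that $X\cong P\Omega_{n}^{-}(q)$ with $n\geq 6$, so that $e=n$. By Theorem \ref{LergeGeo} the point stabilizer $X_{x}$ lies in a large maximal geometric subgroup $Y$ of $X$, and $\left(\Phi_{nf}^{\ast}(p),|Y|\right)>1$ by Theorem \ref{kappa+1}. First I would run the relevant list of \cite{AB} through the primitive prime divisor filter. A member of $\mathcal{C}_{1}(X)$ is ruled out by Proposition \ref{greatsaxl} when it is parabolic, and otherwise by Theorem 3.5 of \cite{He1/2} together with \cite{KL}, Proposition 5.2.15.(i); a member of $\mathcal{C}_{2}(X)\cup\mathcal{C}_{4}(X)\cup\mathcal{C}_{5}(X)\cup\mathcal{C}_{6}(X)\cup\mathcal{C}_{7}(X)$ is ruled out either by \cite{KL}, Proposition 5.2.15, or because its order is too small for it to be large in view of the bounds of \cite{AB}. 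Hence $Y\in\mathcal{C}_{3}(X)$; being large and satisfying $\left(\Phi_{nf}^{\ast}(p),|Y|\right)>1$ forces $Y$ to be of type $O_{n/2}^{-}(q^{2})$ (so $n\equiv 0\pmod 4$, whence $n\geq 8$) or of type $GU_{n/2}(q)$ (so $n/2$ is odd), apart from finitely many small-field possibilities that I would discard by the $s$-prime argument used in Theorem \ref{LergeGeo}: choose a prime $s$ with $s\mid |X|$, $s^{2}\nmid |X|$ and $s\nmid |Y|$, so that $s^{2}\mid k^{2}=[X:X_{x}]$ forces $s^{2}\mid |X|$. In either surviving case $X$ has a single conjugacy class of subgroups isomorphic to $Y$, or two classes fused in $\mathrm{Aut}(X)$, the latter being handled by Lemma \ref{Nov}, \cite{KL}, Tables 3.5.H--I for $n\geq 13$ and \cite{BHRD} for $n\leq 12$. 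Thus $X_{x}=Y$.

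Suppose first that $Y$ is of type $O_{n/2}^{-}(q^{2})$. Then $X_{x}=X_{\mathcal{S}}$, the stabilizer of the spread $\mathcal{S}$ introduced before Lemma \ref{2in1}, and $k^{2}=[X:X_{\mathcal{S}}]$ has $q$-degree of order $n^{2}/4$. By Lemma \ref{2in1} there is $\alpha\in X-X_{\mathcal{S}}$ with $\Omega_{n/2-1}^{-}(q^{2})\leq X_{\mathcal{S},\mathcal{S}^{\alpha}}$, so the $X_{x}$-orbit of the point $\mathcal{S}^{\alpha}$ has length dividing $[X_{\mathcal{S}}:\Omega_{n/2-1}^{-}(q^{2})]$, a polynomial in $q$ of degree $O(n)$. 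By Lemma \ref{Orbits}, $(k+1)/\gcd(k+1,\left\vert\mathrm{Out}(X)\right\vert)$ divides this length, so $k+1\leq\left\vert\mathrm{Out}(X)\right\vert\cdot[X_{\mathcal{S}}:\Omega_{n/2-1}^{-}(q^{2})]$. Comparing this with $k^{2}=[X:X_{\mathcal{S}}]$, whence $k$ has $q$-degree $\approx n^{2}/8$, yields a contradiction for $n\geq 8$, the finitely many pairs $(q,n)$ near equality being eliminated by Lemma \ref{cyclont} together with \cite{At} and \cite{BHRD}.

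Suppose next that $Y$ is of type $GU_{n/2}(q)$, so that $X_{x}=X_{\mathcal{H}}$ is the stabilizer of the Hermitian variety $\mathcal{H}$ described before Lemma \ref{2in1Unitary}, with structure as in (\ref{moretuge}). Writing $m=n/2$ (odd), a direct computation gives
\[
k^{2}=[X:X_{\mathcal{H}}]=q^{m(m-1)/2}\prod_{i\ \mathrm{odd},\ i\le m-1}(q^{i}-1)\prod_{i\ \mathrm{even},\ i\le m-1}(q^{i}+1),
\]
in particular $k^{2}=q^{3}(q-1)(q^{2}+1)$ when $n=6$. By Lemma \ref{2in1Unitary} there is $\beta\in X-X_{\mathcal{H}}$ with $[X_{\mathcal{H}}:X_{\mathcal{H},\mathcal{H}^{\beta}}]$ dividing $2(q+1,n/2)(q+1,n/2-2)q^{n-3}(q^{n/2}+1)(q^{n/2-1}-1)$; exactly as above this bounds $k+1$ by $\left\vert\mathrm{Out}(X)\right\vert$ times a polynomial in $q$ of degree $2n-4$, and comparison with $k^{2}$ (of $q$-degree $n(n-2)/4$) forces $n$ to be bounded, leaving only a short list of small values. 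For each of these I would isolate the factor $q^{(n-2)/2}+1$ of $k^{2}$ (arising from the largest even index $i=m-1$), which is coprime to the remaining factors of $k^{2}$ up to a power of $2$; since $k^{2}$ is a perfect square, a suitable divisor of $q^{(n-2)/2}+1$ must then be a square, and one reaches a contradiction via \cite{Rib} (for $n=6$ one argues directly from $k^{2}=q^{3}(q-1)(q^{2}+1)$). This disposes of the type $GU_{n/2}(q)$ case and completes the proof.

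The hard part will be this last step: for the small-rank cases surviving the order comparison one must pin down the exact cyclotomic factor of $k^{2}=[X:X_{x}]$ forced to be a perfect square and invoke the precise statement of \cite{Rib}, keeping careful track of the $2$-parts of $k$ and of the divisors shared with $\left\vert\mathrm{Out}(X)\right\vert$; an alternative route here is to run the argument of Lemma \ref{PSp}, passing to $X_{B}^{\symbol{94}}\leq SL_{n}(q)$, using Proposition \ref{hdu} to force $X_{B}$ into a parabolic of type $P_{2}$ or $P_{1,n-1}$ and then contradicting the lower bound for $\left\vert X_{B}\right\vert_{p}$ of Corollary \ref{kralj}. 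A secondary point requiring care is to confirm that no geometric family other than the two listed $\mathcal{C}_{3}$ types survives the primitive prime divisor filter, and that the conjugacy-class bookkeeping genuinely yields $X_{x}=Y$ rather than a type 1 novelty.
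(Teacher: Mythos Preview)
Your proposal is essentially the same route as the paper's proof: reduce via Theorem~\ref{LergeGeo} and the primitive prime divisor filter to the two $\mathcal{C}_{3}$-types $O_{n/2}^{-}(q^{2})$ and $GU_{n/2}(q)$, use Lemma~\ref{Nov} with \cite{KL}/\cite{BHRD} to obtain $X_{x}=Y$, and then apply Lemmas~\ref{2in1} and~\ref{2in1Unitary} to bound $k+1$ against $k^{2}=[X:X_{x}]$.

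A few small differences worth mentioning. First, the paper disposes of $n=6$ at the outset via the isomorphism $P\Omega_{6}^{-}(q)\cong PSU_{4}(q)$ and Lemma~\ref{PSU}; you instead keep $n=6$ in the $GU_{3}(q)$ case and write down $k^{2}=q^{3}(q-1)(q^{2}+1)$, which then still needs an argument (either the isomorphism, or a direct square/parity analysis of that expression). Second, in the $O_{n/2}^{-}(q^{2})$ case the paper inserts two parity steps you do not mention: a $2$-adic count showing $q$ must be odd, and a further $2$-adic count forcing $n/4$ odd, so that in fact $n\geq 12$ before Lemma~\ref{2in1} is invoked; this sharpens the comparison and avoids having to treat $n=8$ separately. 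Finally, your alternative suggestion of passing to $X_{B}^{\symbol{94}}$ via Proposition~\ref{hdu} is not used here (the paper uses that device in Lemma~\ref{PSp}, not in the present lemma). None of these differences is substantive; your plan is correct and matches the paper's argument.
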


\begin{proof}
Assume that $X\cong P\Omega _{n}^{-}(q)$ with $n \geq 6$. Actually, $P\Omega _{6}^{-}(q)$ cannot occur by Lemma \ref{PSU}, since $P\Omega _{6}^{-}(q)\cong PSU_{4}(q)$. Thus $n \geq 8$. The group $X_{x}$ lies in a large maximal
geometric subgroup $Y$ of $X$ by Theorem \ref{LergeGeo}. Then either $Y\in 
\mathcal{C}_{1}(X)$, or $Y$ is a $\mathcal{C}_{3}$-subgroup of $X$ either of type $%
O_{n/2}^{-}(q^{2})$ or $GU_{n/2}(q)$ according as $n/2$ is even or
odd, respectively, by \cite{AB}, Proposition 4.23, since $\left( \Phi
_{nf}^{\ast }(p),\left\vert Y\right\vert \right) >1$. Actually, the case $%
Y\in \mathcal{C}_{1}(X)$ is ruled out by Theorem 3.5 of \cite{He1/2}. Thus,
either $Y\cong P\Omega _{n/2}^{-}(q^{2}).Z_{2}$ or $Y\cong \frac{q+1}{\left(
q+1,4\right) }.PSU_{n/2}(q).\left[ \left( q+1,n/2\right) \right] $ by \cite%
{KL}, Propositions 4.3.16.(II) and 4.3.18.(II). Moreover, $X_{x}=Y$ by Lemma %
\ref{Nov}(1) and by \cite{KL}, Propositions 4.3.16.(I) and 4.3.18.(I).

Assume that $X_{x}\cong P\Omega _{n/2}^{-}(q^{2}).Z_{2}$. Then 
\begin{equation}
k^{2}=\frac{1}{2}q^{n^{2}/8}\prod_{i=0}^{n/4-1}(q^{4i+2}-1)\text{.}
\label{pentagon}
\end{equation}%
Then $n^{2}/8$ is even, as $n/2$ is even. If $q$ is even, then the
highest power of $2$ dividing $k^{2}$ is $2^{f\frac{n^{2}}{8}-1}$, but this is a
contradiction as $fn^{2}/8-1$ is odd. Therefore, $q$ is odd and hence (\ref{pentagon}) becomes%
\begin{equation}
k^{2}=q^{n^{2}/8}\frac{(q^{2}-1)^{n/4}}{2}\prod_{i=1}^{n/4-1}\left( \frac{%
q^{4i+2}-1}{q^{2}-1}\right) \text{.}  \label{sesamo}
\end{equation}%
Moreover, the actions
of $X$ on a suitable $X$-orbit of spreads of $PG_{n-1}(q)$ and on the point set of $\mathcal{D}$
are equivalent (see above). Thus we may identify the two sets.\\
Suppose that $n/4$ is even and let $2^{j}$ be the highest power of $2$
dividing $q^{2}-1$. Then $2^{jn/4-1}$ must be a square, as $q^{n^{2}/8}\prod_{i=1}^{n/4-1}%
\left( \frac{q^{4i+2}-1}{q^{2}-1}\right) $ is odd, whereas it is not, as $jn/2-1$ is odd, and so we reach a contradiction. Thus $n/4$ is odd and hence $n\geq 12$, as $n \geq 8$.

Since there is $\alpha \in X-X_{x}$ such that $\Omega _{n/2-1}(q^{2})\leq
X_{x,x^{\alpha }}$ by Lemma \ref{2in1}, it follows that $%
\Omega _{n/2-1}(q^{2})\leq G_{x,x^{\alpha }}$. Thus $k+1 \mid 8q^{n/2-2}(q^{n/2}+1)f$, as $k+1$ divides $\left[G_{x}:G_{x,x^{\alpha }}\right] $ by Lemma \ref{PP}(3), and hence $k+1\mid 8(q^{n/2}+1)f$, as $p$ divides $k$. So, 
\[
q^{n^{2}/8}(q^{2}-1)^{n/4}<k<64(q^{n/2}+1)^{2}f^{2}<16(q^{n/2}+1)^{2}q^{2}%
\text{,} 
\]%
which has no solutions for $n\geq 12$.

Assume that $X_{x}\cong \frac{q+1}{\left( q+1,4\right) }.PSU_{n/2}(q).\left[
\left( q+1,n/2\right) \right] $. Let $\mathcal{Q}$ be the elliptic quadric
of $PG_{n-1}(q)$ preserved by $X$. Then the actions of $X$ on the sets of the spreads of lines
of $\mathcal{Q}$ and on the point set of $\mathcal{D}$ are equivalent. Therefore, we may identify the points of $\mathcal{D}$ with the spreads of lines of the elliptic quadric $\mathcal{Q}$. Hence 
\begin{equation}\label{kappa}
k^{2}=q^{n(n-2)/8}\prod_{i=2}^{n/2-1}\left( q^{i}+\left( -1\right)
^{i}\right) \text{,}
\end{equation}%
where $n\geq 10$, as $n/2$ is odd and $n\geq 8$.\\
There is $\beta \in X-X_{x}$ such that $[X_{x}:X_{x,x^{\beta }}]$ divides $2(q+1,n/2)(q+1,n/2-2)q^{n-3}(q^{n/2}+1)(q^{n/2-1}-1)$ by Lemma \ref{2in1Unitary}, since $n/2$ is odd. Then
\begin{equation}\label{danas}
k+1 \mid f(q+1,n/2)(q+1,n/2-2)(q^{n/2}+1)(q^{n/2-1}-1) 
\end{equation}
by Lemma \ref{PP}(3), since $k$ is even and $p$ divides $k$. Now, comparing (\ref{kappa}) and (\ref{danas}), and using \cite{AB}, Lemma 4.1(i), we obtain
\begin{equation*}
q^{n(n-2)/4-1 }<k^{2}<q^{2}(q+1)^{4}(q^{n/2}+1)^{2}(q^{n/2-1}-1)^{2}<64q^{2n+4}
\end{equation*}
and hence $n=10$, as $n/2$ is odd. Then (\ref{kappa}) becomes
\begin{equation} \label{dosta}
k^{2}=q^{10}(q^{2}+1)(q^{3}-1)(q^{4}+1)
\end{equation}
which, together with (\ref{danas}), yields $k+1 \mid f(q+1,15)(q^{5}+1)(q+1)$. Then $k^{2}<225q^{2}(q+1)^{2}(q^{5}+1)^{2}$ which compared to \ref{dosta} yields $q=2,3$. However, these values of $q$ do not fulfill (\ref{dosta}), and hence this case is ruled out. This completes the
proof.
\end{proof}

\begin{lemma}
\label{POmegaPiu}$X$ is not isomorphic to $P\Omega _{n}^{+}(q)$ for $n \geq 4$.
\end{lemma}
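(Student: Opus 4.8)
The plan is as follows. Assume $X\cong P\Omega_n^+(q)$. Since $(n,\varepsilon)\neq(4,+)$ by hypothesis and $P\Omega_6^+(q)\cong PSL_4(q)$, the latter being excluded by Lemma~\ref{PSL}, we may assume $n\geq 8$. By Theorem~\ref{kappa+1} the point-stabilizer $X_x$ is a large subgroup of $X$ with $(\Phi_{(n-2)f}^{\ast}(p),|X_x|)>1$, and by Theorem~\ref{LergeGeo} it is contained in a large maximal geometric subgroup $Y$ of $X$; in particular $(\Phi_{(n-2)f}^{\ast}(p),|Y|)>1$, where $e=n-2$.

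First I would run through the large maximal geometric subgroups of $P\Omega_n^+(q)$ listed in \cite{AB}, Proposition~4.23, keeping only those $Y$ with $\Phi_{(n-2)f}^{\ast}(p)\mid|Y|$. A parabolic $Y$ is excluded by Proposition~\ref{greatsaxl}, and $\Omega_7(q)$, $Sp_6(q)\leq P\Omega_8^+(q)$ were already removed in Theorem~\ref{LergeGeo}. Inspecting the explicit orders (and using \cite{KL}, Proposition~5.2.15) shows that members of $\mathcal{C}_2(X)\cup\mathcal{C}_4(X)\cup\mathcal{C}_5(X)\cup\mathcal{C}_6(X)\cup\mathcal{C}_7(X)$, as well as the $\mathcal{C}_3$-subgroups of field-extension type $O_{n/2}^{\varepsilon'}(q^2)$, have order coprime to $\Phi_{(n-2)f}^{\ast}(p)$; note that here the computation genuinely differs from the one in Lemma~\ref{POmegaMeno}, since now $e=n-2$ rather than $e=n$. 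The survivors are: the $\mathcal{C}_1$-stabilizer of a non-degenerate point, the $\mathcal{C}_1$-stabilizer of a non-degenerate line of type $-$, and the $\mathcal{C}_3$-subgroup of type $GU_{n/2}(q)$, the last forcing $n\equiv 0\pmod 4$. In each case $X$ has a single class of subgroups isomorphic to $Y$ and there are no relevant type~1 novelties (\cite{KL}, Tables~3.5.H--I for $n\geq 13$ and \cite{BHRD} for $8\leq n\leq 12$), so Lemma~\ref{Nov} yields $X_x=Y$.

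Next I would dispose of the $\mathcal{C}_1$ candidates. For $Y$ a non-degenerate point (resp. non-degenerate line of type $-$) stabilizer, the action of $G$ on the points of $\mathcal{D}$ is equivalent to its action on the corresponding set of non-singular subspaces of the hyperbolic quadric preserved by $X$. Computing $k^2=[X:X_x]$ in closed form and invoking the known $X$-subdegrees (see e.g. \cite{Saxl}) together with Lemma~\ref{PP}(3) to bound $k+1$, one reaches a contradiction exactly as in Lemmas~\ref{POmegadisp} and \ref{e=n-1}: either $k^2$ forces a non-square factor to be a perfect square, contradicting \cite{Rib} (A3.1 or A7.1), or the divisor of $k+1$ so obtained is too small to satisfy $(k+1)^2>k^2$. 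Finally, for $Y=X_{\mathcal{H}}$ of type $GU_{n/2}(q)$ with $n\equiv 0\pmod 4$, identify the points of $\mathcal{D}$ with the spreads of lines of the hyperbolic quadric. By \eqref{moretuge} one obtains $k^2=[X:X_{\mathcal{H}}]$ in closed form; for $n=8$ this equals $\frac{q^{6}(q-1)^{2}(q^{2}+1)(q^{2}+q+1)}{(2,q-1)}$, whence $q^2+q+1$ would be a perfect square, which is impossible by \cite{Rib}, A7.1. For $n>8$, Lemma~\ref{2in1Unitary} supplies $\beta\in X-X_{\mathcal{H}}$ with $[X_{\mathcal{H}}:X_{\mathcal{H},\mathcal{H}^{\beta}}]$ a divisor of $2(q+1,n/2)(q+1,n/2-2)q^{n-3}(q^{n/2}-1)(q^{n/2-1}+1)$, so by Lemma~\ref{PP}(3) and $p\mid k$ we get $k+1\mid f(q+1,n/2)(q+1,n/2-2)(q^{n/2}-1)(q^{n/2-1}+1)$; comparing this with $(k+1)^2>k^2=[X:X_{\mathcal{H}}]$ and the lower bounds for $|X|$ in \cite{AB}, Lemma~4.1, forces $n=12$, which is then disposed of in the same way (a factor of the shape $q^2+q+1$ survives in the squarefree part of $k^2$).

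The main obstacle is the filtering of paragraph two together with the $\mathcal{C}_1$ analysis of paragraph three: because $e=n-2$ is not extremal (unlike $e=n$ for $P\Omega_n^-(q)$ or $e=n-1$ for $P\Omega_n(q)$ with $nq$ odd), \cite{He1/2}, Theorem~3.5 no longer eliminates the reducible candidates outright, so the stabilizers of a non-degenerate point and of a non-degenerate line of type $-$ really do have to be killed by the explicit evaluation of the diophantine equation $k^2=[X:X_x]$ combined with the subdegree constraints of Lemma~\ref{PP}(3). By contrast, once the spread-of-lines geometry of Lemma~\ref{2in1Unitary} is in hand, the $GU_{n/2}(q)$ case is routine and entirely parallel to the corresponding case in Lemma~\ref{POmegaMeno}.
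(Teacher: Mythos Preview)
Your filtering step contains a genuine gap. You assert that the $\mathcal{C}_3$-subgroups of field-extension type $O_{n/2}^{\varepsilon'}(q^2)$ have order coprime to $\Phi_{(n-2)f}^{\ast}(p)$, but this is false when $n/2$ is odd and $q$ is odd: then $Y$ is of type $O_{n/2}(q^2)$ (see \cite{KL}, Proposition~4.3.20), and $|\Omega_{n/2}(q^2)|$ is divisible by $q^{2(n/2-1)}-1=q^{n-2}-1$, hence by $\Phi_{(n-2)f}^{\ast}(p)$. The paper treats this as a separate surviving case: one shows $X_x=Y$ (here there can be two $X$-classes when $q\equiv 1\pmod 4$, so Lemma~\ref{Nov}(2) and the novelty tables are needed, not Lemma~\ref{Nov}(1)), computes $k^2$ explicitly, and observes that $\frac{q^{n/2}-1}{q-1}$ must be a square, forcing $(n,q)=(10,3)$ by \cite{Rib}, A8.1, which then fails the equation. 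Your proposal contains no argument for this case.

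Two smaller but related oversights: first, the claim that ``in each case $X$ has a single class of subgroups isomorphic to $Y$'' is incorrect for both the non-degenerate point stabilizer (cases (1a)--(1b) in the paper) and for $GU_{n/2}(q)$, where there are two $X$-classes; the paper handles these via Lemma~\ref{Nov}(2) and the absence of relevant type~1 novelties. Second, your blanket elimination of $\mathcal{C}_2$ and $\mathcal{C}_6$ misses the pairs $(P\Omega_8^+(3),\,O_1(3)\wr S_8)$ and $(P\Omega_8^+(3),\,2^6.\Omega_6^+(2))$, both of which have order divisible by $7=\Phi_6^{\ast}(3)$; the paper disposes of them with the prime $13$. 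Finally, your treatment of $n=12$ in the $GU_{n/2}(q)$ case (``a factor of the shape $q^2+q+1$ survives'') is not obviously correct without further work on coprimality; the paper instead bounds $q<64$ from the subdegree inequality and checks numerically.
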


\begin{proof}
Assume that $X\cong P\Omega _{n}^{+}(q)$ with $n \geq 4$. Actually, $n\geq 8$ by Remark \ref{mag3} and since $P\Omega _{6}^{+}(q)\cong PSL_{4}(q)$ is ruled out in Lemma \ref{PSL}. The group $X_{x}$ lies in a large maximal geometric subgroup $Y$ of $X$ by
Theorem \ref{LergeGeo}. Then $Y$ is one of the the groups listed in \cite{AB}%
, Proposition 4.23. Filtering such a list with respect to the constraint $%
\left( \Phi _{(n-2)f}^{\ast }(p),\left\vert Y\right\vert \right) >1$, we see
that either $Y\in \mathcal{C}_{1}(X)$, or $Y$ is a $\mathcal{C}_{3}$-group
of type $O_{n/2}(q^2)$ or $GU_{n/2}(q)$ according to whether $n/2$ is odd or even, respectively, or $(X,Y)=(P\Omega
_{8}^{+}(3),O_{1}(3)\wr S_{8}),(P\Omega
_{8}^{+}(3),2^{6}.\Omega _{6}^{+}(2))$. The latter is immediately ruled out.
Indeed, $13\mid \left\vert X\right\vert $ but $13^{2}\nmid \left\vert
X\right\vert $ and $13 \nmid \left\vert Y\right\vert $.

Assume that $Y$ is a maximal $\mathcal{C}_{3}$-subgroup of $X$ of type $%
O_{n/2}(q^2)$. Then $X$ has one or two conjugacy classes of subgroups
isomorphic to $Y$ by \cite{KL}, Propositions 4.3.20(I) and 2.5.10(i), according to whether $q \equiv 3 \pmod 4$ or $q \equiv 1 \pmod 4$ respectively. In the former case $X_{x}=Y$ by Lemma \ref{Nov}(1), whereas in the latter $G_{x}$ is a type 1 novelty with respect to $Y$ by Lemma \ref{Nov}(2). However, this is impossible by \cite{KL}, Tables 3.5.H--I. Thus $X_{x}=Y$ in each case, and hence
\begin{equation}\label{nemati}
k^{2}=q^{\left( n/2-1\right) ^{2}}\frac{(q-1)^{2}(q+1)}{(4,q^{n/2}-1)}\cdot \frac{%
q^{n/2}-1}{q-1}
\end{equation}
by \cite{KL}, Proposition 4.3.20.(II). Then $\frac{%
q^{n/2}-1}{q-1}$ is a square, being coprime to the other factor of $\frac{4k^{2}}{(q+1)^{2}}$ or $\frac{k^{2}}{(q+1)^{2}}$ according to whether $q \equiv 1 \pmod 4$ or $q \equiv 3 \pmod 4$ respectively. Hence $(n,q)=(10,3)$ by \cite{Rib}, A8.1, since $n/2$ is odd. However, such a pair does not fulfill (\ref{nemati}), and hence this case is excluded.

Assume that $Y$ is a maximal $\mathcal{C}_{3}$-subgroup of $X$ of type $%
GU_{n/2}(q)$ with $n/2$ even. Then $X$ has two conjugacy classes of subgroups
isomorphic to $Y$ by \cite{KL}, Proposition 4.3.18.(I). If $X_{x}<Y$, then $%
G_{x}$ is a type 1 novelty with respect to $Y$ by Lemma \ref{Nov}(2).
However, this is impossible by \cite{KL}, Tables 3.5.H--I. Thus $X_{x}=Y$
and hence%
\begin{equation}\label{vala}
k^{2}=\frac{1}{2} q^{n\left( n-2\right) /8} \prod_{i=1}^{n/2-1}(q^{i}+(-1)^{i}) 
\end{equation}
Arguing as in the $P\Omega ^{-}(q)$-case, and bearing in mind that $n/2$ is even, we see that \begin{equation}\label{divandan}
k+1 \mid f(q+1,n/2)(q+1,n/2-2)(q^{n/2}-1)(q^{n/2-1}+1) 
\end{equation}
and hence 
\begin{equation*}
q^{n(n-2)/4-1 }/2<k^{2}<q^{2}(q+1)^{4}(q^{n/2}+1)^{2}(q^{n/2-1}-1)^{2}<64q^{2n+4},
\end{equation*}
which yields either $n=8$, or $n=12$ and $q<64$. It is easy to check that, there are no $q$ less than $64$ fulfilling \ref{vala} for $n=12$. Thus $n=8$ and hence (\ref{divandan}) becomes
\begin{equation*}
k^{2}=\frac{1}{2}q^{6}\left( q^{2}+1\right) (q-1)^{2}\left( q^{2}+q+1\right).
\end{equation*}
So $q^{2}+q+1$ is a square. However, this is impossible by \cite{Rib}, A7.1. Thus $Y$ is not a $\mathcal{C}_{3}$-subgroup of $X$.

Assume that $Y\in \mathcal{C}_{1}(X)$. Then $Y$ is not a parabolic
subgroup of $X$ by \cite{KL}, Proposition 4.1.20.(II), since $%
\left( \Phi _{(n-2)f}^{\ast }(p),\left\vert Y\right\vert \right) >1$. Moreover, by \cite{KL}%
, Propositions 4.1.26.(II)--4.1.7.(II) and 2.5.10.(i), one of the following
cases occurs:

\begin{enumerate}
\item $Y$ is the stabilizer in $X$ of a non-singular point of $PG_{n-1}(q)$:

\begin{enumerate}
\item $Y\cong \Omega _{n-1}(q)$ with $q\equiv 1 \pmod 4$, or $q\equiv 3 \pmod 4$ and $n/2$ even;

\item $Y\cong \Omega _{n-1}(q).Z_{2}$, with $q\equiv 3 \pmod 4$ and $n/2$
odd;

\item $Y\cong Sp_{n-2}(q)$, with $q$ even.
\end{enumerate}

\item $Y$ is the stabilizer in $X$ of a non-singular line of type $-$ of $PG_{n-1}(q)$:

\begin{enumerate}
\item $Y\cong \left( Z_{\frac{q+1}{(q+1,2)}}\times \Omega
_{n-2}^{-}(q)\right) .Z_{2}$, with $q$ even, or $q\equiv 1 \pmod4 $;

\item $Y\cong \left( Z_{\frac{q+1}{2}}\times \Omega _{n-2}^{-}(q)\right)
.[4] $, with $q\equiv 3 \pmod 4$ and $n/2$ odd;

\item $Y\cong Z_{2}. \left( Z_{\frac{q+1}{4}}\times P\Omega _{n-2}^{-}(q)\right) .[4]$,
with $q\equiv 3 \pmod 4$ and $n/2$ even.
\end{enumerate}
\end{enumerate}

The group $X$ has a unique conjugacy class of subgroups isomorphic to $Y$ in
case (1c) and in cases (2a)--(2c) by \cite{KL}, Propositions
4.1.6.(I)--4.1.7.(I). Then $X_{x}=Y$ in these cases by Lemma \ref{Nov}(1).

In cases (1a) and (1b), $X$ has two conjugacy classes of subgroups
isomorphic to $Y$ again by \cite{KL}, Proposition 4.1.6.(I). If $X_{x}<Y$,
then $G_{x}$ is a type 1 novelty with respect to $Y$ by Lemma \ref{Nov}(2),
but this is impossible by \cite{KL} Tables 3.5.H--I. Thus, $Y=X_{x}$ in each
case.

Assume that $X_{x}$ is the stabilizer in $X$ of a non-singular point of $%
PG_{n-1}(q)$. As pointed in \cite{Saxl}, Section 5 and (1.b), the lengths of
the $X_{x}$-orbits distinct from $\left\{ x\right\} $ are as follows:

\begin{description}
\item[$q\equiv 1 \pmod 4$] one orbit of length $q^{n-2}-1$, one of length $%
q^{n/2-1}(q^{n/2-1}+1)/2$, $(q-1)/4$ ones of length $q^{n/2-1}(q^{n/2-1}-1)$, and 
$(q-5)/4$ ones of length $q^{n/2-1}(q^{n/2-1}+1)$.

\item[$q\equiv 3 \pmod 4$] one orbit of length $q^{n-2}-1$, one of length $%
q^{n/2-1}(q^{n/2-1}-1)/2$, $(q-3)/4$ ones of length $q^{n/2-1}(q^{n/2-1}-1)$, and 
$(q-3)/4$ others of length $q^{n/2-1}(q^{n/2-1}+1)$

\item[$q \equiv 0 \pmod 2$] one of length $q^{n-2}-1$, $q/2$ ones of length $%
q^{n/2-1}(q^{n/2-1}-1)$ and $(q-2)/2$ ones of length $q^{n/2-1}(q^{n/2-1}+1)$.
\end{description}

In each of the three cases the greatest common divisor of the lengths of the $X_{x}$-orbits is $1$. Thus $k+1\mid (2,q-1)\left\vert \mathrm{Out(}X\mathrm{)}\right\vert $, since $\frac{k+1}{\left( k+1,\left\vert \mathrm{Out(}X\mathrm{)}\right\vert
\right) }$ divides the length of each $X_{x}$-orbit distinct from ${\lbrace x \rbrace}$ by Lemma \ref{Orbits}(2). Hence, $k<(2,q-1)\cdot (4,q^{n/2}-1)\cdot 2\theta f$, where
either $\theta =3$ and $n=8$, or $\theta =1$. Since $k^{2}=[X:X_{x}]$, it follows that $k^{2}_{p}=q^{(n-2)/2}$, hence $q^{(n-2)/4}\mid k$. Thus $q^{(n-2)/4}<(2,q-1)\cdot (4,q^{n/2}-1)\cdot
2\theta f$ and hence either $n=8$ and $q=4,9$, or $n=10$ and $q=3$. Therefore, either $X\cong P\Omega
_{8}^{+}(4)$ and $X_{x}\cong Sp_{6}(2)$, or $X \cong P\Omega _{8}^{+}(9)$ and 
$X_{x} \cong \Omega _{7}(9)$, or $X \cong P \Omega_{10}^{+}(3)$ and $X_{x} \cong \Omega_{9}(3).Z_{2}$. The first two cases are ruled out, since $\left[ X:X_{x}\right] $ is not a square for any of them, whereas in the latter $k=99$. Then $b=9900\lambda$, where $\lambda \mid 99$ and $\lambda \geq 2$. Thus $3^{16}\cdot 7 \cdot 13 \cdot 41 \mid \left\vert X_{B} \right \vert$ and hence $\Omega_{9}(3) \unlhd X_{B} \leq \Omega_{9}(3).Z_{2}$ by \cite{BHRD}, Tables 8.58--8.59 and 8.66--8.67. So, either $[X:X_{B}]=99^2$ or $[X:X_{B}]=2*99^2$, and hence $b \mid 8*99^{2}$ since $\left\vert \mathrm{Out(}X\mathrm{)}\right\vert =4$. However, this is impossible since $100$ divides $b$. So, this case is ruled out. 

Finally, assume that $X_{x}$ is the stabilizer in $X$ of a non-singular line of type $-$ of $PG_{n-1}(q)$. Then $\left\vert X_{x}\right\vert =\left\vert \Omega
_{n-2}^{-}(q)\right\vert (q+1)\mu$,
where 
\begin{equation}
\centering
\mu = \left\{\begin{split}
2 & \hphantom{ciao}q\text{ even, or }q \equiv 3 \hspace{-2.5mm} \pmod4\text{ and } n/2 \text{ odd} \\
1/2 & \hphantom{ciao}q \equiv 3 \hspace{-2.5mm} \pmod{4}\text{ and } n/2 \text{ even} \\ 
1 & \hphantom{ciao}q \equiv 1 \hspace{-2.5mm} \pmod{4} \text{.}\\
\end{split}\right.
 \end{equation}

Set $\rho =\frac{\left( q^{n/2-1}+1,4\right) }{\left( q^{n/2}-1,4\right)\mu}$. Then $\rho$ is $2$ for $q \equiv 3 \pmod{4}$ and $n/2$ even, and $1/2$ otherwise, and hence%
\begin{equation}
k^{2}=q^{n-2}\rho (q-1)^{2}\left[ \frac{1}{%
(q+1)}\left( \frac{q^{n/2}-1}{q-1}\right) \left( \frac{q^{n/2-1}-1}{q-1}%
\right) \right] \text{.}  \label{sestougau}
\end{equation}%
Assume that $n/2$ is odd. Then $\frac{q^{n/2}-1}{q-1}$ is odd and $\frac{%
q^{n/2-1}-1}{q^{2}-1}$ is an integer. Moreover, they are coprime. Then $%
\frac{q^{n/2}-1}{q-1}$ is a square, since the maximal odd divisor of $\frac{%
(q-1)^{2}}{\left( 4,q^{n/2}-1\right) }$ is a square too. Then $(n,q)=(10,3)$
by \cite{Rib}, A7.1 and A8.1, since $n\geq 8$. However, such $(n,q)$ does
not fulfill (\ref{sestougau}).

Assume that $n/2$ is even. Arguing as above, we see that $\frac{q^{n/2-1}-1}{%
q-1}$ is a square and hence $(n,q)=(12,3)$ by \cite{Rib}, A7.1 and A8.1,
since $n\geq 8$, but such $(n,q)$ does not fulfill (\ref{sestougau}).
This completes the proof.
\end{proof}

\bigskip

Now, the proof of Theorem \ref{main} is a summary of the previous results.

\bigskip

\begin{proof}[Proof of Theorem \protect\ref{main}]
Suppose that $X$ is a simple group. Then $X$ is classical by Theorem 1 of 
\cite{MF}. Note that $X\cong $ $^{2}G_{2}(3)^{\prime }$ is classical,
since $^{2}G_{2}(3)^{\prime }\cong PSL_{2}(8)$. If $X\cong PSL_{2}(q)$, then
assertions (1) and (2) follow from Proposition \ref{Due nonVale}. If $%
X\ncong PSL_{2}(q)$, then the followings hold by Remark \ref{mag3}:

\begin{enumerate}
\item $n\geq 3$ and $X\cong PSL_{n}(q)$

\item $n\geq 3$ and $X\cong PSU_{n}(q^{1/2})$ for $q$ square;

\item $n\geq 4$ and $X\cong PSp_{n}(q)$;

\item $n\geq 5$ and $X\cong P\Omega _{5}(q)$;

\item $n\geq 6$ and $X\cong P\Omega _{n}^{-}(q)$;

\item $n\geq 6$ and $X\cong P\Omega _{n}^{+}(q)$.
\end{enumerate}

Cases (2)--(6) are excluded in Lemmas \ref{PSp}, \ref{PSU}, \ref{POmegadisp}%
, \ref{POmegaMeno} and \ref{POmegaPiu}, respectively. Thus $X\cong
PSL_{n}(q) $, with $n\geq 3$, and hence assertions (3) and (4) follow from
Lemma \ref{PSL} and Proposition \ref{ExmplPSL}.
\end{proof}

\end{document}